\newcommand{\authoraddress}[2]{%
	\textsc{#1} \textit{E-mail address:} \protect\url{#2}%
}                                                                                                                     
\newcommand{\AuthorAddressone}{                                                                                          
	\authoraddress{Institute for Applied Mathematics, University of Bonn, 53115 Bonn, Germany.}{dematte@iam.uni-bonn.de}%
} 
\newcommand{\AuthorAddresstwo}{                                                                                          
	\authoraddress{Institute for Applied Mathematics, University of Bonn, 53115 Bonn, Germany.}{velazquez@iam.uni-bonn.de}%
} 
\numberwithin{equation}{section}
\newtheorem{theorem}{Theorem}[section]
\newtheorem{lemma}{Lemma}[section]
\newtheorem{corollary}{Corollary}[section]
\newtheorem{prop}{Proposition}[section]
\theoremstyle{definition}
\theoremstyle{remark}
\newtheorem*{remark}{Remark}
\newcommand{\rchi}{\protect\raisebox{2pt}{$ \chi $}}
\newcommand{\ou}{\overline{u}}
\newcommand{\ov}{\overline{v}}
\newcommand{\uv}{\underline{v}}
\newcommand{\og}{\overline{g}}
\newcommand{\oU}{\overline{U}}
\newcommand{\oV}{\overline{V}}
\newcommand{\oH}{\overline{H}}
\newcommand{\oLL}{\overline{\mathcal{L}}}
\newcommand{\tu}{\tilde{u}}
\newcommand{\F}{\mathcal{F}}
\newcommand{\LL}{\mathcal{L}}
\newcommand{\Ss}{\mathbb{S}}
\newcommand{\RR}{\mathbb{R}}
\newcommand{\Rot}{\mathcal{R}}
\newcommand{\eps}{\varepsilon}
\newcommand{\dist}{\text{dist}}
\newcommand\Bnu[1]{B_\nu\left(T\left({#1}\right)\right)}
\newcommand\LpR[1]{L^{{#1}}\left(\RR\right)}
\newcommand\Lprr[1]{L^{{#1}}\left(\RR^3\right)}
\newcommand{\intnu}{\int_{0}^\infty d\nu}
\newcommand{\intS}{\int_{\Ss^2}dn}
\newcommand{\intnN}{\int_{n\cdot N<0}dn}
\newcommand{\intR}{\int_{\RR_+\times\RR^2}}
\newcommand{\intup}{\int_0^\infty}
\newcommand{\intbig}{\int_{-\infty}^\infty}
\newcommand{\intdown}{\int_{-\infty}^0}
\newcommand{\gradx}{\nabla_x}
\newcommand{\grady}{\nabla_y}
\newcommand{\alnu}{\alpha_\nu}
\newcommand{\Inux}{I_\nu\left(x,n\right)}
\newcommand{\Inuy}{I_\nu\left(y,n\right)}
\newcommand{\Bnux}{B_\nu\left(T\left(x\right)\right)}
\newcommand{\Bnuy}{B_\nu\left(T\left(y\right)\right)}
\newcommand{\Bnueta}{B_\nu\left(T\left(\eta\right)\right)}
\newcommand{\sgn}{\text{sgn}}
\newcommand{\diam}{\textnormal{diam}}
\newcommand{\bnd}{\partial\Omega}
\newcommand{\eptw}{\eps^{\frac{1}{2}+2\delta}}
\newcommand{\epth}{\eps^{\frac{1}{2}+3\delta}}
\newcommand{\epfo}{\eps^{\frac{1}{2}+4\delta}}
\newcommand{\epsi}{\eps^{\frac{1}{2}+6\delta}}
\newcommand\Kern[1]{\frac{e^{-\frac{\left|{#1}\right|}{\eps}}}{4\pi\eps\left|{#1}\right|^2}}
\newcommand\PlanarKern[1]{K\left(#1\right)}
\newcommand\Epskern[1]{K_\eps\left(#1\right)}
\newcommand\deps[1]{d_\eps\left(#1\right)}
\DeclareMathOperator*{\Div}{div}
\title{On the diffusion approximation of the stationary radiative transfer equation with absorption and emission}
\author{Elena Demattè\thanks{\AuthorAddressone}, Juan J.L. Velázquez\thanks{\AuthorAddresstwo}}
\begin{document}
	
	\maketitle
\begin{abstract}
	We study the situation in which the distribution of temperature a body is due to its interaction with radiation. We consider the boundary value problem for the stationary radiative transfer equation under the assumption of the local thermodynamic equilibrium. We study the diffusion equilibrium approximation in the absence of scattering. We consider absorption coefficient independent of the frequency $ \nu $ (the so-called Grey approximation) and the limit when the photons' mean free path tends to zero, i.e. the absorption coefficient tends to infinity. We show that the densitive of radiative energy $ u $, which is proportional to the fourth power of the temperature due to the Stefan-Boltzmann law, solves in the limit an elliptic equation where the boundary value can be determined uniquely in terms of the original boundary condition. We derive formally with the method of matched asymptotic expansions the boundary condition for the limit problem and we prove rigorously the convergence to the solution of the limit problem with a careful analysis of some non-local integral operators. The method developed here allows to prove all the results using only maximum principle arguments.

\end{abstract}
\textbf{Acknowledgments:} The authors gratefully acknowledge the financial support of the collaborative research centre \textit{The mathematics of emerging effects} (CRC 1060, Project-ID 211504053) and Bonn International Graduate School of Mathematics (BIGS) at the Hausdorff Center for Mathematics founded through the Deutsche Forschungsgemeinschaft (DFG, German Research Foundation) under Germany’s Excellence Strategy – EXC-2047/1 – 390685813. \\

\textbf{Keywords:} Radiative transfer equation, diffusion approximation, stationary solution, maximum principle, boundary layers.\\

\textbf{Statements and Declarations:} The authors have no relevant financial or non-financial interests to disclose.\\

\textbf{Data availability:} Data sharing not applicable to this article as no datasets were generated or analysed during the current study.
	\tableofcontents
	\section{Introduction}
	The radiative transfer equation is the kinetic equation which describes the distribution of energy and direction of motions of a set of photons, which can be absorbed and scattered by a medium. This equation can be used to describe the transfer of heat in a material due to radiative processes. The radiative transfer equation can be written in its more general form as
	\begin{equation}\label{rte}
	\frac{1}{c}\partial_t I_\nu(x,n,t)+n\cdot\nabla_xI_\nu(x,n,t)=\alpha_\nu^e-\alpha_\nu^aI_\nu(x,n,t)-\alpha_\nu^sI_\nu(x,n,t)+\alpha_\nu^s\int_{\Ss^2}K(n,n')I_\nu(x,n',t)dn'.
	\end{equation} 
	We denote by $ I_\nu(x,n,t) $ the intensity of radiation (i.e. radiating energy) of frequency $ \nu $ at position $ x\in\Omega $ and in direction $ n\in\Ss^2 $ and at time $ t\geq0 $. The coefficients $ \alpha_\nu^a $, $ \alpha_\nu^e $ and $ \alpha_\nu^s $ are respectively the absorption, the emission and the scattering coefficient. In the scattering term the kernel is normalized such that $ \int_{\Ss^2}K(n,n')dn'=1 $. The speed of light is indicated by $ c $.\\
	In this paper we focus on the stationary problem and on processes, where the scattering is negligible. Therefore the equation we will study reduces to
	\begin{equation}\label{stationary rte}
	n\cdot \nabla_x I_\nu \left(x,n\right)= \alpha_\nu^e-\alpha^a_\nu I_\nu \left(x,n\right).
	\end{equation}
	
	In this article we consider the situation of local thermal equilibrium (LTE), which means that at every point $ x\in\Omega $ there is a well-defined temperature $ T(x)\geq 0 $. This yields according to the Kirchhoff's law (cf. \cite{Zeldovic}) the following relation for the absorption and emission coefficient
	\begin{equation*}\label{emission}
	\alpha_\nu^e(x)=\alpha_\nu^a(x) B_\nu(T(x)),
	\end{equation*}
	where $ B_\nu(T(x))=\frac{2h\nu^3}{c^2}\frac{1}{e^{\frac{h\nu}{kT}}-1} $ is the Plank emission of a black body at temperature $ T(x) $ and $ k $ the Boltzmann constant. Moreover, it is well-known that
	\begin{equation}\label{sigma}
	\int_0^\infty B_\nu\left(T(x)\right)\;d\nu=\sigma T^4(x),
	\end{equation} where $ \sigma=\frac{2\pi^4k^4}{15 h^3 c^2} $ is the Stefan-Boltzmann constant. We will denote for simplicity from now on as the absorption coefficient $ \alpha_\nu^a $ as $ \alpha_\nu $.\\
	
	The solution $ I_\nu(x,n) $ of \eqref{stationary rte} can be used to compute the flux of energy at each point $ x\in\Omega $ of the material, which is given by
	\begin{equation*}\label{flux}
	\F(x):=\int_{0}^{\infty}d\nu\int_{\Ss^2}dn\;n\:I_\nu\left(x,n\right).
	\end{equation*}
	In the stationary case, if the temperature is independent of time at every point, the total incoming and outgoing flux of energy should balance. In mathematical terms this can be formulated by the condition for the flux of energy to be divergence free, i.e.
	\begin{equation*}\label{div free}
	\nabla_x\cdot \F(x)=0.
	\end{equation*}
	This situation is denoted in the physical literature by pointwise radiative equilibrium.\\

	We study the situation when the radiation is coming from a very far source of infinite distance. This can be formalized in mathematical terms by means of the boundary condition
	\begin{equation}\label{boundary}
	I_\nu \left(x,n\right)=g_\nu\left(n\right)\geq 0
	\end{equation}
	if $ x\in\partial\Omega  $ and $ n\cdot N_x<0 $ for $ N_x $ the outer normal vector of the boundary at point $ x\in\bnd $. Throughout this paper we will consider $ \Omega\subset\RR^3 $ to be a bounded convex domain with $ C^3 $-boundary.\\
	
	We are concerned in this paper with the study of the diffusion approximation that arises in optically thick media. This means, that we consider the case when the optical depth is very large compared to the characteristic length of the system. Hence, we rescale and for $ \eps\ll 1 $ we consider the following boundary value problem 
		\begin{equation}\label{bvpnoscattering}
	\begin{cases}
	n\cdot \nabla_x I_\nu \left(x,n\right)= \frac{\alpha_\nu(x)}{\eps}\left(B_\nu\left(T\left(x\right)\right)-I_\nu \left(x,n\right)\right) & x\in\Omega,\\
	\nabla_x\cdot \F=0 & x\in\Omega,\\
	I_\nu \left(x,n\right)=g_\nu\left(n\right)& x\in\partial\Omega \text{ and }n\cdot N_x<0.
	\end{cases}
	\end{equation}
	For the solution to this equation we will prove that the intensity of radiation $ I_\nu(x,n) $ is approximately the Plank distribution $ B_\nu(T(x)) $ with the local temperature at each point $ x\in\Omega $, i.e. we will show
	\begin{equation}\label{result}
	I_\nu^\eps(x,n)\to B_\nu(T(x))\;\;\;\;\;\;\;\;\text{ as }\eps\to 0.
	\end{equation}Notice however, that this approximation cannot be expected for points $ x $ that are close to the boundary $ \bnd $.
	The situation in which \eqref{result} holds is denoted in the physical literature as the diffusion equilibrium approximation (see e.g. \cite{mihalas} and \cite{Zeldovic}). More precisely, we will consider the limit problem when $ \eps\to 0 $ and we will rigorously prove that it is given by a Dirichlet problem for the heat equation of the temperature with boundary value uniquely determined by the incoming source $ g_\nu(n) $ and the outer normal $ N_x $ for $ x\in\bnd $. The main result we will prove in this paper is for the so called Grey approximation, i.e. the case when the absorption coefficient is independent of the frequency $ \nu $. The main reason for that is that some of the estimates are already in this case very technical. Hopefully, the type of methods we are developing in this paper can be extended to the non-Grey case.
	\begin{theorem}\label{main result}
Let $ \alpha_\nu(x)=\alpha(x) $ independent of $ \nu $, $ \alpha\in C^3\left(\Omega\right) $, $ g_\nu\geq 0 $ with $ \int_0^\infty g_\nu(n)\;d\nu\in L^{\infty}\left(\Ss\right) $ in \eqref{bvpnoscattering}, $ \Omega $ bounded convex with $C^3 $-boundary and strictly positive curvature. Let $ T_\eps $ be the temperature associated to the intensity $ I_\nu $ solution to the initial value problem \eqref{bvpnoscattering}. Then there exists a functional $ T_\Omega:L^\infty\left(\Ss^2,L^1\left(\RR_+\right)\right)\to C\left(\bnd\right) $ which maps $ g_\nu $ to a continuous function $ T_\Omega[g_\nu](p) $ on the boundary $ p\in\bnd $ such that
\begin{equation*}
T_\eps(x)\to T(x)
\end{equation*}
uniformly in every compact subset of $ \Omega $, where $ T $ is the solution to the Dirichlet problem
\begin{equation*}
\begin{cases}
-\Div\left(\frac{\sigma 4 T^3}{\alpha}\nabla T\right)=0 &x\in\Omega,\\T(p)=T_\Omega[g_\nu](p)&p\in\bnd.
\end{cases}
\end{equation*}
	\end{theorem}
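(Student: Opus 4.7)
The plan is to reduce the problem to a grey scalar transport problem via frequency integration, handle it by matched asymptotic expansions, and then promote the formal result to rigorous convergence via maximum-principle arguments applied to the non-local integral equation. Set $U_\eps(x,n) := \int_0^\infty I_\nu^\eps(x,n)\,d\nu$, $u_\eps(x) := \sigma T_\eps(x)^4$, and $G(n) := \int_0^\infty g_\nu(n)\,d\nu \in L^\infty(\Ss^2)$. Integrating the first equation of \eqref{bvpnoscattering} in $\nu$ and using \eqref{sigma} yields $\eps\,n\cdot \gradx U_\eps = \alpha(x)\bigl(u_\eps - U_\eps\bigr)$ in $\Omega$, with $U_\eps(p,n) = G(n)$ for $p\in\bnd$, $n\cdot N_p<0$. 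A further integration in $n$ combined with $\nabla_x\cdot\F = 0$ produces the self-consistency relation $u_\eps = \frac{1}{4\pi}\int_{\Ss^2}U_\eps(\cdot,n)\,dn$, so the theorem reduces to proving $u_\eps \to u := \sigma T^4$ uniformly on compact subsets of $\Omega$.

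\textbf{Formal expansion and Milne problem.} In the interior expand $U_\eps = u_0(x) + \eps U_1(x,n) + \eps^2 U_2(x,n) + \cdots$. The leading order forces $U_0 = u_0$ to be $n$-independent; at order $\eps$ one gets $U_1 = u_1 - \alpha^{-1}(n\cdot \nabla u_0)$; and the solvability condition at order $\eps^2$, after $n$-integration and using $\int_{\Ss^2}n\otimes n\,dn = \tfrac{4\pi}{3}\mathrm{Id}$, is exactly $\Div\bigl(\alpha^{-1}\nabla u_0\bigr)=0$, i.e.\ the PDE of the theorem once $u_0 = \sigma T^4$. Since $u_0(p)$ cannot match the $n$-dependent datum $G$, near each $p\in\bnd$ I introduce the stretched normal coordinate $\eta = \dist(x,\bnd)/\eps$; freezing coefficients at $p$ produces the grey half-space Milne problem
\begin{equation*}
(n\cdot N_p)\,\partial_\eta V(\eta,n) = \alpha(p)\bigl(V(\eta,n)-v(\eta)\bigr),\qquad v(\eta)=\frac{1}{4\pi}\int_{\Ss^2}V(\eta,n)\,dn,
\end{equation*}
for $\eta>0$, with $V(0,n)=G(n)$ for $n\cdot N_p<0$ and $V$ bounded. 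Integration along characteristics converts this into a non-local integral equation for $v$ on $\RR_+$, for which a maximum-principle analysis provides a unique bounded solution with $v(\eta)\to v_\infty(G,N_p)$ exponentially. The boundary functional is then defined by $T_\Omega[g_\nu](p) := \bigl(v_\infty(G,N_p)/\sigma\bigr)^{1/4}$, whose continuity on $\bnd$ follows from that of $p\mapsto N_p$ (using $\bnd\in C^3$).

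\textbf{Rigorous convergence and main obstacle.} Let $T$ solve the limiting Dirichlet problem and set $u=\sigma T^4$. Combining the interior expansion with the boundary-layer corrector centered at the closest boundary point $p(x)$, I construct approximate super- and subsolutions $\hat U_\eps^\pm(x,n)$ of the scalar transport equation whose inflow traces pinch $G$ and whose $n$-averages pinch $u$ up to an $O(\eps^\gamma)$ error for some $\gamma>0$. Expressing $U_\eps$ along characteristics yields an integral equation $u_\eps = \LL_\eps u_\eps + h_\eps$ with a non-negative, sub-Markovian non-local kernel, to which a comparison principle applies and gives $\hat U_\eps^-\leq U_\eps\leq \hat U_\eps^+$ up to vanishing errors; passing to the limit one obtains $u_\eps\to u$ uniformly on compacta, and $T_\eps\to T$ by monotonicity of $t\mapsto t^{1/4}$. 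The main technical obstacle is precisely this non-local analysis: proving well-posedness and exponential stabilization of the Milne problem uniformly in $G\in L^\infty(\Ss^2)$, establishing continuous/monotone dependence of $v_\infty$ on the data, and quantitatively controlling the tangential variation of the boundary corrector along the curved $\bnd$—where the $C^3$ regularity and strictly positive curvature enter to tame the glancing directions $n\cdot N_p\to 0$ and the near-boundary singularities of the integral kernel, with all estimates carried out using only maximum-principle machinery for the underlying non-local operators.
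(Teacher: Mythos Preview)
Your outline identifies the same high-level strategy as the paper: reduce by frequency integration, derive the interior elliptic PDE from a Hilbert expansion, solve a half-space Milne problem to define the boundary functional, and then close the argument by maximum-principle comparison for the non-local integral equation $u_\eps=\LL_\eps u_\eps+h_\eps$. In that sense the plan is sound.

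Two points deserve emphasis. First, you oscillate between two levels: you speak of building super/subsolutions $\hat U_\eps^\pm(x,n)$ \emph{of the transport equation} and then of applying comparison to the scalar integral equation for $u_\eps$. The paper commits entirely to the latter: it never compares at the kinetic level, but works only with the non-local operator $\LL^\eps_\Omega(u)(x)=u(x)-\int_\Omega K_\eps(x;\eta)u(\eta)\,d\eta$ acting on functions of $x$ alone. This is a deliberate choice (contrasted with the Guo--Wu $L^2$--$L^\infty$ framework), and it is what makes the method potentially extensible to non-Grey coefficients.

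Second, and more substantively, your last paragraph names ``the main technical obstacle'' but does not resolve it, and this obstacle is in fact the entire content of the paper. The work consists of two explicit, rather intricate barrier constructions: (i) a global supersolution $\Phi^\eps$ (built from $C_1-|x|^2$ or $e^{\lambda D}-e^{\lambda x_1}$ plus a boundary-layer profile $1-\gamma/(1+d_\eps^2)$) giving uniform bounds on $u_\eps$, and (ii) a delicate local barrier $W_{\eps,L}$, assembled from harmonic/superharmonic functions in shifted polar coordinates $(\rho_i^\pm,\theta_i^\pm)$, that quantifies $|u_\eps-\bar u|$ in a neighborhood of each $p\in\bnd$ of width $\eps^{1/2+O(\delta)}$ (or $\eps^{2/3+O(\delta)}$ in the variable-$\alpha$ case). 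Controlling the glancing directions and the curvature corrections is done via these barriers, not via any direct transport-equation estimate. Without producing such functions and verifying $\LL^\eps_\Omega$ acting on them has the required sign, the argument remains a program rather than a proof.
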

	\subsection{Motivation and previous results}
	The computation of the distribution of temperature of matter interacting with radiation is an important issue in many physical application and in addition it rises interesting mathematical questions. The kinetic equation describing the interaction of matter with radiation is the radiative transfer equation. A detailed explanation of its derivation and its main properties can be found in \cite{Chandrasekhar, mihalas, oxenius, Rutten, Zeldovic}. In particular, in \cite{mihalas, Zeldovic} there is an extensive discussion about the diffusion equilibrium approximation and the situations where this can be expected or not.
	
	Since the earlier result by Compton \cite{compton} in 1922 the interaction of a gas with radiation has been extensively studied. Milne for example studied a simplified model, where the radiation is monochromatic and the gas density depends only on one space variable (cf. \cite{Milne}). 
	
	A question which has been much studied in mathematical literature is the situation in which $ \alpha_\nu^e=\alpha_\nu^a=0 $ in  \eqref{rte}, i.e. the interaction between matter and radiation is due to scattering only. In this case the problem reduces to
	\begin{equation}\label{rtescattering}
		\frac{1}{c}\partial_t I_\nu(x,n,t)+n\cdot\nabla_xI_\nu(x,n,t)=-\alpha_\nu^s(x)I_\nu(x,n,t)+\alpha_\nu^s(x)\int_{\Ss^2}K(n,n')I_\nu(x,n',t)dn'.
	\end{equation}The same equation arises also in the study of neutron transport, a problem which has been extensively studied in mathematics.
	
	It turns out that in the Grey approximation the problem \eqref{bvpnoscattering} can be reduced exactly to the study of a particular neutron transport equation, namely the case when the kernel $ K $ is constant $ 1 $. Indeed, denoting by $ J(x,n)=\int_0^\infty I_\nu(x,n)\;d\nu $ and combining the first two equations of \eqref{bvpnoscattering} we obtain $ \int_0^\infty B_\nu(T(x))\;d\nu=\fint_{\Ss^2}J(x,n)\;dn=\frac{1}{4\pi}\int_{\Ss^2} J(x,n)\;dn$. Hence, equation \eqref{bvpnoscattering} is equivalent to the study of 
	\begin{equation}\label{equiv}
	\begin{cases}
	n\cdot \nabla_x J(x,n)=\frac{\alpha(x)}{\eps}\left(\fint_{\Ss^2} J(x,n)\;dn-J(x,n)\right)& \text{ if }x\in\Omega,\\J(x,n)=\int_0^\infty g_\nu(n)\;d\nu& \text{ if }x\in\bnd, \;n\cdot N_{x}<0.
	\end{cases}
	\end{equation}
	However, the equivalence between \eqref{bvpnoscattering} and \eqref{equiv} does not hold in the non-Grey case. 
	The properties of equation \eqref{equiv} as well as the diffusion approximation limit have been studied for a long time, starting with the seminal paper \cite{papanicolaou} of 1979, where the stationary version of \eqref{rtescattering} was studied. In that work the authors proved the diffusion approximation for the neutron transport equation using a stochastic method. The result they obtained for $ J $ would imply in particular our main Theorem \ref{main result}.  
	
	More recently, in a series of papers \cite{GuoLei2d,Leiunsteady,Lei3d,wuguo,annulus} Yan Guo and Lei Wu have studied the diffusion approximation of both the stationary and the time dependent neutron transport equation \eqref{rtescattering} when $ K\equiv 1 $ and $ \alpha^s_\nu(x)\equiv 1 $, independent of $ x $, for different classes of boundary conditions in $ 2 $ and $ 3 $ dimensions, in bounded convex domains or annuli (in 2D). In particular the result in paper \cite{Lei3d} imply again the main Theorem \ref{main result} when $ \alpha\equiv 1 $. Their proof relies on PDE methods and not on a stochastic approach. Moreover, they also computed the geometric approximation in the structure of the boundary layer.
	
	The main goal of this paper is to develop a method which allows to obtain diffusive limit approximations like the one in Theorem \ref{main result} for the radiative transfer equation \eqref{rte} using PDE methods that rely only in maximum principle tools. This tools are different from those used by Guo and Wu. Specifically, the method in \cite{GuoLei2d,Leiunsteady,Lei3d,wuguo,annulus} relies on the $ L^2 $-$ L^p $-$ L^\infty $ estimates that were introduced for the analysis of kinetic equations by Yan Guo in \cite{GuoL2}. In particular, the method is based on the estimates of the velocity distribution $ J $. Our approach is based on the direct derivation of estimates for the temperature $ T(x) $ associated to a given distribution of radiation $ I_\nu(x,n) $. More precisely, equation \eqref{bvpnoscattering} can be reformulated as a non-local integral equation for the temperature (c.f. \cite{jang}). In the case of the Grey approximation we have the following equation for $ u(x)=4\pi\sigma T^4(x) $
	\begin{equation}\label{integral}
	u(x)-\int_{\Omega} K_\eps(x,\eta)u(\eta)\;d\eta=S(x),
	\end{equation} 
	where the precise form of the kernel $ K_\eps $ and of the source $ S(x) $ are discussed in Sections 4 and 5.
	
	Equation \eqref{integral} can be thought as a non-local elliptic equation which in particular satisfies good properties, such as the maximum principle. Specifically, our proof relies only in finding suitable supersolutions and applying the maximum principle. The way in which we constructed these supersolutions is mimicking particular solutions of elliptic equations with constant coefficients. These supersolutions give also an insight of the behavior of the solution near the boundary $ \bnd $. Our hope is that the method developed in this paper could be extended to the non-Grey case, at least for some suitable choice of $ \alpha_\nu(x) $. One reason why this should be possible is that \cite{jang} shows how to solve the non-local equation \eqref{integral} for some class of non-Grey problems.
	
	Another type of diffusion approximation for \eqref{rte} is the one in \cite{guo2,guo1} in which it has been considered the situation when $ \alpha_\nu^s\to\infty $ while $ \alpha_\nu^e $ and $ \alpha_\nu^a $ remain bounded combined with the equation for balancing the energy either in the one dimensional case or in the whole space.
	
	The well-posedness and the diffusion approximation of the time dependent problem \eqref{rtescattering} in the frame work of $ L^1 $-functions using the theory of $ m $-accretive operators has ben studied in a series of papers \cite{Golse3,golse5}. Seemingly, although the techniques in these papers allow to develop a theory for the time dependent problem, they do not provide information about the stationary solution.
	
	Some versions of the stationary problem involving the radiative transfer equation can be found in \cite{finnland2,finnland1,porzio,finnland3}. The problems studied in these papers include also heat conduction and different type of boundary condition of our model (for a more detailed discussion see \cite{jang}).
	
	It is important to emphasize that equation \eqref{bvpnoscattering} is very different in the non-Grey case from the scattering problem \eqref{rtescattering}, in the sense that the system \eqref{bvpnoscattering} provides an equation for the temperature. Specifically, the equation $ \nabla_x\cdot \mathcal{F}=0 $ is automatic satisfied in the stationary version of \eqref{rtescattering}. Physically, this is due to the fact that the radiation arriving at every point is just deflected. Equation \eqref{bvpnoscattering} plays the same role as the Laplace equation in order to describe the stationary distribution of temperature in systems where the energy is transported by means of heat conduction. In the case of \eqref{bvpnoscattering} the energy is transported by means of radiation which results in non-locality for determining the temperature distribution. The fact that the determination of the temperature in a body where the energy is transported by radiation is non-local was first formulated in \cite{holstein}. Since the approximation \eqref{result} fails at the boundary, some boundary layers appears for which the intensity of radiation $ I_\nu^\eps $ differs from the Plank distribution $ B_\nu(T) $. Hence, a careful analysis must be made for these boundary layers where the radiation is out of equilibrium. This will be essential in order to determine the functional $ T_\Omega $ in Theorem \ref{main result}, which defines the temperature at every point of the boundary. 
	 
	Finally, we mention that one can consider more complicated interactions between radiation and matter. For instance when the matter that interacts with radiation is a moving fluid. (cf. \cite{Golse1, Golse2,mihalas, Zeldovic}). The case when the interacting medium is a Boltzmann gas whose molecules can be in different energy layers has been considered in \cite{dematte,paper,oxenius,rossani}.
	\subsection{Structure of the paper and notation}
	We aim to prove Theorem \ref{main result}. In Section 2 we will formally derive the limit problem using ideas from the theory of matched asymptotic expansions and from boundary layers theory. Sections 3 and 4 deal with the case of constant absorption coefficient $ \alpha\equiv 1 $, while Section 5 shows the diffusion approximation in the case of space dependent coefficient $ \alpha\in C^3(\Omega) $. In Section 3 we will study the properties of the solution for the non-local integral equation describing the distribution of energy at the boundary layer. In particular, this will allow us to construct in the case of $ \alpha\equiv 1 $ the functional $ T_\Omega $  of Theorem \ref{main result} which assigns the boundary value of the limit solution at every point of the boundary. Important tools we will use for the well-posedness are the maximum principle and a combination of Fourier methods with the classical tools of sub- and super-solution which resembles the Perron method for the Laplace equation. For the asymptotic theory we use the theory of Fourier transform for distributions. Section 4 deals with the rigorous proof of the convergence to the diffusion equilibrium approximation for the limit problem in the constant absorption coefficient case. Here the main tool is the maximum principle for the non-local integral operator we can construct for the boundary value problem \eqref{bvpnoscattering}. Finally, in Section 5 we consider the more general Grey approximation in which $ \alpha\in C^3(\overline{\Omega}) $ is not constant. We will derive formally the limit problem and then we will extend the theory developed in Sections 3 and 4 for this case. We will hence prove again by means of the maximum principle and suitable supersolutions the convergence to the diffusion equilibrium approximation.  \\
	
	We introduce here some notation we will use throughout this paper. First of all, $ \Omega\subset \RR^3 $ is an open bounded convex domain with $ C^3 $-boundary and strictly positive curvature. In order to avoid meaningless constants we assume without loss of generality that $ 0\in\overline{\Omega} $. $ N_x $ indicates always the outer normal vector for a point $ x\in\bnd $. 
	
	We assume $ \Omega $ to be convex in order to simplify some geometrical argument. First of all this assumption implies that for every point $ p\in\bnd $ the tangent plane to the boundary at $ p $ divided the space $ \RR^3 $ in two disjoint half-spaces, one of them containing the whole domain $ \Omega $. This will be used several times in the definition for every point $ p\in\bnd $ of the isometric transformation mapping $ p $ to $ 0 $ and $ \Omega $ in the positive half-space $ \RR_+\times\RR^2 $. The assumption of convexity can be relaxed and the geometrical estimates should still hold, but we would need a more careful analysis of the geometry of the problem.\\
	Moreover, for $ g_\nu(n)\geq 0 $ with $ \int_0^\infty g_\nu(n)\;d\nu\in L^\infty\left(\Ss^2\right) $ we define the norms
	\begin{equation}\label{norm1}
	\Arrowvert g\Arrowvert_1:=\int_0^\infty \int_{\Ss^2} g_\nu(n)\;d\nu\;dn
	\end{equation}
	and 
		\begin{equation}\label{norminfty}
	\Arrowvert g\Arrowvert_\infty:=\sup\limits_{n\in\Ss^2}\left(\int_0^\infty g_\nu(n)\;d\nu\right).
	\end{equation}
	\begin{remark} The reason why we are assuming the seemingly restrictive boundary condition \eqref{boundary} is because we are supposing that the source of radiation is placed at infinity. We can obtain analogous results to the one of the paper if we consider the more general boundary condition $ g_\nu(n,x) $ depending also on $ x\in\bnd $. In addition to the assumption above we need to require $ g_\nu(n,x) $ to be a $ C^1 $-function with respect to $ x\in\bnd $.
	\end{remark}
\begin{figure}[H]\centering
\includegraphics[height=5cm]{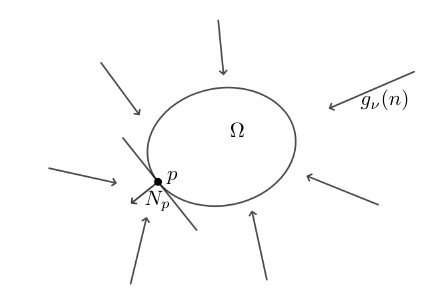}\caption{Representation of the boundary value problem.}
\end{figure}
	For any point $ p\in\bnd $ we choose a fixed isometry mapping $ p $ to $ 0 $ and the vector $ N_p $ to $ -e_1 $. We will denote this rigid motion by $ \Rot_p:\RR^3\to\RR^3 $ with the following properties
	\begin{equation}\label{rotp}
	\Rot_p(p)=0\;\;\;\;\;\;\;\text{ and }\;\;\;\;\;\;\;\Rot_p(N_p)=-e_1.
	\end{equation} 
	Finally, we define by 
	\begin{equation}\label{proj}
	\begin{split}
	 \pi_{\bnd}:\left\{x\in\RR^3:\dist(x,\bnd)<\delta\right\}&\to\bnd\\x&\mapsto\pi_{\bnd}(x) \end{split}
	\end{equation} the projection to the unique closest point in the boundary $ \bnd $. This function is continuous and well-defined in small neighborhood of $ \bnd $, i.e. for $ \delta>0 $ small enough. 
	\section{Derivation of the limit problem}

	\subsection{Formal derivation of the limit problem in the diffusive equilibrium approximation}
	We first remind how to obtain formally the equation for the interior in the limit problem. First of all we expand the intensity of radiation
	\begin{equation}\label{intensityexpanded}
	I_\nu \left(x,n\right)=f_\nu^0\left(x,n\right)+\eps f_\nu^1\left(x,n\right)+\eps^2...
	\end{equation}
	Substituting it in the first equation of \eqref{bvpnoscattering} and identifying the terms containing $ \eps^{-1} $ and $ \eps^0 $ we see
	\begin{equation*}\label{f0}
	f_\nu^0\left(x,n\right)=B_\nu\left(T\left(x\right)\right)
	\end{equation*}
	and 
	\begin{equation*}\label{f1}
	f_\nu^1\left(x,n\right)= -\frac{1}{\alpha_\nu(x)}n\cdot \nabla_xB_\nu\left(T\left(x\right)\right)
	\end{equation*}
	Using the second equation in \eqref{bvpnoscattering} and the expansion in \eqref{intensityexpanded} we deduce
	\begin{equation*}\label{interior1}
	\begin{split}
	0&=\intnu\intS \;n\cdot\gradx\Inux\\
	&=\Div\left[\intnu\intS \;n\Bnux\right]-\eps\Div\left[\intnu\intS\:\left(n\otimes n\right)\frac{1}{\alpha_\nu(x)}\gradx\Bnux\right]\\
	&=-\eps\frac{4}{3}\pi\Div\left(\left(\intnu\frac{1}{\alpha_\nu(x)}\gradx \Bnux\right)\right),
	\end{split}
	\end{equation*}
	where we used 
	\begin{equation*}
	\intS (n\otimes n)=\frac{4}{3}\pi\text{Id}\;\;\;\;\;\text{ and }\;\;\;\;\;\int_{\ss^2}dn\;n=0.
	\end{equation*}
	Therefore,
	\begin{equation}\label{nongreyinterior}
	\Div\left(\kappa\left(T\right)\gradx T\right)=0,
	\end{equation}
	where $ \kappa\left(T\right):=\intnu \frac{\partial_T \Bnux}{\alnu(x)} $.
	In the particular case of the Grey approximation when $ \alnu(x)=1 $ we have $ \kappa(T)= 4\sigma T^3(x) $. Then defining $ u(x):=4\pi\sigma T^4(x) $ we obtain the following equation
	\begin{equation*}\label{greyinterior}
	\Delta u=0.
	\end{equation*}
	This is the limit  problem we will study.
	\subsection{Formal derivation of boundary condition for the limit problem in the diffusive equilibrium approximation}
	In order to obtain the intensity of radiation closed to the boundary of $ \Omega $ we derive a boundary layer equation, whose solution will be used to determine the value of the temperature at the boundary by means of a matched argument. Suppose that $ x_0\in\bnd $, without loss of generality we can assume $ x_0=0 $ and $ N_{x_0}=N=-e_1 $ using the rigid motion $ \Rot_{x_0} $ defined in \eqref{rotp} and putting $ \og_\nu(n):=g_\nu\left(\Rot^{-1}_{x_0}(n)\right) $. We rescale $ x=\eps y $, where $ y\in\frac{1}{\eps}\Omega $. Thus, at the leading order as $ \eps\to 0 $ we obtain $ \alpha_\nu(x)=\alpha_\nu(\eps y)=\alpha_\nu(0)+\mathcal{O}(\eps) $. Taking $ \eps\to 0 $ we obtain that the intensity of radiation satisfies
	\begin{equation}\label{scaledbvpproblem}
	\begin{cases}
	n\cdot \grady\Inuy= \alnu(0)\left(\Bnuy-\Inuy\right) & y\in\RR_+\times\RR^2\\
	\grady\cdot \F=0 & y\in\RR_+\times\RR^2\\
	\Inuy=\og_\nu\left(n\right)& y\in \{0\}\times\RR^2 \text{ and }n\cdot N<0
	\end{cases}
	\end{equation}
	The first equation can be solved for $ I_\nu $ using the method of characteristics. Given $ y\in \RR_+\times\RR^2 $ and $ n\in\Ss^2 $ with $ n\cdot N<0 $ we call $Y(y,n)$ the unique point belonging to $\partial\left(\RR_+\times\RR^2\right)=\{0\}\times\RR^2 $ such that 
	\begin{equation*}
	y=Y(y,n)+s(y,n)n,
	\end{equation*}
	where $ s(y,n)=\left|y-Y(y,n)\right| $. Notice that $ s(y,n) $ is the distance to the first intersection point of the boundary $ \{0\}\times\RR^2 $ with the half line $ \{y-tn:t>0\} $. For $ n\cdot N\geq 0 $ we define $ s(y,n)=\infty $. Solving the equation by characteristics we obtain
	\begin{equation*}\label{boundary1}
	\begin{split}
	\Inuy=& \og_\nu(n)e^{-\alnu(0) s(y,n)}\rchi_{n\cdot N<0}+\int_{0}^{s(y,n)}\;e^{-\alnu(0) t}\alnu(0) \Bnu{y-tn} dt.
	\end{split}
	\end{equation*}
	Using the second equation in the rescaled problem \eqref{scaledbvpproblem} we calculate
	\begin{equation}\label{boundary2}
	\begin{split}
	0=&\Div\left[\intnu\intnN\;n\og_\nu(n)e^{-\alnu(0) s(y,n)}+\intnu\intS\int_{0}^{s(y,n)}dt\;ne^{-\alnu(0) t}\alnu \Bnu{y-tn}\right]\\
	=& -\intnu\intnN \;\og_\nu(n)\alnu n\cdot\grady s(y,n) e^{-\alnu(0) s(y,n)}\\& +\Div\left(\intnu\intR d\eta\; \frac{y-\eta}{\left|y-\eta\right|^3} e^{-\alnu(0)\left|y-\eta\right|}\alnu(0) \Bnu{\eta}\right)\\
	=&-\intnu\intnN \;\og_\nu(n)\alnu(0) e^{-\alnu(0) s(y,n)}+4\pi\intnu(0) \;\alnu \Bnuy\\
	&-\intnu\intR d\eta\;\frac{\alnu^2(0)}{\left|y-\eta\right|^2}e^{-\alnu(0)\left|y-\eta\right|}\Bnu{\eta}.
	\end{split}
	\end{equation}
	The second equality holds via the spherical change of variable
	\begin{equation*}
	\begin{split}
	\Ss^2\times \RR_+&\to \RR_+\times\RR^2\\
	(n,t)&\mapsto \eta=y-tn
	\end{split}
	\end{equation*}
	so that $ n=\frac{y-\eta}{\left|y-\eta\right|}$. For the third inequality we use on the one hand that $ \Div_y\left(\frac{y-\eta}{\left|y-\eta\right|^3}\right)=4\pi\delta(y-\eta) $ and on the other hand that $ n\cdot\grady s(y,n)=1 $. The latter can be seen by the fact that 
	\begin{equation*}
	Y(y,n)+s(y+tn,n)n=y+tn=Y(y+tn,n)+s(y+tn,n)n.
	\end{equation*}
	This implies that $ Y(y+tn,n) $ is $ t $-constant and therefore $ 1=\partial_t s(y+tn,n)=\left(\grady s(y+tn,n)\right)\cdot n $. We assume now that the temperature depends only on the first variable. This can be expected because we are considering limits for $ \eps\ll1 $ and hence the temperature can be considered to depend only on the distance to the point $ x_0 $, which is approximated by the first variable in this setting. After the change of variables $ \xi=(y_2+\eta_2,y_3-\eta_3) $ and calling $ y-\eta:=y_1-\eta_1 $ the last integral in \eqref{boundary2} can be written as
	\begin{equation*}\label{boundary3}
	\intnu\int_{\RR_+}d\eta\int_{\RR^2}d\xi \;\alnu^2(0)\frac{e^{-\alnu(0)\sqrt{(y-\eta)^2+|\xi|^2}}}{(y-\eta)^2+|\xi|^2}\Bnueta.
	\end{equation*}
	Using polar coordinates we obtain
	\begin{equation}\label{kernel}
	\begin{split}
	\int_{\RR^2}d\xi \;\frac{e^{-\alnu(0)\sqrt{(y-\eta)^2+|\xi|^2}}}{(y-\eta)^2+|\xi|^2}=&\pi \int_{|y-\eta|^2}^\infty dx\; \frac{e^{-\alnu(0)\sqrt{x}}}{x}= 2\pi \int_{\alnu(0)|y-\eta|}^\infty dt\; \frac{e^{-t}}{t}= 4\pi K(\alnu(0) |y-\eta|),
	\end{split}
	\end{equation}
	where we will denote $ K(x)=\frac{1}{2}\int_{|x|}^\infty dt \;\frac{e^{-t}}{t} $ as the normalized exponential integral.
	
	Notice that $ s(y,n)= \frac{y_1}{\left|n\cdot N\right|} $ if $ n\cdot N<0 $. We can summarize the equation the temperature satisfies in the non-Grey approximation as follows
	\begin{equation}\label{bvpnongreyboundary}
	\begin{split}
	&\intnu \;\alnu(0) B_\nu\left(T(y_1)\right)-\intnu \int_{0}^\infty d\eta\; \alnu^2(0) K\left(\alnu(0)\left|y_1-\eta_1\right|\right)B_\nu\left(T\left(\eta_1\right)\right)\\
	=& \intnu\intnN \;\og_\nu(n)\alnu(0) e^{-\alnu(0) \frac{y_1}{\left|n\cdot N\right|}}.
	\end{split}
	\end{equation}
	In the particular case of the Grey approximation when $ \alpha\equiv 1 $ using that $ u(y)= 4\pi\sigma T^4(y) $ we can simplify equation \eqref{bvpnongreyboundary} by property \eqref{sigma}
	\begin{equation}\label{bvpgreyboundary}
	u(y_1)-\int_0^\infty d\eta\;K(y_1-\eta)u(\eta)= \intnu\intnN \;\og_\nu(n)e^{-\frac{y_1}{\left|n\cdot N\right|}}.
	\end{equation}
	In some occasions, when the dependence of the boundary layer function $ u $ on the point $ p\in\bnd $ is needed, we will use the notation $ \ou(y_1,p) $, where this function solves according to the rigid motion $ \Rot_p $ in \eqref{rotp}
	\begin{equation}\label{bvpgreyboundary1}
	\ou(y_1,p)-\int_0^\infty d\eta\;K(y_1-\eta)u(\eta,p)= \intnu\int_{n\cdot N_p<0}dn \;g_\nu(n)e^{-\frac{y_1}{\left|n\cdot N_p\right|}}.
	\end{equation}
	For the rest of Section 2 and Section 3 we will focus on the study of $ \ou(y_1,p) $ for an arbitrary given $ p\in\bnd $, hence we will call $ u(y_1)=\ou(y_1,p) $ and $ N=N_p $. In order to simplify the reading from now on we set $ G(x)=\intnu\intnN \;\og_\nu(n)e^{-\frac{x}{\left|n\cdot N\right|}}\rchi_{\{x>0\}} $ and if we want to stress out the dependence on $ p\in\bnd $ we write $ G_p(x)= \intnu\int_{n\cdot N_p<0} \;g_\nu(n)e^{-\frac{x}{\left|n\cdot N_p\right|}}\rchi_{\{x>0\}} $.\\

	From now on until Section 5 we consider the case of constant absorption coefficient $ \alpha\equiv 1 $.
	\subsection{Some properties of the kernel}
	We consider the kernel $ K $ introduced in Section 2.2. We remark that $ K(x)=\frac{1}{2}E_1(|x|)$, where $ E_1 $ is the standard exponential integral function. See \cite{exponentialintegral}. We collect some properties of the normalized exponential integral.
	\begin{prop}\label{propK}
The function $ K $ satisfies $ \intbig dx\;K(x)=1 $, $ K\in\LpR{1}\cap\LpR{2} $ and the following estimate holds
\begin{equation*}\label{Kestimate}
\frac{1}{4}e^{-|x|}\ln(1+\frac{2}{|x|})\leq K(x)\leq \frac{1}{2}e^{-|x|}\ln(1+\frac{1}{|x|}).
\end{equation*}
Moreover, the Fourier transform of $ K $ is $ \hat{K}(\xi)= \frac{1}{\sqrt{2\pi}}\frac{\arctan(\xi)}{\xi} $.
\begin{proof}
Since $ K $ is even and non negative we can calculate, applying Tonelli's Theorem
\begin{equation*}\label{integralK}
\intbig K(s)\;ds=2\intup K(s)\;ds=\intup\int_s^\infty\frac{e^{-t}}{t}\;dt\;ds=\intup\frac{e^{-t}}{t}\int_0^t\;ds\;dt=\intup e^{-t}\;dt=1.
\end{equation*}
This proves also that $ K\in\LpR{1} $. 

For the square integrability we refer to equation 5.1.33 in \cite{exponentialintegral} and see $ \int_\RR \left|K(x)\right|^2\;dx=\ln(2) $. Estimate 5.1.20 in \cite{exponentialintegral} also implies $ \frac{1}{4}e^{-|x|}\ln(1+\frac{2}{|x|})\leq K(x)\leq \frac{1}{2}e^{-|x|}\ln(1+\frac{1}{|x|}) $.

We now move to the computation of the Fourier transform of the kernel $ K $. The kernel is an even function, hence we compute
\begin{equation*}\label{Fourier transform}
\begin{split}
\hat{K}(\xi)=&\frac{1}{\sqrt{2\pi}}\intbig e^{-i\xi x}K(x)\;dx=\frac{1}{\sqrt{2\pi}}\intup \cos\left(\xi x\right)\int_x^\infty \frac{e^{-t}}{t}\;dt\;dx\\=&\frac{1}{\sqrt{2\pi}}\frac{1}{\xi}\intup\frac{e^{-t}}{t}\sin(\xi t)\;dt=\frac{1}{\sqrt{2\pi}}\frac{\arctan(\xi)}{\xi}.
\end{split}
\end{equation*}
The last identity can be justified noticing that $ F(\xi)= \intup\frac{e^{-t}}{t}\sin(\xi t)\;dt$ has derivative $ F'(\xi)=\frac{1}{\xi^2+1} $.
\end{proof}
	\end{prop}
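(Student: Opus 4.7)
The plan is to treat the four claims in the order they appear, since each reduces to an elementary Fubini/differentiation‑under‑the‑integral argument.

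First I would verify $\int_\mathbb{R} K(x)\,dx = 1$ and $K\in L^1(\mathbb{R})$ simultaneously. Since $K$ is even and non‑negative, Tonelli's theorem lets me swap the order of integration:
\begin{equation*}
\int_{-\infty}^{\infty} K(x)\,dx = \int_{0}^{\infty}\!\!\int_{x}^{\infty}\frac{e^{-t}}{t}\,dt\,dx = \int_{0}^{\infty}\frac{e^{-t}}{t}\!\int_{0}^{t}dx\,dt = \int_{0}^{\infty} e^{-t}\,dt = 1.
\end{equation*}
This gives the $L^1$‑bound and the total mass in one stroke.

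Next comes the two‑sided bound $\tfrac{1}{4}e^{-|x|}\ln(1+2/|x|)\le K(x)\le \tfrac{1}{2}e^{-|x|}\ln(1+1/|x|)$. Here I would either appeal directly to the classical estimate for the exponential integral $E_{1}$ in Abramowitz--Stegun (5.1.20), or reproduce it by the substitution $t = |x| + s$ to write $2K(x) = e^{-|x|}\int_0^\infty \frac{e^{-s}}{s+|x|}\,ds$ and then sandwich this last integral between $\int_0^1\frac{ds}{s+|x|}$ and $\int_0^2\frac{ds}{s+|x|}$ using elementary monotonicity of $e^{-s}$ on the relevant ranges (the constants $1$ and $2$ are calibrated so that $\int_1^\infty \frac{e^{-s}}{s+|x|}\,ds$ can be absorbed into either side). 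Given this pointwise bound, $L^2$‑integrability is immediate, because the right‑hand side has only an integrable logarithmic singularity at $0$ and exponential decay at infinity; the explicit value $\|K\|_{L^2}^2 = \ln 2$ can just be cited.

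The main computation is the Fourier transform. Since $K$ is even and in $L^1$, $\hat{K}(\xi) = \frac{2}{\sqrt{2\pi}}\int_0^\infty \cos(\xi x)\,K(x)\,dx$. Substituting the defining double integral and applying Fubini (justified because the integrand is absolutely integrable, as the argument in the first step shows) gives
\begin{equation*}
\hat{K}(\xi) = \frac{1}{\sqrt{2\pi}}\int_0^\infty \frac{e^{-t}}{t}\int_0^t \cos(\xi x)\,dx\,dt = \frac{1}{\sqrt{2\pi}\,\xi}\int_0^\infty \frac{e^{-t}\sin(\xi t)}{t}\,dt.
\end{equation*}
So it remains to identify $F(\xi):=\int_0^\infty \frac{e^{-t}\sin(\xi t)}{t}\,dt$ with $\arctan(\xi)$. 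This follows by differentiation under the integral sign, which is legitimate because $\partial_\xi(e^{-t}\sin(\xi t)/t) = e^{-t}\cos(\xi t)$ is dominated by the integrable function $e^{-t}$ uniformly in $\xi$; then $F'(\xi) = \int_0^\infty e^{-t}\cos(\xi t)\,dt = 1/(1+\xi^2)$ and $F(0)=0$, so $F(\xi) = \arctan(\xi)$.

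The only place I expect any delicacy is the pointwise sandwich estimate for $K$, where the precise calibration of the constants $1$ and $2$ inside the logarithms is the fussiest bookkeeping; apart from that the proof is entirely a matter of Tonelli/Fubini manipulations and a one‑line ODE for the sine integral.
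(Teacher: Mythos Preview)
Your proposal is correct and follows essentially the same route as the paper: the paper also computes the total mass by Tonelli, cites Abramowitz--Stegun (5.1.20 and 5.1.33) for the pointwise sandwich and the $L^2$ norm, and obtains $\hat K$ by the same Fubini step followed by $F'(\xi)=1/(1+\xi^2)$. Your additional remarks (the explicit dominated-convergence justification for differentiating under the integral, and the optional $t=|x|+s$ substitution as an alternative to citing 5.1.20) are sound elaborations but not a different strategy.
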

The following calculation will also be very useful in the next section.
\begin{prop}\label{propK1}
	Let $ x>0$. Then we can compute
	\begin{equation}\label{intK1}
	\int_{-x}^\infty K(s)\;ds= 1-\frac{e^{-x}}{2}+xK(x);
	\end{equation}
	\begin{equation}\label{intK2}
	\int_{x}^\infty K(s)\;ds= \frac{e^{-x}}{2}-xK(x);
	\end{equation}
	\begin{equation}\label{intK3}
	\int_{-x}^\infty sK(s)\;ds=\int_{x}^\infty sK(s)\;ds= \frac{xe^{-x}}{4}+\frac{e^{-x}}{4}-\frac{x^2}{2}K(x);
	\end{equation}
	\begin{proof}
	The proof relies on basic integral computations. We have to compute several integrals changing the order of integration applying Tonelli's Theorem and integrating by parts. We assume $ x>0 $. We prove only \eqref{intK1}, since all other formulas can be obtained in a similar way.
	\begin{equation*}\label{intK1calc}
	\begin{split}
	\int_{-x}^\infty K(s)\;ds=& \frac{1}{2}\int_{-x}^0 \int_{|s|}^\infty\frac{e^{-t}}{t}\;dt\;ds+\frac{1}{2}\intup\int_s^\infty\frac{e^{-t}}{t}\;dt\;ds\\
	=&\frac{1}{2}\int_0^x\frac{e^{-t}}{t} \int_0^t\;ds\;dt+\frac{1}{2}\int_x^\infty\frac{e^{-t}}{t} \int_0^x\;ds\;dt+\frac{1}{2}\\
	=&1-\frac{e^{-x}}{2}+xK(x).
	\end{split}
	\end{equation*}
	\end{proof}
\end{prop}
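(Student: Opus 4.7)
The plan is to reduce all three identities to Tonelli/Fubini manipulations on the double integral representation $K(s)=\tfrac{1}{2}\int_{|s|}^\infty e^{-t}/t\,dt$, exactly as in the sample computation already shown for \eqref{intK1}, then exploit the symmetry of $K$ to handle signs and halve the work.

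First, I would prove \eqref{intK1} by splitting $\int_{-x}^{\infty} K(s)\,ds=\int_{-x}^{0}K(s)\,ds+\int_{0}^{\infty}K(s)\,ds$. The second piece equals $1/2$ by Proposition \ref{propK}. For the first piece, substitute $s\mapsto -s$, plug in the definition of $K$, and apply Tonelli's theorem to swap the order, splitting the $t$-integration at $t=x$. One branch ($0<t<x$) contributes $\tfrac12\int_0^x e^{-t}\,dt=\tfrac12(1-e^{-x})$ after integrating in $s$, while the other ($t>x$) contributes $\tfrac{x}{2}\int_x^\infty e^{-t}/t\,dt=xK(x)$. Summing yields $1-e^{-x}/2+xK(x)$. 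Then \eqref{intK2} follows at once from evenness of $K$ and total mass one: since $\int_{-x}^\infty K=\int_{-\infty}^x K=1-\int_x^\infty K$, we simply subtract \eqref{intK1} from $1$.

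For \eqref{intK3}, the equality $\int_{-x}^\infty sK(s)\,ds=\int_x^\infty sK(s)\,ds$ is immediate from the fact that $sK(s)$ is odd on $[-x,x]$, so the interval $[-x,x]$ contributes nothing. To evaluate either side, I plug in the definition and swap order:
\begin{equation*}
\int_x^\infty sK(s)\,ds=\frac{1}{2}\int_x^\infty s\int_s^\infty \frac{e^{-t}}{t}\,dt\,ds=\frac{1}{2}\int_x^\infty \frac{e^{-t}}{t}\int_x^t s\,ds\,dt=\frac{1}{4}\int_x^\infty e^{-t}\Bigl(t-\frac{x^2}{t}\Bigr)\,dt.
\end{equation*}
Now $\int_x^\infty te^{-t}\,dt=(x+1)e^{-x}$ by one integration by parts, and $\int_x^\infty e^{-t}/t\,dt=2K(x)$ by definition, giving exactly $xe^{-x}/4+e^{-x}/4-(x^2/2)K(x)$.

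There is no real obstacle: Tonelli applies because $K\in L^1(\RR)$ (Proposition \ref{propK}), and the resulting one-dimensional integrals are elementary. The only bookkeeping care needed is the correct splitting of the $t$-domain relative to $x$ when swapping orders, and recognizing that the mixed linear-exponential factor $\int_x^\infty te^{-t}\,dt$ should be evaluated by parts rather than by the tail formula used for $\int_x^\infty e^{-t}/t\,dt$.
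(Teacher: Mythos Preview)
Your proposal is correct and follows essentially the same approach as the paper: both proofs insert the definition $K(s)=\tfrac12\int_{|s|}^\infty e^{-t}/t\,dt$ and swap the order of integration via Tonelli, splitting the $t$-domain at $x$. The paper only writes out \eqref{intK1} and leaves the rest as ``similar'', whereas you additionally exploit the evenness of $K$ and oddness of $sK(s)$ to shortcut \eqref{intK2} and the first equality in \eqref{intK3}, but these are minor conveniences within the same method.
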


	\section{The boundary condition for the limit problem}
	We now start with the boundary layer analysis. This boundary layer problem, know in the literature as Milne problem, was studied with different approaches, e.g.  \cite{Golse3,golse5,Davison,golse6,Hopf}. We will present another proof of the boundary layer analysis for the equation \eqref{bvpgreyboundary} of the temperature. The proof uses a combination of comparison arguments and Fourier analysis. In addition, instead of considering the intensity of radiation the analysis is made directly for the temperature.\\
	
	Our aim is now to solve equation \eqref{bvpgreyboundary}. Indeed, according to the method of matched asymptotic expansions we expect the boundary condition for the limit problem to be the limit of $ u $ as $ y\to\infty $ for every point $ x\in\partial\Omega $. In order to simplify the notation we call $ \LL\left(u\right)(x):=u(x)-\int_0^\infty dy\; K\left(x-y\right)u(y)$ and $ \oLL \left(u\right)(x):=u(x)-\int_{-\infty}^\infty dy\; K\left(x-y\right)u(y) $.
	\subsection{The homogeneous equation}
	We start with the study of the homogeneous equation, i.e. \eqref{bvpgreyboundary} with $ G(x)\equiv 0 $. We will show using maximum principle that any bounded solution is the trivial solution $ u\equiv 0 $. We will use the following version of the maximum principle for the non-local operator $ 	\LL $.
	\begin{lemma}\label{maxprinciple}
		Let $ \ou\in C\left([0,\infty)\right) $ with $ \lim\limits_{x\to\infty}\ou(x)\in[0,\infty] $ be a supersolution of \eqref{bvpgreyboundary}, i.e. 
		\begin{equation*}\label{supersolution}
		\begin{cases}
		\ou (x)-\int_0^\infty dy\;K\left(x-y\right)\ou(y)\geq 0 & x>0\\\ou(x)=0&x<0
		\end{cases}
		\end{equation*}
		Then $ u\geq 0 $ for all $ x\geq 0 $.
		\begin{proof}Let us assume the contrary, i.e. that there exists some $ x\in[0,\infty] $ such that $ \ou(x)<0 $. By assumption $ x\in[0,\infty) $.
		Since $ \ou $ is continuous in $ [0,\infty) $ and it has non-negative limit at infinity which is bounded or infinity, $ u $ attains its global minimum in $ [0,\infty) $, i.e. there exists some $ x_0\in[0,\infty) $ such that $ \ou(x_0)=\inf_{x\in[0,\infty)}\ou(x)<0 $. Since $ \ou $ is a super solution we can calculate
		\begin{equation*}\label{maxprinciple1}
		\begin{split}
		0\leq& \LL(\ou)(x_0)=\ou(x_0)-\intup dy\; K\left(x_0-y\right)\ou(y)\\
		=& \intbig dy\;K\left(x_0-y\right)\ou(x_0)-\intup dy\; K\left(x_0-y\right)\ou(y)\\
		=& \intdown dy\;K\left(x_0-y\right)\ou(x_0)+\intup dy\; K\left(x_0-y\right)\left(\ou(x_0)-\ou(y)\right)\\
		<&0,
		\end{split}
		\end{equation*}
		where we used the positivity of $ K\left(x_0-y\right) $, the fact that the integral of the kernel $ K $ is $ 1 $ and the fact that $ \ou(x_0) $ is the minimum of $ \ou $ and it is strictly negative. This leads to a contradiction and thus we conclude the proof.
		\end{proof}
	\end{lemma}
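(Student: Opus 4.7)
The plan is to prove the lemma by contradiction, using the fact that $K$ is strictly positive and integrates to $1$ over $\mathbb{R}$, as established in Proposition \ref{propK}. Suppose that $\bar u$ takes a strictly negative value somewhere. Because $\bar u$ is continuous on $[0,\infty)$ and $\lim_{x\to\infty}\bar u(x)\in[0,\infty]$ is non-negative, any strictly negative value of $\bar u$ must be bounded away from the behavior at infinity. This means that $\bar u$ attains its global infimum on $[0,\infty)$ at some point $x_0\in[0,\infty)$, with $\bar u(x_0)<0$.

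The idea now is to evaluate $\mathcal{L}(\bar u)(x_0)$ and show it is strictly negative, contradicting the supersolution hypothesis. Using $\int_{\mathbb{R}}K(x_0-y)\,dy=1$ (from Proposition \ref{propK}), I rewrite
\begin{equation*}
\bar u(x_0)=\int_{-\infty}^{\infty}K(x_0-y)\bar u(x_0)\,dy,
\end{equation*}
and then split the defining integral of $\mathcal{L}(\bar u)(x_0)$ into the contributions from $y<0$ and $y>0$. This gives
\begin{equation*}
\mathcal{L}(\bar u)(x_0)=\int_{-\infty}^{0}K(x_0-y)\bar u(x_0)\,dy+\int_{0}^{\infty}K(x_0-y)\bigl(\bar u(x_0)-\bar u(y)\bigr)\,dy.
\end{equation*}

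The second integrand is non-positive because $x_0$ is a minimizer of $\bar u$ on $[0,\infty)$ and $K>0$. The first integrand is strictly negative since $\bar u(x_0)<0$ and the kernel $K$ is pointwise positive on $\mathbb{R}$, so the integral over $(-\infty,0)$ is strictly negative (here the boundary condition $\bar u\equiv 0$ on $(-\infty,0)$ produces the strict gain that does not cancel). Summing the two pieces yields $\mathcal{L}(\bar u)(x_0)<0$, contradicting $\mathcal{L}(\bar u)\geq 0$ and completing the proof.

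The only delicate step is the attainment of the infimum; since $\lim_{x\to\infty}\bar u(x)\geq 0$, if $\bar u$ were negative somewhere then there is a compact set on which the values dip below $0$, and continuity forces the infimum to be a minimum. Once this is in hand, the rest is a straightforward non-local maximum principle computation driven by positivity of $K$ and by the vanishing of $\bar u$ outside $[0,\infty)$, which is what makes the inequality strict.
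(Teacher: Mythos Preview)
Your proof is correct and follows essentially the same approach as the paper: argue by contradiction, use continuity and the non-negative limit at infinity to locate a negative minimum $x_0$, then exploit $\int_{\mathbb{R}}K=1$ to split $\mathcal{L}(\bar u)(x_0)$ into a strictly negative piece over $(-\infty,0)$ and a non-positive piece over $(0,\infty)$, contradicting the supersolution inequality.
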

With the maximum principle we can now show the following theorem on the triviality of the solution to the homogeneous equation.
	\begin{theorem}\label{homogeneoussolution}
	Assume $ u $ is a bounded solution to
	\begin{equation}\label{homogeneous}
\oLL(u)(x)=0
	\end{equation}	with $ u(x)\equiv 0 $ for $ x<0 $.
	Then $ u=0 $ for almost every $ x\in\RR $.
	\begin{proof}
	We will construct a supersolution $ \ou $ which converges to infinity and we will apply Lemma \ref{maxprinciple} to the supersolutions $ \ou-u $ and $ u+\ou $. First of all we see that for $ x>0 $ the bounded solution $ u $ is continuous, indeed $ u(x)=K*u (x) $. Since $ K\in \LpR{1} $ and $ u\in\LpR{\infty} $ then the convolution is a continuous bounded function. Moreover we can extend continuously $ u $ in $ 0 $. Indeed, we define
	\begin{equation*}
	u(0)=\lim\limits_{x\to 0}\left[G(x)+\intup dy\;K\left(x-y\right)u(y)\right].
	\end{equation*}
	This limit exists because $ G $ is continuous in $ [0,\infty) $ and for the integral term we can apply the generalized dominated convergence theorem using that the sequence $ K\left(x-y\right)\to K\left(y\right) $ as $ x\to0 $ pointwise and in $ \LpR{1} $. 
	
	We consider now the function
	\begin{equation*}\label{supersol}
	\ou(x)=	\begin{cases} 1+x & x\geq 0\\0&x<0	\end{cases}
	\end{equation*}
	$ \ou $ is a supersolution. It is indeed possible to calculate $ \LL(\ou)(x) $. Let $ x\geq 0 $. Then $ \LL(\ou)=\LL(Id)+\LL(1) $. By a simple calculation we get on the one hand
	\begin{equation*}\label{Lx}
	\begin{split}
	\LL(Id)(x)=& x-\intup dy\; K\left(x-y\right)y=x-\int_{-x}^\infty dy\;K\left(y\right)(x+y)=\frac{x}{4}e^{-x}-\frac{e^{-x}}{4}-\frac{x^2}{2}K(x)	\end{split}
	\end{equation*}
	and on the other hand
	\begin{equation*}\label{L1}
	\begin{split}
	\LL(1)(x)=& 1-\intup dy\; K\left(x-y\right)=1-\int_{-x}^\infty dy\;K\left(y\right)=\frac{e^{-x}}{2}-xK(x).	\end{split}
	\end{equation*}
	Therefore we want to show that the function $ f(x):=\LL(\ou)(x)= \frac{e^{-x}}{4}(1+x)-\frac{x}{2}K(x)(2+x) $ is non-negative for all $ x\geq 0 $. It is not difficult to see that $ f(0)=\frac{1}{4}>0 $ and that $ \lim\limits_{x\to\infty}f(x)=0 $. Moreover, we can consider the derivative
	\begin{equation*}\label{supersol1}
	\begin{split}
	f'(x)=& \frac{1}{2}\left(e^{-x}-K(x)(2x+2)\right)\leq\frac{1}{2}\left(e^{-x}-\frac{e^{-x}}{2}\ln\left(1+\frac{2}{x}\right)(x+1)\right)
	\leq 0.
	\end{split}
	\end{equation*}
	The first inequality is given by the estimate of Proposition \ref{propK} and the second one is due to the well-know estimate $ \ln(1+x)\geq \frac{2x}{2+x} $.
	The non-positivity of the derivative implies that $ f $ is monotonously decreasing, and therefore $ \LL(\ou)(x)=f(x)\geq 0 $ for all $ x\geq 0 $.
	
	Let now $ \eps>0 $ arbitrary. We know that $ u $ is bounded and $ \ou $ converges to infinity, moreover both $ u $ and $ \ou $ are continuous in $ [0,\infty) $. 
	Also $ u $ is a homogeneous solution of \eqref{bvpgreyboundary} and the operator $ \LL $ is linear. Therefore we can apply Lemma \ref{maxprinciple} to the supersolutions $ \eps\ou-u $ and $ u+\eps\ou $ and get that the $ \inf_{x\in[0,\infty)}\left[\eps\ou(x)-u(x)\right]\geq 0 $ and $ \inf_{x\in[0,\infty)}\left[\eps\ou(x)+u(x)\right]\geq0 $. This implies that for any $ x\in\RR $ the following holds
	\begin{equation*}
	-\eps\ou(x)\leq u(x)\leq\eps\ou(x)
	\end{equation*}
	Since $ \eps $ was arbitrary we conclude $ u(x)=0 $  for all $ x\in\RR $.
	\end{proof}
	\end{theorem}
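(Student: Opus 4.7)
The plan is to exploit the linearity of $\oLL$ together with the maximum principle of Lemma \ref{maxprinciple}, sandwiching $u$ between $-\varepsilon\ou$ and $+\varepsilon\ou$ for a suitably chosen nonnegative supersolution $\ou$ of the homogeneous problem on $[0,\infty)$ that grows to infinity. Since $u$ is bounded while $\ou$ blows up, sending $\varepsilon\to 0^+$ will force $u\equiv 0$.

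First I would record some regularity. From $u(x)=\int_0^\infty K(x-y)u(y)\,dy$ on $x>0$, together with $u\in\LpR{\infty}$ and $K\in\LpR{1}$ (Proposition \ref{propK}), standard properties of convolutions give $u\in C([0,\infty))$, and dominated convergence extends $u$ continuously at $0$. This continuity is what allows Lemma \ref{maxprinciple} to be applied to the auxiliary functions below.

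Next I construct the supersolution. Mimicking an affine harmonic function, the natural candidate is
\[
\ou(x)=\begin{cases}1+x & x\geq 0,\\ 0 & x<0.\end{cases}
\]
By linearity $\LL(\ou)=\LL(1)+\LL(\mathrm{Id})$. Using the explicit evaluations of $\int_{-x}^\infty K(s)\,ds$ and $\int_{-x}^\infty sK(s)\,ds$ from Proposition \ref{propK1}, both pieces reduce to a closed-form combination of $e^{-x}$, $xK(x)$ and $x^2K(x)$, so $f(x):=\LL(\ou)(x)$ is an elementary function. I would then show $f\geq 0$ on $[0,\infty)$ by checking $f(0)>0$, $\lim_{x\to\infty}f(x)=0$, and $f'\leq 0$; the last inequality is where I use the two-sided bound $\tfrac14 e^{-|x|}\ln(1+2/|x|)\leq K(x)\leq \tfrac12 e^{-|x|}\ln(1+1/|x|)$ from Proposition \ref{propK} together with the elementary estimate $\ln(1+t)\geq 2t/(2+t)$.

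Finally, for any $\varepsilon>0$ the functions $\varepsilon\ou\pm u$ vanish on $(-\infty,0]$, are continuous on $[0,\infty)$ with limit $+\infty$ at infinity, and are supersolutions of $\LL$ by linearity since $\LL(u)=0$ and $\LL(\ou)\geq 0$. Lemma \ref{maxprinciple} then yields $-\varepsilon\ou(x)\leq u(x)\leq \varepsilon\ou(x)$ for every $x\geq 0$, and letting $\varepsilon\to 0$ concludes $u\equiv 0$. The main obstacle is the explicit verification that $\LL(\ou)\geq 0$ on $[0,\infty)$ for the candidate $\ou=1+x$; once this concrete monotone supersolution is in hand, the rest of the argument is routine.
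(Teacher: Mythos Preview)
Your proposal is correct and follows essentially the same approach as the paper: the same supersolution $\ou(x)=1+x$ on $[0,\infty)$, the same verification that $\LL(\ou)\ge 0$ via $f(0)>0$, $f(\infty)=0$, and $f'\le 0$ using the bounds of Proposition~\ref{propK} together with $\ln(1+t)\ge 2t/(2+t)$, and the same sandwich argument with $\varepsilon\ou\pm u$ and Lemma~\ref{maxprinciple}.
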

\subsection{Well-posedness theory for the inhomogeneous equation}
We can now move to the well-posedness theory for the inhomogeneous equation, for which the next theorem is the main result.
	\begin{theorem}\label{existence} Let  $ H:\RR_+\to\RR_+ $ be a continuous function bounded by an exponential function, i.e. $ |H(x)|\leq Ce^{-Ax}\rchi_{\{x>0\}} $ for $ C,A>0 $. Then there exists a unique bounded solution to the equation
	\begin{equation}\label{inhomogeneousH}
	\begin{cases}
	u (x)-\int_0^\infty dy\;K\left(x-y\right)u(y)= H(x) & x>0,\\u(x)=0&x<0.
	\end{cases}
	\end{equation}
	Moreover, $ u $ is continuous on $ (0, \infty) $.
	\begin{proof}The assumption on the exponential decay of $ H $ yields $ H\in L^1\left(\RR\right)\cap L^2\left(\RR\right)\cap L^\infty\left(\RR\right) $.
	In order to find a bounded solution for \eqref{inhomogeneousH} we will follow several steps. We will look for functions $ \tu $ and $ v $ solutions of the following equations
	\begin{equation*}\label{eqtu}
	\tu(x)-\intbig K(x-y)\tu(y)\;dy= \oH(x):= H(x)-H(-x)\;\;\;\;\;\;\;\;x\in\RR
	\end{equation*}
	and
	\begin{equation}\label{eqv}
	\begin{cases}
	v(x)-\intbig K(x-y) v(v)\;dy=0, & x>0\\ v(x)=-\tu(x)& x<0.
	\end{cases}
	\end{equation}
	Then $ u=\tu+v $ will be the desired solution.\\
	
	\textbf{Step 1: }Construction of $ \tu $.\\
	We can construct the solution $ \tu $ via Fourier method. First of all we notice that any affine function is a solution to the homogeneous equation in the whole space $ \RR $. This is because $ \intbig K(x)\;dx=1 $ and $ \intbig xK(x)\;dx=0 $. Since by assumption $ H\in\LpR{2} $ also $ \oH\in\LpR{2} $. We define for an integrable function $ f $ the kth-moment as $ m_k\left(f\right)=\intbig x^kf(x)\;dx $ assuming it exists. Then clearly by construction $ m_0\left(\oH\right)=0 $ while $ m_1\left(\oH\right)=\frac{2C}{A}>0 $. Moreover, since $ \oH $ has exponential decay, all moments $ m_k\left(\oH\right)<\infty $ are bounded.
	
	We define also the function $ F(x)=\oLL\left(\frac{3}{2}\sgn\right)(x) $. It can be compute that \begin{equation*}
	F(x)= \frac{3}{2}\left(\sgn(x)-\intbig K(x-y)\sgn(y)\;dy\right)=\frac{3}{2}\left(\sgn(x)-\int_{-x}^x K(y)\;dy\right).
	\end{equation*} It is not difficult to see that $ F(0)=0 $, $ \lim\limits_{|x|\to\infty}F(x)=0 $ and that $ F $ is a stepwise continuous function with the discontinuity in $ 0 $. Therefore $ F(x) $ is bounded. We proceed with the construction of $ \tu $. We can write it as $ \tu=u^{(1)}+u^{(2)}+a+bx $, where $ u^{(1)}(x)= m_1\left(\oH\right)\frac{3}{2}\sgn(x) $ solves the equation
	\begin{equation*}\label{u1}
	\LL\left(u^{(1)}\right)(x)=m_1\left(\oH\right)F(x) \;\;\;\;\;\;\;\;\;x\in\RR
	\end{equation*}
	and $ u^{(2)} $ solves
	\begin{equation}\label{u2}
	\LL\left(u^{(2)}\right)(x)=\oH(x)-m_1\left(\oH\right)F(x) \;\;\;\;\;\;\;\;\;x\in\RR.
	\end{equation}
	applying now the Fourier transform to the equation \eqref{u2}, recalling the convolution rule and the Fourier transforms of the kernel $ K $ and of the $ \sgn $ function we get first in distributional sense
	\begin{equation}\label{u21}
	\hat{u}^{(2)}(s)\left(\frac{s-\arctan(s)}{s}\right)= \mathcal{F}(\oH)(s)+\frac{3m_1\left(\oH\right)}{\sqrt{2\pi}}\frac{i}{s}\frac{s-\arctan(s)}{s}.	
	\end{equation}
	The Fourier transform of $ \oH $ is in $ C^\infty $, since $ \oH $ has exponential decay and therefore it has all kth-moment finite. Therefore there exists a function $ \tilde{H} $ such that $ \tilde{H}(0)=\tilde{H}'(0)=0 $ such that $ \mathcal{F}(\oH)(s)=-\frac{i}{\sqrt{2\pi}}m_1\left(\oH\right)s+\tilde{H}(s) $, since $ m_0\left(\oH\right)=0 $ and by definition $ \mathcal{F}(\oH)'(s)\big|_{s=0}=-\frac{i}{\sqrt{2\pi}}m_1\left(\oH\right) $. We can therefore find first formally $ u^{(2)} $ analyzing its Fourier transform
	\begin{equation}\label{u22}
	\begin{split}
		\hat{u}^{(2)}(s)=& \frac{s}{s-\arctan(s)}\mathcal{F}(\oH)(s)+\frac{3m_1\left(\oH\right)}{\sqrt{2\pi}}\frac{i}{s}\\
		=& -\frac{is^2}{s-\arctan(s)}\frac{m_1\left(\oH\right)}{\sqrt{2\pi}}+\frac{3m_1\left(\oH\right)}{\sqrt{2\pi}}\frac{i}{s}+\tilde{H}(s)\frac{s}{s-\arctan(s)}\\
		=&\mathbb{H}(s).
	\end{split}
	\end{equation}
	It is important to notice that $ \lim\limits_{s\to0}\frac{s^2}{s-\arctan(s)}-\frac{3}{s}=0 $, since $ \frac{s}{s-\arctan(s)}= \frac{3}{s^2}+\frac{9}{5}+O(s^2) $ near zero. Using L'Hôpital rule we see also that $ \lim\limits_{s\to0}\tilde{H}(s)\frac{s}{s-\arctan(s)} $ is finite. On the other hand $ \frac{s}{s-\arctan(s)} $ is bounded for $ |s|>1 $. Since $ \mathcal{F}(\oH)(s) $ and $ \frac{1}{s} $ are both square integrable functions and since $ \mathbb{H} $ is bounded near $ 0 $ we conclude that $ \mathbb{H}\in\LpR{2} $. Therefore also the in \eqref{u22} defined $ \hat{u}^{(2)} $ is square integrable. We can hence invert it 
	\begin{equation*}\label{u23}
	u^{(2)}(x):=\mathcal{F}^{-1}\left(\mathbb{H}\right)(x)\in\LpR{2}.
	\end{equation*}
	Since this function solves \eqref{u21} not only in distributional sense but also pointwise almost everywhere, we can conclude rigorously that indeed the function in \eqref{u22} is the desired $ u^{(2)} $ solving \eqref{u2}.
	Moreover, $ u^{(2)}= K*u^{(2)}+\oH-F$ and since both $ K $ and $ u^{(2)} $ itself are square integrable and both $ H $ and $ F $ are bounded, then also $ u^{(2)} $ is bounded. We can conclude this step therefore defining 
	\begin{equation}\label{tu}
	\tu(x)=\frac{3}{2}m_1\left(\oH\right)\sgn(x)+a+bx+u^{(2)}(x).	
	\end{equation}
	\textbf{Step 2: }Construction of $ v $.\\
	
	We recall that the equation $ v $ shall solve \eqref{eqv}
	As we found out in the first step, $ \tu=\frac{3}{2}m_1\left(\oH\right)\sgn(x)+a+bx+u^{(2)}(x)  $. As we already pointed out, affine solutions are always solution of the homogeneous equation in the whole space $ \RR $. Therefore, we shall look for a function of the form 
	\begin{equation}\label{v1}
	v(x)= \frac{3}{2}m_1\left(\oH\right)-a-bx+v^{(2)}(x)
	\end{equation}
	where $ v^{(2)} $ solves similarly as above
	\begin{equation}\label{eqv2}
	\begin{cases}
	v^{(2)}(x)-\intbig K(x-y) v(v)\;dy=0 & x>0,\\ v^{(2)}(x)=-u^{(2)}(x)& x<0.
	\end{cases}
	\end{equation}
	We proceed now iteratively constructing the desired solution. We call $ B>0 $ the constant such that $ \left\Arrowvert u^{(2)}\right\Arrowvert_\infty\leq B $ and we define $ \ov=B $ and $ \uv=-B $. Inductively we define $ v_0:=\uv $ and for $ k\geq1 $ we set
	\begin{equation*}\label{vk}
	v_k(x)=	\begin{cases} -u^{(2)}(x)&x<0,\\\intbig K\left(x-y\right)v_{k-1}(y)\;dy&x>0.\\	
	\end{cases}
	\end{equation*}
	We claim that $ \uv=v_0\leq v_1\leq v_2\leq ...\leq v_k\leq v_{k+1}\leq ... $ and that $ v_k\leq \uv $ for all $ k\in\mathbb{N} $. Clearly for $ k=0 $ both statements hold. On the one hand  since $ \intbig K\left(x-y\right)v_0(y)=-B $ we see that 
		\begin{equation*}\label{v0}
	v_1(x)-v_0(x)=	\begin{cases} -u^{(2)}(x)+B\geq 0&x<0,\\0&x>0,\\	
	\end{cases}
	\end{equation*}
	on the other hand per definition we have $ \ov-\uv=2B\geq 0 $. We see also that $ \ov-v_1\geq 0 $, indeed
	\begin{equation*}\label{v11}
	\ov(x)-v_1(x)=	\begin{cases} B+u^{(2)}(x)\geq 0&x<0,\\2B&x>0.\\	
	\end{cases}
	\end{equation*}
	We now prove inductively that $ v_k\geq v_{k-1} $ and $ \ov\geq v_k $. Hence, we assume that these inequalities are satisfied for $ k $ and we prove them for $ k+1 $. Indeed this just follows from the identities
	\begin{equation*}\label{v12}
	v_{k+1}(x)-v_k(x)=	\begin{cases} 0&x<0,\\\intbig K\left(x-y\right)\left(v_k(y)-v_{k-1}(y)\right)\geq 0&x>0,\\	
	\end{cases}
	\end{equation*}
		\begin{equation*}\label{v13}
	\ov(x)-v_{k+1}(x)=	\begin{cases} B+u^{(2)}(x)\geq 0&x<0,\\\intbig K\left(x-y\right)\left(B-v_k(y)\right)\geq 0&x>0,\\	
	\end{cases}
	\end{equation*}
	where we used again that the integral in the whole line of the kernel $ K $ is $ 1 $. Therefore the sequence $ v_k(x) $ is increasing and bounded. This means that there exists a pointwise limit. By the dominated convergence theorem and by construction this will be also the desired solution of \eqref{eqv2}, i.e.
	\begin{equation*}\label{v2}
	v^{(2)}(x):=\lim\limits_{k\to\infty}v_k(x)
	\end{equation*}
	solves the equation \eqref{eqv2} and it is by construction bounded.\\
	
	\textbf{Step 3: }Properties of $ u $.\\
	
	Now we are ready to write down the whole solution. As we remarked at the beginning $ u= \tu+v $, where $ \tu $ solves as in Step 1 \eqref{eqtu} and $ v $ solves as in Step 2 \eqref{eqv}. Therefore by \eqref{v1} and by \eqref{tu}
	\begin{equation*}\label{defu}
	u(x)=\begin{cases}
	6 m_{1}\left(H\right)+u^{(2)}(x)+v^{(2)}(x)&x>0,\\ 0&x<0,
	\end{cases}
	\end{equation*}
	solves the initial problem \eqref{inhomogeneousH} and it is by construction bounded. Moreover, since $ K $ is integrable and $ H $ is continuous in $ [0,\infty) $ also $ u=K*u+H $ is continuous in $ [0,\infty) $.\\
	
	\textbf{Step 4: }Uniqueness.\\
	
	Let us assume that $ u_1 $ and $ u_2 $ are two bounded solution to the problem \eqref{inhomogeneousH}. Then $ u_1-u_2 $ will be a bounded continuous solution to the homogeneous problem \eqref{homogeneous}. Therefore by Theorem \ref{homogeneoussolution} $ u_1-u_2=0 $. Hence, there exists a unique bounded solution $ u $ to the inhomogeneous problem \eqref{inhomogeneousH}.	
	 This concludes the proof.
	\end{proof}
	\end{theorem}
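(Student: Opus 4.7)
Uniqueness will follow immediately from Theorem \ref{homogeneoussolution}: the difference of two bounded solutions satisfies the homogeneous half-line problem with vanishing negative-side extension, hence must be identically zero. For existence my plan is to write $u = \tu + v$, where $\tu$ solves the full-line equation $\oLL(\tu) = \oH$ with $\oH(x) := H(x) - H(-x)$ the odd extension of $H$, and $v$ solves the full-line homogeneous equation $\oLL(v) = 0$ on $x>0$ with boundary values $v(x) = -\tu(x)$ on $x<0$. The matching condition forces $u = \tu + v$ to vanish on $(-\infty, 0)$, so that $\int_0^\infty K(x-y)u(y)\,dy = \intbig K(x-y)u(y)\,dy$; the residual on $x>0$ is then exactly $\oLL(\tu)(x) = \oH(x) = H(x)$, as required.

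The construction of $\tu$ would proceed by Fourier analysis using $\hat K(\xi) = \arctan(\xi)/(\sqrt{2\pi}\,\xi)$ from Proposition \ref{propK}. The symbol of $\oLL$ is $1 - \sqrt{2\pi}\hat K(\xi) = (\xi-\arctan\xi)/\xi$, which vanishes like $\xi^2/3$ at the origin. Because $\oH$ is odd with exponential decay, $\hat{\oH}$ is smooth and vanishes only linearly at $\xi = 0$, with leading coefficient proportional to the first moment $m_1(\oH) := \intbig x\,\oH(x)\,dx$; direct inversion would therefore produce a non-integrable $1/\xi$ singularity. The idea is to exploit that affine functions are annihilated by $\oLL$ (since $\intbig K = 1$ and $\intbig xK(x)\,dx = 0$ by evenness of $K$) together with an explicit computation of $F := \oLL(\tfrac{3}{2}\sgn)$, a bounded step-like function whose Fourier transform exhibits exactly the same $1/\xi$ pole. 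Subtracting the multiple $m_1(\oH)\,F$ from $\oH$ yields a right-hand side whose Fourier transform vanishes to second order at $\xi = 0$, so that the quotient by $(\xi-\arctan\xi)/\xi$ lies in $L^2(\RR)$. Its inverse Fourier transform $u^{(2)}$ is square integrable, and rereading the identity as $u^{(2)} = K*u^{(2)} + \oH - m_1(\oH)F$ together with $K \in L^1(\RR)$ shows that $u^{(2)}$ is in fact bounded. Hence $\tu := \tfrac{3}{2}m_1(\oH)\sgn + u^{(2)}$, modulo an affine element of $\ker\oLL$ that will be adjusted later.

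For $v$ on $x>0$ I would use a monotone iteration inspired by the maximum principle of Lemma \ref{maxprinciple}. With $B := \|u^{(2)}\|_\infty$, I start from $v_0 \equiv -B$ and define $v_k(x) = -u^{(2)}(x)$ for $x<0$ and $v_k(x) = \intbig K(x-y) v_{k-1}(y)\,dy$ for $x>0$. Because $K \geq 0$ with unit mass, a direct comparison of successive iterates gives $-B \leq v_0 \leq v_1 \leq \cdots \leq B$, and by monotone convergence the pointwise limit $v := \lim_k v_k$ is bounded and automatically solves the desired half-line homogeneous problem. Summing $u = \tu + v$ produces the required solution, and continuity on $(0,\infty)$ follows from $u = K*u + H$ with $K\in L^1(\RR)$, $u$ bounded and $H$ continuous. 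The principal obstacle is the degeneracy of the symbol of $\oLL$ at the origin: the counterterm $m_1(\oH)\cdot\tfrac{3}{2}\sgn$ must simultaneously match the order of vanishing and the leading coefficient of $\hat{\oH}$, and one must verify that the quotient by $(\xi-\arctan\xi)/\xi$ is genuinely in $L^2$ both near $0$ and near infinity. Once this Fourier degeneracy has been handled, the sub/super-solution construction of $v$ is routine but relies essentially on the probabilistic features of $K$.
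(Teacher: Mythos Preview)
Your proposal is correct and follows essentially the same approach as the paper's proof: the same decomposition $u=\tu+v$ with the odd extension $\oH$, the same Fourier construction of $\tu$ via the counterterm $\tfrac{3}{2}m_1(\oH)\sgn$ to cancel the symbol degeneracy at the origin, the same $L^2$-to-$L^\infty$ bootstrap for $u^{(2)}$, the same monotone iteration for $v$ starting from $-B$, and the same uniqueness via Theorem~\ref{homogeneoussolution}.
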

\begin{corollary}\label{existenceG}
	Let  $ p\in\bnd $ and $ G_p(x) $ as defined in \eqref{bvpgreyboundary1}. Let $ g_\nu(n)\geq 0 $ and assume $ \intnu\;g_\nu(n)\in L^\infty\left(\Ss\right) $. Then there exists a unique bounded solution to the equation
	\begin{equation}\label{inhomogeneous}
	\begin{cases}
	u (x)-\int_0^\infty dy\;K\left(x-y\right)u(y)= G_p(x) & x>0,\\u(x)=0&x<0.
	\end{cases}
	\end{equation}
	Moreover, $ u $ is continuous on $ (0, \infty) $.
	\begin{proof}
	By assumption $ G_p $ is continuous for $ x>0 $ and $ |G_p(x)|\leq \Arrowvert g\Arrowvert_1e^{-y}\rchi_{\{x>0\}} $. Hence we can apply Theorem \ref{existence}.
	\end{proof}
\end{corollary}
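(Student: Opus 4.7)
My plan is to reduce the corollary directly to Theorem \ref{existence} by verifying that $G_p$ satisfies the hypotheses imposed on the source $H$ there, namely continuity on $(0,\infty)$ and pointwise exponential decay of the form $|H(x)|\leq Ce^{-Ax}\chi_{\{x>0\}}$.

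First I would establish the exponential decay. Recalling
\begin{equation*}
G_p(x)=\int_0^\infty d\nu\int_{n\cdot N_p<0} dn\; g_\nu(n)\,e^{-\frac{x}{|n\cdot N_p|}}\chi_{\{x>0\}},
\end{equation*}
I would use that $|n\cdot N_p|\leq 1$ on $\mathbb{S}^2$, so for $x>0$ we have $e^{-x/|n\cdot N_p|}\leq e^{-x}$. Together with the bound $\int_0^\infty g_\nu(n)\,d\nu\leq \|g\|_\infty$ and Tonelli/Fubini to switch the order of integration, this yields
\begin{equation*}
0\leq G_p(x)\leq e^{-x}\int_{n\cdot N_p<0}dn\left(\int_0^\infty g_\nu(n)\,d\nu\right)\leq 2\pi\,\|g\|_\infty\,e^{-x},
\end{equation*}
which is the desired estimate with $C=2\pi\|g\|_\infty$ and $A=1$.

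Next I would verify continuity of $G_p$ on $(0,\infty)$ by dominated convergence. For any compact interval $[a,b]\subset(0,\infty)$ the integrand $g_\nu(n)\,e^{-x/|n\cdot N_p|}\chi_{\{x>0\}}$ is pointwise continuous in $x$ and dominated by $g_\nu(n)$, which is integrable on $\mathbb{S}^2\times\mathbb{R}_+$ with respect to $dn\,d\nu$ since $\int_0^\infty g_\nu(n)\,d\nu\in L^\infty(\mathbb{S}^2)$ and the half sphere has finite measure. Hence $G_p\in C((0,\infty))$.

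With these two facts in hand, $G_p$ meets the hypotheses of Theorem \ref{existence}, which then provides the unique bounded solution $u$ to the equation, together with continuity of $u$ on $(0,\infty)$. I do not anticipate a real obstacle here: the work reduces entirely to checking the two routine estimates above; no further technicalities are needed beyond what Theorem \ref{existence} already delivers.
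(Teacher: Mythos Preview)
Your proposal is correct and follows essentially the same route as the paper: verify that $G_p$ is continuous on $(0,\infty)$ and bounded by a constant times $e^{-x}$, then invoke Theorem~\ref{existence}. The only cosmetic difference is that the paper uses the constant $\|g\|_1$ (integrating over the full sphere) rather than your $2\pi\|g\|_\infty$, but either bound works.
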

It is also possible to show, that the bounded solution $ u $ is non-negative
\begin{lemma}\label{nonnegativity}
Let $ u $ be the unique bounded solution to \eqref{inhomogeneous}. Then $ u(x)\geq 0 $ for all $ x\in\RR $.
\begin{proof}
	The proof is very similar to the proof of Theorem \ref{homogeneoussolution}. We consider the supersolution 
	\begin{equation*}
	\ou(x)=	\begin{cases}1+x&x\geq 0\\0&x<0
	\end{cases}.
	\end{equation*}
	As we have seen before, $ u=K*u+G $ is continuous in $ [0,\infty) $. Moreover, since $ G>0 $ as $ x\geq 0 $, $ u $ is a supersolution too. Let now $ \eps>0 $ be arbitrary. Let us consider the supersolution $ \eps\ou+u $. This is continuous in $ [0,\infty) $ and since $ u $ is bounded it converges to infinity as $ x\to\infty $. Therefore Lemma \ref{maxprinciple} implies that there exists no $ x_0\in[0,\infty) $ such that
	\begin{equation*}
	\inf_{x\in[0,\infty)}\left(\eps\ou(x)+u(x)\right)=\eps\ou(x_0)+u(x_0)<0.
	\end{equation*}
	Hence $ u\geq -\eps\ou $ and since $ \eps>0 $ was arbitrary we conclude $ u\geq 0 $.
\end{proof}
\end{lemma}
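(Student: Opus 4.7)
The plan is to mimic the comparison argument from the proof of Theorem \ref{homogeneoussolution}, exploiting the same linear supersolution $\ou(x) = (1+x)\rchi_{\{x \geq 0\}}$ which was already shown to satisfy $\LL(\ou)(x) \geq 0$ for all $x \geq 0$. The key new observation needed is that the hypotheses on $g_\nu$ force the source $G_p$ itself to be non-negative, which in turn makes the solution $u$ a supersolution of the homogeneous equation, not just the inhomogeneous one.

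First I would verify this non-negativity of $G_p$: since $g_\nu(n) \geq 0$ and $e^{-x/|n \cdot N_p|} > 0$ for $x > 0$, integration over $\nu$ and the set $\{n \cdot N_p < 0\}$ yields $G_p(x) \geq 0$ on $(0,\infty)$. Combined with the equation $u = K \ast u + G_p$, this shows $\LL(u)(x) = G_p(x) \geq 0$ on $(0,\infty)$, so $u$ itself is a supersolution of the homogeneous problem in the sense of Lemma \ref{maxprinciple}. Moreover, Theorem \ref{existence} already provides continuity of $u$ on $[0,\infty)$.

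Next I would fix $\eps > 0$ arbitrary and consider the function $w_\eps := u + \eps \ou$. By linearity of $\LL$, $w_\eps$ is again a supersolution, continuous on $[0,\infty)$ and vanishing on $(-\infty,0)$. Because $u$ is bounded while $\ou(x) = 1 + x \to \infty$, we have $\lim_{x \to \infty} w_\eps(x) = +\infty$, so the hypothesis of Lemma \ref{maxprinciple} on the limit at infinity is satisfied. The maximum principle then yields $w_\eps(x) \geq 0$, i.e. $u(x) \geq -\eps \ou(x)$ for every $x \geq 0$.

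Finally, sending $\eps \to 0^+$ at each fixed $x$ gives $u(x) \geq 0$ on $[0,\infty)$, and $u \equiv 0$ on $(-\infty,0)$ by definition, completing the argument. I do not expect any significant obstacle here: the whole construction of the supersolution $\ou$ — which is the only nontrivial computation — has already been carried out in Theorem \ref{homogeneoussolution}, and the only additional ingredients are the sign of $G_p$ and the linearity of $\LL$. The argument is essentially a one-sided version of the comparison already used for uniqueness.
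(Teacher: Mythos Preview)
Your proposal is correct and follows essentially the same approach as the paper: both use the supersolution $\ou(x)=(1+x)\rchi_{\{x\ge 0\}}$ from Theorem \ref{homogeneoussolution}, observe that $G_p\ge 0$ makes $u$ itself a supersolution, form $u+\eps\ou$, apply Lemma \ref{maxprinciple}, and let $\eps\to 0$. Your write-up is slightly more explicit about why $G_p\ge 0$ and about invoking linearity of $\LL$, but the argument is identical.
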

\begin{remark}
	Theorem \ref{existence} can be proved also using the Wiener-Hopf solution formula for the problem \eqref{bvpgreyboundary} as given in \cite{Hochstadt}. It is true that in this way one obtains an explicit formula, which not only assures the well-posedness of the planar problem we are studying but also directly shows the existence of a limit for the solution $ u $ when $ x\to\infty $. However, the Wiener-Hopf method produces a complicate formula which requires a careful analysis with complex variables in order to be understood. We have preferred to use this soft method approach which in particular allows us to prove some relevant properties of the solution, such as the positivity.
\end{remark}

\subsection{Asymptotic behavior of the bounded solution of the inhomogeneous equation}
We were able to show that the equation for the boundary value in the Grey approximation has a unique bounded solution which is positive whenever $ G>0 $. As we anticipated at the beginning of this section, we would like to study the limit as $ x\to\infty $ of the solution $ u(x) $. We will show, that such limit exists and is uniquely characterized by $ g_\nu(n) $ and $ N $. To this end we first prove that the function $ u $ is uniformly continuous.
\begin{lemma}\label{unifcont}
	Let $ u $ be the unique bounded solution to the problem \eqref{bvpgreyboundary}. The $ u $ is uniform continuous on $ [0,\infty) $ and it satisfies for $ x,y\in[0,\infty) $
	\begin{equation}\label{ux-uy}
	\begin{split}
	\left|u(x)-u(y)\right|\leq
	& \left|G(x)-G(y)\right|\\+&\left\Arrowvert u\right\Arrowvert_{\infty}\left[\frac{\left|e^{-x}-e^{-y}\right|}{2}+2\left(1-e^{\frac{\left| x-y\right|}{2}}\right)+4\left|\frac{y-x}{2}\right|K\left(\frac{y-x}{2}\right)+\left|xK(x)-yK(y)\right|\right].
	\end{split}
	\end{equation}
	\begin{proof}
	This is a consequence of the uniform continuity of $ G $ and $ xK(x) $. Clearly, since $ u $ solves the problem \eqref{inhomogeneous}, we have the estimate
	\begin{equation}\label{unifcont1}
	\left|u(x)-u(y)\right|\leq \left|G(x)-G(y)\right|+\intup\left|K\left(\eta-x\right)-K\left(\eta-y\right)\right|u(\eta)\;d\eta.
	\end{equation}
	Since $ G $ is continuous on $ [0,1] $, and therefore uniformly continuous on $ [0,1] $ and since $ G $ is Lipschitz continuous in $ [1,\infty) $, $ G$ is uniform continuous in $ [0,\infty) $. The latter affirmation is true, since 
	\begin{equation*}\label{unifcont2}
	\sup_{x\geq 1}\left|G'(x)\right|\leq\intnu\intnN \;g_\nu(n)\frac{e^{-\frac{1}{|n\cdot N|}}}{|n\cdot N|}<\infty,
	\end{equation*}
	where the finiteness is due to the fact that $ \lim\limits_{|n\cdot N|\to0}\frac{e^{-\frac{1}{|n\cdot N|}}}{|n\cdot N|}=0 $.\\
	
	For the integral term in \eqref{unifcont1} we assume that $ x<y $. Then we can calculate using the fact that for positive arguments the kernel $ K $ is decreasing
	\begin{equation*}\label{unifcont3}
	\begin{split}
	\intup&\left|K\left(\eta-x\right)-K\left(\eta-y\right)\right|u(\eta)\;d\eta\\
	=&\int_{0}^{\frac{x+y}{2}}\left(K\left(\eta-x\right)-K\left(\eta-y\right)\right)u(\eta)\;d\eta+\int_{\frac{x+y}{2}}^{\infty}\left(K\left(\eta-y\right)-K\left(\eta-x\right)\right)u(\eta)\;d\eta\\
	\leq&\left\Arrowvert u\right\Arrowvert_{\infty} \left[\int_{0}^{\frac{x+y}{2}}\left(K\left(\eta-x\right)-K\left(\eta-y\right)\right)\;d\eta+\int_{\frac{x+y}{2}}^{\infty}\left(K\left(\eta-y\right)-K\left(\eta-x\right)\right)\;d\eta\right]\\
	\end{split}
	\end{equation*}
	We can calculate explicitly the last two integrals using the result of Proposition \ref{propK1}, indeed by a change of variable
	\begin{equation*}\label{unifcont4}
	\begin{split}
	\intup&\left|K\left(\eta-x\right)-K\left(\eta-y\right)\right|u(\eta)\;d\eta\\\leq&\left\Arrowvert u\right\Arrowvert_{\infty} \left[\int_{-x}^{\frac{y-x}{2}}K\left(\eta\right)\;d\eta-\int_{-y}^{\frac{x-y}{2}}K\left(\eta\right)\;d\eta+\int_{\frac{x-y}{2}}^{\infty}K\left(\eta\right)\;d\eta-\int_{\frac{y-x}{2}}^{\infty}K\left(\eta\right)\;d\eta\right]\\
	=&\left\Arrowvert u\right\Arrowvert_{\infty}\left[\frac{e^{-y}-e^{-x}}{2}+2\left(1-e^{\frac{ x-y}{2}}\right)+4\frac{y-x}{2}K\left(\frac{y-x}{2}\right)+xK(x)-yK(y)\right].
	\end{split}
	\end{equation*}
	Recalling that $ x<y $ we get the estimate \eqref{ux-uy}. From the well-known estimates $ \left|e^{-x}-e^{-y}\right|\leq \left|x-y\right| $ and $ \left|1-e^{\frac{ x-y}{2}}\right|\leq\frac{\left|x-y\right|}{2}  $ we see that we shall only consider the function $ f(x)=xK(x) $. Since $ f(0)=0 $ and $ f $ is continuous, $ f $ is uniformly continuous on $ [0,1] $, on the other hand $ f $ is Lipschitz continuous on $ [1,\infty] $. This is because 
	\begin{equation*}\label{unifcont5}
	\sup\limits_{x\geq 1} \left|f'(x)\right|=\sup\limits_{x\geq 1} \left|K(x)-e^{-x}\right|\leq \frac{1}{e}+K(1)<\infty.
	\end{equation*}
	Therefore $ f $ is uniform continuous on $ [0,\infty) $.
	By the continuity of $ f $ in $ 0 $ we also now that given an $ \eps>0 $ there exists some $ \delta $ such that $ \frac{y-x}{2}K\left(\frac{y-x}{2}\right)<\eps $ for all $ \left|x-y\right|<\delta $. Hence, we conclude that $ u $ is uniform continuous. 
	\end{proof}
\end{lemma}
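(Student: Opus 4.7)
Starting from the integral equation $u(x)=G(x)+\intup K(\eta-x)u(\eta)\,d\eta$ valid on $[0,\infty)$, subtracting the same equation at $y$ and taking absolute values yields
\[
|u(x)-u(y)| \le |G(x)-G(y)| + \|u\|_\infty\intup|K(\eta-x)-K(\eta-y)|\,d\eta.
\]
Hence the proof reduces to two separate tasks: (i) compute explicitly the $L^1$ difference of two half-line translates of $K$ and (ii) prove that $G$ itself is uniformly continuous on $[0,\infty)$.

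For task (i), I would assume without loss of generality $x<y$. Since $K$ is even and monotone decreasing on $[0,\infty)$, the function $\eta\mapsto K(\eta-x)-K(\eta-y)$ is positive on $[0,(x+y)/2)$ and negative on $((x+y)/2,\infty)$, with equality at the midpoint where $|\eta-x|=|\eta-y|$. Splitting the integral at $(x+y)/2$ strips off the absolute value, and after translating the variable each of the four resulting pieces becomes a tail integral of $K$ of exactly the shape handled in Proposition \ref{propK1}. Substituting the explicit formulas \eqref{intK1}--\eqref{intK3} and collecting terms is expected to produce the closed form in \eqref{ux-uy}. The delicate step is bookkeeping the cross-cancellations so that the eight boundary contributions compress into the compact expressions $|xK(x)-yK(y)|$ and $4|\tfrac{y-x}{2}|K(\tfrac{y-x}{2})$, rather than four unrelated pairs; this is where most of the work lies and is the main obstacle.

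For task (ii) I would split $[0,\infty)=[0,1]\cup[1,\infty)$. On the compact interval $G$ is continuous and therefore uniformly continuous; on $[1,\infty)$ differentiating under the integral sign gives
\[
\sup_{x\ge 1}|G'(x)| \le \intnu\intnN g_\nu(n)\,\frac{e^{-1/|n\cdot N|}}{|n\cdot N|},
\]
which is finite because the factor $|n\cdot N|^{-1}e^{-1/|n\cdot N|}$ vanishes as $|n\cdot N|\to 0$ and because $\intnu g_\nu\in L^\infty(\Ss^2)$ by hypothesis. Thus $G$ is Lipschitz on $[1,\infty)$ and hence uniformly continuous on all of $[0,\infty)$.

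Finally, to promote the pointwise bound \eqref{ux-uy} to uniform continuity of $u$, I would verify that every summand inside the bracket tends to zero as $|x-y|\to 0$ uniformly in $x,y\ge 0$. The exponential differences are controlled by $|x-y|$ via the mean value theorem. The genuinely subtle term is $|xK(x)-yK(y)|$, since $K$ has a logarithmic singularity at the origin and one cannot differentiate through $0$; I would handle it again by splitting $[0,1]$ and $[1,\infty)$, using Proposition \ref{propK} to conclude $xK(x)\to 0$ as $x\to 0^+$ (so $xK(x)$ extends continuously, hence is uniformly continuous on $[0,1]$) and using $(xK(x))'=K(x)-e^{-x}$ bounded on $[1,\infty)$ to get Lipschitz continuity there. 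The same limit at the origin also gives $|\tfrac{y-x}{2}|K(\tfrac{y-x}{2})\to 0$ as $|x-y|\to 0$, closing the argument.
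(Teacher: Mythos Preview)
Your proposal is correct and follows essentially the same approach as the paper's proof: the same splitting of the integral at the midpoint $(x+y)/2$ using the monotonicity of $K$, the same reduction to the tail formulas of Proposition~\ref{propK1}, and the same two-regime argument ($[0,1]$ versus $[1,\infty)$) for the uniform continuity of both $G$ and $xK(x)$. The only cosmetic difference is that you pull $\|u\|_\infty$ outside the integral before splitting, whereas the paper keeps $u(\eta)$ inside until after the split; since $u\ge 0$ this is immaterial.
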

We want now to show that the limit $ \lim\limits_{y\to\infty}u(y) $ exists. To this end we proceed again using Fourier methods. 
\begin{theorem}\label{thmgrey}
Let $ u $ be the unique bounded solution to the problem \eqref{inhomogeneous}. Then $ \lim\limits_{x\to\infty}u(x) $ exists and it is uniquely determined by $ G $ and $ u $ itself. Moreover, the limit is positive if \linebreak$ \left\{n\in\Ss:n\cdot N<0\text{ and }\int_0^\infty d\nu \og_\nu(n)\not\equiv 0\right\} $ is not a zero measure set.
\begin{proof}
Since $ u $ is the unique bounded solution, $ u $ solves for all $ x\in\RR $
\begin{equation}\label{bigthm 1}
u(x)-\int_{-\infty}^\infty K(y-x)u(y)\;dy=G(x)\;\rchi_{\{x>0\}}-\int_{0}^{\infty}K(y-x)u(y)\;dy\;\rchi_{\{x<0\}}\equiv W(x).
\end{equation}
Indeed, this is equivalent to \eqref{inhomogeneous}. This can be seen easily, since $ u $ solves for $ x<0 $ 
\begin{equation*}\label{bigthm 2}
u(x)-\int_{-\infty}^0 K(y-x)u(y)\;dy=0
\end{equation*}
and since $ u=0 $ for $ x<0 $ is a possible solution, by uniqueness, this is the only possible solution. It is not only true that $ W\in L^1\left(\RR\right)\cap L^2\left(\RR\right) $ but also that $ W $ has all moments bounded. This follows from the similar property of $ G $ (cf. Step 1 in Theorem \ref{existence}) as well as from the inequality $ 0\leq \int_{0}^{\infty}K(y-x)u(y)\;dy\rchi_{\{x<0\}}\leq \Arrowvert u\Arrowvert_{\infty}\; \rchi_{\{x<0\}} \left(\frac{e^{-|x|}}{2}-|x|K(x)\right) $. Notice that $ |x|K(x)\leq \frac{e^-|x|}{2} $. Hence, finite moments and Riemann-Lebesgue Theorem imply that $ W $ has a Fourier transform $ \hat{W}\in C_0\left(\RR\right)\cap C^\infty\left(\RR\right)\cap L^2\left(\RR\right) $. Moreover, looking at the left hand side of \eqref{bigthm 1} we recall as in \cite{reedsimon} that in distributional sense for all $ \phi\in\mathcal{S}\left(\RR\right) $
\begin{equation*}\label{bigthm 3}
\langle\hat{u}-\mathcal{F}\left(u*K\right),\phi \rangle:=\langle u-u*K, \hat{\phi}\rangle=\langle u, \mathcal{F}\left((1-\sqrt{2\pi}\hat{K})\phi\right)\rangle,
\end{equation*}
where the last equality is due to an elementary calculation involving the convolution and we define $ \langle f,g\rangle =\int_{\RR}f(x)g(x)\;dx $. We recall also that $ 1-\sqrt{2\pi}\hat{K}(\xi)=\frac{\xi-\arctan(\xi)}{\xi}:=F(\xi) $. Hence, for all $ \phi\in\mathcal{S}\left(\RR\right) $ we have
\begin{equation}\label{bigthm 4}
\langle u, \mathcal{F}(\phi F)\rangle=\langle \hat{W}, \phi\rangle.
\end{equation}
Now we consider for $ \eps>0 $ the sequence of standard mollifiers $ \phi_\eps(\xi):=\frac{1}{\eps}\phi\left(\frac{\xi}{\eps}\right)\in C_c^\infty\left(\RR\right)\subset \mathcal{S}\left(\RR\right)  $ such that in distributional sense $ \phi_\eps\rightharpoonup \delta $. The smoothness of $ \hat{W} $ implies $ \langle \hat{W}, \phi\rangle\to\hat{W}(0) $ as $ \eps\to 0 $. It is our first aim to show that $ \hat{W}(0) $ is zero. To this end we study the left hand side of \eqref{bigthm 4}. We calculate
\begin{equation*}\label{bigthm 5}
\begin{split}
\langle u,& \mathcal{F}(\phi_\eps F)\rangle=\frac{1}{\sqrt{2\pi}}\int_0^\infty dx\;u(x)\int_{\RR}d\xi\; \phi_\eps(\xi)F(\xi)e^{-i\xi x}\\
=&\frac{1}{\sqrt{2\pi}}\int_0^1 dx\;u(x)\int_{\RR}d\xi\; \phi_\eps(\xi)F(\xi)e^{-i\xi x}-\frac{1}{\sqrt{2\pi}}\int_1^\infty dx\;\frac{u(x)}{x^2}\int_{\RR}d\xi\; \left(\phi_\eps(\xi)F(\xi)\right)''e^{-i\xi x},
\end{split}
\end{equation*}
where for the last equality we integrated twice by parts in $ \xi $. By a change of coordinates and the dominated convergence theorem, since $ F(0)=0 $ and $ |F(\eps\xi)\phi(\xi)|\leq |\phi(\xi)| $ we see for the first term as $ \eps\to 0 $
\begin{equation*}\label{bigthm 6}
\left|\frac{1}{\sqrt{2\pi}}\int_0^1 dx\;u(x)\int_{\RR}d\xi\; \phi_\eps(\xi)F(\xi)e^{-i\xi x}\right|\leq \frac{1}{\sqrt{2\pi}}\int_0^1 dx\;u(x)\int_{\RR}d\xi\; |F(\eps\xi)\phi(\xi)|\to 0.
\end{equation*}
Thus, we shall consider only the second term. We use the following well-known estimate $ \left|e^{-i\xi x}-1\right|\leq 2|\xi|^\delta|x|^\delta $ for $ 0<\delta <1 $ and $ x\in\RR $. Then using $ \int_\RR \left(\phi_\eps F\right)''=0 $
\begin{equation*}\label{bigthm 7}
\begin{split}
\frac{1}{\sqrt{2\pi}}\int_1^\infty dx\;\frac{u(x)}{x^2}\int_{\RR}d\xi\; \left(\phi_\eps(\xi)F(\xi)\right)''e^{-i\xi x}=&\frac{1}{\sqrt{2\pi}}\int_1^\infty dx\;\frac{u(x)}{x^2}\int_{\RR}d\xi\; \left(\phi_\eps(\xi)F(\xi)\right)''\left(e^{-i\xi x}-1\right),
\end{split}
\end{equation*}
and hence
\begin{equation*}\label{bigthm 8}
\begin{split}
\left|\frac{1}{\sqrt{2\pi}}\int_1^\infty dx\;\frac{u(x)}{x^2}\int_{\RR}d\xi\; \left(\phi_\eps(\xi)F(\xi)\right)''e^{-i\xi x}\right|\leq \frac{1}{\sqrt{2\pi}}\int_1^\infty dx\;\frac{u(x)}{x^{2-\delta}}\int_{\RR}d\xi\; \left|\left(\phi_\eps(\xi)F(\xi)\right)''\right|2|\xi|^\delta.
\end{split}
\end{equation*}
Now we notice that $ \int_1^\infty dx\;\frac{u(x)}{x^{2-\delta}}<\infty $ and also we see that $ F(\xi)\simeq \frac{\xi^2}{3} $ as $ x\to 0 $, similarly as $ \xi\to 0 $ also $ F'(\xi)\simeq \frac{2}{3}\xi $ and $ F''(\xi)\simeq \frac{2}{3} $. Hence, with a change of variables we see that 
\begin{equation*}\label{bigthm 9}
\begin{split}
\int_{\RR}d\xi\;& \left|\left(\phi_\eps(\xi)F(\xi)\right)''\right||\xi|^\delta\\\leq&\int_{\RR}d\xi\;\left[|\phi(\xi)||F''(\eps\xi)|\eps^\delta|\xi|^\delta+2|\phi'(\xi)|\frac{|F'(\eps\xi)||\xi|^\delta}{\eps^{1-\delta}}+|\phi''(\xi)|\frac{|F(\eps\xi)||\xi|^\delta}{\eps^{2-\delta}}\right].
\end{split}
\end{equation*}
With the consideration above about $ F $ and since $ \phi\in C_c^\infty\left(\RR\right) $ we see that there exists a constant $ C=2\Arrowvert \phi\Arrowvert_{C_c^\infty\left(\RR\right)} \left(\max\limits_{\text{supp}\phi}|\xi|\right)^{2+\delta}<\infty$ such that 
\begin{equation*}
	|\phi(\xi)||F''(\eps\xi)|\eps^\delta|\xi|^\delta+|\phi'(\xi)|\frac{|F'(\eps\xi)||\xi|^\delta}{\eps^{1-\delta}}+|\phi''(\xi)|\frac{|F(\eps\xi)||\xi|^\delta}{\eps^{2-\delta}}\leq C\eps^\delta
	\end{equation*}  for any $ \xi\in\text{supp}(\phi) $. Thus, again with the dominated convergence theorem we conclude
\begin{equation*}\label{bigthm 10}
\left|\frac{1}{\sqrt{2\pi}}\int_1^\infty dx\;\frac{u(x)}{x^2}\int_{\RR}d\xi\; \left(\phi_\eps(\xi)F(\xi)\right)''e^{-i\xi x}\right|\to 0,
\end{equation*}
which implies the first claim, namely $ \hat{W}(0)=0 $. \\

As next step we prove that the limit $ \lim\limits_{x\to\infty}u(x) $ exists. First of all we know that in distributional sense $ \hat{u} $ solves the equation
\begin{equation}\label{distr}
F \hat{u}\overset{\mathcal{S}'}{=} \hat{W}.
\end{equation}
Given any distributional solution $ \hat{u} $ to \eqref{distr} also $ \hat{u}+\hat{u}_h $ is a solution, where $ \hat{u}_h $ is the homogeneous solution to $ F\hat{u}_h\overset{\mathcal{S}'}{=}0 $. Let us consider the tempered distribution given by $ \hat{u}_h $ and let $ \varphi\in\mathcal{S}(\RR) $ be any testfunction with support away from zero, i.e. $ \text{supp}(\varphi)\subset\RR\setminus\{0\} $. Since $ F(\xi)=0 $ if and only $ \xi=0 $ and since it is bounded, the function $ \frac{\varphi}{F}\in\mathcal{S}\left(\RR\right) $. Hence, $ \int_\RR \hat{u}_h \varphi=0 $. This implies (see \cite{reedsimon1}) that $ \hat{u}_h\overset{\mathcal{S}'}{=}\sum\limits_{0\leq\alpha<m}c_\alpha(D^\alpha\delta) $, for $ c_\alpha $ constants and a suitable $ m\in\mathbb{N} $. Since $ c_\alpha F (D^\alpha\delta) \not\equiv 0$ for any $ \alpha\geq2 $ we conclude 
\begin{equation*}\label{distr1}
\hat{u}_h=c_0\delta+c_1\delta'
\end{equation*}
for suitable constants $ c_0,c_1 $. Using the smoothness of $ \hat{W} $ we can write $ \hat{W}(\xi)=\hat{W}'(0)\xi+ H(\xi)$ where $ \hat{W}'(0)=\frac{m_1(W)}{\sqrt{2\pi}i} $ and $ H\in C^\infty\left(\RR\right) $ with $ H(0)=H'(0)=0 $. Let us consider the behavior of $ F $
\begin{equation}\label{distr2}
F(\xi)\simeq\begin{cases}
\frac{\xi^2}{3}-\frac{\xi^4}{5}+\mathcal{O}(\xi^6)&\xi\to. 0,\\1-\frac{\pi}{2\xi}+\mathcal{O}\left(\frac{1}{\xi^2}\right)&\xi\to \infty
\end{cases}
\end{equation}
Hence, \begin{equation}\label{def f}
f(\xi):=\hat{W}(\xi)-\frac{3m_1(W)}{\sqrt{2\pi}i}\frac{F(\xi)}{\xi}\in L^2\left(\RR\right)
\end{equation} and it also satisfies
\begin{equation}\label{behavior f}
f(\xi)\simeq H''(0)\xi^2+\mathcal{O}(\xi^3) \;\;\;\;\;\;\text{ as }\xi\to0 .
\end{equation} By the boundedness of $ F $ and given its behavior as in \eqref{distr2} we conclude that the function $ \hat{h}:=\frac{f}{F}\in L^2\left(\RR\right) $, in particular $\hat{h} $ is well-defined in zero. It is easy to see that $ \hat{u} $ solves
\begin{equation}\label{distr3}
F(\xi) \hat{u}(\xi)\overset{\mathcal{S}'}{=} \frac{3m_1(W)}{\sqrt{2\pi}i}\frac{F(\xi)}{\xi}+f(\xi).
\end{equation}
Therefore, since $ \hat{h}\in L^2(\RR) $ we have that $ \hat{u}(\xi)= \frac{3m_1(W)}{\sqrt{2\pi}i}PV\left(\frac{1}{\xi}\right)+\hat{h}(\xi) $ is a solution to \eqref{distr3}. We denote by $ PV(\cdot) $ the principal value. Thus, adding the homogeneous solution we conclude
\begin{equation*}\label{bigthm 13}
\hat{u}(\xi)\overset{\mathcal{S}'}{=}c_0\delta+c_1\delta'+\frac{3}{2i}m_1(W)\sqrt{\frac{2}{\pi}}PV\left(\frac{1}{\xi}\right)+\hat{h}(\xi),
\end{equation*}
which yields
\begin{equation*}\label{distr4}
u(x)\overset{\mathcal{S}'}{=}\frac{c_0}{\sqrt{2\pi}}-\frac{c_1 i}{\sqrt{\pi}}x+\frac{3}{2}m_1(W)\sgn(x)+h(x),
\end{equation*}
where $ h\in L^2(\RR) $ is the inverse transform of $ \hat{h} $. Since $ u $ is bounded and satisfies $ u(x)=0 $ for all $ x<0 $, we have in distributional sense
\begin{equation*}\label{bigthm 14}
u(x)=\frac{3}{2}m_1(W)+\frac{3}{2}m_1(W)\sgn(x)+h(x).
\end{equation*}
Hence for $ x>0 $ also $ u(x)=3m_1(W)+h(x) $ pointwise.  Lemma \ref{unifcont} implies also that $ h $ is uniformly continuous in the positive real line. Hence, $ \lim\limits_{x\to\infty}h(x)=0 $  and therefore the limit of $ u $ as $ x\to\infty $ exists and is uniquely determined by $ g_\nu(n) $ and $ N$. This is true since
\begin{equation*}\label{bigthm 15}
\lim\limits_{y\to\infty}u(y)=3m_1(W)=3\left(\int_0^\infty dx\;xG(x)-\int_{-\infty}^0dx\;x\int_0^\infty dy\;K(y-x)u(y)\right)\geq 0.
\end{equation*}
Also the positivity of the limit is guaranteed when $\left\{n\in\Ss:n\cdot N<0\text{ and }\int_0^\infty d\nu \og_\nu(n)\not\equiv 0\right\} $ is not a zero measure set.
\end{proof}
\end{theorem}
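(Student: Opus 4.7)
The plan is to reduce the half-line integral equation to a Fourier problem on the whole real line. Since $u$ vanishes on $(-\infty,0)$ and is bounded, one extends the equation to all of $\RR$ by introducing
\begin{equation*}
W(x) := G(x)\rchi_{\{x>0\}} - \rchi_{\{x<0\}}\int_0^\infty K(y-x)u(y)\,dy,
\end{equation*}
so that $\oLL(u)=W$ on $\RR$ (the extra term on $\{x<0\}$ compensates for the fact that the original equation only holds for $x>0$). I would first verify that $W\in L^1\cap L^2\cap L^\infty$ and has finite moments of all orders, using the exponential decay of $G$ built into its definition and the estimate $|x|K(x)\le e^{-|x|}/2$ from Proposition \ref{propK1}. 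Applying the Fourier transform and recalling $1-\sqrt{2\pi}\hat K(\xi)=(\xi-\arctan\xi)/\xi=:F(\xi)$ from Proposition \ref{propK}, the identity becomes $F(\xi)\hat u(\xi)=\hat W(\xi)$ in $\mathcal{S}'(\RR)$.

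The first genuine task is to establish $\hat W(0)=0$, i.e.\ $\int_\RR W=0$. I would test the distributional identity against a standard mollifier $\phi_\eps$ concentrated at the origin. Since $F(\xi)=O(\xi^2)$ near $0$, the pairing $\langle u,\mathcal{F}(\phi_\eps F)\rangle$ vanishes as $\eps\to 0$ on the piece $x\in[0,1]$ by dominated convergence, while on $[1,\infty)$ one integrates twice by parts in $\xi$, uses $\int(\phi_\eps F)''=0$ to insert the factor $(e^{-i\xi x}-1)$ for free, and exploits the Hölder bound $|e^{-i\xi x}-1|\le 2|\xi|^\delta|x|^\delta$ together with the scaling of the derivatives of $\phi_\eps F$ to obtain decay of order $\eps^\delta$. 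This zeroth-moment identity is what makes the forthcoming division by $F$ legitimate.

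Once $\hat W(0)=0$, write $\hat W(\xi)=\hat W'(0)\xi+H(\xi)$ with $H(0)=H'(0)=0$, so that $f(\xi):=\hat W(\xi)-\tfrac{3m_1(W)}{\sqrt{2\pi}\,i}\tfrac{F(\xi)}{\xi}$ vanishes to order two at the origin; combined with the boundedness of $F(\xi)/\xi$ for $|\xi|$ large, this gives $f/F\in L^2(\RR)$. Thus a particular distributional solution of $F\hat u=\hat W$ is $\tfrac{3m_1(W)}{\sqrt{2\pi}\,i}\,\mathrm{PV}(1/\xi)+f/F$. The general solution differs from this by a distribution supported at $\{0\}$; since $F$ has only a double zero at $\xi=0$, only $\delta$ and $\delta'$ survive multiplication by $F$ as zero, so the full solution reads $\hat u=c_0\delta+c_1\delta'+\tfrac{3m_1(W)}{\sqrt{2\pi}\,i}\,\mathrm{PV}(1/\xi)+\hat h$ for some $\hat h\in L^2$. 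Inverting yields
\begin{equation*}
u(x)=\alpha+\beta x+\tfrac{3}{2}m_1(W)\sgn(x)+h(x),\qquad h\in L^2(\RR).
\end{equation*}

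To finish, I would use the structural constraints on $u$: boundedness forces $\beta=0$, and $u\equiv 0$ on $(-\infty,0)$ together with the fact that $h\in L^2$ pins down $\alpha=\tfrac{3}{2}m_1(W)$ (after noting that $h$ is actually a continuous function because $u$ is). Hence $u(x)=3m_1(W)+h(x)$ for $x>0$, so the existence of the limit reduces to showing $h(x)\to 0$ as $x\to\infty$; this follows from $h\in L^2$ combined with the uniform continuity of $u$ (hence of $h$) granted by Lemma \ref{unifcont}. The limit depends only on $G$ (hence on $g_\nu$ and $N$) and on $u$ through the explicit formula
\begin{equation*}
\lim_{x\to\infty}u(x)=3m_1(W)=3\!\left(\int_0^\infty xG(x)\,dx-\int_{-\infty}^0 x\int_0^\infty K(y-x)u(y)\,dy\,dx\right),
\end{equation*}
and Lemma \ref{nonnegativity} ($u\ge 0$) gives that the second summand is non-negative, so the limit is strictly positive whenever the set $\{n\in\Ss^2:n\cdot N<0,\ \int_0^\infty \og_\nu(n)\,d\nu\neq 0\}$ has positive measure (which makes the first integral strictly positive). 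The step I expect to be most delicate is the distributional bookkeeping: proving $\hat W(0)=0$ via the mollifier argument, and then showing that the residual piece $\hat h$ really sits in $L^2$ rather than merely being a tempered distribution, so that the decay of the $L^2$ inverse transform can be combined with uniform continuity to extract an actual pointwise limit.
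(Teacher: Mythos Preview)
Your proposal is correct and follows essentially the same route as the paper's proof: the extension to $W$ on the whole line, the mollifier argument (with the double integration by parts and the H\"older bound $|e^{-i\xi x}-1|\le 2|\xi|^\delta|x|^\delta$) to get $\hat W(0)=0$, the construction of the particular solution via $\hat h=f/F\in L^2$, the classification of homogeneous solutions as $c_0\delta+c_1\delta'$, and the final identification of the constants from boundedness and the vanishing of $u$ on $(-\infty,0)$ all match the paper step for step. Your closing remark about what is delicate is exactly right and coincides with where the paper spends most of its effort.
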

We will define $ \ou_\infty(p):=\lim\limits_{y\to\infty}\ou(y,p) $ for $ p\in\bnd $.
We can also show that $ \ou $ converges to $ \ou_\infty $ with exponential rate.
\begin{lemma}\label{exp rate}
Let $ u $ be the unique bounded solution to the problem \eqref{inhomogeneous} and $ u_\infty=\lim\limits_{x\to\infty}u(x) $. Then there exists a constant $ C>0 $ such that
\begin{equation*}
	|u-u_\infty|\leq Ce^{-\frac{|x|}{2}}.
\end{equation*}
\begin{proof}
We use the same notation as in Theorem \ref{thmgrey}. Hence, we know that
\begin{equation}\label{uhat}
\hat{u}(\xi)\overset{\mathcal{S}'}{=}\frac{u_\infty}{2}\sqrt{2\pi}\delta+\frac{u_\infty}{2}\sqrt{\frac{2}{\pi}}PV\left(\frac{1}{\xi}\right)+\hat{h}(\xi),
\end{equation}
with $ F(\xi)\hat{h}(\xi)=\hat{W}(\xi)-\frac{3m_1(W)}{\sqrt{2\pi}i}\frac{F(\xi)}{\xi} $. By the definition of $ W $ we see 
\begin{equation}\label{u0}
\lim\limits_{x\nearrow 0}W(x)-\lim\limits_{x\searrow 0}W(x)=W(0^+)-W(0^-)=u(0).
\end{equation}
We recall that $ W $ has exactly one discontinuity in $ x=0 $ and that $ W\rchi_{\{x<0\}}\in C^\infty\left(\RR_-\right) $ and $ W\rchi_{\{x>0\}}\in C^\infty\left(\RR_+\right) $. By the monotonicity of the two functions $  W\rchi_{\{x<0\}} $ and $ W\rchi_{\{x>0\}} $ and since $ W\in L^\infty\left(\RR\right) $ we see that $ W'\rchi_{\{x<0\}}\in L^1(\RR_-)$ and $W'\rchi_{\{x>0\}}\in L^1(\RR_+) $. Moreoevr, we have the asymptotics $ \hat{W}(\xi)\simeq \frac{u(0)}{\sqrt{2\pi}i\xi}+\mathcal{O}\left(\frac{1}{\xi^{1+\delta}}\right)$ as $ |\xi|\to\infty $ for $ 0<\delta<1 $. Indeed, integrating by parts and using that $ \lim\limits_{|x|\to\infty}W(x)=0 $ we compute
\begin{equation}\label{higher order}
\begin{split}
\sqrt{2\pi}\hat{W}(\xi)=&\int_{-\infty}^0 W(x)e^{-i\xi x}\;dx+\int_0^\infty W(x)e^{-i\xi x}\;dx\\
=&\frac{u(0)}{i\xi}+\frac{1}{i\xi}\left(\int_{-\infty}^0 W'(x)e^{-i\xi x}\;dx+\int_0^\infty W'(x)e^{-i\xi x}\;dx\right)\\
=&\frac{u(0)}{i\xi}-\frac{1}{i\xi}\left(\int_{-\infty}^{-1}dx\int_0^\infty dy\; \frac{e^{-(y-x)}u(y)}{2(y-x)}e^{-i\xi x}+\int_{-\infty}^0dx\int_1^\infty dy\; \frac{e^{-(y-x)}u(y)}{2(y-x)}e^{-i\xi x}\right)\\&-\frac{1}{i\xi}\left(\int_{-1}^{0}dx\int_0^1 dy\; \frac{e^{-(y-x)}u(y)}{2(y-x)}\frac{d}{dx}\frac{e^{-i\xi x}-1}{-i\xi}\right)\\&+\frac{1}{i\xi}\left(\int_1^\infty G'(x)e^{-i\xi x}\;dx+\int_0^1 G'(x)\frac{d}{dx}\frac{e^{-i\xi x}-1}{-i\xi}\right)
\end{split}	
\end{equation}
We conclude integrating by parts and applying the Riemann-Lebesgue Theorem in the following way.
First of all, the function $ \partial_x \frac{e^{-(y-x)}}{(y-x)} $ is integrable on $ (-\infty,1)\times \RR_+\cup \RR_-\times(1,\infty) $ and also  $ G''(x) $ is integrable in $ (1,\infty) $. Moreover, using $ \left|e^{-i\xi x}-1\right|\leq 2 |\xi|^\delta|x|^\delta $ for $ 0<\delta<1 $ we have
\begin{equation*}
\int_{-1}^{0}dx\int_0^1 dy\; \frac{e^{-(y-x)}u(y)}{(y-x)}|x|^\delta\leq C\int_{-1}^0 dx\;\left(|x|^{\delta-1}\right)<\infty
\end{equation*}
and 
\begin{equation*}
\int_0^1 dx\;G''(x)|x|^\delta\leq C\int_0^1 \frac{e^{-x}}{x^{1-\delta}}\;dx<\infty.
\end{equation*}
For this last estimate we also used that $ \frac{d}{d\theta}e^{\frac{x}{\cos(\theta)}}=x\frac{e^{\frac{x}{cos(\theta)}}}{\cos^2(\theta)}\sin(\theta) $, which implies $ \left|G''(x)\right|\leq 2\pi \Arrowvert g\Arrowvert_{\infty} \frac{e^{-x}}{x} $. Thus, by the definition of $ \hat{h} $ and using \eqref{distr2} we have
\begin{equation}\label{behavior u}
\hat{h}(\xi)\simeq\begin{cases}
\mathcal{O}(1)& |\xi|\to 0,\\\frac{u(0)}{\sqrt{2\pi}}\frac{1}{i\xi}-\frac{u_\infty}{\sqrt{2\pi}}\frac{1}{i\xi}+\mathcal{O}\left(\frac{1}{\xi^{1+\delta}}\right)& |\xi|\to\infty.\end{cases}
\end{equation}
By the definition of $ \hat{u} $ in \eqref{uhat} we see
\begin{equation}\label{vhat}
\hat{v}(\xi):= \hat{u}(\xi)-\frac{u_\infty}{2}\sqrt{2\pi}\delta-PV\left(\frac{1}{i\xi}\right)\left(\frac{u_\infty}{2}\sqrt{\frac{2}{\pi}}\frac{1}{1+\xi^2}+\frac{u(0)}{\sqrt{2\pi}}\frac{\xi^2}{1+\xi^2}\right)\in L^2(\RR).
\end{equation}
We claim that 
\begin{enumerate}
	\item[(i)] $ \hat{v} $ is analytic in the strip $ S=\{z\in\mathbb{C}: |\Im(z)|<\frac{3}{4}\} $;
\item[(ii)] $| \hat{v}(\xi)|\leq \frac{C}{|1+\xi^{1+\delta}|} $;
\item[(iii)] $ v(x)=u(x)-u_\infty+\frac{e^{-|x|}}{2}\left(u_\infty-u(0)\right) $ for $ x>0 $ and $v(x)=\mathcal{F}^{-1}(\hat{v})(x) $.
\end{enumerate}
A contour integral implies then the lemma. Indeed for $ x>0 $ we can compute
\begin{equation}\label{contour}
\begin{split}
\sqrt{2\pi}|v(x)|=&\lim\limits_{R\to\infty}\left|\int_{-R}^R \hat{v}(\xi)e^{i\xi x}\;d\xi\right|\\
\leq&\lim\limits_{R\to\infty}\left|i\int_0^{\frac{1}{2}}\hat{v}(R+it)e^{iRx}e^{-tx}\;dt\right|+\lim\limits_{R\to\infty}\left|i\int_0^{\frac{1}{2}}\hat{v}(-R+it)e^{-iRx}e^{-tx}\;dt\right|\\&+\lim\limits_{R\to\infty}\left|\int_{-R}^R\hat{v}\left(t+\frac{1}{2}i\right)e^{itx}e^{-\frac{x}{2}}\;dt\right|\\
\leq&e^{-\frac{x}{2}}\lim\limits_{R\to\infty}\int_{-R}^R\frac{C}{\frac{1}{2}+t^{1+\delta}}\;dt=\overline{C}e^{-\frac{x}{2}},
\end{split}
\end{equation}
where for the first inequality we used the triangle inequality and the analycity of $ \hat{v} $ by (i), the second inequality is due to dominate convergence and the claim (ii), finally the last integral is finite. Equation \eqref{contour} and claim (iii) imply $ |u(x)-u_\infty|\leq Ce^{-\frac{x}{2}} $ for $ x>0 $.\\

We prove now the claims. To prove claim (i) it is enough to show that $ \hat{h} $ is analytic in
$ S $. Then, \eqref{uhat} and \eqref{vhat} implies (i). First of all we recall that $ W $ has an exponential decay like $ |W(x)|\leq Ce^{-|x|} $, hence $ |W(x)|e^{\frac{3}{4}|x|}\in L^1\left(\RR\right) $ and therefore Paley-Wiener Theorem implies that $ \hat{W} $ is analytic in $ S $. Since $ \arctan(z)=\frac{1}{2i}\ln(\frac{1+iz}{1-iz}) $ is analytic in $ \{z\in\mathbb{C}:|\Im(z)|<1\} $ and since $ F(z)=\frac{z-\arctan(z)}{z} $ has exactly one zero in $ z=0 $, which is of degree $ 2$, the definition of $ \hat{h}=\frac{f}{F} $ together with \eqref{def f} implies that $ \hat{h} $ is analytic in $ S $ since \eqref{behavior f} implies that $ 0 $ is a removable singularity.\\

For claim (ii) we just put together equations \eqref{uhat}, \eqref{behavior u} and \eqref{vhat}. We notice also that the constant $ C>0 $ of claim (ii) depends only on $ \hat{W} $. \\

Claim (iii) is more involved. We have to consider again two different contour integrals in order to compute the inverse Fourier transform of $ \hat{v} $. We start with considering the function $ PV\left(f(\xi)\right)=PV\left(\frac{1}{i\xi(1+\xi^2)}\right) $. Let first of all $ x>0 $ and let $ \gamma^+_1 $ the path around $ i $ given as in the following picture.
\begin{figure}[H]\centering
	\includegraphics[height= 3 cm]{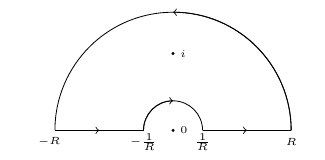}	\vspace{-.5cm}\caption{sketch of $ \gamma^+_1 $.}
\end{figure}
Hence, we compute 
\begin{equation*}
\begin{split}
\mathcal{F}^{-1}&\left(PV\left(\frac{1}{i\xi}\frac{1}{1+\xi^2}\right)\right)(x)=\frac{1}{\sqrt{2\pi}}\lim\limits_{R\to\infty}\left(\int_{-R}^{-\frac{1}{R}} f(\xi)e^{i\xi x}\;d\xi+\int_{\frac{1}{R}}^R f(\xi)e^{i\xi x}\;d\xi\right)\\
=&\frac{1}{\sqrt{2\pi}}\lim\limits_{R\to\infty}\left(\int_{\gamma^+_1}f(\xi)e^{i\xi x}d\xi\right)+\frac{1}{\sqrt{2\pi}}\lim\limits_{R\to\infty}\left(\int_0^\pi f\left(\frac{e^{i\theta}}{R}\right)\frac{ie^{i\theta}}{R}e^{-\frac{\sin(\theta)x}{R}}e^{\frac{i\cos(\theta)x}{R}}\;d\theta\right)\\
&-\frac{1}{\sqrt{2\pi}}\lim\limits_{R\to\infty}\left(\int_0^\pi f\left(Re^{i\theta}\right)Rie^{i\theta}e^{-R\sin(\theta)x}e^{iR\cos(\theta)x}\;d\theta\right)\\
=& \sqrt{\frac{\pi}{2}}\left(1-e^{-x}\right).
\end{split}
\end{equation*}
For the computation of these integrals we used the Cauchy's residue theorem and $ \text{Res}_i f(\xi)e^{i\xi x}=\frac{ie^{-x}}{2} $, the second integral converges to $ \pi $ as $ R\to \infty $ and the third converges to zero, both limit are due to the Lebesgue dominated convergence theorem. Denoting by $ \gamma^-_1 $ the mirrored path to $ \gamma^+_1 $ with respect to the real axis and arguing similarly we also get that for $ x<0 $ the inverse Fourier transformation is $ \mathcal{F}^{-1}\left(PV\left(\frac{1}{i\xi}\frac{1}{1+\xi^2}\right)\right)(x)= -\sqrt{\frac{\pi}{2}}\left(1-e^{-|x|}\right) $. Hence, 
\begin{equation}\label{inverse1}
\mathcal{F}^{-1}\left(PV\left(\frac{1}{i\xi}\frac{1}{1+\xi^2}\right)\right)(x)=\sgn(x)\sqrt{\frac{\pi}{2}}\left(1-e^{-|x|}\right).
\end{equation}
For the function $ g(x)(\xi)=\frac{\xi}{i(1+\xi^2)} $ we consider again first of all $ x>0 $ and the path $ \gamma^+_2 $ around $ i $ given by
\begin{figure}[H]\centering
	\includegraphics[height= 3 cm]{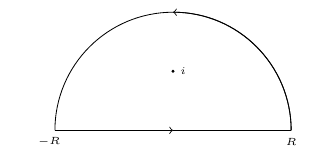}	\vspace{-.5cm}\caption{sketch of $ \gamma^+_2 $.}
\end{figure}
Hence, the Cauchy's residue theorem and the dominated convergence imply
\begin{equation*}
\begin{split}
\mathcal{F}^{-1}&\left(\frac{\xi}{i(1+\xi^2)}\right)(x)=\frac{1}{\sqrt{2\pi}}\lim\limits_{R\to\infty}\int_{-R}^{R} g(\xi)e^{i\xi x}\;d\xi\\
=&\frac{1}{\sqrt{2\pi}}\lim\limits_{R\to\infty}\left(\int_{\gamma^+_2}g(\xi)e^{i\xi x}d\xi\right)-\frac{1}{\sqrt{2\pi}}\lim\limits_{R\to\infty}\left(\int_0^\pi g\left(Re^{i\theta}\right)Rie^{i\theta}e^{-R\sin(\theta)x}e^{iR\cos(\theta)x}\;d\theta\right)\\
=& \sqrt{\frac{\pi}{2}}e^{-x},
\end{split}
\end{equation*}
where we also used that $ \text{Res}_i g(\xi)e^{-\xi x}=\frac{e^{-x}}{2i} $. Denoting similarly as before by $ \gamma^-_2 $ the mirrored path to $ \gamma^+_2 $ with respect to the real axis we obtain $  \mathcal{F}^{-1}(g)(x)=-\sqrt{\frac{\pi}{2}}e^{-|x|} $for $ x<0 $ and thus
\begin{equation}\label{inverse2}
\mathcal{F}^{-1}\left(\frac{\xi}{i(1+\xi^2)}\right)(x)=\sgn(x)\sqrt{\frac{\pi}{2}}e^{-|x|}.
\end{equation}
Hence, the definition of $ u $ and equations \eqref{inverse1}, \eqref{inverse2} imply claim (iii) for $ x>0 $ 
\begin{equation*}
v(x)=u(x)-u_\infty+\frac{e^{-|x|}}{2}\left(u_\infty-u(0)\right).
\end{equation*}
\end{proof}
\end{lemma}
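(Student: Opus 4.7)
The strategy I would pursue is to refine the Fourier-analytic representation of $u$ from Theorem \ref{thmgrey} by isolating all pieces of $\hat{u}$ that are either singular at $0$ or that contribute the slow $1/\xi$ decay at infinity, and showing that the remainder is analytic in a horizontal strip with summable decay; a standard contour shift by $i/2$ then produces the factor $e^{-x/2}$. Concretely, from Theorem \ref{thmgrey} we have $\hat{u}=\tfrac{u_\infty}{2}\sqrt{2\pi}\,\delta+\tfrac{u_\infty}{2}\sqrt{\tfrac{2}{\pi}}\,PV(1/\xi)+\hat{h}$ with $F(\xi)\hat{h}(\xi)=\hat{W}(\xi)-\tfrac{3m_1(W)}{\sqrt{2\pi}i}\tfrac{F(\xi)}{\xi}$ and $\hat{h}\in L^2$. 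The plan is to subtract from $\hat{u}$ an auxiliary combination of $PV(1/(i\xi))$ multiplied by rational factors with poles at $\pm i$, defining $\hat{v}$ exactly as in \eqref{vhat}, and prove that $\hat{v}$ enjoys enough analyticity and decay for contour deformation.

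First I would establish analyticity in the strip $S=\{|\Im z|<3/4\}$. Since $G$ decays like $e^{-x}$ and the correction $\int_0^\infty K(y-x)u(y)\,dy\,\rchi_{\{x<0\}}$ obeys the upper bound $\Arrowvert u\Arrowvert_\infty(\tfrac{e^{-|x|}}{2}-|x|K(x))$ via Proposition \ref{propK1}, the function $W$ defined in \eqref{bigthm 1} satisfies $|W(x)|e^{3|x|/4}\in L^1(\RR)$, so Paley--Wiener gives analyticity of $\hat{W}$ in $S$. The symbol $F(z)=(z-\arctan z)/z$ extends analytically where $\arctan$ does (i.e.\ for $|\Im z|<1$) and has in $S$ a unique zero of order $2$ at $z=0$; the subtraction of $\tfrac{3m_1(W)}{\sqrt{2\pi}i}\tfrac{F}{\xi}$ in the numerator cancels the ambiguity at $0$ (by the asymptotics \eqref{behavior f}), so $\hat{h}$ and hence $\hat{v}$ are analytic in $S$.

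Next I would establish the decay estimate $|\hat{v}(\xi)|\leq C/(1+|\xi|^{1+\delta})$. For this I must identify the leading $1/\xi$ behavior of $\hat{W}$ at infinity and verify it is matched by the subtracted rational PV pieces. The function $W$ is smooth on $\RR_-$ and on $\RR_+$ with a single jump at $0$ of size $W(0^+)-W(0^-)=u(0)$, so integrating by parts once yields $\sqrt{2\pi}\hat{W}(\xi)=\tfrac{u(0)}{i\xi}+\tfrac{1}{i\xi}\int W'(x)e^{-i\xi x}\,dx$. Further integrations by parts, combined with the estimates $|G''(x)|\leq 2\pi\Arrowvert g\Arrowvert_\infty e^{-x}/x$ near zero and the $L^1$ character of $\partial_x[e^{-(y-x)}/(y-x)]$ away from the diagonal, together with the H\"older trick $|e^{-i\xi x}-1|\leq 2|\xi|^\delta|x|^\delta$ on small intervals, produce $\hat{W}(\xi)=\tfrac{u(0)}{\sqrt{2\pi}\,i\xi}+O(|\xi|^{-1-\delta})$. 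Dividing by $F(\xi)\to 1$ and subtracting the piece $PV(1/(i\xi))(\tfrac{u_\infty}{2}\sqrt{\tfrac{2}{\pi}}\tfrac{1}{1+\xi^2}+\tfrac{u(0)}{\sqrt{2\pi}}\tfrac{\xi^2}{1+\xi^2})$ removes precisely the $1/\xi$ tail without introducing new singularities at $0$.

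The identity in (iii) is obtained by inverting the subtracted rational distributions via contour integrals around $\pm i$, which yields $\mathcal{F}^{-1}(PV(1/(i\xi))/(1+\xi^2))=\sgn(x)\sqrt{\pi/2}\,(1-e^{-|x|})$ and $\mathcal{F}^{-1}(\xi/(i(1+\xi^2)))=\sgn(x)\sqrt{\pi/2}\,e^{-|x|}$, exactly as in \eqref{inverse1}--\eqref{inverse2}; substituting into the definition of $\hat{v}$ and using $u(x)=0$ for $x<0$ gives $v(x)=u(x)-u_\infty+\tfrac{e^{-|x|}}{2}(u_\infty-u(0))$ for $x>0$. Finally, for $x>0$, I would close the contour in the rectangle $[-R,R]\times[0,1/2]$: the vertical segments at $\xi=\pm R+it$ vanish as $R\to\infty$ by (ii) applied to the analytic extension of $\hat{v}$ (the decay estimate extends to $|\Im\xi|\leq 1/2$ by the same integration-by-parts argument carried out in $S$), and the top segment contributes $e^{-x/2}\int_\RR|\hat{v}(t+i/2)|\,dt$, producing the desired bound $|v(x)|\leq \overline{C}e^{-x/2}$, from which $|u(x)-u_\infty|\leq Ce^{-x/2}$ follows since the extra explicit term in (iii) decays like $e^{-|x|}$. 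The main obstacle I expect is verifying (ii) uniformly for complex $\xi$ in the strip; this requires being careful in the integration-by-parts computations \eqref{higher order} that the error terms (which involve $|x|^{\delta-1}$ singularities at $0$) remain under control when $e^{-i\xi x}$ is replaced by its analytic continuation, but the exponential decay of $W$ absorbs the additional factor $e^{|\Im\xi|\,|x|}$ throughout.
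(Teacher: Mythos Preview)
Your proposal is correct and follows essentially the same approach as the paper's proof: the same decomposition \eqref{vhat} of $\hat{u}$, the same three claims (analyticity in the strip via Paley--Wiener for $\hat{W}$ and the structure of $F$, the $|\xi|^{-1-\delta}$ decay from integration by parts on $W$, and the explicit inverse transforms \eqref{inverse1}--\eqref{inverse2}), and the same rectangular contour shift to $\Im\xi=\tfrac12$. You also correctly flag the one point the paper leaves somewhat implicit, namely that the decay bound (ii) must hold uniformly along $\Im\xi=\tfrac12$ for the contour argument \eqref{contour} to go through.
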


There are still two important properties of $ \ou(y,p) $ we will need for the rest of the paper and which are explained in the next two Lemmas. First of all $ \ou(y,p) $ is uniformly bounded in both variables.

\begin{lemma}\label{unifbounded}
Let $ \ou(y,p) $ be the non-negative bounded solution to the problem \eqref{bvpgreyboundary1} for $ g_\nu(n) $ satisfying the assumption as in Theorem \ref{existence}. Then there exists a constant $ C $ such that 
\begin{equation*}\label{unifbounded1}
\sup\limits_{y\in\RR,\;p\in\bnd}\ou(y,p)\leq C<\infty.
\end{equation*}
\begin{proof}
By definition $ \ou $ satisfies $ \LL(\ou)(y)=G_p(y) $ for $ y>0 $ and $ \ou(y,p)=0 $ for $ y<0 $. Moreover, recalling the norm as in \eqref{norm1} the source can be estimated by
\begin{equation*}\label{unifbounded2}
0\leq G_p(y)\leq \Arrowvert g\Arrowvert_1 e^{-y},
\end{equation*}
since $ |n\cdot N_p|\leq1 $.

Theorem \ref{existence} assures us the existence of a unique bounded continuous (in the positive line) solution $ v $ of $ \LL(v)(y)=\Arrowvert g\Arrowvert_1 e^{-y} $ for $ y>0 $ and $ v(y)=0 $ for $ y<0 $. Hence, we can apply the maximum principle of Theorem \ref{maxprinciple} as we did in Lemma \ref{nonnegativity} to the function $ v-\ou(\cdot,p)\in C\left([0,\infty]\right) $ and we conclude
\begin{equation*}\label{unifbounded3}
0\leq \ou(y,p)\leq v(y)\leq \Arrowvert v\Arrowvert_{\infty}:=C<\infty
\end{equation*} for all $ y\in \RR $ and $ p\in\bnd $.
\end{proof}
\end{lemma}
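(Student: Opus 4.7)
The plan is to dominate $\ou(\cdot,p)$ by a single bounded function independent of $p$, then invoke the maximum principle of Lemma \ref{maxprinciple}. The starting point is the elementary observation that the source $G_p$ admits a $p$-independent envelope. Indeed, since $|n\cdot N_p|\leq 1$ for every $n\in\Ss^2$ and every $p\in\bnd$, one has $e^{-y/|n\cdot N_p|}\leq e^{-y}$ for $y>0$, and hence
\begin{equation*}
0\leq G_p(y)\leq e^{-y}\intnu\int_{n\cdot N_p<0}dn\;g_\nu(n)\leq \Arrowvert g\Arrowvert_1 e^{-y}
\end{equation*}
uniformly in $p\in\bnd$.

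Next I would use Theorem \ref{existence} with the continuous, exponentially decaying function $H(y):=\Arrowvert g\Arrowvert_1 e^{-y}\rchi_{\{y>0\}}$ to obtain a unique bounded solution $v\in C([0,\infty))$ of $\LL(v)=H$ on $(0,\infty)$ with $v\equiv 0$ on $(-\infty,0)$. Lemma \ref{nonnegativity} (or the same construction as in its proof applied to $H\geq 0$) gives $v\geq 0$. Set $C:=\Arrowvert v\Arrowvert_\infty<\infty$; this number is fixed and independent of $p$.

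Now I would consider, for an arbitrary fixed $p\in\bnd$, the difference
\begin{equation*}
w_p(y):=v(y)-\ou(y,p).
\end{equation*}
On $(-\infty,0)$ both functions vanish, so $w_p\equiv 0$ there. On $(0,\infty)$, by linearity of $\LL$,
\begin{equation*}
\LL(w_p)(y)=H(y)-G_p(y)\geq 0,
\end{equation*}
using the pointwise estimate on $G_p$. Since both $v$ and $\ou(\cdot,p)$ are continuous and bounded on $[0,\infty)$, so is $w_p$, and its limit at infinity (if it exists) lies in $[-\Arrowvert\ou\Arrowvert_\infty,\Arrowvert v\Arrowvert_\infty]$; more importantly, $w_p$ attains its infimum on $[0,\infty)$. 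Thus $w_p$ is a supersolution in the sense of Lemma \ref{maxprinciple}, and that lemma yields $w_p\geq 0$, i.e.\ $\ou(y,p)\leq v(y)\leq C$. Combining with $\ou(\cdot,p)\geq 0$ from Lemma \ref{nonnegativity} gives $0\leq \ou(y,p)\leq C$ for all $y\in\RR$ and all $p\in\bnd$, which is the claim.

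No step is really hard here: the only point requiring a little care is checking that the hypotheses of Lemma \ref{maxprinciple} are met for $w_p$ (continuity on $[0,\infty)$, a nonnegative extended limit at $\infty$, and being a supersolution). Continuity is inherited from the two solutions; the lower bound of the infimum on $[0,\infty)$ follows from boundedness of both $v$ and $\ou(\cdot,p)$; and the supersolution property reduces to the uniform envelope $G_p\leq H$. The whole argument therefore just packages the uniform exponential bound on $G_p$ together with the comparison principle, and the constant $C$ depends only on $\Arrowvert g\Arrowvert_1$.
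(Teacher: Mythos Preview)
Your argument is essentially identical to the paper's: bound $G_p\leq \Arrowvert g\Arrowvert_1 e^{-y}$ uniformly in $p$, solve the equation once with this $p$-independent source to get a bounded $v$, and compare $v-\ou(\cdot,p)$ via the maximum principle. The only small imprecision is your direct appeal to Lemma~\ref{maxprinciple} for $w_p$: that lemma requires a nonnegative limit at infinity, which you do not verify, and boundedness alone does not guarantee that the infimum is attained at a finite point. The paper sidesteps this exactly as in Lemma~\ref{nonnegativity}, i.e.\ by adding $\eps(1+x)$ and letting $\eps\to 0$; once you insert that step your proof matches the paper's.
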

Also, the rate of convergence of $ \ou(y,p) $ to $ \ou_\infty(p) $ can be bounded independently of $ p\in\bnd $.

\begin{corollary}\label{rate}
	There exists a constant $ C>0 $ independent of $ p\in\bnd $ such that $$ |\ou(y,p)-\ou_\infty(p)|\leq Ce^{-\frac{y}{2}} $$.
	
	\begin{proof}
		This is a consequence of Lemma \ref{exp rate} and Lemma \ref{unifbounded}.
		From Lemma \ref{unifbounded} we know that there exists a constant $ C>0 $ independent of $ p\in\bnd $ such that
		\begin{equation*}
		\left|W(x)\right|\leq C\left(e^{-|x|}+|x|K(x)\rchi_{\{x<0\}}\right)\in L^1(\RR)\cap L^2(\RR)\cap L^\infty(\RR),
		\end{equation*}
		where $ W $ is the function defined in \eqref{bigthm 1}. Since $ |x|K(x)\leq \frac{e^{-|x|}}{2} $ all moments of $ W $ are finite and for any $ n\in\mathbb{N} $ there exists a constant $ C_n>0 $ independent of $ p\in\bnd $ such that
		\begin{equation*}
		\left|m_n\left(W\right)\right|\leq C_n<\infty.
		\end{equation*} 
		Hence, $ \hat{W}\in C_0(\RR)\cap C^\infty(\RR)\cap L^2(\RR) $ and also all derivatives are uniformly bounded in $ p\in\bnd $ since $ \left|\hat{W}^{(n)}(\xi)\right|\leq \frac{C_n}{\sqrt{2\pi}} $. Thus, the function $ \hat{h} $ in \eqref{behavior u} defined using \eqref{behavior f} can be bounded independently of $ p\in\bnd $.
		
		Moreover, we notice that in \eqref{higher order} as $ |\xi|\to\infty $ we can bound $ \left|\hat{W}(\xi)-\frac{u(0)}{\sqrt(2\pi)i\xi}\right| $ by $ \frac{C}{\left|\xi^{1+\delta}\right|} $ with a constant $ C>0 $ independent of $ p\in\bnd $. Indeed, as we have seen in Lemma \ref{exp rate} we have $ \left|G''(x)\right|\leq 2\pi \Arrowvert g\Arrowvert_{\infty} \frac{e^{-x}}{x} $ and by Lemma \ref{unifbounded} we have also $ \left|\ou(y,p)\right|\leq C $.
		
		Hence, we conclude as we did in Lemma \ref{exp rate} that there exists a constant $ C>0 $ independent of $ p\in\bnd $ such that $ \left|\hat{v}(\xi)\right|\leq\frac{C}{|1+\xi^{1+\delta}|} $, where $ \hat{v} $ was defined in \eqref{vhat}. 
		
		Arguing now exactly as in Lemma \ref{exp rate} using also Lemma \ref{unifbounded} we conclude that there exists a constant $ C>0 $ independent of $ p\in\bnd $ such that $ |\ou(y,p)-\ou_\infty(p)|\leq Ce^{-\frac{y}{2}} $.
	\end{proof}
\end{corollary}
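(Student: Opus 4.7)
The plan is to re-examine the proof of Lemma \ref{exp rate} and verify that every constant appearing there can be taken independent of $p\in\bnd$. Since the structure of the argument is already in place, the task reduces to tracking uniformity through the chain of estimates.

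First I would observe that the source term $G_p$ satisfies the uniform bound $0\leq G_p(x)\leq \Arrowvert g\Arrowvert_1 e^{-x}$ since $|n\cdot N_p|\leq 1$, regardless of $p$. Combined with Lemma \ref{unifbounded}, which gives a $p$-independent bound $\Arrowvert \ou(\cdot,p)\Arrowvert_\infty\leq C$, this yields a uniform pointwise control on $W$, namely $|W(x)|\leq C\bigl(e^{-|x|}+|x|K(x)\rchi_{\{x<0\}}\bigr)$. Using $|x|K(x)\leq \tfrac{1}{2}e^{-|x|}$ (from Proposition \ref{propK}), all moments $m_n(W)$ are bounded by constants $C_n$ independent of $p$. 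Consequently $\hat{W}\in C_0(\RR)\cap C^\infty(\RR)\cap L^2(\RR)$ with $\Arrowvert \hat{W}^{(n)}\Arrowvert_\infty\leq C_n/\sqrt{2\pi}$ uniformly in $p$.

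Next I would revisit the asymptotic expansion of $\hat{W}$ carried out via integration by parts in \eqref{higher order}. The bounds there rely on the derivative estimate $|G_p''(x)|\leq 2\pi\Arrowvert g\Arrowvert_\infty \tfrac{e^{-x}}{x}$ (uniform in $p$) and on the uniform sup-norm of $\ou(\cdot,p)$ from Lemma \ref{unifbounded}. Therefore the remainder estimate $\bigl|\hat{W}(\xi)-\tfrac{u(0)}{\sqrt{2\pi}i\xi}\bigr|\leq \tfrac{C}{|\xi|^{1+\delta}}$ as $|\xi|\to\infty$ holds with a constant $C>0$ independent of $p$. Plugging this into the definitions of $\hat h$ in \eqref{behavior u} and $\hat v$ in \eqref{vhat}, and using that $F(\xi)=\tfrac{\xi-\arctan\xi}{\xi}$ is a fixed function independent of $p$ whose only zero is removable by the construction, I obtain the uniform decay estimate $|\hat{v}(\xi)|\leq \tfrac{C}{|1+\xi^{1+\delta}|}$ with $C$ independent of $p$.

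Finally I would invoke the contour integral argument from Lemma \ref{exp rate} verbatim: $\hat v$ is analytic in the strip $S=\{|\Im(z)|<3/4\}$ (this is a statement about the fixed functions $\hat W$ and $F$, not about the specific $p$, so uniformity is inherited), and shifting the contour to $\Im(\xi)=1/2$ together with the uniform decay of $|\hat v|$ gives $|v(x)|\leq \overline{C}e^{-x/2}$ for $x>0$ with $\overline C$ independent of $p$. Claim (iii) of Lemma \ref{exp rate} then yields the conclusion $|\ou(y,p)-\ou_\infty(p)|\leq Ce^{-y/2}$. The main point of the proof is purely bookkeeping: no new estimate is required beyond verifying that the constants in Lemma \ref{exp rate} depend only on $\Arrowvert g\Arrowvert_\infty$, $\Arrowvert g\Arrowvert_1$, and the universal bound on $\ou$ from Lemma \ref{unifbounded}, none of which depend on $p$.
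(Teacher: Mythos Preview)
Your proposal is correct and follows essentially the same approach as the paper: both arguments revisit the proof of Lemma \ref{exp rate} and verify, via Lemma \ref{unifbounded} and the uniform bounds $|G_p(x)|\leq \Arrowvert g\Arrowvert_1 e^{-x}$ and $|G_p''(x)|\leq 2\pi\Arrowvert g\Arrowvert_\infty e^{-x}/x$, that every constant in the chain leading to $|\hat v(\xi)|\leq C/|1+\xi^{1+\delta}|$ and the subsequent contour shift is independent of $p$. Your write-up is in fact slightly more explicit about the role of the analyticity in the strip and claim (iii), but the substance is identical.
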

Next, using again the maximum principle we can also show that $ \ou(y,p) $ is Lipschitz continuous with respect to $ p\in\bnd $ uniformly in $ y $.
\begin{lemma}\label{ou continuous in p}
	Let $ g_\nu(n) $ be as in Theorem \ref{existence} and let $ \ou $ be the unique bounded solution to \eqref{bvpgreyboundary1}. Then $ \ou $ is uniformly continuous with respect the variable $ p\in\bnd $ uniformly in $ y $. More precisely, it is Lipschitz continuous, i.e. there exists a constant $ C>0 $ such that for every $ p,q\in\bnd $
	\begin{equation*}\label{ou cont p 1} 
	\sup_{y\geq 0}\left|\ou(y,p)-\ou(y,q)\right|\leq C|p-q|:= \omega_1(|p-q|).
	\end{equation*}
	\begin{proof}
		The proof is based on the maximum principle. We start taking $ 0<\tilde{\delta}<1 $ sufficiently small and we consider $ p,q\in\bnd $ with $ |p-q|<\tilde{\delta} $. We denote by $ S_p(q) $ the plane defined by the vector $ \overset{\rightharpoonup}{pq} $ and the unit vector $ N_p $. Given that $ \bnd $ is a $ C^3 $-surface we can define $ \rho_p $ to be the radius of curvature of the curve $ C_p(q):=S_p(q)\cap\bnd $ at $ p $. Since by assumption the curvature of $ \bnd $ is bounded from below by a positive constant, for $ \tilde{\delta} $ small enough we can estimate
		\begin{equation}\label{ou cont p}
		\frac{1}{2}\rho_p\theta_{pq}\leq |p-q|\leq 2 \rho_p\theta_{pq},
		\end{equation}
		where $ \theta_{pq} $ is the angle between $ N_p $ and $ N_q $. This is true, because for $ \tilde{\delta} $ sufficiently small the angle $ \theta_{pq} $ is not zero and it is approximately the central angle between the rays connecting $ p $ and $ q $ with the center of the circle with radius $ \rho_p $ tangent to $ p $. We denote by $ R $ the minimal radius of curvature of $ \bnd $, hence $ \rho_p\geq R $. Now we consider the operator $ \LL $ acting on the difference $ \ou(y,p)-\ou(y,q) $. We can estimate its absolute value by the sum of the following six terms
		\begin{equation}\label{ou cont p 2}
		\begin{split}
		\left|\LL\left(\right.\right.&\left.\left.\ou(y,p)-\ou(y,q)\right)\right|\leq\int_{A_1}\int_0^\infty g_\nu(n)e^{-\frac{y}{|n\cdot N_p|}}\;d\nu\;dn+\int_{A_2}\int_0^\infty g_\nu(n)e^{-\frac{y}{|n\cdot N_q|}}\;d\nu\;dn \\&+ \int_{A_3}\int_0^\infty g_\nu(n)\left|e^{-\frac{y}{|n\cdot N_p|}}-e^{-\frac{y}{|n\cdot N_q|}}\right|\;d\nu\;dn+\int_{A_4} \int_0^\infty g_\nu(n)\left|e^{-\frac{y}{|n\cdot N_p|}}-e^{-\frac{y}{|n\cdot N_q|}}\right|\;d\nu\;dn\\
		&+ \int_{A_5}\int_0^\infty g_\nu(n)\left|e^{-\frac{y}{|n\cdot N_p|}}-e^{-\frac{y}{|n\cdot N_q|}}\right|\;d\nu\;dn+\int_{A_6} \int_0^\infty g_\nu(n)\left|e^{-\frac{y}{|n\cdot N_p|}}-e^{-\frac{y}{|n\cdot N_q|}}\right|\;d\nu\;dn,
		\end{split}
		\end{equation}
		where we denote by $ A_i $ the following sets
		\begin{equation*}
			\begin{split}
			A_1&:=\left\{n\in\Ss^2: n\cdot N_p<0,\;n\cdot N_q\geq 0\right\},\;\;\;\;A_2:=\left\{n\in\Ss^2: n\cdot N_p\geq 0,\;n\cdot N_q< 0\right\},\\
			A_3&:=\left\{n\in\Ss^2: n\cdot N_p<0,\;n\cdot N_q< 0,\; |n\cdot N_p|\geq |n\cdot N_q|,|;|n\cdot N_p|> \frac{4}{R}|p-q|\right\},\\
			A_4&:=\left\{n\in\Ss^2: n\cdot N_p<0,\;n\cdot N_q< 0,\; |n\cdot N_p|\geq |n\cdot N_q|,|;|n\cdot N_p|\leq \frac{4}{R}|p-q|\right\},\\
			A_5&:=\left\{n\in\Ss^2: n\cdot N_p<0,\;n\cdot N_q< 0,\; |n\cdot N_q|\geq |n\cdot N_p|,|;|n\cdot N_q|> \frac{4}{R}|p-q|\right\} \text{ and }\\
			A_6&:=\left\{n\in\Ss^2: n\cdot N_p<0,\;n\cdot N_q< 0,\; |n\cdot N_q|\geq |n\cdot N_p|,|;|n\cdot N_q|\leq \frac{4}{R}|p-q|\right\}.\\
			\end{split}
		\end{equation*}
		By symmetry, we need to estimate only the first, the third and the fourth terms. We start with the first line of equation \eqref{ou cont p 2}. The set $ A_1 $ is contained by the set given by all the $ n $ such that their angle with $ N_p $ is in the interval $ (\frac{\pi}{2},\frac{\pi}{2}+\theta_{pq}) $. Using the fact that $ \frac{y}{|n\cdot N_p|}>y $, we estimate the exponential by $ e^{-y} $ and hence we see
		\begin{equation}\label{ou cont p 3}
		\int_{A_1} \int_0^\infty g_\nu(n)e^{-\frac{y}{|n\cdot N_p|}}\;d\nu\;dn\leq \Arrowvert g\Arrowvert_\infty 2\pi \theta_{pq} e^{-y}\leq \frac{4\pi}{R}\Arrowvert g\Arrowvert_\infty e^{-y}.
		\end{equation}
		The second term in \eqref{ou cont p 2} is estimated similarly.
		For the third term of equation \eqref{ou cont p 2} we estimate the difference of the exponential as follows, assuming $  |n\cdot N_p|\geq |n\cdot N_q|$
		\begin{equation*}\label{ou cont p 4}
		\left|e^{-\frac{y}{|n\cdot N_p|}}-e^{-\frac{y}{|n\cdot N_q|}}\right|\leq e^{-\frac{y}{|n\cdot N_p|}}y \left|\frac{1}{|n\cdot N_q|}-\frac{1}{|n\cdot N_p|}\right|\leq e^{-\frac{y}{|n\cdot N_p|}}y \left|\frac{|n\cdot N_p|-|n\cdot N_q|}{|n\cdot N_q||n\cdot N_p|}\right|,
		\end{equation*}
		where we used for $ x>0 $ the inequality $ 1-e^{-x}\leq x $. By definition $ |n\cdot (N_p-N_q)|\leq \theta_{pq}\leq \frac{2}{R}|p-q| $ which implies
		\begin{equation*}
		0\leq |n\cdot N_p|-|n\cdot N_q|=|n\cdot(N_q-N_p)|\leq \frac{2}{R}|p-q|. 
		\end{equation*} Since $ |n\cdot N_p|>\frac{4}{R}|p-q| $ we see also that
		\begin{equation*}\label{ou cont p 5}
		|n\cdot N_q|\geq |n\cdot N_p|-\frac{2}{R}|p-q|\geq \frac{|n\cdot N_p|}{2}.\end{equation*}
		Hence, 
		\begin{equation*}
		\left|e^{-\frac{y}{|n\cdot N_p|}}-e^{-\frac{y}{|n\cdot N_q|}}\right|\leq e^{-\frac{y}{|n\cdot N_p|}}y\frac{4 |p-q|}{R|n\cdot N_p|^2}.
		\end{equation*} Putting together these inequalities we compute
		\begin{equation}\label{ou cont p 6}
		\begin{split}
		\int_{A_3} \int_0^\infty g_\nu(n)&\left|e^{-\frac{y}{|n\cdot N_p|}}-e^{-\frac{y}{|n\cdot N_q|}}\right|\;d\nu\;dn\leq \frac{4|p-q|}{R}\Arrowvert g\Arrowvert_{\infty} \int_{A_3}dn\;e^{-\frac{y}{|n\cdot N_p|}}\frac{y}{|n\cdot N_p|^2}\\\leq&\frac{4|p-q|}{R}\Arrowvert g\Arrowvert_{\infty}4\pi\int_0^{\frac{\pi}{2}}
		e^{-\frac{y}{\cos(\theta)}}\frac{y\sin(\theta)}{\cos^2(\theta)}\;d\theta=\frac{16\pi|p-q|}{R}\Arrowvert g\Arrowvert_{\infty} e^{-y},
		\end{split}
		\end{equation}
		where we estimated the last integral in $ A_3 $ using polar coordinates in $ \Ss^2 $ using as reference $ N_p $.
		It remains to estimate the integral on $ A_4 $. For this term we use the inclusion
		\begin{equation*}
			\begin{split}
			 A_4\subset&\left\{n\in\Ss^2: n\cdot N_p<0,\;|n\cdot N_p|\leq \frac{4}{R}|p-q|\right\}\\\subset&\left\{(\varphi,\theta)\in[0,2\pi]\times[0,\pi]: \theta\in \left(-\frac{\pi}{2},-\frac{\pi}{2}+C(R)|p-q|\right)\cup\left(\frac{\pi}{2}-C(R)|p-q|,\frac{\pi}{2}\right)\right\},
			\end{split}
		\end{equation*}
		where the last inclusion is due to the smallness of $ \frac{4}{R}|p-q|<1 $ and the expansion of the arc-cosine. Moreover, $ C(R) $ is a constant depending only on $ R $. Hence, as we estimated in \eqref{ou cont p 3} we have
		\begin{equation}\label{ou cont p 9}
		\int_{A_4} \int_0^\infty g_\nu(n)\left|e^{-\frac{y}{|n\cdot N_p|}}-e^{-\frac{y}{|n\cdot N_q|}}\right|\;d\nu\;dn\leq C(R)4\pi\Arrowvert g\Arrowvert_{\infty}|p-q|.
		\end{equation}
		Now, with equations \eqref{ou cont p 3},\eqref{ou cont p 6} and \eqref{ou cont p 9} we estimate the operator by
		\begin{equation*}\label{ou cont p 7}
		\left|\LL\left(\ou(y,p)-\ou(y,q)\right)\right|\leq C(R)\Arrowvert g\Arrowvert_{\infty}|p-q|e^{-y},
		\end{equation*}
		where $ C(R)>0 $ is a constant depending only on the minimal radius of curvature $ R $.
		Theorem \ref{existence} and the maximum principle imply the existence of a unique non-negative bounded continuous function $ V $ solution to the equation $ \LL(V)(y)= e^{-y} $ for $ y\geq 0 $. Hence, we apply the maximum principle of Theorem \ref{maxprinciple} as in Lemma \ref{nonnegativity} to the continuous functions $ C(R)\Arrowvert g\Arrowvert_{\infty} |p-q| V -\left(\ou(y,p)-\ou(y,q)\right) $ and $ C(R)\Arrowvert g\Arrowvert_{\infty} |p-q| V -\left(\ou(y,q)-\ou(y,p)\right) $. We conclude the uniformly continuity of $ \ou(y,p) $ in $ p $ uniformly in $ y $
		\begin{equation*}\label{ou cont p 8}
		\left|\ou(y,p)-\ou(y,q)\right|\leq C(R)\Arrowvert g\Arrowvert_{\infty}|p-q|.
		\end{equation*}
		The modulus of continuity $ \omega_1 $ is hence defined by $ \omega_1(r)= C(R)\Arrowvert g\Arrowvert_{\infty} r $.
	\end{proof}
\end{lemma}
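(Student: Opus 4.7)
The plan is to apply the non-local maximum principle of Lemma \ref{maxprinciple} to the difference $\ou(\cdot,p)-\ou(\cdot,q)$. By linearity of $\LL$, this difference satisfies
\begin{equation*}
\LL\bigl(\ou(\cdot,p)-\ou(\cdot,q)\bigr)(y)=G_p(y)-G_q(y)\quad\text{for }y>0,
\end{equation*}
and vanishes for $y<0$. Hence, once I prove a pointwise source estimate of the form $|G_p(y)-G_q(y)|\leq C(R)\Arrowvert g\Arrowvert_\infty |p-q|e^{-y}$, where $R$ is the minimal radius of curvature of $\bnd$, the result follows: Theorem \ref{existence} produces a unique bounded solution $V$ of $\LL(V)=e^{-y}$ on $\RR_+$ (with $V=0$ for $y<0$), which is nonnegative by Lemma \ref{nonnegativity}. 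Applying the maximum principle to the continuous functions $C(R)\Arrowvert g\Arrowvert_\infty|p-q|V\pm\bigl(\ou(\cdot,p)-\ou(\cdot,q)\bigr)$ yields the Lipschitz bound with constant $C=C(R)\Arrowvert g\Arrowvert_\infty\Arrowvert V\Arrowvert_\infty$.

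The real work is thus the estimate on $G_p-G_q$. Using that $\bnd$ is $C^3$ with strictly positive curvature, for $|p-q|$ small enough the angle $\theta_{pq}$ between $N_p$ and $N_q$ satisfies $\theta_{pq}\lesssim |p-q|/R$. I would then decompose $\Ss^2$ into three pieces and estimate each contribution separately:
\begin{enumerate}
\item[(i)] the spherical band where $n\cdot N_p$ and $n\cdot N_q$ have opposite signs; its surface measure is $O(\theta_{pq})=O(|p-q|/R)$ and the exponentials there are bounded by $e^{-y}$ (since $|n\cdot N|\leq 1$);
\item[(ii)] the non-grazing region where both $|n\cdot N_p|,|n\cdot N_q|$ exceed a multiple of $|p-q|/R$; here I use the mean value inequality $\bigl|e^{-y/|n\cdot N_p|}-e^{-y/|n\cdot N_q|}\bigr|\leq e^{-y/\max}y\cdot\bigl||n\cdot N_p|-|n\cdot N_q|\bigr|/(|n\cdot N_p||n\cdot N_q|)$, bound the numerator by $|N_p-N_q|\lesssim |p-q|/R$, replace both denominators by (say) $|n\cdot N_p|/2$, and then integrate using polar coordinates centered on $N_p$; the factor $ye^{-y/\cos\theta}/\cos^2\theta$ integrates against $\sin\theta\,d\theta$ to $e^{-y}$;
\item[(iii)] the grazing region where $\min(|n\cdot N_p|,|n\cdot N_q|)\lesssim |p-q|/R$.
\end{enumerate}

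The main obstacle is region (iii): there the exponential is extremely sensitive to the cosine, and no naive derivative bound works. The saving observation is that the set has measure $O(|p-q|/R)$ (a narrow band of angular width $\lesssim |p-q|/R$ around the equator), while the integrand itself is bounded by $2e^{-y}$, so the contribution is again $O(|p-q|e^{-y})$. Summing the three contributions gives $|G_p(y)-G_q(y)|\lesssim\Arrowvert g\Arrowvert_\infty|p-q|e^{-y}/R$ uniformly in $y\geq 0$, closing the argument. (The assumption of a strictly positive lower bound on the curvature enters precisely in ensuring that the constant in $\theta_{pq}\lesssim |p-q|$ is uniform over $p,q\in\bnd$ and that the geometric decomposition above is globally valid.)
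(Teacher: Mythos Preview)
Your proposal is correct and follows essentially the same route as the paper's proof: the paper also reduces to the source estimate $|G_p(y)-G_q(y)|\leq C(R)\Arrowvert g\Arrowvert_\infty|p-q|e^{-y}$, decomposes the sphere into the same three types of regions (sign-change band $A_1\cup A_2$, non-grazing region $A_3\cup A_5$, grazing band $A_4\cup A_6$, split further by symmetry), and concludes via the bounded solution $V$ of $\LL(V)=e^{-y}$ and the maximum principle of Lemma~\ref{maxprinciple}. The individual estimates you sketch for each region match the paper's almost line by line.
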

\begin{corollary}\label{ou infty continuous}
	The	limit $ \ou_\infty $ is Lipschitz continuous in $ p\in\bnd $.
	\begin{proof}
		This is a direct consequence of the previous Lemma \ref{ou continuous in p}. The modulus of continuity of $ \ou_\infty $ is still the same $ \omega_1 $ of $ \ou(y,p) $.
	\end{proof}
\end{corollary}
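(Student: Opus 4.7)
The plan is to pass to the limit $y\to\infty$ in the uniform-in-$y$ Lipschitz estimate provided by Lemma \ref{ou continuous in p}. By Theorem \ref{thmgrey} the pointwise limit $\overline{u}_\infty(p)=\lim_{y\to\infty}\overline{u}(y,p)$ exists for each $p\in\partial\Omega$, so the only thing to verify is the stability of the inequality under this limit.

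Concretely, I would fix $p,q\in\partial\Omega$ and invoke Lemma \ref{ou continuous in p} to obtain
\begin{equation*}
|\overline{u}(y,p)-\overline{u}(y,q)|\leq \omega_1(|p-q|)=C\|g\|_\infty|p-q|
\end{equation*}
for every $y\geq 0$, with a constant $C=C(R)$ that depends only on the minimal radius of curvature of $\partial\Omega$ and not on $y$. Since the right-hand side is independent of $y$, taking the limit $y\to\infty$ on the left (which exists by Theorem \ref{thmgrey}) yields
\begin{equation*}
|\overline{u}_\infty(p)-\overline{u}_\infty(q)|\leq C\|g\|_\infty|p-q|,
\end{equation*}
which is the desired Lipschitz continuity, with the same modulus $\omega_1$.

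There is essentially no obstacle here: the work has already been done in Lemma \ref{ou continuous in p}, whose Lipschitz constant is uniform in $y$. The only subtle point worth emphasizing in the write-up is the uniformity in $y$ of that bound, since it is precisely this uniformity (rather than a merely pointwise continuity estimate) that survives the passage to the pointwise limit as $y\to\infty$. Corollary \ref{rate} is not needed for this argument, although it would also allow an alternative proof via uniform convergence on $[Y,\infty)$ combined with the Lipschitz bound on a finite interval.
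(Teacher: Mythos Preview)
Your proof is correct and is exactly the argument the paper has in mind: pass to the limit $y\to\infty$ in the uniform-in-$y$ Lipschitz bound of Lemma \ref{ou continuous in p}, using Theorem \ref{thmgrey} for the existence of the limit. The paper's one-line proof is just a terse version of what you wrote.
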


Finally, we summarize all properties of $ \ou $ in the following proposition.
\begin{prop}\label{prop u}
	Let $ g_\nu(n) $ be as in Theorem \ref{existence} and $ \Omega $ as in the assumption. For every $ p\in\bnd $ there exists a unique non-negative bounded solution $ \ou(y,p) $ to \eqref{bvpgreyboundary1}. For every $ p\in \bnd $ the function $ \ou(\cdot,p) $ is uniformly continuous in $[0,\infty)$ and has a non-negative limit $ \ou_\infty(p)=\lim\limits_{y\to\infty}\ou(y,p) $, which is strictly positive if $ \left\{n\in\Ss:n\cdot N_p<0\text{ and }\int_0^\infty d\nu g_\nu(n)\not\equiv 0\right\} $ is not a zero measure set. Moreover, $ \ou(y,p) $ is uniformly bounded in both variables and it is Lipschitz continuous with respect to $ p\in\bnd $ uniformly on $ y\in\RR_+ $. Finally, $ \ou_\infty $ is Lipschitz continuous and there exists a constant $ C>0 $ independent of $ p\in\bnd $ such that $ |\ou(y,p)-\ou_\infty(p)|\leq Ce^{-\frac{|y|}{2}} $.
\end{prop}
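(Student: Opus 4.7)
The proposition is essentially a summary collecting all the properties of $\ou(y,p)$ established in this section, so the plan is simply to string together the previous results and verify that each required conclusion has indeed been proven. I would organize the statement into three groups: existence/uniqueness/pointwise properties for fixed $p$, the asymptotic behavior as $y\to\infty$, and the joint regularity in $(y,p)$.

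First, for a fixed $p\in\bnd$, the function $G_p$ defined after \eqref{bvpgreyboundary1} is continuous on $(0,\infty)$ and dominated by $\|g\|_1 e^{-y}\rchi_{\{y>0\}}$ because $|n\cdot N_p|\le 1$. Thus Corollary \ref{existenceG} applies and yields a unique bounded continuous solution $\ou(\cdot,p)$, which is non-negative by Lemma \ref{nonnegativity}. Uniform continuity on $[0,\infty)$ follows from Lemma \ref{unifcont}, whose right-hand side depends only on $G_p$ and $\|\ou(\cdot,p)\|_\infty$. The existence of $\ou_\infty(p)=\lim_{y\to\infty}\ou(y,p)\ge 0$ together with the positivity criterion based on the set $\{n:n\cdot N_p<0,\;\int_0^\infty g_\nu(n)\,d\nu\not\equiv 0\}$ being of positive measure is exactly the content of Theorem \ref{thmgrey}.

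For the joint control in $(y,p)$, the uniform-in-$p$ bound $\sup_{y,p}\ou(y,p)\le C$ is Lemma \ref{unifbounded}, whose proof compares $\ou(\cdot,p)$ via the maximum principle to the solution with source $\|g\|_1 e^{-y}$. The Lipschitz continuity $|\ou(y,p)-\ou(y,q)|\le C|p-q|$ uniformly in $y$ is Lemma \ref{ou continuous in p}, and from this Corollary \ref{ou infty continuous} passes to the limit $y\to\infty$ to give the Lipschitz continuity of $\ou_\infty$. Finally, the exponential convergence rate $|\ou(y,p)-\ou_\infty(p)|\le C e^{-|y|/2}$ with $C$ independent of $p$ is exactly Corollary \ref{rate}, which combines the pointwise decay estimate of Lemma \ref{exp rate} with the uniform bounds from Lemma \ref{unifbounded}.

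Since every individual claim in the proposition has already been proved in the preceding lemmas and corollaries, no new argument is required and there is no genuine obstacle. The only care to take is to make sure that the constants appearing in the Lipschitz estimates and the exponential bound truly come out independent of $p\in\bnd$; this was already checked in Lemma \ref{ou continuous in p} and Corollary \ref{rate} by tracking that they depend only on $\|g\|_\infty$, $\|g\|_1$ and on the minimal radius of curvature $R$ of $\bnd$ (which is positive by the assumption of strictly positive curvature), rather than on the particular point $p$. The proof therefore consists of a short enumeration of citations to the previously established results.
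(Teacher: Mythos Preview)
Your proposal is correct and matches the paper's treatment: Proposition~\ref{prop u} is stated without proof in the paper precisely because it is a summary of the preceding results, and you have accurately identified each supporting lemma and corollary (Corollary~\ref{existenceG}, Lemma~\ref{nonnegativity}, Lemma~\ref{unifcont}, Theorem~\ref{thmgrey}, Lemma~\ref{unifbounded}, Lemma~\ref{ou continuous in p}, Corollary~\ref{ou infty continuous}, and Corollary~\ref{rate}).
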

\section{Rigorous proof of the diffusion equilibrium approximation for constant absorption coefficient}
This section of the paper deals with the rigorous proof of the diffusion equilibrium approximation for the constant absorption coefficient case. We will show that the Stefan-Boltzmann law $ u^\eps(x)=4\pi\sigma T_\eps^4(x) $ for the temperature $ T_\eps $ associated to the boundary value problem \eqref{bvpnoscattering} converges pointwise as $ \eps\to0 $ to $ v $, the solution to the 
Dirichlet problem
\begin{equation}\label{defv}
\begin{cases}
-\Delta v =0& \text{ in } \Omega,\\v=\ou_\infty&\text{ on }\bnd,
\end{cases}
\end{equation}
where $ \ou_\infty $ is defined as in Proposition \ref{prop u}.
\subsection{Derivation of the equation for $ u^\eps $}
Let us call $ I^{\eps}_\nu $ the solution to the initial boundary value problem \eqref{bvpnoscattering}. We start with the derivation of the integral equation satisfied by $ u^\eps=4\pi\sigma T_\eps^4 $. To this end we solve by characteristics the equation 
\begin{equation*}
n\cdot \nabla_x I_\nu \left(x,n\right)= \frac{1}{\eps}\left(B_\nu\left(T\left(x\right)\right)-I_\nu \left(x,n\right)\right)
\end{equation*}
Let $ x\in\Omega $ and $ n\in\Ss^2 $. The convexity of $ \Omega $ implies the existence of a unique $ x_\Omega(x,n)\in\bnd $ connecting $ x $ in direction $ -n $ with the boundary $ \bnd $. Hence, $ \frac{x-x_\Omega(x,n)}{\left|x-x_\Omega(x,n)\right|}=n $ and we define $ s(x,n)=\left|x-x_\Omega(x,n)\right| $. Then $ x=x_\Omega(x,n)+s(x,n)n $. Integrating along the characteristics equation \eqref{bvpnoscattering} we get
\begin{equation*}\label{I eps charact}
\begin{split}
I^\eps_\nu(x,n)=g_\nu(n)e^{-\frac{\left|x-x_\Omega(x,n)\right|}{\eps}}+\frac{1}{\eps}\int_0^{s(x,n)}e^{-\frac{t}{\eps}}\Bnu{x-tn}\;dt.
\end{split}
\end{equation*}
Using the heat equation, i.e. $ \nabla_x \cdot\F=0 $ (see \eqref{bvpnoscattering}), we calculate
\begin{equation*}\label{I eps 2}
0=\intnu \intS \;n\cdot \nabla_xI^\eps_\nu(x,n)=\frac{1}{\eps}\intnu\intS\; B_\nu(T_\eps)(x)-I^\eps_\nu(x,n).
\end{equation*}
We define $ u^\eps(x)=4\pi\sigma T_\eps^4(x)=\intnu\intS B_\nu\left(T_\eps(x)\right) $ according to \eqref{sigma}. Hence also $ u^\eps(x)=\intnu\intS\;I_\nu^\eps(x,n) $. We integrate now the expression we got for the intensity and we conclude with the equation satisfied by $ u^\eps $ as follows
\begin{equation*}\label{eq u eps}
\begin{split}
u^\eps(x)&=\intnu\int_{\Ss^2}dn\;g_\nu(n)e^{-\frac{\left|x-x_\Omega(x,n)\right|}{\eps}}+\frac{1}{4\pi\eps}\intS\int_0^{s(x,n)}e^{-\frac{t}{\eps}}u^{\eps}(x-tn)\;dt\\
&=\intnu\int_{\Ss^2}dn\;g_\nu(n)e^{-\frac{\left|x-x_\Omega(x,n)\right|}{\eps}}+\frac{1}{4\pi\eps}\int_\Omega \frac{e^{-\frac{\left|x-\eta\right|}{\eps}}}{\left|x-\eta\right|^2}u^{\eps}(\eta)\;d\eta,
\end{split}
\end{equation*}
where the last equality is due to the change of variables $ \Ss^2\times (0,\infty)\to\Omega $ with $ (n,t)\mapsto x-tn=\eta $. Hence the sequence $ u^\eps $ of exact solutions solves
\begin{equation}\label{u eps}
u^\eps(x)-\int_\Omega \frac{e^{-\frac{\left|x-\eta\right|}{\eps}}}{4\pi\eps\left|x-\eta\right|^2}u^{\eps}(\eta)\;d\eta=\intnu\int_{\Ss^2}dn\;g_\nu(n)e^{-\frac{\left|x-x_\Omega(x,n)\right|}{\eps}}.
\end{equation}
We define the kernel $ K_\eps(x):=\Kern{x} $ and we notice that its integral in $ \RR^3 $ is $ 1 $.
\begin{remark}
	There exists a unique solution $ u^\eps $ continuous and bounded. We adapt the proof in \cite{jang}. The existence and uniqueness of a solution $ u^\eps \in L^\infty\left(\Omega\right)$ can be shown with the Banach fixed-point Theorem. We define for every given $ g $ and $ \eps>0 $ the self map $  A_g^\eps: L^\infty\left(\Omega\right)\to L^\infty\left(\Omega\right) $ by 
	\begin{equation*}
	 A_g^\eps(u)(x)=\int_\Omega K_\eps\left(\eta-x\right)u(\eta)\;d\eta+\intnu\int_{\Ss^2}dn\;g_\nu(n)e^{-\frac{\left|x-x_\Omega(x,n)\right|}{\eps}}.
	\end{equation*} Then since $ \int_\Omega K_\eps(\eta-x)\;d\eta<\int_{\RR^3} K_\eps(\eta-x)\;d\eta=1 $ we conclude that $ A_g^\eps $ is a contraction, hence there is a unique fixed-point, which is the desired unique solution. Moreover,  $ G^\eps_{x_\Omega}(x):=\intnu\int_{\Ss^2}dn\;g_\nu(n)e^{-\frac{\left|x-x_\Omega(x,n)\right|}{\eps}} $ is continuous and since $ u^\eps\in L^\infty\left(\Omega\right) $ and $ K_\eps(x-\cdot)\in\Lprr{1} $ we conclude that the convolution $ \int_\Omega K_\eps(\eta-x)u^{\eps}(\eta)\;d\eta $ is continuous and bounded. Hence, $ u^\eps $ is continuous and bounded. We can also extend continuously $ u^\eps $ to the boundary $ \bnd $ defining $ |x-x_\Omega(x,n)|=0 $ for $ x\in\bnd $ and $ n\cdot N_x\leq 0 $. Then using the generalized dominated convergence theorem we see that both integral terms in \eqref{u eps} are continuous up to the boundary. Hence, $ u^\eps \in C\left(\overline{\Omega}\right) $. Moreover, $ u^\eps $ is non-negative. This is because of the maximum principle as stated in the following theorem. 
\end{remark}

\begin{theorem}[Maximum Principle]\label{max principle omega}
Let $ v $ be bounded and continuous, $ v\in C\left(\overline{\Omega}\right) $. Let $ \LL_\Omega^\eps (v)(x)=v(x)-\int_\Omega K_\eps(\eta~-~x)v(\eta)\;d\eta$. Assume $ v $ satisfies one of the following properties:
\begin{enumerate}
	\item[(i)] $
	\LL_\Omega^\eps (v)(x)\geq 0$ if $x\in\Omega$;
	\item[(ii)] $ \LL_\Omega^\eps (v)(x)\geq 0$ if $ x\in O\subset\Omega $ open and $ v(x)\geq 0 $ if $x\in\Omega\setminus O$.
\end{enumerate}

Then, $ v\geq 0 $.
\begin{proof}
Let $ y\in\overline{\Omega} $ such that $ v(y)=\min_{x\in\overline{\Omega}}v(x) $. Assume $ v(y)<0 $.

Assume that property $ (i) $ holds. By continuity of the the operator we have that $ \LL^\eps_\Omega(v)(x)\geq 0 $ for all $ x\in\overline{\Omega} $. Then
\begin{equation}\label{max principle omega 1}
\begin{split}
0\leq& \LL^\eps_\Omega(v)(y)=v(y)-\int_\Omega K_\eps(\eta-y)v(\eta)\;d\eta\\=&\int_\Omega K_\eps(\eta-y)\left(v(y)-v(\eta)\right)\;d\eta +v(y)\int_{\Omega^c} K_\eps(\eta-y)\;d\eta<0,
\end{split}
\end{equation}
where we used the normalization of the kernel $ K_\eps $.
Hence, this contradiction yields $ v\geq 0 $.

Assume now that $ (ii) $ holds. Then in this case $ y\in \overline{O} $. Then again by the continuity of the operator we obtain exactly as in \eqref{max principle omega 1} a contradiction. Thus the Theorem is proved.
\end{proof}
\end{theorem}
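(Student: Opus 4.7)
The plan is to argue by contradiction in both cases, exploiting that a continuous function on the compact set $\overline{\Omega}$ attains its minimum. The crucial observation distinguishing this non-local operator from classical elliptic ones is that the kernel $K_\eps$ is normalized to integrate to $1$ over all of $\RR^3$, whereas the operator integrates only over $\Omega$. Since $\Omega$ is bounded and $K_\eps$ is strictly positive everywhere, one has the strict inequality
\begin{equation*}
\int_\Omega K_\eps(\eta-x)\,d\eta < 1 \qquad \text{for all } x\in\overline{\Omega},
\end{equation*}
which is the mechanism producing the contradiction.

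For case (i), suppose for contradiction that $\inf_{\overline{\Omega}} v < 0$, and let $y\in\overline{\Omega}$ attain this infimum, so $v(y) < 0 \leq v(y)$ and $v(y)\leq v(\eta)$ for every $\eta\in\Omega$. Using $v(y)=v(y)\int_{\RR^3} K_\eps(\eta-y)\,d\eta$, I would split
\begin{equation*}
\LL_\Omega^\eps(v)(y) \;=\; \int_\Omega K_\eps(\eta-y)\bigl(v(y)-v(\eta)\bigr)\,d\eta \;+\; v(y)\int_{\Omega^c} K_\eps(\eta-y)\,d\eta.
\end{equation*}
The first summand is $\leq 0$ by minimality, and the second is strictly negative because $v(y)<0$ and the mass of $K_\eps$ outside $\Omega$ is strictly positive. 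Since $v\in C(\overline{\Omega})$ the map $x\mapsto \LL_\Omega^\eps(v)(x)$ is continuous on $\overline{\Omega}$, so the assumption on $\Omega$ extends by continuity to $\overline{\Omega}$, yielding $0\leq \LL_\Omega^\eps(v)(y) < 0$, a contradiction.

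For case (ii), I would locate the minimizer $y$. If $y\in\Omega\setminus O$ then the hypothesis gives $v(y)\geq 0$, so the infimum is non-negative and there is nothing to prove. Otherwise $y\in\overline{O}$; by continuity of $\LL_\Omega^\eps(v)$ on $\overline{\Omega}$ the inequality $\LL_\Omega^\eps(v)\geq 0$ passes from $O$ to $\overline{O}$, and then the same splitting as in case (i) produces the contradiction $0\leq \LL_\Omega^\eps(v)(y)<0$.

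The only genuine subtlety is making sure that the non-positivity of the minimality term is actually strengthened to a strict sign: this is achieved exclusively through boundedness of $\Omega$, which guarantees that the kernel loses a strictly positive fraction of its mass outside $\Omega$, uniformly for $y\in\overline{\Omega}$. Everything else, including the reduction of case (ii) to case (i), is a soft continuity argument using $v\in C(\overline{\Omega})$ and the $L^1$-regularity of $K_\eps$.
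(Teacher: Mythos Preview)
Your proposal is correct and follows essentially the same approach as the paper: argue by contradiction, locate the global minimizer $y\in\overline{\Omega}$, use the normalization $\int_{\RR^3}K_\eps=1$ to split $\LL_\Omega^\eps(v)(y)$ into a non-positive ``minimality'' term over $\Omega$ and a strictly negative term $v(y)\int_{\Omega^c}K_\eps(\eta-y)\,d\eta$, and extend the hypothesis to the closure by continuity of the operator. The only blemish is the evident typo ``$v(y)<0\leq v(y)$'' in your case~(i).
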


\subsection{Uniform boundedness of $ u^\eps $}
In this section we will show that the sequence $ u^\eps $ is uniformly bounded in $ \eps $. We will use the maximum principle again. Indeed, we will construct functions $ \Phi^\eps $ uniformly bounded such that $ \LL^\eps_\Omega(\Phi^\eps)(x)\geq \Arrowvert g\Arrowvert_1e^{-\frac{\dist(x,\partial \Omega)}{\eps}} $. We will use this to prove $ \LL^\eps_\Omega\left(\Phi^\eps-u^\eps\right)(x)\geq 0 $ which implies using the maximum principle $ 0\leq u^\eps\leq \Phi^\eps $. The main result of this subsection is the following.

\begin{theorem}\label{interior supsol}
	There exists suitable constants $ 0<\mu<1 $, $ 0<\gamma(\mu)<\frac{1}{3} $, $ C_1,\;C_2,\;C_3>0 $ and there exists some $ \eps_0>0 $ such that the function 
	\begin{equation*}\label{supsol1}
	\Phi^\eps(x)=C_3 \Arrowvert g\Arrowvert_1 \left(C_1-\left|x\right|^2\right)+C_2\Arrowvert g\Arrowvert_1\left[\left(1-\frac{\gamma}{1+\left(\frac{d(x)}{\eps}\right)^2}\right)\wedge\left(1-\frac{\gamma}{1+\left(\frac{\mu R}{\eps}\right)^2}\right)\right],
	\end{equation*}
	for  $ a\wedge b\:=\min\left(a,b\right) $, $ R>0 $ the minimal radius of curvature $ R=\min_{x\in\bnd}R(x) $ and $ d(x):=\dist\left(x,\partial\Omega\right) $, 
	satisfies $ \LL_\Omega^\eps\left(\Phi^\eps\right)(x)\geq \Arrowvert g\Arrowvert_1 e^{-\frac{d(x)}{\eps}} $ in $ \Omega $ uniformly for all $ \eps<\eps_0 $. Moreover, the solutions $ u^\eps $ of \eqref{u eps} are uniformly bounded in $ \eps $.
\end{theorem}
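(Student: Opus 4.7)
The plan is to apply the non-local maximum principle of Theorem~\ref{max principle omega}. Since $|x-x_\Omega(x,n)|\geq d(x)$ for every admissible direction, the source on the right-hand side of \eqref{u eps} satisfies $\intnu\int_{\Ss^2}\!dn\,g_\nu(n)e^{-|x-x_\Omega(x,n)|/\eps}\leq \Arrowvert g\Arrowvert_1 e^{-d(x)/\eps}$. Hence, once we establish $\LL_\Omega^\eps(\Phi^\eps)(x)\geq \Arrowvert g\Arrowvert_1 e^{-d(x)/\eps}$ in $\Omega$, Theorem~\ref{max principle omega} applied to $\Phi^\eps-u^\eps$ yields $u^\eps\leq \Phi^\eps$, and since $\Phi^\eps\leq (C_3 C_1+C_2)\Arrowvert g\Arrowvert_1$ independently of $\eps$, the uniform boundedness of $u^\eps$ follows. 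We write $\Phi^\eps=\Phi_1^\eps+\Phi_2^\eps$ with $\Phi_1^\eps:=C_3\Arrowvert g\Arrowvert_1(C_1-|x|^2)$ and treat the two pieces separately, with $\Phi_1^\eps$ carrying the interior control and $\Phi_2^\eps$ the boundary-layer control.

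For $\Phi_1^\eps$, the identity $|\eta|^2=|x|^2+2x\cdot(\eta-x)+|\eta-x|^2$ combined with the moment computations $\int_{\RR^3}K_\eps=1$, $\int_{\RR^3}z K_\eps(z)dz=0$, and $\int_{\RR^3}|z|^2 K_\eps(z)dz=2\eps^2$ (the last by direct integration in polar coordinates) yields
\[
\LL_\Omega^\eps(C_1-|x|^2)(x)=2\eps^2+(C_1-|x|^2)\!\int_{\Omega^c}\!K_\eps(\eta-x)d\eta-2x\cdot\!\int_{\Omega^c}\!K_\eps(\eta-x)(\eta-x)d\eta-\!\int_{\Omega^c}\!K_\eps(\eta-x)|\eta-x|^2d\eta.
\]
The radial decay of $K_\eps$ bounds each $\Omega^c$-integral by a tail $\int_{|z|\geq d(x)}|z|^j K_\eps(z)dz$, which is polynomial in $d(x)/\eps$ times $e^{-d(x)/\eps}$. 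On the interior region $\{d(x)\geq \mu R\}$ these corrections are therefore super-exponentially small and $\LL_\Omega^\eps(\Phi_1^\eps)\geq C_3\Arrowvert g\Arrowvert_1\eps^2$. Near the boundary, convexity ensures that the outer half-space determined by the tangent plane at $\pi_{\bnd}(x)$ is contained in $\Omega^c$, whence $\int_{\Omega^c}K_\eps(\eta-x)d\eta\geq 1/2$ at any $x\in\bnd$; for $C_1$ large enough, the positive leading correction $(C_1-|x|^2)\int_{\Omega^c}K_\eps$ then dominates the last two terms.

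For $\Phi_2^\eps$ with $d(x)<\mu R$, set $p=\pi_{\bnd}(x)$ and apply $\Rot_p$ so that $\Rot_p(x)=(d(x),0,0)$; convexity gives $\Rot_p(\Omega)\subset\RR_+\times\RR^2$. Extending $\Phi_2^\eps\circ\Rot_p^{-1}$ to the half-space by the same formula in the normal variable $y_1$, and using the strict positive lower bound $R$ on the radius of curvature to estimate the mismatch between $\Rot_p(\Omega)$ and $\RR_+\times\RR^2$, the integral $\int_\Omega K_\eps(\eta-x)\Phi_2^\eps(\eta)d\eta$ is bounded above by the corresponding half-space integral plus a curvature remainder of order $\mu$. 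The transverse integration reduces via the planar identity \eqref{kernel} to the one-dimensional kernel $K$, and after rescaling $y_1=\eps s$ the required inequality becomes the pointwise bound for the 1D operator $\LL$ of Section~3 applied to $\psi_\gamma(s):=1-\gamma/(1+s^2)$,
\[
\LL(\psi_\gamma)(s)=\frac{e^{-s}}{2}-sK(s)-\frac{\gamma}{1+s^2}+\gamma\intup\frac{K(s-\eta)}{1+\eta^2}\,d\eta\geq \beta\, e^{-s},\qquad s\in[0,\mu R/\eps],
\]
where $\LL(1)(s)=\tfrac{1}{2}e^{-s}-sK(s)$ is extracted from Proposition~\ref{propK1}. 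The restriction $\gamma<1/3$ is precisely what forces $\LL(\psi_\gamma)(0)=\tfrac{1}{2}-\gamma(1-\intup K(\eta)/(1+\eta^2)d\eta)$ to be strictly positive with a margin $\beta>0$; the decay as $s\to\infty$ is controlled through the second moment $\int_{\RR}u^2K(u)du=2/3$ governing the cancellation between the algebraic terms.

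The main obstacle is the compatible calibration of the constants, which proceeds in the order $\gamma\to\mu\to C_2\to C_1\to C_3$: first $\gamma\in(0,1/3)$ is fixed so that the pointwise 1D bound holds with $\beta>0$; then $\mu\in(0,1)$ is taken small enough that the curvature correction in the half-space reduction is absorbed into $\beta/2$; then $C_2:=2/\beta$ converts the 1D estimate to $\LL_\Omega^\eps(\Phi_2^\eps)\geq \Arrowvert g\Arrowvert_1 e^{-d(x)/\eps}$ on $\{d<\mu R\}$; finally $C_1$ is chosen so that the leading correction for $\Phi_1^\eps$ dominates the remaining terms up to the boundary, and $C_3$ so that $2C_3\eps^2\geq e^{-\mu R/\eps}$ for all $\eps<\eps_0$, which is immediate since $e^{-\mu R/\eps}=o(\eps^N)$ for every $N$. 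The genuinely technical step is the curvature-correction estimate in the half-space reduction, since the remainder and the target $e^{-d(x)/\eps}$ are of the same order in the boundary layer.
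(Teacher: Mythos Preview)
Your approach differs substantially from the paper's. The paper never reduces to the one-dimensional operator $\LL$ of Section~3; instead it stays in three dimensions and Taylor-expands $\frac{1}{1+d_\eps(\eta)^2}$ about the evaluation point $x$, using the $C^3$-regularity of the distance function and the Hessian bound $\xi^\top\nabla^2 d(x)\,\xi\le \frac{1}{(1-\mu)R}$ (Proposition~\ref{prop dist}). The analysis then splits into four zones ($d(x)\ge R\mu$; $d(x)<M\eps$ with $M=\mu^{-2}$; $M\eps\le d(x)\le R\mu/2$; $R\mu/2<d(x)<R\mu$), the core being the third, where the second-order term $J_2$ produces the positive contribution $\tilde C\gamma(1+d_\eps^2)^{-2}$ and the curvature corrections are shown to be of relative size $O(\mu)$. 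The constants are calibrated in the order $\mu\to\gamma$ (with $\gamma<\nu_M/2$), opposite to what you propose.

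Your half-space reduction has a genuine gap: the claim that $\int_\Omega K_\eps(\eta-x)\Phi_2^\eps(\eta)\,d\eta$ is bounded by the corresponding half-space integral ``plus a curvature remainder of order $\mu$'' is not justified and, read literally, is false. Replacing $d(\eta)$ by $\eta_1$ costs an error $|\psi_\gamma'|_\infty\cdot|d(\eta)-\eta_1|/\eps\sim \gamma|\tilde\eta|^2/(R\eps)$, which after integration against $K_\eps$ is $O(\gamma\eps/R)$; in the range $\eps^{3/4}\ll d(x)<\mu R$ this swamps the positive 1D contribution $\LL(\psi_\gamma)(d(x)/\eps)\sim\gamma(\eps/d(x))^4$. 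There is, however, a clean repair you do not mention: since $d(\eta)\le\eta_1$ for $\eta\in\Omega$ (the distance to $\bnd$ is at most the distance to the tangent plane at $p=\pi_{\bnd}(x)$) and $\psi_\gamma$ is increasing, one has the pointwise inequality $\Phi_2^\eps(\eta)\le C_2\|g\|_1\,\psi_\gamma(\eta_1/\eps)$ on $\Omega$; combined with $\Omega\subset\Rot_p^{-1}(\RR_+\times\RR^2)$ and $\psi_\gamma\ge0$, this yields the half-space bound with \emph{no} remainder, and the planar identity \eqref{kernel} then gives exactly $\LL_\Omega^\eps(\Phi_2^\eps)(x)\ge C_2\|g\|_1\,\LL(\psi_\gamma)(d(x)/\eps)$.

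The residual gap is the 1D bound $\LL(\psi_\gamma)(s)\ge\beta e^{-s}$ for all $s\ge0$, which you assert but do not prove. Your endpoint heuristics are correct: at $s=0$ it follows from $\gamma<\tfrac13$, and for large $s$ the second-moment expansion gives $\LL(\psi_\gamma)(s)\sim \tfrac{\gamma}{3}\,\tfrac{3s^2-1}{(1+s^2)^3}\gg e^{-s}$. But the intermediate regime needs an argument, because $\LL(1)(s)=\tfrac{e^{-s}}{2}-sK(s)\sim e^{-s}/(2s)$ alone does \emph{not} dominate $\beta e^{-s}$, so the $\gamma$-term must switch from hurting (small $s$) to helping (large $s$) without a gap in between. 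This is precisely the computation the paper carries out in 3D via the decomposition $J_1+J_2+J_3$ in Lemma~\ref{second supsol}; your route trades the geometry of the distance function for this purely one-dimensional estimate, which is a legitimate and potentially shorter alternative, but it still owes a proof.
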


We split the proof of this theorem in two lemmas.

\begin{lemma}\label{first supsol}
	Let $ C_1:= 2\max_{x\in\overline\Omega}\left|x\right|^2+2\;\diam\left(\Omega\right)^2+4\;\diam\left(\Omega\right)+4 $, let $ 0<\eps<1 $. Then 
	\begin{equation*}\label{first supsol lem 1}
	\LL^\eps_\Omega \left(C_1-\left|x\right|^2\right)\geq 2\eps^2. 
	\end{equation*}
	\begin{proof}
		We start computing the action of $ \LL^\eps_{\RR^3} $ on $ \left|x\right|^2 $. 
		\begin{equation*}\label{first supsol lem 2}
		\begin{split}
		\LL^\eps_{\RR^3}\left[\left|\cdot\right|^2\right](x)=& \left|x\right|^2-\int_{\RR^3} K_\eps\left(\eta-x\right)\left|\eta\right|^2\;d\eta\\
		=&-\int_{\RR^3} K_\eps\left(\eta-x\right)\left|\eta-x\right|^2\;d\eta=-2\eps^2,
		\end{split}
		\end{equation*}
		where we expanded $ |\eta|^2=|x+(\eta-x)|^2 $ and we used that $ \int_{\RR^3} K_\eps=1 $ and the symmetry of the kernel $ K_\eps $.
		
		Le $ D:=\diam(\Omega) $ and let $ B=2\max_{x\in\overline\Omega}\left|x\right|^2 $ and $ \beta=2D^2+4D+4 $. Thus, $ C_1=B+\beta $. Then
		\begin{equation*}\label{first supsol lem 3}
		\begin{split}
		\LL^\eps_\Omega &\left(K+\beta-\left|\cdot\right|^2\right)(x)=(B+\beta)\int_{\Omega^c} K_\eps\left(\eta-x\right)\;d\eta\;-\LL^\eps_{\RR^3}\left[\left|\cdot\right|^2\right](x)-\int_{\Omega^c} K_\eps\left(\eta-x\right)\left|\eta\right|^2\;d\eta\\
		\geq &\left(B+\beta\right)\int_{\Omega^c} K_\eps\left(\eta-x\right)\;d\eta+2\eps^2-2\left|x\right|^2\int_{\Omega^c} K_\eps\left(\eta-x\right)\;d\eta-2\int_{\Omega^c} K_\eps\left(\eta-x\right)\left|\eta-x\right|^2\;d\eta,\\
		\end{split}
		\end{equation*}
		where we used $\left|\eta\right|^2\leq 2\left|x\right|^2+2\left|\eta-x\right|^2  $. Moreover using that $ B-2|x|^2\geq 0 $ and splitting for $ x\in\Omega $ the complement of the domain as $ \Omega^c= \left(\Omega^c\cap B_D(x)\right)\cup B_D^c(x) $ we obtain
		\begin{equation*}\label{first supsol lem 4}
		\begin{split}
		\LL^\eps_\Omega \left(K+\beta-\left|\cdot\right|^2\right)(x)\geq& 2\eps^2 +\int_{B^c_{D/\eps}(0)}K_\eps\left(\eta\right)\left(\beta-2\eps^2\left|\eta\right|^2\right)\;d\eta\\
		=&2\eps^2+\beta e^{-\frac{D}{\eps}}-e^{-\frac{D}{\eps}}\left(2D^2+4D\eps+4\eps^2\right)\geq 2\eps^2,
		\end{split}
		\end{equation*}
		where in the first inequality we used that $ 2\left|\eta-x\right|^2\leq 2D^2\leq \beta $ for $ \eta,x\in B_D(x) $ and for the integral in $ B^c_D(x) $ we changed variables $ \frac{\eta-x}{\eps}\mapsto \eta $ and we computed the resulting integral using also that $ \eps<1 $.
	\end{proof}
\end{lemma}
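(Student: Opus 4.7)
The plan is to relate $\LL^\eps_\Omega$ acting on $C_1-|x|^2$ to the action of $\LL^\eps_{\RR^3}$ on the same function, then control the boundary defect arising from integration over $\Omega^c$. The function $|x|^2$ is quadratic and $K_\eps$ is a radial probability density, so $\LL^\eps_{\RR^3}$ acts on it essentially like $-\eps^2\Delta$ up to a computable constant.

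First I would compute $\LL^\eps_{\RR^3}(|\cdot|^2)(x)$. Writing $|\eta|^2=|x|^2+2x\cdot(\eta-x)+|\eta-x|^2$ and using $\int_{\RR^3}K_\eps=1$ together with the spherical symmetry of $K_\eps$ (which kills the first-moment term), one reduces to $\LL^\eps_{\RR^3}(|\cdot|^2)(x)=-\int_{\RR^3}K_\eps(z)|z|^2\,dz$. The change of variables $z=\eps y$ turns this into $\eps^2\int_0^\infty s^2e^{-s}\,ds=2\eps^2$, giving $\LL^\eps_{\RR^3}(|\cdot|^2)(x)=-2\eps^2$.

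Second, I split $\LL^\eps_\Omega=\LL^\eps_{\RR^3}+\int_{\Omega^c}K_\eps(\eta-x)(\cdots)\,d\eta$ on $C_1-|\eta|^2$ to obtain
\begin{equation*}
\LL^\eps_\Omega(C_1-|\cdot|^2)(x)=C_1\int_{\Omega^c}K_\eps(\eta-x)\,d\eta+2\eps^2-\int_{\Omega^c}K_\eps(\eta-x)|\eta|^2\,d\eta.
\end{equation*}
Applying $|\eta|^2\leq 2|x|^2+2|\eta-x|^2$ and writing $C_1=B+\beta$ with $B=2\max_{\overline\Omega}|x|^2$ and $\beta=2D^2+4D+4$ (where $D=\diam(\Omega)$), the term $B\int_{\Omega^c}K_\eps$ dominates $2|x|^2\int_{\Omega^c}K_\eps$ since $x\in\overline\Omega$. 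Thus the problem reduces to verifying
\begin{equation*}
\int_{\Omega^c}K_\eps(\eta-x)\bigl(\beta-2|\eta-x|^2\bigr)\,d\eta\geq 0.
\end{equation*}

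Third, I decompose $\Omega^c=(\Omega^c\cap B_D(x))\cup B_D(x)^c$; since $\Omega\subset B_D(x)$, this is exhaustive. On $\Omega^c\cap B_D(x)$ we have $2|\eta-x|^2\leq 2D^2\leq\beta$, so the integrand is pointwise non-negative. On $B_D(x)^c$ I compute explicitly via $y=(\eta-x)/\eps$: spherical integration combined with the elementary identity $\int_a^\infty r^2e^{-r}\,dr=e^{-a}(a^2+2a+2)$ yields $\int_{B_D(x)^c}K_\eps\,d\eta=e^{-D/\eps}$ and $\int_{B_D(x)^c}K_\eps|\eta-x|^2\,d\eta=e^{-D/\eps}(D^2+2D\eps+2\eps^2)$. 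The outer contribution becomes $e^{-D/\eps}(\beta-2D^2-4D\eps-4\eps^2)=e^{-D/\eps}\bigl(4D(1-\eps)+4(1-\eps^2)\bigr)\geq 0$ for $0<\eps<1$, precisely by the choice of $\beta$. Combining these two non-negative contributions with the leftover $2\eps^2$ gives the claim.

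There is no real obstacle; the proof is essentially an exact computation. The only delicate point is choosing $\beta$ so that the outer-ball correction is absorbed uniformly in $\eps<1$, and the specific value $2D^2+4D+4$ is dictated by the explicit formula $e^{-a}(a^2+2a+2)$ for the truncated second moment of $r^2e^{-r}$.
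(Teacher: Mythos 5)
Your proposal is correct and follows essentially the same route as the paper: compute $\LL^\eps_{\RR^3}(|\cdot|^2)=-2\eps^2$ by symmetry, split $C_1=B+\beta$ with $B=2\max_{\overline\Omega}|x|^2$ and $\beta=2D^2+4D+4$, use $|\eta|^2\leq 2|x|^2+2|\eta-x|^2$, decompose $\Omega^c$ into $\Omega^c\cap B_D(x)$ and $B_D^c(x)$, and compute the outer tail explicitly via the truncated second moment of $r^2e^{-r}$. The only cosmetic difference is that you write the final non-negativity as $e^{-D/\eps}\bigl(4D(1-\eps)+4(1-\eps^2)\bigr)\geq 0$, whereas the paper leaves it as $\beta e^{-D/\eps}-e^{-D/\eps}(2D^2+4D\eps+4\eps^2)\geq 0$; these are the same inequality.
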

In order to proceed further with the construction of the supersolution, we will use repeatedly the distance function and its relation to the curvature of the domain's boundary. All the properties of this function can be found in the Appendix ``Boundary curvatures and distance functions" in \cite{Boundarycurvature}. It is well-know that if the boundary $ \bnd $ is $ C^3 $, then in a neighborhood of the boundary the distance function can be expanded by Taylor as 
\begin{equation}\label{taylor distance}
d(\eta)=d(x)+\nabla d(x)\cdot (\eta-x)+\frac{1}{2}\left(\eta-x\right)^\top\nabla^2 d(x)\left(\eta-x\right)+\mathcal{O}\left(\left|\eta-x\right|^3\right)
\end{equation}
Moreover, the following proposition holds.
\begin{prop}\label{prop dist}
	For $ x\in\Omega $ in a neighborhood of the boundary the gradient of the distance function is the inner normal, so that $ \left|\nabla d(x)\right|=1 $. Moreover, denoting $ R=\min_{x\in\bnd} R(x)>0$ the minimal radius of curvature and letting $ \mu\in(0,1) $ we have
	\begin{equation}\label{dist hessian}
	\xi^\top \nabla^2 d(x)\xi\leq \frac{1}{(1-\mu)R}
	\end{equation}
	for  every $ x\in\left\{y\in\Omega:\;d(y)<R\mu\right\} $ and $ \Arrowvert\xi\Arrowvert=1 $.
	\begin{proof}
		See 14.6, Appendix ``Boundary curvatures and distance functions" (\cite{Boundarycurvature}).
	\end{proof}
\end{prop}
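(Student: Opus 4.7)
The plan is to work in the tubular neighborhood of $\bnd$ and reduce both claims to the diagonalization of the shape operator. Define $\Phi:\bnd\times[0,R)\to\overline{\Omega}$ by $\Phi(p,t)=p+t\nu(p)$, where $\nu(p)=-N_p$ is the inner unit normal; since $\bnd$ is $C^3$, $\nu$ is $C^2$. For $X\in T_p\bnd$ we have $\partial_p\Phi(X)=X+t\,d\nu(X)$, and the strict positive-curvature hypothesis gives $d\nu$ eigenvalues $-\kappa_1,-\kappa_2$ with $\kappa_i\in(0,1/R]$, where $R=\min_{p\in\bnd}R(p)$. Hence for $t<\mu R$ the eigenvalues $1-t\kappa_i$ of $\partial_p\Phi|_{T_p\bnd}$ lie in $(1-\mu,1]$, and together with $\partial_t\Phi=\nu(p)\perp T_p\bnd$, the full Jacobian of $\Phi$ is non-degenerate. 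The inverse function theorem combined with the convexity of $\Omega$ (which ensures uniqueness of the nearest-point projection $\pi_\bnd$ introduced in Section 1) shows that $\Phi$ is a $C^2$-diffeomorphism onto its image, which contains $\{d<\mu R\}$.

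Since $d\circ\Phi(p,t)=t$, differentiating in $t$ yields $\nabla d(\Phi)\cdot\nu(p)=1$, and differentiating in any $X\in T_p\bnd$ gives $\nabla d(\Phi)\cdot\partial_p\Phi(X)=0$. Because $\partial_p\Phi$ bijectively maps $T_p\bnd$ onto itself and this subspace is orthogonal to $\nu(p)$, we conclude $\nabla d(\Phi(p,t))=\nu(p)$; in particular $|\nabla d(x)|=1$ and $\nabla d(x)$ is the inner normal at $\pi_\bnd(x)$. For the Hessian I differentiate the identity $\nabla d(\Phi(p,t))=\nu(p)$: differentiation in $t$ gives $\nabla^2 d(\Phi)\nu(p)=0$, so $\nu$ lies in the kernel of $\nabla^2 d(x)$; differentiation in a principal direction $X_i\in T_p\bnd$ yields $\nabla^2 d(\Phi)(1-t\kappa_i)X_i=d\nu(X_i)=-\kappa_i X_i$, so $\nabla^2 d(x)X_i=-\tfrac{\kappa_i}{1-t\kappa_i}X_i$. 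Thus, in the orthonormal basis $\{\nu,X_1,X_2\}$, $\nabla^2 d(x)$ is diagonal with eigenvalues $0$ and $-\kappa_i/(1-t\kappa_i)$, $i=1,2$.

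For $d(x)=t<\mu R$ and $\kappa_i\in(0,1/R]$ one has $1-t\kappa_i\geq 1-\mu>0$, so every non-zero eigenvalue lies in $[-1/((1-\mu)R),0)$. For any unit vector $\xi$ the value $\xi^\top\nabla^2 d(x)\xi$ is a non-negative convex combination of $0$ and these negative numbers, hence at most $0$, and a fortiori at most $1/((1-\mu)R)$. The main technical obstacle is upgrading the local invertibility of $\Phi$ to a global diffeomorphism onto $\{d<\mu R\}$: the non-degenerate Jacobian handles local invertibility, but ruling out two distinct preimages requires convexity of $\Omega$ together with the curvature hypothesis, which together guarantee the uniqueness of $\pi_\bnd$. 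Once that is in place, the rest of the argument is elementary differential geometry.
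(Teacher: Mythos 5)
Your proof is correct and, since the paper merely cites the Gilbarg--Trudinger appendix without giving an argument of its own, it essentially fills in that reference. The structure is the standard one underlying the cited lemma: parameterize a tubular neighborhood by $\Phi(p,t)=p+t\nu(p)$ with $\nu(p)=-N_p$ the inner normal, diagonalize $\partial_p\Phi$ along principal directions using $d\nu(X_i)=-\kappa_i X_i$, and differentiate $d\circ\Phi=t$ to identify $\nabla d$ with the inner normal and read off the eigenvalues $0$ and $-\kappa_i/(1-t\kappa_i)$ of $\nabla^2 d$. Under the paper's hypotheses ($\Omega$ convex with strictly positive boundary curvature, so $\kappa_i\in(0,1/R]$, and $t=d(x)<\mu R$) those eigenvalues lie in $\left[-\frac{1}{(1-\mu)R},\,0\right]$, so what you actually establish is the two-sided estimate $-\frac{1}{(1-\mu)R}\le\xi^\top\nabla^2 d(x)\xi\le 0$ for unit $\xi$. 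The one-sided bound \eqref{dist hessian} is then automatic (even $\le 0$ suffices), and where the proposition is actually invoked, namely the estimate \eqref{distance middle 3} inside Lemma \ref{second supsol}, the negative semidefiniteness you obtain would in fact allow that term to be discarded entirely.

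The step you flag but only sketch should be made precise: nondegeneracy of the Jacobian gives local invertibility of $\Phi$, and the uniqueness of $\pi_\bnd$ invoked from \eqref{proj} is asserted there only in a small neighborhood of $\bnd$, not for the whole band $\left\{d<\mu R\right\}$. The clean way to close this is the interior rolling-ball property: since $\Omega$ is convex and all principal curvatures are $\le 1/R$, each $p\in\bnd$ admits an interior tangent ball of radius $R$ contained in $\overline{\Omega}$; for $t<R$ the point $\Phi(p,t)$ lies in that ball and has $p$ as its unique nearest boundary point, which gives injectivity of $\Phi$ on $\bnd\times[0,R)$, a set that contains $\bnd\times[0,\mu R)$. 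With that filled in, your proof is complete and in fact slightly sharper than what the proposition states.
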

Using these properties of the distance function we can prove the next lemma.
\begin{lemma}\label{second supsol}
	Let $ \psi(x):=\left(1-\frac{\gamma}{1+\left(\frac{d(x)}{\eps}\right)^2}\right)\wedge\left(1-\frac{\gamma}{1+\left(\frac{\mu R}{\eps}\right)^2}\right) $. Then there exists some $ 0<\mu<1 $ small enough, $ 0<\gamma(\mu)<\frac{1}{3} $, $ 0<\eps_1<1 $ small enough and constants $ C_0:=C_0(R, \Omega, \mu, \gamma)>0 $ and $ c:=c(R, \mu,\gamma)>0 $ such that for all $ 0<\eps\leq\eps_1 $
	\begin{equation}\label{second supsol 1}
	\LL_\Omega^\eps\left(\psi\right)(x)\geq\begin{cases}
	C_0 e^{-\frac{d(x)}{\eps}} & 0<d(x)\leq \frac{R\mu}{2}\\
	-c\eps^2 & \frac{R\mu}{2}<d(x)< R\mu\\
	0& d(x)\geq R\mu\\
	\end{cases}
	\end{equation}
	\begin{proof}
		We start with some preliminary consideration on the distance function. We define $ \frac{d(\eta)}{\eps}:=\deps{\eta} $. For every $ x,\eta\in\left\{y\in\Omega:\;d(y)<R\mu\right\} $ we have using \eqref{taylor distance}
		\begin{equation}\label{taylor square distance}
		\begin{split}
		\deps{\eta}^2
		=&\deps{x}^2+\frac{2d(x)\nabla d(x)\cdot \left(\eta-x\right)}{\eps^2}+\frac{d(x)\left(\eta-x\right)^\top\nabla^2 d(x)\left(\eta-x\right)}{\eps^2}\\&+\frac{\left(\nabla d(x)\cdot (\eta-x)\right)^2}{\eps^2}+\mathcal{O}\left(\frac{d(x)}{\eps^2}\left|\eta-x\right|^3\right).
		\end{split}
		\end{equation}
		Then Taylor's expansion shows
		\begin{equation}\label{taylor fraction distance}
		\begin{split}
		\frac{1}{1+\deps{\eta}^2}=&\frac{1}{\left(1+\deps{x}^2\right)\left(1+\left[\deps{\eta}^2-\deps{x}^2\right]\frac{1}{1+\deps{x}^2}\right)}\\=&Q_\eps^{(1)}(x,\eta)+Q_\eps^{(2)}(x,\eta)+Q_\eps^{(3)}(x,\eta),\\
		\end{split}
		\end{equation}
		where we the terms $ Q_\eps^{(i)} $ are defined as follows.
		\begin{equation*}\label{Q1}
		Q_\eps^{(1)}(x,\eta)= \frac{1}{1+\deps{x}^2}-\frac{2d(x)\nabla d(x)\cdot \left(\eta-x\right)}{\eps^2\left(1+\deps{x}^2\right)^2},
		\end{equation*}
		\begin{equation*}\label{Q_2}
		Q_\eps^{(2)}(x,\eta)= -\frac{d(x)\left(\eta-x\right)^\top\nabla^2 d(x)\left(\eta-x\right)}{\eps^2\left(1+\deps{x}^2\right)^2}-\frac{\left(\nabla d(x)\cdot (\eta-x)\right)^2}{\eps^2\left(1+\deps{x}^2\right)^2}+\frac{4d^2(x)\left(\nabla d(x)\cdot (\eta-x)\right)^2}{\eps^4\left(1+\deps{x}^2\right)^3},
		\end{equation*}
		\begin{equation*}\label{Q3}
		Q_\eps^{(3)}(x,\eta)=\mathcal{O}\left(\frac{d(x)}{\eps^2}\frac{\left|\eta-x\right|^3}{\left(1+\deps{x}^2\right)^2}\right)+\mathcal{O}\left(\frac{d(x)}{\eps^4}\frac{\left|\eta-x\right|^3}{\left(1+\deps{x}^2\right)^3}\right).
		\end{equation*}
		We consider now the function $ \psi(x)$ defined in the statement of Lemma \ref{second supsol}. We take $ M=\frac{1}{\mu^2} $ for $ 0<\mu<1 $ small enough and $ 0<\eps<1 $ also small enough such that $ 0< M\eps<\frac{R\mu}{2} $, i.e. $ 0<\eps<\frac{R\mu^3}{2} $, and we decompose $ \Omega $ in four disjoint sets $$ \Omega=\left\{ d(x)\geq R\mu\right\}\cup \left\{d(x)< M\eps\right\}\cup\left\{ M\eps\leq d(x)\leq\frac{R\mu}{2}\right\}\cup\left\{\frac{R\mu}{2}<d(x)< R\mu\right\}. $$ We proceed estimating $ \LL_\Omega^\eps(\psi)(x) $ for $ x $ in each of these regions of $ \Omega $.\\
		\begin{figure}[H]\centering
			\includegraphics[height= 5 cm]{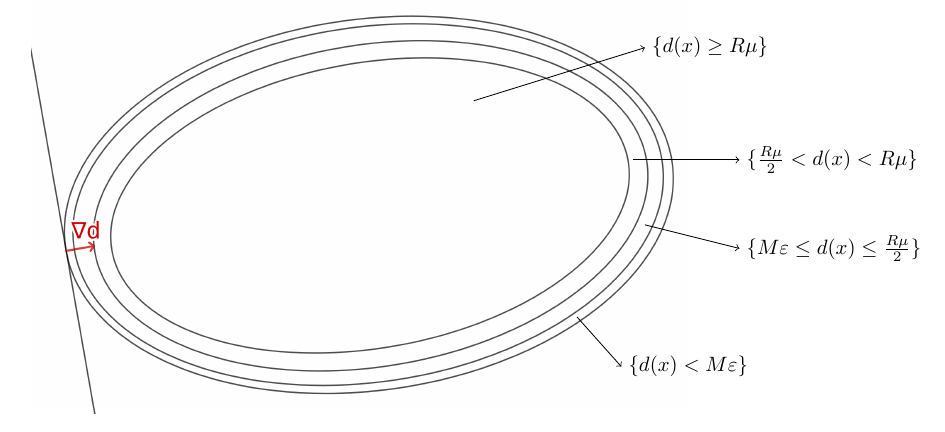}\caption{Decomposition of $ \Omega $.}
		\end{figure}
		For further reference we write
		\begin{equation}\label{L psi}
		\begin{split}
		\LL^\eps_\Omega\left(\psi\right)(x)=&\psi(x)-\int_{\Omega\cap\{d(\eta)<R\mu\}}d\eta K_\eps(\eta-x)\left(1-\frac{\gamma}{1+\deps{\eta}^2}\right)\\&-\int_{\Omega\cap\{d(\eta)\geq R\mu\}}d\eta K_\eps(\eta-x)\left(1-\frac{\gamma}{1+\left(\frac{\mu R}{\eps}\right)^2}\right).
		\end{split}
		\end{equation}
		In order to estimate $ \LL_\Omega^\eps(\psi)(x) $ in the region $ \{d(x)\geq R\mu\} $ we will use the fact that the minimum of supersolutions is again a supersolution. In the region where $ d(x)<M\eps $ we will use the explicit form of the kernel to see that the main contribution has the right sign. Finally, in the region $ \{M\eps\leq d(x)<R\mu\} $ the idea behind the arguments we will present is that $ \LL_\Omega^\eps(\psi)(x) $ can be approximated by $ -\eps^2\Delta\psi $ using Taylor.\\
		
		\noindent\textbf{Step 1: $ \{d(x)\geq R\mu\} $}
		
		First of all we notice that if $ d(x)\geq R\mu $ then $ \LL^\eps_\Omega(\psi)(x)\geq 0 $. Indeed, $\psi(\eta)	\leq\psi(x)=1- \frac{\gamma}{1+\left(\frac{\mu R}{\eps}\right)^2} $ in the first integral of \eqref{L psi} since $ d(\eta)<R\mu $ there. Hence
		\begin{equation}\label{distance big}
		\LL^\eps_\Omega\left(\psi\right)(x)\geq \LL^\eps_\Omega\left(1-\frac{\gamma}{1+\left(\frac{\mu R}{\eps}\right)^2}\right)\geq 0.
		\end{equation} 
		\noindent\textbf{Step 2: $\{ d(x)< M\eps\} $}
		
		We consider now the region $ \{d(x)< M\eps\} $. After a suitable rigid motion we can assume $ 0\in\bnd $ and $ x=(d(x), 0, 0) $. Hence, $ \Omega\subset \RR_+\times \RR^2 $ and
		\begin{equation*}\label{distance small 1}
		\begin{split}
		\int_{\Omega^c} \Kern{\eta-x}\;d\eta&\geq \int_{-\infty}^{-d(x)/\eps}K\left(\eta\right)\;d\eta\geq\int_{-\infty}^{-M}K\left(\eta\right)\;d\eta:=\nu_M>0.
		\end{split}
		\end{equation*}
		$ K $ is as usual the normalized exponential integral.		
		On the other hand, using that $ \frac{1}{1+\deps{x}^2}\leq 1 $ and choosing $ \gamma<\frac{\nu_M}{2} $ we can conclude
		\begin{equation}\label{distance small 3}
		\begin{split}
		\LL^\eps_\Omega\left(\psi\right)(x)=&-\frac{\gamma}{1+\deps{x}^2}+\int_{\Omega^c}d\eta K_\eps(\eta-x)+\gamma\int_{\Omega}d\eta K_\eps(\eta-x)\left(\frac{1}{1+\deps{\eta}^2}\vee\frac{1}{1+\left(\frac{\mu R}{\eps}\right)^2}\right)\\
		\geq&\frac{\nu_M}{2}\geq \frac{\nu_M}{2}e^{-\deps{x}},
		\end{split}
		\end{equation}
		where $ a\vee b=\max(a,b) $.\\
		
		\noindent\textbf{Step 3: $ \left\{M\eps\leq d(x)\leq \frac{R\mu}{2}\right\} $}
		
		We consider now the set $ \left\{M\eps\leq d(x)\leq \frac{R\mu}{2}\right\} $. As first step we plug \eqref{taylor fraction distance} into the right hand side of \eqref{L psi}. To this end we define three integral terms $ J_1 ,\;J_2,\; J_3$ as
		\begin{equation}\label{J1}
		\begin{split}
		J_1=&1-\frac{\gamma}{1+\deps{x}^2}-\int_{\Omega\cap\{d(\eta)<R\mu\}}d\eta\;K_\eps(\eta-x)\left(1-\gamma Q_\eps^{(1)}(x,\eta)\right)\\&-\int_{\Omega\cap\{d(\eta)\geq R\mu\}}d\eta\;K_\eps(\eta-x)\left(1-\frac{\gamma}{1+\frac{R^2\mu^2}{\eps^2}}\right),
		\end{split}
		\end{equation}
		\begin{equation}\label{J_2}
		J_2=\int_{\Omega\cap\{d(\eta)<R\mu\}}d\eta\;K_\eps(\eta-x)\left(\gamma Q_\eps^{(2)}(x,\eta)\right),
		\end{equation}
		\begin{equation}\label{J_3}
		J_3=\int_{\Omega\cap\{d(\eta)<R\mu\}}d\eta\;K_\eps(\eta-x)\left(\gamma Q_\eps^{(3)}(x,\eta)\right).
		\end{equation}
		Hence, we have
		\begin{equation}\label{Js}
		\begin{split}
		\LL_\Omega^\eps(\psi)(x)=J_1+J_2+J_3.
		\end{split}
		\end{equation}
		The main contribution to these terms is due to $ J_2 $. Therefore we start with this term and we show that for $ 0<\mu<1 $ small enough there exists a constant $ \tilde{C}(\mu)>0 $ independent of $ \eps $ such that
		\begin{equation}\label{distance middle 5}
		\begin{split}
		J_2	\geq\frac{\tilde{C}(\mu)\gamma}{\left(1+\deps{x}^2\right)^2}.
		\end{split}
		\end{equation}
		In order to prove this estimate we first notice that
		\begin{equation}\label{distance middle 1}
		\begin{split}
	\frac{4 \deps{x}^2}{\left(1+\deps{x}^2\right)}-1=&3-\frac{4 }{\left(1+\deps{x}^2\right)}\geq 3-\frac{4 }{\left(1+M^2\right)}\geq 0.
		\end{split}
		\end{equation}
		Hence, multiplying this inequality by $ K_\eps(\eta-x)
		\frac{\gamma\left(\nabla d(x)\cdot \left(\eta-x\right)\right)^2}{\eps^2\left(1+\deps{x}^2\right)^2} $ and integrating on $\{d(\eta)<R\mu\} $ we obtain
		\begin{equation}\label{distance middle 2}
		\begin{split}
		\int_{\Omega\cap\{d(\eta)<R\mu\}}&d\eta\;K_\eps(\eta-x)\left(-
		\frac{\gamma\left(\nabla d(x)\cdot \left(\eta-x\right)\right)^2}{\eps^2\left(1+\deps{x}^2\right)^2}+\frac{4\gamma d^2(x)\left(\nabla d(x)\cdot \left(\eta-x\right)\right)^2}{\eps^4\left(1+\deps{x}^2\right)^3}\right)\\
		\geq& \frac{\gamma\left(3-\frac{4}{1+M^2}\right) }{\left(1+\deps{x}^2\right)^2}\int_{B_{M\eps}(x)}d\eta\;K_\eps(\eta-x)\frac{\left(\nabla d(x)\cdot \left(\eta-x\right)\right)^2}{\eps^2}\\
		=& \frac{\gamma\left(3-\frac{4}{1+M^2}\right) }{\left(1+\deps{x}^2\right)^2}\frac{1}{4\pi}\int_0^{2\pi}d\varphi\int_0^\pi d\theta \sin(\theta)\cos^2(\theta)\int_0^M dr\: e^{-r}r^2= \frac{\gamma C(M)\left(3-\frac{4}{1+M^2}\right) }{\left(1+\deps{x}^2\right)^2},
		\end{split}
		\end{equation}
		where used that $ B_{M\eps}(x)\subset\{d(\eta)<R\mu\} $ and we define the constant $ C(M)=\frac{1}{3}\int_0^M dr\;e^{-r}r^2=\frac{1}{3}(2-2e^{-M}-2Me^{-M}-M^2e^{-M}) $ which depends on $ M=\frac{1}{\mu^2} $. Notice that $ C(M)\to \frac{2}{3} $ as $ M\to \infty $ and hence for $ M $ sufficiently large we have also $ C(M)\geq \frac{1}{2} $.\\
		
		In order to conclude the estimate for $ J_2 $ we use the result \eqref{dist hessian} to estimate the Hessian of the distance function, thus
		\begin{equation}\label{distance middle 3}
		\frac{\gamma d(x)\left(\eta-x\right)^\top\nabla^2d(x)\left(\eta-x\right)}{\eps^2\left(1+\deps{x}^2\right)^2}\leq \frac{\gamma\mu\left|\eta-x\right|^2}{\eps^2(1-\mu)\left(1+\deps{x}^2\right)^2}
		\end{equation}
		and we conclude 
		\begin{equation}\label{distance middle 4}
		-\int_{\Omega\cap\{d(\eta)<R\mu\}}d\eta\;K_\eps(\eta-x)\frac{\gamma d(x)\left(\eta-x\right)^\top\nabla^2d(x)\left(\eta-x\right)}{\eps^2\left(1+\deps{x}^2\right)^2}\geq -C \frac{\gamma\mu}{(1-\mu)\left(1+\deps{x}^2\right)^2},
		\end{equation}
		for some constant $ C>0 $.
		
		Combining \eqref{distance middle 2} and \eqref{distance middle 4} we obtain \eqref{distance middle 5}.\\
		
		We proceed now with the term $ J_1 $ in \eqref{J1}. Using the symmetry of the scalar product in $ \RR^3 $ we write
		\begin{equation}\label{distance middle 7}
		\begin{split}
		J_1=&\int_{\Omega^c}d\eta\;K_\eps(\eta-x)\left(1-\gamma Q_\eps^{(1)}(x,\eta)\right)+\int_{\Omega\cap\{d(\eta)\geq R\mu\}}d\eta\;K_\eps(\eta-x)\left(\frac{\gamma}{1+\frac{R^2\mu^2}{\eps^2}}-\gamma Q_\eps^{(1)}(x,\eta)\right)\\
		=&J_{1,1}+J_{1,2}.
		\end{split}
		\end{equation}
		
		We proceed with the estimate for $ J_{1,1} $ in \eqref{distance middle 7}. By means of a suitable coordinate system we can assume again $ 0\in\bnd $ and $ x=(d(x), 0, 0) $.  We notice that if $ \eta\in\left(-\infty, d(x)\right)\times\RR^2 $ then $ \nabla d(x)\cdot \left(\eta-x\right)=\eta_1-d(x)\leq 0 $, while if $ \eta\in\left(d(x),\infty\right)\times\RR^2 $ then $ \nabla d(x)\cdot \left(\eta-x\right)\geq 0 $. Hence, we obtain
		\begin{equation}\label{distance middle 8}
		\begin{split}
		J_{1,1}
		\geq&\int_{\Omega^c\cap\left(-\infty,d(x)\right)\times \RR^2}d\eta\;K_\eps(\eta-x)\left(1-\gamma Q_\eps^{(1)}(x,\eta)\right).\\
		\end{split}
		\end{equation}
		
		We now decompose the set $ \Omega^c\cap\left(\left(-\infty,d(x)\right)\times \RR^2\right)=\left(\left(-\infty,0\right)\times \RR^2\right)\cup\left(\Omega^c\cap\left(\left(0,d(x)\right)\times \RR^2\right)\right)  $. Using that \begin{equation}\label{distance middle identity}
		 \frac{d(x)}{\eps^2\left(1+\deps{x}^2\right)^2}=\frac{1}{d(x)\left(1+\deps{x}^2\right)}-\frac{1}{d(x)\left(1+\deps{x}^2\right)^2}
		\end{equation} and since $ \gamma<\frac{1}{3} $ we have $ 1-\frac{\gamma}{1+\deps{x}^2}>0 $ and therefore we obtain
		\begin{equation}\label{distance middle 9}
		\begin{split}
		\int_{\left(-\infty,0\right)\times \RR^2}&d\eta\;K_\eps(\eta-x)\left(1-\gamma Q_\eps^{(1)}(x,\eta)\right)\geq\int_{\left(-\infty,0\right)\times \RR^2}d\eta\;K_\eps(\eta-x)\frac{2\gamma \nabla d(x)\cdot \left(\eta-x\right)}{\deps{x}\eps \left(1+\deps{x}^2\right)}\\=&-\frac{2\gamma}{\deps{x} \left(1+\deps{x}^2\right)}\int_{\deps{x}}^\infty dz\; K\left(z\right)z
		\geq-\frac{\gamma}{2\deps{x}}\frac{1+\deps{x}}{1+\deps{x}^2}e^{-\deps{x}}\\\geq& -\frac{\gamma C}{M}\frac{1}{\left(1+\deps{x}^2\right)^2},		
		\end{split}
		\end{equation}
		where we also changed variable $ (\deps{x}-z)\mapsto z $, we used the identity \eqref{intK3} for the normalized exponential integral in Proposition \ref{propK1}, we estimated $ \deps{x}\geq M $ and finally we denote by $ C $ the constant such that $ \frac{(1+x^2)^2}{2}e^{-|x|}\leq C $.
		
		Concerning the integral in the set $ \Omega^c\cap\left(\left(0,d(x)\right)\times \RR^2\right) $ we proceed similarly using again \eqref{distance middle identity} and also the fact that if $ z>0 $ then $ z-d(x)>-d(x) $. Hence, we have
		\begin{equation}\label{distance middle 10}
		\begin{split}
		&\int_{\Omega^c\cap\left(\left(0,d(x)\right)\times \RR^2\right)}d\eta\;K_\eps(\eta-x)\left(1-\gamma Q_\eps^{(1)}(x,\eta)\right)\\
		\geq&\int_{\Omega^c\cap\left(\left(0,d(x)\right)\times \RR^2\right)}d\eta\;K_\eps(\eta-x)\left(1-\frac{\gamma}{1+\deps{x}^2}+\frac{2\gamma \nabla d(x)\cdot \left(\eta-x\right)}{\deps{x}\eps \left(1+\deps{x}^2\right)}\right)\\
		=&\int_{\Omega^c\cap\left((0,d(x)\times\RR^2\right)}dz\;K_\eps(\eta-d(x)e_1)\left(1-\frac{\gamma}{1+\deps{x}^2}+\frac{2\gamma(\eta_1-d(x))}{d(x)\left(1+\deps{x}^2\right)}\right)\geq 0
		\end{split}
		\end{equation}
		Hence, for $  M\eps\leq d(x)< R\mu $ and $ \gamma<\frac{1}{3} $ we can summarize
		\begin{equation}\label{distance middle 11}
		J_{1,1}\geq -\frac{\gamma}{\left(1+\deps{x}^2\right)^2}\frac{C}{M}.
		\end{equation}
		\begin{remark}
		Notice that the estimates \eqref{distance middle 8}-\eqref{distance middle 11} are valid in the whole region $ \{M\eps\leq d(x)<R\mu\} $.
		\end{remark}
	
		We still have to consider the integral $ J_{1,2} $ in \eqref{distance middle 7}. We notice that for all $ \eta\in\Omega $ with $ d(\eta)\geq R\mu $ we have on the one hand $ \left|\eta-x\right|\geq \frac{R\mu}{2} $ and on the other hand $ \nabla d(x)\cdot (\eta-x)\geq 0 $ since $ d(\eta)>d(x) $. We recall that $ D:=\diam\left(\Omega\right) $ and that $ \Omega\cap\{d(\eta)\geq R\mu\}\subset B_D(x) $. Therefore, we estimate
		\begin{equation}\label{distance middle 12}
		\begin{split}
		J_{1,2}\geq&-\int_{\Omega\cap\{d(\eta)\geq R\mu\}}d\eta\;K_\eps(\eta-x)\frac{\gamma}{1+\deps{x}^2}
		\geq -\frac{\gamma e^{-\frac{R\mu}{2\eps}}}{1+\deps{x}^2}\int_{B_D(0)}dz\;\frac{1}{4\pi\eps|z|^2}\\\geq&-\gamma\frac{ e^{-\frac{\deps{x}}{2}}}{1+\deps{x}^2}\frac{4D}{R\mu}\geq -\gamma C \frac{D}{R}\frac{\mu}{\left(1+\deps{x}^2\right)^2}
		\end{split}
		\end{equation}
		where we used the well-known estimate $ xe^{-x} \leq e^{-1}$ combined with $ e^{-\frac{R\mu}{4\eps}}\leq e^{-\frac{d(x)}{2\eps}} $ and we denoted by $ C $ the constant such that $ 4x(1+x^2)e^{-\frac{x}{2}}\leq C $ and finally the relation $ M=\frac{1}{\mu^2} $.\\
		
		Finally we estimate the term $ J_3 $ in \eqref{J_3}. Here we have to estimate the integral term containing the error terms $ Q_\eps^{(3)}(x,\eta) $ of the Taylor expansion \eqref{taylor square distance}.
		If $  M\eps\leq d(x)\leq \frac{R\mu}{2} $ and if $ \eps<1 $ we use $ \frac{x}{1+x^2}=\frac{1}{x}-\frac{1}{x\left(1+x^2\right)} $ and we calculate
		\begin{equation}\label{error 3}
		\begin{split}
		\gamma\int_{\Omega\cap\{d(\eta)<R\mu\}}d\eta\;& K_\eps(\eta-x)\left(\frac{d(x)}{\eps^2}\frac{\left|\eta-x\right|^3}{\left(1+\deps{x}^2\right)^2}+\frac{d(x)}{\eps^4}\frac{\left|\eta-x\right|^3}{\left(1+\deps{x}^2\right)^3}\right)\\
		\leq&\int_{\RR^3}d\eta\; \frac{\gamma e^{-|\eta|}}{4\pi}\frac{\left|\eta\right|}{\left(1+\deps{x}^2\right)^2}\left(d(x)\eps+\frac{1}{\frac{d(x)}{\eps}}-\frac{1}{\frac{d(x)}{\eps}\left(1+\deps{x}^2\right)}\right)\\
		\leq& \frac{C \gamma}{\left(1+\deps{x}^2\right)^2}\left(\frac{R\mu}{2}+\mu^2\right).
		\end{split}
		\end{equation}
		Hence, also $ J_3\geq-\frac{C \gamma}{\left(1+\deps{x}^2\right)^2}\left(\frac{R\mu}{2}+\mu^2\right) $.\\
		
		We conclude putting together estimates \eqref{distance middle 5} \eqref{distance middle 7}, \eqref{distance middle 11}, \eqref{distance middle 12} and \eqref{error 3} the existence of a constant $ C(\Omega)>0 $ independent of $ \mu, \gamma, \eps $ such that
		\begin{equation}\label{distance middle 14}
		\LL_\Omega^\eps\left(\psi\right)(x)\geq \frac{\gamma}{\left(1+\deps{x}^2\right)^2}\left[C(M)\left(3-\frac{4}{1+M^2}\right)-C(\Omega)\frac{\mu}{1-\mu}\right].
		\end{equation}
		Choosing $ 0<\mu<1 $ small enough, depending only on $ \Omega $, such that $ C(M)>\frac{1}{3} $ and $ C(\Omega)\frac{\mu}{1-\mu}<\frac{1}{6} $ we obtain
		\begin{equation}\label{distance middle 13}
		\LL_\Omega^\eps\left(\psi\right)(x)\geq\frac{\gamma}{6\left(1+\deps{x}^2\right)^2}\geq Ce^{-\frac{d(x)}{\eps}}
		\end{equation}
		for $ M\eps\leq d(x)\leq\frac{R\mu}{2} $ and some constant $ C $ depending on $ \Omega $, $ R$, $ \gamma $, $ \mu $ but independent of $ \eps $.\\
		
		\noindent\textbf{Step 4: $\left\{ \frac{R\mu}{2}<d(x)< R\mu\right\} $}
		
				
		It remains to calculate the behavior of $ \LL_\Omega^\eps(\psi) $ when $ \frac{R\mu}{2}<d(x)< R\mu $. Here, we show that there exists a constant $ c(R,\mu,\gamma) $ such that $ \LL_\Omega^\eps\left(\psi\right)(x)\geq -c\eps^2 $. We can use several results we obtained in Step 3. We decompose again the operator $ \LL_{\Omega}^\eps(\psi)(x)=J_1+J_2+J_3 $ according to \eqref{Js} using the integral terms defined in \eqref{J1}-\eqref{J_3}.\\
		
		First of all \eqref{distance middle 1} implies
		\begin{equation*}\label{distance middle middle 1}
		\int_{\Omega\cap\{d(\eta)<R\mu\}}d\eta\;K_\eps(\eta-x)\left(-
		\frac{\gamma\left(\nabla d(x)\cdot \left(\eta-x\right)\right)^2}{\eps^2\left(1+\deps{x}^2\right)^2}+\frac{4\gamma d^2(x)\left(\nabla d(x)\cdot \left(\eta-x\right)\right)^2}{\eps^4\left(1+\deps{x}^2\right)^3}\right)\geq 0
		\end{equation*}
		and hence we estimate $ J_2 $ using \eqref{distance middle 3} and \eqref{distance middle 4}
		\begin{equation}\label{distance middle middle 4}
		\begin{split}
		J_2\geq&-\int_{\Omega\cap\{d(\eta)<R\mu\}}d\eta\;K_\eps(\eta-x)\frac{\gamma d(x)\left(\eta-x\right)^\top\nabla^2d(x)\left(\eta-x\right)}{\eps^2\left(1+\deps{x}^2\right)^2}\\
		\geq& -C \frac{\gamma\mu}{(1-\mu)\left(1+\deps{x}^2\right)^2}
		\geq -\frac{8\gamma C}{(1-\mu)R^3}\eps^3,
		\end{split}
		\end{equation}
		where we used $ 1+\deps{x}^2\geq \deps{x}^2\geq \left(\frac{R\mu}{2\eps}\right)^2 $ and $ 0<\eps<\frac{R\mu^3}{2} $.\\
		
		We now proceed to estimate $ J_1 $. To this end we use again the decomposition \eqref{distance middle 7}. The estimate \eqref{distance middle 11} for $ J_{1,1} $ is also valid in the region $ \{\frac{R\mu}{2}<d(x)< R\mu\} $, as we indicated in the remark after \eqref{distance middle 11}.  Hence we have for $ \eps<\frac{R\mu^3}{2} $
		\begin{equation*}\label{distance middle middle 3}
		\begin{split}
		J_{1,1}\geq -\frac{\gamma\mu^2 C}{\left(1+\deps{x}^2\right)^2}
		\geq -\frac{8\gamma C}{R^2}\eps^3.
		\end{split}
		\end{equation*}
		Concerning the term $ J_{1,2} $ we have to argue slightly different than in Step 3. Using now the first inequality in \eqref{distance middle 12} and $ \int_{\RR^3}d\eta K_\eps(\eta-x)=1 $ we compute
		\begin{equation}\label{distance middle middle 2}
		\begin{split}
		J_{1,2}
		\geq &-\int_{\Omega\cap\{d(\eta)\geq R\mu\}}d\eta\;K_\eps(\eta-x)\frac{\gamma}{1+\deps{x}^2}\geq-\frac{\gamma}{1+\deps{x}^2}
		\geq-\frac{4\gamma}{\left(R\mu\right)^2}\eps^2.
		\end{split}
		\end{equation}
		Finally, we estimate $ J_3 $ as defined in \eqref{J_3}. Arguing as in \eqref{error 3} and using $ 1+x^2\geq x^2 $ and $ 0< \eps<\frac{R\mu^3}{2} $ we compute
		\begin{equation}\label{error 4}
		\begin{split}
		\int_{\Omega\cap\{d(\eta)<R\mu\}}d\eta\;& K_\eps(\eta-x)\left(\frac{d(x)}{\eps^2}\frac{\left|\eta-x\right|^3}{\left(1+\deps{x}^2\right)^2}+\frac{d(x)}{\eps^4}\frac{\left|\eta-x\right|^3}{\left(1+\deps{x}^2\right)^3}\right)\\
		\leq&\frac{\gamma \left(d(x)^2+1\right)}{4\pi\left(\deps{x}\right)^5}\int_{\RR^3}d\eta\;e^{-|\eta|} \left|\eta\right|
		\leq\frac{2 \gamma C \left({R^2}+2\right)}{R^3}\eps^2.
		\end{split}
		\end{equation}
		Thus, also $ J_3\geq -\frac{2 \gamma C \left({R^2}+2\right)}{R^3}\eps^2$.
	
		Hence, \eqref{distance middle middle 4},\eqref{distance middle middle 3},\eqref{distance middle middle 2} and \eqref{error 4}  imply the existence of a constant $ c(R,\mu,\gamma)>0 $ independent of $ \eps $ such that
		\begin{equation}\label{distance middle middle 6}
		 \LL_\Omega^\eps\left(\psi\right)(x)\geq -c\eps^2 
		\end{equation}for all $ \frac{R\mu}{2}<d(x)< R\mu $ .\\
		
	We know summarize the results. 
		Equations \eqref{distance big}, \eqref{distance small 3}, \eqref{distance middle 13}, \eqref{distance middle middle 6} imply the claim in \eqref{second supsol 1}. We remark that $ \mu $, $ \gamma $ and $ \eps_1 $ are chosen as follows. First of all $ \mu $ is chosen according to Step 3 as in \eqref{distance middle 14}, then $ \gamma $ is taken according to Step 2 such that $ 0<\gamma<\frac{\nu_M}{2} $ and finally $ \eps_1 $ satisfies $ 0<\eps_1< \frac{R\mu^3}{2} $. This concludes the Lemma \ref{second supsol}.
	\end{proof}
\end{lemma}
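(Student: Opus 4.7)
The shape of $\psi$ is transparent: rescaling by $\eps$ turns the non-local operator $\LL_\Omega^\eps$ into an approximation of $-\eps^2\Delta$, and the function $z\mapsto 1-\gamma/(1+z^2)$ is a positive supersolution of the Laplacian in the rescaled normal variable $z=d(x)/\eps$ on the half line, with a decay rate matched to the exponential kernel $K_\eps$. The min with the flat value at $z=\mu R/\eps$ is there to extend $\psi$ to the bulk of $\Omega$ while preserving monotonicity in $d$, which is what we will need to bound integrals against $K_\eps$ from below by comparison. My plan is thus to treat $\Omega$ as four concentric shells determined by $d(x)$: the deep interior $d(x)\geq R\mu$, a thin boundary layer $d(x)<M\eps$ with $M=\mu^{-2}$, a matching region $M\eps\leq d(x)\leq R\mu/2$ where the Taylor expansion of $d$ controls everything, and a transition zone $R\mu/2<d(x)<R\mu$ where the min kicks in.

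In the deep interior $\psi(x)$ equals the flat cap, and since $\psi(\eta)\leq\psi(x)$ everywhere (by monotonicity of $1-\gamma/(1+z^2)$ in $z$), the normalization $\int_{\RR^3}K_\eps=1$ and the localization to $\Omega$ give $\LL_\Omega^\eps(\psi)(x)\geq 0$ immediately. In the boundary-layer region $d(x)<M\eps$, I would tilt coordinates so $x=d(x)e_1$ with $\{0\}\times\RR^2$ tangent to $\bnd$, and exploit the convexity of $\Omega$ to estimate $\int_{\Omega^c}K_\eps(\eta-x)\,d\eta\geq \int_{-\infty}^{-M}K(s)\,ds=:\nu_M>0$, so that choosing $\gamma<\nu_M/2$ gives a positive constant bound, hence the desired $e^{-d(x)/\eps}$ estimate (trivial here since $d(x)/\eps\leq M$).

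The actual work is in the matching region $M\eps\leq d(x)\leq R\mu/2$. There I would Taylor expand $d_\eps(\eta)^2$ to second order around $x$ (the $C^3$ regularity of $\bnd$ gives both the smoothness of $d$ up to distance $R$ and the Hessian bound $\xi^T\nabla^2 d(x)\xi\leq 1/((1-\mu)R)$ from Proposition~\ref{prop dist}). Plugging this into $1/(1+d_\eps(\eta)^2)$ and integrating against $K_\eps(\eta-x)$ produces three groups of terms. The linear-in-$(\eta-x)$ term would vanish by symmetry in $\RR^3$ but is only killed up to a boundary correction; this correction, integrated on $\Omega^c\cap\{\eta_1<d(x)\}$, is controlled because $\nabla d(x)\cdot(\eta-x)\leq 0$ there and the remaining integral decays like $e^{-d(x)/\eps}\leq e^{-M}$, giving a contribution of order $\gamma/(M(1+d_\eps(x)^2)^2)$. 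The main positive contribution is the quadratic one: using the algebraic identity $4d_\eps(x)^2/(1+d_\eps(x)^2)-1\geq 3-4/(1+M^2)>0$ for $d_\eps(x)\geq M$ large, the combination of $(\nabla d\cdot(\eta-x))^2/\eps^2$ terms gives something $\gtrsim \gamma/(1+d_\eps(x)^2)^2$ with an absolute constant. The Hessian term gives $-C\gamma\mu/((1-\mu)(1+d_\eps(x)^2)^2)$, the cubic Taylor remainder is $O(\gamma(R\mu+\mu^2)/(1+d_\eps(x)^2)^2)$, and the contribution from $\{d(\eta)\geq R\mu\}$ is $O(\gamma\mu/(1+d_\eps(x)^2)^2)$ after using the $e^{-R\mu/(2\eps)}$ gain. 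Choosing $\mu$ first small enough that the negative terms are dominated, then $\gamma$ small enough for the boundary-layer step, yields a lower bound of order $\gamma/(1+d_\eps(x)^2)^2$, which dominates $Ce^{-d(x)/\eps}$ for $x$ in this region.

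The transition region $R\mu/2<d(x)<R\mu$ is handled by the same expansion but now without the luxury of claiming a positive main term (the min may be saturated for some $\eta$), so all terms are simply bounded in absolute value: since $1+d_\eps(x)^2\geq (R\mu/(2\eps))^2$, every negative contribution comes multiplied by at least $\eps^2$ after using $\eps<R\mu^3/2$, yielding $\LL_\Omega^\eps(\psi)(x)\geq -c\eps^2$. The hard step is the matching region: the proof hinges on the algebraic margin $3-4/(1+M^2)$ being large enough to absorb the Hessian constant $1/((1-\mu)R)$ times a geometric constant, and this is where one has to be careful in choosing $\mu$ depending only on $\Omega$ before choosing $\gamma$ depending on the resulting $M=\mu^{-2}$, and only then $\eps_1<R\mu^3/2$.
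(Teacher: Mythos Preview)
Your proposal is correct and follows essentially the same approach as the paper: the identical four-shell decomposition with $M=\mu^{-2}$, the same treatment of each region (monotonicity in the deep interior, the mass-escape bound $\nu_M$ with $\gamma<\nu_M/2$ near the boundary, the Taylor expansion of $1/(1+d_\eps(\eta)^2)$ in the matching region driven by the algebraic margin $4d_\eps^2/(1+d_\eps^2)-1\geq 3-4/(1+M^2)$ and the Hessian bound from Proposition~\ref{prop dist}, and the crude $\eps^2$ bounds in the transition zone via $1+d_\eps(x)^2\geq (R\mu/(2\eps))^2$), and the same parameter ordering $\mu\to\gamma\to\eps_1<R\mu^3/2$. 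The only detail you leave slightly implicit is the paper's further split of the boundary correction $J_1$ into $J_{1,1}$ (over $\Omega^c$) and $J_{1,2}$ (over $\Omega\cap\{d(\eta)\geq R\mu\}$), but you correctly identify both pieces and their respective orders.
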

Using Lemma \ref{first supsol} and \ref{second supsol} we can now prove Theorem \ref{interior supsol}.
\begin{proof}[(Proof of Theorem \ref{interior supsol})]
	Let $ C_1 $ be the constant defined in	Lemma \ref{first supsol} and let $ \gamma,\;\mu,\;C_0,\;c $ be as in Lemma \ref{second supsol}. We define $ C_2:=\frac{1}{C_0} $ and $ C_3:=\frac{C_0+c}{2C_0}>\frac{1}{2} $. Notice that all these constants are independent of $ \eps $. Hence, Lemma \ref{first supsol} and \ref{second supsol} imply
	\begin{equation}\label{interior supsol 1}
	\LL_\Omega^\eps\left(\Phi^\eps\right)(x)\geq\Arrowvert g\Arrowvert_1 \begin{cases}
	e^{-\frac{d(x)}{\eps}}+2C_3 \eps^2 & 0<d(x)\leq \frac{R\mu}{2},\\
	\eps^2 & \frac{R\mu}{2}<d(x)< R\mu,\\
	2C_3\eps^2& d(x)\geq R\mu,\\
	\end{cases}\geq \Arrowvert g\Arrowvert_1\begin{cases}
	 e^{-\frac{d(x)}{\eps}} & 0<d(x)\leq \frac{R\mu}{2},\\
\eps^2 & \frac{R\mu}{2}<d(x)< R\mu,\\
	\eps^2& d(x)\geq R\mu.\\
	\end{cases}
	\end{equation}
	We define now $ \eps_0:=\min\left\{1,\;a, \eps_1\right\} $ with $ a $ such that $ 2a\ln(\frac{1}{a})<\frac{R\mu}{2} $ and $ \eps_1>0 $ as in Lemma \ref{second supsol}. Then $ \eps^2\geq e^{-\frac{R\mu}{2\eps}}\geq e^{-\frac{d(x)}{\eps}} $ for all $ d(x)>\frac{R\mu}{2} $. 
	
	We now apply the maximum principle in Theorem \ref{max principle omega} to the function $ \Phi^\eps-u^\eps $. This function satisfies the continuity and boundedness assumption. Indeed, for any $ \eps>0 $ the function $ u^\eps $ is continuous and bounded as we have seen at the beginning of Section 4.1. Moreover, by construction $ \Phi^\eps $ is continuous and it is easy to see that it is even uniformly bounded since
	\begin{equation*}\label{interior supsol 2}
	0\leq \Phi^\eps(x)\leq \Arrowvert g\Arrowvert_1\left(2C_3C_1+C_2\right).
	\end{equation*}
We also have
	\begin{equation*}\label{interior supsol 3}
	\LL_\Omega^\eps\left(\Phi^\eps-u^\eps\right)(x)\geq \Arrowvert g\Arrowvert_1 e^{-\frac{d(x)}{\eps}}-\intup d\nu \int_{n\cdot N_{x_\Omega}<0}dn\; g_\nu(n)e^{-\frac{\left|x-x_\Omega(x,n)\right|}{\eps}}\geq 0,
	\end{equation*}
	since $ \left|x-x_\Omega(x,n)\right|\geq d(x) $. Hence, Theorem \ref{max principle omega} implies that $ \Phi^\eps-u^\eps\geq 0 $ and thus
	\begin{equation*}\label{interior supsol 4}
	0\leq u^\eps\leq \Phi^\eps\leq \tilde{C}<\infty
	\end{equation*}
	uniformly in $ \eps $ and $ x\in\Omega $.
\end{proof}
\subsection{Estimates of $ u^\eps-\ou $ near the boundary $ \bnd $}

 In this subsection we will prove that for each point $ p\in\bnd $ the function $ \ou $ defined in \eqref{bvpgreyboundary1} is a good approximation of $ u^\eps $ in a neighborhood of size close to $ \eps^{\frac{1}{2}} $. Notice that this neighborhood is much greater than the region of size $ \eps $. We will do it by means of the maximum principle in Theorem \ref{max principle omega}. Now we start estimating the action of the operator $ \LL_\Omega^\eps $ on $ \ou-u^\eps $.
\begin{lemma}\label{estimate op ou-u eps}
Let $ p\in\bnd $ and let $ \Rot_p $ be the isometry defined in \eqref{rotp}. Then the following holds for $ x\in\Omega $, $ \delta>0 $ sufficiently small and independent of $ \eps $ and a suitable $ 0<A<1 $ and constant $ C>0 $
\begin{equation}\label{op ou-u eps 1}
\left|\LL_\Omega^\eps\left(\ou\left(\frac{\Rot_p(\cdot)\cdot e_1}{\eps},p\right)-u^\eps\right)(x)\right|\leq C e^{-\frac{Ad(x)}{\eps}}\begin{cases}\eps^\delta & \text{ if }|x-p|<\eptw,\\ 1& \text{ if }|x-p|\geq \eptw.
\end{cases}
\end{equation}
\begin{proof}
Let us denote by $ \Pi_p $ the half space $ \Pi_p:=\Rot_p^{-1}\left(\RR_+\times \RR^2\right) $. Then the function $ \oU_\eps(x,p):=\ou\left(\frac{\Rot_p(x)\cdot e_1}{\eps},p\right) $ is a continuous bounded function which maps $ \Pi_p\times \bnd $ to $ \RR_+ $. Notice that $ \oU_\eps(x,p) $ is the solution to the planar equation \eqref{bvpgreyboundary1} before rescaling and rotating. Our plan is to approximate $ \LL_{\Omega}^\eps\left(\oU_\eps\right) $ by $ \LL_{\Pi_p}^\eps\left(\oU_\eps\right) $. Let $ x\in\Pi_p $ and $ p\in\Omega $. Using the definition of $ \ou $ in \eqref{bvpgreyboundary1} we can compute
\begin{equation*}\label{op ou-u eps 2}
\begin{split}
\int_0^\infty d\eta\;& \PlanarKern{\eta-\frac{\Rot_p(x)\cdot e_1}{\eps}} \ou(\eta,p)= \int_{\RR_+\times\RR^2} d\eta\; \frac{e^{-\left|\eta-\frac{\Rot_p(x)}{\eps}\right|}}{4\pi \left|\eta-\frac{\Rot_p(x)}{\eps}\right|^2}\ou\left(\eta_1,p\right)\\&= \int_{\RR_+\times\RR^2} d\eta\; \frac{e^{-\frac{\left|\eta-\Rot_p(x)\right|}{\eps}}}{4\pi\eps\left|\eta-\Rot_p(x)\right|^2}\ou\left(\frac{\eta_1}{\eps},p\right)=\int_{\Pi_p}d\eta\; K_\eps(\eta-x) \oU_\eps(\eta,p),
\end{split}
\end{equation*}
where we used in the first equality the translation invariance of the integral with respect to the second and third variable, the definition of the planar kernel and the definition of $ y $. For the second equality we used the change of variables $ \tilde{\eta}=\eps\eta $ and in the last identity the change of variables $ \tilde{\eta}=\Rot_p^{-1}(\eta) $ gives the result. In order to write the value of $ \LL_{\Pi_p}^\eps\left(\oU_\eps\right) $ we use once again equation \eqref{bvpgreyboundary1} and we define $ x_{\Pi_p}(x,n) $ as the point on the boundary of $ \Pi_p $ with $ \frac{x-x_{\Pi_p}(x,n)}{\left|x-x_{\Pi_p}(x,n)\right|}=n $, i.e. $ x=x_{\Pi_p}(x,n)+\left|x-x_{\Pi_p}(x,n)\right|n $ if $ n\cdot N_p<0 $. By construction we see that $ \frac{\Rot_p(x)\cdot e_1}{|n\cdot N_p|}=\left|x-x_{\Pi_p}(x,n)\right| $. Hence, 
\begin{equation*}\label{op ou-u eps 3}
\LL_{\Pi_p}^\eps\left(\oU_\eps(\cdot,p)\right)(x) =\intnu\int_{n\cdot N_{p}<0} dn\; g_\nu(n)e^{-\frac{\left|x-x_{\Pi_p}(x,n)\right|}{\eps}}.
\end{equation*}
We will hence estimate the two integrals terms on the right hand side of the following equation
\begin{equation}\label{op ou-u eps 4}
\begin{split}
\left|\LL_\Omega^\eps\left(\oU_\eps(\cdot,p)-u^\eps\right)(x)\right|\leq & \int_{\Pi_p\setminus\Omega}d\eta  K_\eps(\eta-x)\oU_\eps(\cdot,p)\\&+\intnu\intS\; g_\nu(n)\left|e^{-\frac{\left|x-x_{\Pi_p}(x,n)\right|}{\eps}}-e^{-\frac{\left|x-x_{\Omega}(x,n)\right|}{\eps}}\right|=S_1+S_2,
\end{split}
\end{equation}
where we put $ \left|x-x_{\Pi_p}(x,n)\right|=\infty $ if $ n\cdot N_p\geq0 $. \\

\noindent\textbf{Step 1: estimate of $ S_1 $.}

It always possible to estimate $ S_1 $ by $ e^{-\frac{d(x)}{\eps}} $, indeed using $ B_{d(x)}(x)\subset \Omega $ we compute
\begin{equation}\label{op ou-u eps 5}
 \int_{\Pi_p\setminus\Omega}d\eta  K_\eps(\eta-x)\oU_\eps(\eta,p)\leq  \int_{B^c_{d(x)}(x)}d\eta  K_\eps(\eta-x)\oU_\eps(\eta,p)\leq C e^{-\frac{d(x)}{\eps}},
\end{equation}
where $ C>0 $ is the uniform bound on $ \ou $ that we have obtained in Lemma \ref{unifbounded}. 

Our goal is to obtain a better estimate for the region $ |x-p|<\eptw $ (cf. \eqref{op ou-u eps 1}). Therefore we will now assume $ |x-p|<\eptw $ and since $ d(x)<|x-p| $ we can also assume $ d(x)<\eptw $.
\begin{figure}[H]\centering
\includegraphics[height=6 cm]{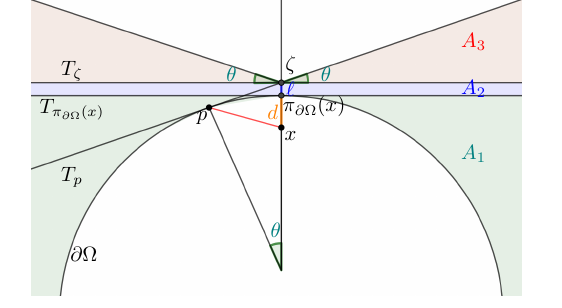}\caption{Decomposition of $ \Pi_p\setminus \Omega $.}
\end{figure}
Let $ p\in\bnd $ and $ \Pi_p $ the half space defined at the beginning of the proof. Let $ x\in\Omega $ with $ |x-p|<\eptw $ for $ \eps>0 $ as small as needed. Let $ \pi_{\bnd}(x)\in\bnd $ be the projection of $ x $ on the boundary as in \eqref{proj}. Then $ x=\pi_{\bnd}(x)-d(x)N_{\pi_{\bnd}(x)} $. We denote further by $ \theta(x) $ the angle between the normal vectors $ N_p $ and $ N_{\pi_{\bnd}(x)} $. Let $ T_p $ and $ T_{\pi_{\bnd}(x)} $ the tangent planes to $ \bnd $ containing $ p $ respectively $ \pi_{\bnd}(x) $. We define also $ \zeta=T_p\cap\left\{x+tN_{\pi_{\bnd}(x)}:t\geq 0\right\} $ and $ T_\zeta $ the plane orthogonal to $ N_{\pi_{\bnd}(x)} $ containing $ \zeta $. We denote by $ \ell(x)=\left|\zeta-N_{\pi_{\bnd}(x)}\right| $ the distance between $ T_\zeta $ and $ T_{\pi_{\bnd}(x)} $.

We decompose now $ \Pi_p\setminus\Omega $ in three larger regions, i.e. $ \Pi_p\setminus\Omega\subset A_1\cup A_2\cup A_3 $. We define $ A_1:= \Pi_{\pi_{\bnd}(x)}\setminus \Omega $, $ A_2 $ is the region containing all points between the planes $ T_{\pi_{\bnd}(x)} $ and $ T_\zeta $ and finally $ A_3:= \left\{\zeta\in\RR^3:\;0\leq N_{\pi_{\bnd}(x)}\cdot (\eta-\zeta)\leq \sin(\theta(x))|\eta-\zeta|\right\} $. The choice of these regions has been made in order to obtain integrals that are symmetric and easier to compute.

By standard differential geometry arguments we know that $ \theta(x)\leq \frac{1}{R}\eptw $ and that for some constant $ C(\Omega)>0 $ also $ \theta(x)\leq C(\Omega)|x-p| $. Moreover, denoting by $ \rho $ the radius of curvature of the curve given by the intersection of $ \bnd $ with the plane uniquely defined by $ N_p $, $ N_{\pi_{\bnd}(x)} $ and containing $ p $ we obtain 
\begin{equation*}\label{ell}
\ell(x)\leq C \rho \theta(x)^2\leq C(\Omega)|x-p|^2.
\end{equation*}
In case $ \theta=0 $ and hence $ N_{\pi_{\bnd}(x)}=N_p $ we only consider $ A_1 $.\\

\noindent\textbf{Region $ A_1 $}
 
We begin with the integral on the set $ A_1 $. Elementary differential geometry implies that $ A_1\subseteq \Pi_{\pi_{\bnd}(x)}\cap B_{\frac{R}{2}}^c\left(\pi_{\bnd}(x)-\frac{R}{2}N_{\pi_{\bnd}(x)}\right) $. Let us also denote by $ B^{c,+}_r(0):=B^c_r(0)\cap\left(\RR_+\times\RR^2\right) $. Moreover, using that $ \left\{(\eta_1,\tilde{\eta})\in[0,\frac{R}{2}]\times \RR^2:\;\left(\eta_1-\frac{R}{2}\right)^2+|\tilde{\eta}|^2\geq \frac{R^2}{4}\right\}\subseteq \left\{(\eta_1,\tilde{\eta})\in\RR_+\times \RR^2:\;|\tilde{\eta}|^2\geq \frac{R}{2}\eta_1\right\}$. \linebreak Hence, with a suitable change of variables we compute
\begin{equation}\label{A1}
\begin{split}
 \int_{A_1}&d\eta K_\eps(\eta-x)\leq \int_{\RR^2}d\tilde{\eta}
\int_{0}^{2\frac{|\tilde{\eta}|^2}{R}}d\eta_1\;\frac{e^{-\frac{\sqrt{|d(x)-\eta_1|^2+|\tilde{\eta}|^2}}{\eps}}}{4\pi \eps \left(|d(x)-\eta_1|^2+|\tilde{\eta}|^2\right)}+ \int_{B^{c,+}_{\frac{R}{2}}(0)}d\eta\; K_\eps\left(\eta-\left(d(x)-\frac{R}{2}\right)e_1\right)\\
\leq& \int_{B_{d(x)}(0)}d\tilde{\eta}
\int_{0}^{\frac{2|\tilde{\eta}|^2}{R}}d\eta_1\;\frac{e^{-\frac{\sqrt{|d(x)-\eta_1|^2+|\tilde{\eta}|^2}}{\eps}}}{4\pi \eps \left(|d(x)-\eta_1|^2+|\tilde{\eta}|^2\right)}+\int_{B_{d(x)}^c(0)}d\tilde{\eta}
\int_{0}^{2\frac{|\tilde{\eta}|^2}{R}}d\eta_1\;\frac{e^{-\frac{\sqrt{|d(x)-\eta_1|^2+|\tilde{\eta}|^2}}{\eps}}}{4\pi \eps \left(|d(x)-\eta_1|^2+|\tilde{\eta}|^2\right)}\\&+\int_{B^{c,+}_{\frac{R}{2}}(0)}d\eta\;\frac{e^{-\frac{|\eta|}{\eps}}}{\pi\eps R^2}\\
\leq& \int_{B_{d(x)}(0)}d\tilde{\eta} \frac{4|\tilde{\eta}|^2e^{-\frac{d(x)}{2\eps}}}{\pi \eps R d^2(x)}+\int_{B_{d(x)}^c(0)}d\tilde{\eta}  \frac{e^{-\frac{|\tilde{\eta}|}{\eps}}}{2\pi R \eps}+\frac{4\eps^2C}{R^2}\int_{\frac{R}{4\eps}}^{\infty} dr\;e^{-r}\\
\leq&\frac{2d^2(x)}{R\eps}e^{-\frac{d(x)}{2\eps}}+ C \eps e^{-\frac{d(x)}{2\eps}}+\frac{4\eps^2C}{R^2}e^{-\frac{R}{4\eps}}
\leq C(\Omega)\eps e^{-\frac{d(x)}{4\eps}}.\end{split}
\end{equation}
We also used that if $ (\eta_1,\tilde{\eta})\in \left[0,\frac{2|\tilde{\eta}|^2}{R} \right]\times B_{d(x)}(0) $ we can estimate
	\begin{equation}\label{op ou-u eps 8}
	|d(x)-\eta_1|=d(x)-\eta\geq d(x)\left(1-\frac{2\eptw}{R}\right)\geq \frac{d(x)}{2},
	\end{equation}
	since $ d(x)<\eptw $ and we combined \eqref{op ou-u eps 8} with $ |d(x)-\eta_1|^2+|\tilde{\eta}|^2\geq |d(x)-\eta_1|^2 $.
	If  $ (\eta_1,\tilde{\eta})\in \left[0,\frac{2|\tilde{\eta}|^2}{R} \right]\times B^c_{d(x)}(0) $ then we can estimate $ |d(x)-\eta_1|^2+|\tilde{\eta}|^2\geq|\tilde{\eta}|^2 $. Moreover, if $ \eta\in B^{c,+}_{\frac{R}{2}}(0)  $ then $ \eta_1+\frac{R}{2}-d(x)\geq \eta_1 $ and $ \left|\eta-\left(d(x)-\frac{R}{2}\right)e_1\right|\geq \frac{R}{2} $. In the third inequality we computed the first two integrals on the 2 dimensional balls using also the fact that there exists a constant $ C>0 $ such that $ e^{-x}x\leq Ce^{-\frac{x}{2}} $ if $ x\geq0 $ and the last integral holds by the existence of a constant $ C>0 $ such that $ x^2 e^{-\frac{x}{2}}\leq C$ for $ x\geq 0 $. For the last estimate we notice first of all that $ R\geq d(x) $ and we consider two different cases. If $ d(x)\leq \eps $ the result follows from the fact that $ \frac{d^2(x)}{2R\eps}\leq \frac{\eps}{2R} $. If $ d(x)\geq \eps $ we use the well-known estimate $ e^{-x}x^2\leq Ce^{-\frac{x}{2}}  $ for $ x\geq 0 $.\\

\noindent\textbf{Region $ A_2 $}
	
	We proceed with the integral on $ A_2 $. We compute using a change of variables
	\begin{equation}\label{op ou-u eps 9}
	\begin{split}
	&\int_{A_2}d\eta\; K_\eps(\eta-x)=\int_{\RR^2} d\tilde{\eta}\int_{d(x)}^{d(x)+\ell(x)} d\eta\;\frac{e^{-\frac{\sqrt{\eta^2+|\tilde{\eta}|^2}}{\eps}}}{4\pi \eps \left(\eta^2+|\tilde{\eta}|^2\right)}
	=\int_{\RR^2} d\tilde{\eta}\int_{\frac{d(x)}{\eps}}^{\frac{d(x)+\ell(x)}{\eps}} d\eta\;\frac{e^{-\sqrt{\eta^2+|\tilde{\eta}|^2}}}{4\pi \left(\eta^2+|\tilde{\eta}|^2\right)}\\
	&= \int_{\frac{d(x)}{\eps}}^{\frac{d(x)+\ell(x)}{\eps}} d\eta\;\PlanarKern{\eta},
	\end{split}
	\end{equation}
	where we rescaled by $ \eps $ and we used the definition of the normalized exponential integral $K$ as in \eqref{kernel}. The estimate of the last integral depends on the values for $ d(x) $ and $ \ell(x) $. We recall that $ d(x)<\eptw $ and that $ \ell(x)\leq C(\Omega)\eps^{1+4\delta} $. Proposition \ref{propK} implies also the following estimate for the normalized exponential integral
	\begin{equation}\label{estimateK}
	K(\eta)\leq C\begin{cases}
	1+|\ln(\eta)|&\text{ if }0\leq \eta\leq 2,\\e^{-\eta}&\text{ if }\eta\geq 1.	\end{cases}
	\end{equation}
	for some constant $ C>0 $. Let us assume first $ d(x)\geq \eps $. Then \eqref{op ou-u eps 9} and \eqref{estimateK} imply
	\begin{equation}\label{A2 1}
	\int_{A_2}d\eta\; K_\eps(\eta-x)\leq C \int_{\frac{d(x)}{\eps}}^{\frac{d(x)+\ell(x)}{\eps}} e^{-\eta}\;d\eta\leq C(\Omega)\eps^{4\delta} e^{-\frac{d(x)}{\eps}}.	\end{equation}
	Let us assume now $ d(x)<\eps $. If $ \ell(x)<d(x) $ we can use the monotonicity of the logarithm together with estimate \eqref{estimateK}. Thus,
	\begin{equation}\label{A2 2}
\begin{split}
	\int_{A_2}d\eta\; K_\eps(\eta-x)\leq& C \int_{\frac{d(x)}{\eps}}^{\frac{d(x)+\ell(x)}{\eps}} (1+|\ln(\eta)|)\;d\eta\leq C\left(\frac{\ell(x)}{\eps}+\frac{\ell(x)}{\eps}\left|\ln\left(\frac{d(x)}{\eps}\right)\right|\right)\\\leq& C(\eps^4+\eps^{\delta})\leq C\eps^\delta e^{-\frac{d(x)}{\eps}},
\end{split}
	\end{equation}
	where we used the estimates $ \sqrt{x}\left|\ln(x)\right|\leq \frac{2}{e}\leq 1 $ for all $ x\in[0,1] $ and $ e^{-1}\leq e^{-\frac{d(x)}{\eps}}$.
	
	If $ \ell(x)\geq d(x) $ we argue similarly as in \eqref{A2 2} using also $ (d(x), d(x)+\ell(x))\subset(0,2\ell(x)) $ and we conclude
	\begin{equation}\label{A2 3}
	\begin{split}
	\int_{A_2}d\eta\; K_\eps(\eta-x)\leq& C \int_{0}^{\frac{2\ell(x)}{\eps}} (1+|\ln(\eta)|)\;d\eta\leq C\left(\frac{\ell(x)}{\eps}+\frac{2\ell(x)}{\eps}\left|\ln\left(\frac{2\ell(x)}{\eps}\right)\right|\right)\\\leq& C(\eps^4+\eps^{2\delta})\leq C\eps^{2\delta} e^{-\frac{d(x)}{\eps}}.
	\end{split}
	\end{equation}
\noindent\textbf{Region $ A_3 $}

We are now ready for the estimate of the integral on the set $ A_3 $. We recall that for some constant $ C(\Omega) $ we can estimate $ \theta(x)\leq C(\Omega)|x-p|<C(\Omega)\eptw $. Arguing similarly as in \eqref{A1} we compute using $ \tan(\theta(x))\leq 2\theta(x) $ and a suitable change of variables
\begin{equation}\label{A3}
\begin{split}
\int_{A_3}&d\eta\; K_\eps(\eta-x)= \int_{\RR^2}d\tilde{\eta}
\int_{d(x)+\ell(x)}^{d(x)+\ell(x)+2\theta |\tilde{\eta}|}d\eta_1\;\frac{e^{-\frac{\sqrt{\eta_1^2+|\tilde{\eta}|^2}}{\eps}}}{4\pi \eps \left(\eta_1^2+|\tilde{\eta}|^2\right)}\\
=& \int_{B_{d(x)+\ell(x)}(0)}d\tilde{\eta}
\int_{d(x)+\ell(x)}^{d(x)+\ell(x)+2\theta |\tilde{\eta}|}d\eta_1\;\frac{e^{-\frac{\sqrt{\eta_1^2+|\tilde{\eta}|^2}}{\eps}}}{4\pi \eps \left(\eta^2+|\tilde{\eta}|^2\right)}+\int_{B_{d(x)+\ell(x)}^c(0)}d\tilde{\eta}
\int_{d(x)+\ell(x)}^{d(x)+\ell(x)+2\theta |\tilde{\eta}|}d\eta_1\;\frac{e^{-\frac{\sqrt{\eta^2+|\tilde{\eta}|^2}}{\eps}}}{4\pi \eps \left(\eta^2+|\tilde{\eta}|^2\right)}\\
\leq &\int_{B_{d(x)+\ell(x)}(0)}d\tilde{\eta} \frac{\theta|\tilde{\eta}|e^{-\frac{d(x)+\ell(x)}{\eps}}}{2\pi \eps \left(d(x)+\ell(x)\right)^2 }+\int_{B_{d(x)+\ell(x)}^c(0)}d\tilde{\eta}  \frac{\theta e^{-\frac{|\tilde{\eta}|}{\eps}}}{2\pi |\tilde{\eta}| \eps}\\
\leq& \theta\frac{d(x)+\ell(x)}{3\eps}e^{-\frac{d(x)+\ell(x)}{\eps}}+ 2\theta e^{-\frac{d(x)+\ell(x)}{\eps}}
\leq C\theta e^{-\frac{d(x)}{2\eps}}\leq C(\Omega) \eptw e^{-\frac{d(x)}{2\eps}} ,\end{split}
\end{equation}
where we used in the first inequality that $ \eta_1^2+|\tilde{\eta}|^2\geq \eta^2\geq \left(d(x)+\ell(x)\right)^2 $ and also that $ \eta_1^2+|\tilde{\eta}|^2\geq |\tilde{\eta}|^2 $ and the well-know estimate $ |x|e^{-\frac{|x|}{2}}\leq 1 $.\\

\noindent\textbf{Summarizing: estimate of $ S_1 $}

Since Lemma \ref{unifbounded} implies $ \oU_\eps\leq C(\Omega, g_\nu) $, then estimates \eqref{op ou-u eps 5}, \eqref{A1}, \eqref{A2 1}, \eqref{A2 2}, \eqref{A2 3} and \eqref{A3} yield
the existence of a constant $ C>0 $ independent of $ \eps,x,p,\delta $ such that 
\begin{equation}\label{Pip-omega}
\int_{\Pi_p\setminus\Omega}d\eta K_\eps(\eta-x)\oU_\eps(\eta,p)\leq C\begin{cases}

\eps^{\delta}e^{-\frac{d(x)}{4\eps}} &|x-p|<\eptw\\e^{-\frac{d(x)}{4\eps}}&|x-p|\geq\eptw
\end{cases}.
\end{equation}
\noindent\textbf{Step 2: estimate of $ S_2 $.}

In order to end the proof for this lemma we now estimate the integral term $ S_2 $ of \eqref{op ou-u eps 4}. If $  |x-p|\geq\eptw $ since $ \left|x-x_{\Pi_p}(x,n)\right|\geq \left|x-x_{\Omega}(x,n)\right|\geq d(x) $ we have the estimate $ S_2\leq 8\pi \Arrowvert g\Arrowvert_{\infty} e^{-\frac{d(x)}{\eps}} $. We now assume $ |x-p|< \eptw $. As before this implies $ d(x)<\eptw $. In order to estimate $ S_2 $ we will divide the integral on $ \Ss^2 $ in three integrals, which will be estimated using different approaches. \\

Figure 6 represents the decomposition we are going to consider.
We denote $ \theta_1 $ and $ \theta_2 $ the angles given by $ \tan(\theta_1)=\frac{\eptw}{\eps^{\frac{1}{2}+\delta}}=\eps^\delta $ and $ \tan(\theta_2)=2\eps^{\frac{1}{2}} $ and we denote by $ \theta(n) $ the angle between $ -n $ and $ N_p $, i.e. $ \theta(n)=\arg(\cos(-n\cdot N_p)) $. We decompose the sphere in three different regions $ \Ss^2=\mathcal{U}_1\cup \mathcal{U}_2\cup \mathcal{U}_3 $, were we define
\begin{equation*}\label{U1}
\mathcal{U}_1:=\left\{n\in\Ss^2:\;-n\cdot N_p> \sin(\theta_1) \right\},
\end{equation*}
\begin{equation*}\label{U2}
\mathcal{U}_2:=\left\{n\in\Ss^2:\;n\cdot N_p>\sin(\theta_2) \right\}
\end{equation*}
and \begin{equation*}\label{U3}
\mathcal{U}_3:=\left\{n\in\Ss^2:\; -\sin(\theta_1)\leq n\cdot N_p\leq \sin(\theta_2) \right\}.
\end{equation*}
\begin{figure}[H]
	\centering
	\includegraphics[height=5cm]{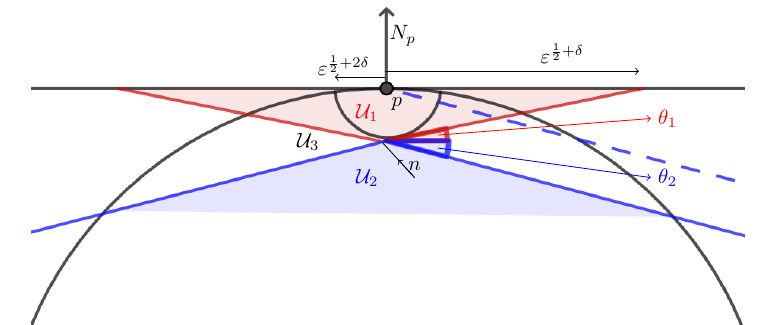}\vspace{-0.1cm}\caption{Decomposition of $ S_2 $.}
\end{figure}
\vspace{-0.1cm}\noindent\textbf{Region $ \mathcal{U}_1 $}

Let us consider $ n\in\mathcal{U}_1 $ and $ x=(y_1,y_2,y_3)^\top\in\Omega $ with $ |x-p|<\eptw $. Let us denote by $ \overline{x}_\Omega $ the intersection point of the axis-symmetric  paraboloid with curvature $ \kappa=\frac{1}{R} $ approximating from the inside of the domain $ \Omega $ the boundary $ \bnd $ in a neighborhood of the point p with the line connecting $ x $ and $ x_\Omega $. It is not difficult to see that $ |x_{\Pi_p}-x_\Omega|\leq |x_{\Pi_p}-\overline{x}_\Omega| $. Without loss of generality we can assume $ n=(n_1,n_2,0) $ with $ n_2<0 $ and as usual $ \Pi_p=\RR_+\times \RR^2 $. Hence, $ x_{\Pi_p}(x,n)= (0, \sigma,y_3) $  and $ \overline{x}_\Omega(x,n)= (x_1,\tilde{x}, y_3) $ and $ |x|<\eptw $. We see then that  $ \sigma\geq\tilde{x}  $ and $ |\sigma|\leq \frac{|x_1|}{ \tan(\theta_1)}\leq \eps^{\frac{1}{2}+\delta}  $. Using the curvature of the boundary we know that the point $ \overline{x}_\Omega $ satisfies the following system of equations 
\begin{equation*}\label{op ou-u eps 17}
\begin{cases}
x_1=\frac{2}{R}\left(\tilde{x}^2+y_3^2\right),\\x_1=\frac{\sigma-\tilde{x}}{\tan(\theta)}.
\end{cases}
\end{equation*}
Hence we calculate
\begin{equation*}\label{op ou-u eps 18}
\begin{split}
\left| x_{\Pi_p}-x_\Omega\right|\leq |x_{\Pi_p}-\overline{x}_\Omega|=\sqrt{|x_1|^2+\left(\sigma-\tilde{x}\right)^2}=\frac{\sigma-\tilde{x}}{\sin(\theta)}=\frac{2\left(\tilde{x}^2+y_3^2\right)}{R\cos(\theta)}\leq \frac{\eps^{1+2\delta}C}{R\eps^\delta }=C(\Omega)\eps^{1+\delta}.
\end{split}
\end{equation*}
Where we used that for $ 0<\eps<1 $ sufficiently small also $ \tan{\theta_1}\approx\sin{\theta_1} $. Thus, we estimate
\begin{equation}\label{op ou-u eps 19}
\begin{split}
&\intnu\int_{\mathcal{U}_1}dn\;g_\nu(n)\left|e^{-\frac{\left|x-x_{\Pi_p}(x,n)\right|}{\eps}}-e^{-\frac{\left|x-x_{\Omega}(x,n)\right|}{\eps}}\right|\\\leq& \Arrowvert g\Arrowvert_{\infty} \int_{\mathcal{U}_1}dn\; e^{-\frac{\left|x-x_{\Omega}(x,n)\right|}{\eps}}\left|1-e^{-\frac{\left|x-x_{\Pi_p}(x,n)\right|-\left|x-x_{\Omega}(x,n)\right|}{\eps}}\right|\\
\leq & 4\pi \Arrowvert g\Arrowvert_{\infty} e^{-\frac{d(x)}{\eps}}\frac{\left|x-x_{\Pi_p}(x,n)\right|-\left|x-x_{\Omega}(x,n)\right|}{\eps}=4\pi \Arrowvert g\Arrowvert_{\infty} e^{-\frac{d(x)}{\eps}}\frac{\left|x_\Omega-x_{\Pi_p}\right|}{\eps}\\\leq& 4\pi \Arrowvert g\Arrowvert_{\infty}C(\Omega)\eps^\delta e^{-\frac{d(x)}{\eps}},
\end{split}
\end{equation}
where we used that $ x,x_{\Pi_p}, x_\Omega $ lie all on the same line.\\

\noindent\textbf{Region $ \mathcal{U}_2 $}

Let us consider $ n\in\mathcal{U}_2 $ and $ x\in\Omega $ with $ |x-p|<\eptw $.
We see first of all that $ n\cdot N_p\geq \sin(\theta_2)\geq 0 $. Thus, by definition $ e^{-\frac{\left|x-x_{\Pi_p}(x,n)\right|}{\eps}}=0 $. In this case we have that $ |x-x_\Omega(x,n)|\geq |x-x_\Omega(x,\tilde{n})| $, where $ \tilde{n}\cdot N_p=\sin(\theta_2) $. We denote by $ Q\in\bnd $ the intersection of the line $ \{x+tN_p:t>0\} $ and the boundary $ \bnd $. As usual $ N_Q $ is the other normal at $ Q\in\bnd $. Since $ |x-p|<\eptw $ also $ |p-Q|<\eptw $ and hence there exists a constant $ C>0 $ such that $ \theta_{pQ}<C\eptw $, where $ \theta_{pQ} $ is the angle between $ N_p $ and $ N_Q $. Let us also denote by $ \tilde{\theta} $ the angle such that $ \tilde{n}\cdot N_Q=\sin(\tilde{\theta}) $. By a geometrical argument on the sphere it is not difficult to see that choosing $ \eps $ sufficiently small, i.e. $ 0<\eps<\min\left(\frac{3}{24}, (4C)^{-\frac{1}{2\delta}}\right) $, we have $ \tilde{\theta}\geq\theta_2-\theta_{pQ}\geq \frac{3}{2}\eps^{\frac{1}{2}} $. Choosing a suitable coordinate system we can assume $ Q=(0,0,0) $, $ N_Q=-e_1 $ and $ \tilde{n}=(-\sin(\tilde{\theta}),-\cos(\tilde{\theta}),0) $. Let us denote by $ \overline{x}_\Omega(x,\omega) $ the intersection point between the line $ \{x-t\omega:\; t>0\} $ and the axis-simmetric paraboloid with curvature $ \kappa=\frac{1}{R} $ inside $ \Omega $ tangent to $ \bnd $ at $ Q $. Then, since now $ x_\Omega(x,\tilde{n}) $ lies outside this paraboloid we obtain $ |x-x_\Omega(x,\tilde{n})|\geq|x-\overline{x}_\Omega(x,\tilde{n})| $. Moreover, notice that since the angle between the axis $ -N_Q $ and the vector $ x-Q $ is given by $ \theta_{pQ}<C\eptw $ we see that $ x $ lies inside the paraboloid choosing $ \eps $ sufficiently small.

For $ \eps>0 $ small enough we also see that $ \sin(\tilde{\theta})\geq \eps^{\frac{1}{2}} $. Hence, it is also true that for $ \tilde{N}=\left(-\eps^{\frac{1}{2}}, -\sqrt{1-\eps},0\right) $ we have $ |x-\overline{x}_\Omega(x,\tilde{n})|\geq |x-\overline{x}_\Omega(x,\tilde{N})| $. 
 Thus, let us denote by $ \overline{x}_\Omega(x,\tilde{N})=(y_1,y_2,y_3) $ and $ x=(x_1,x_2,x_3)\in\Omega $. To compute the position of $  \overline{x}_\Omega(x,\tilde{N}) $ we solve the following system.
\begin{equation*}\label{op ou-u eps 26}
\begin{cases}
y_1=\frac{1}{R}\left(y_2^2+y_3^2\right),\\
y_1=x_1+t\eps^{\frac{1}{2}},\\ y_2=x_2+t\sqrt{1-\eps},\\y_3=x_3.
\end{cases}
\end{equation*}
We want to estimate from below the value of $ t>0 $. Hence we consider the quadratic equation 
\begin{equation*}\label{op ou-u eps 27}
(1-\eps)t^2+t\left(2\sqrt{1-\eps}x_2-R\eps^{\frac{1}{2}}\right)-\Delta=0,
\end{equation*}
where $ \Delta=Rx_1-x_2^2-x_3^2\geq 0 $ since $ x\in\Omega $ inside the paraboloid. Moreover, since $ x_2<\eptw $ for $ \eps $ small enough (i.e. $ \eps<(R/4)^{1/2\delta} $) we have that $ R\eps^{\frac{1}{2}}-2\sqrt{1-\eps}x_2\geq \frac{R}{2} \eps^{\frac{1}{2}} $. Thus,
\begin{equation*}\label{op ou-u eps 28}
(1-\eps)t^2\geq \frac{R}{2} \eps^{\frac{1}{2}}t+\Delta\geq\frac{R}{2} \eps^{\frac{1}{2}}t
\end{equation*} 
and therefore for $ \eps<1 $ we have
\begin{equation*}\label{op ou-u eps 29}
 |x-x_\Omega(x,n)|\geq |x-x_\Omega(x,\tilde{n})|\geq |x-\overline{x}_\Omega(x,\tilde{N})|\geq t\geq \frac{R}{4} \eps^{\frac{1}{2}}=C(\Omega)\eps^{\frac{1}{2}}.
\end{equation*}\\ Hence, using the usual estimate $ xe^{-\frac{x}{2}}\leq 1 $ and that $ |x-x_\Omega(x,n)|\geq d(x) $ we estimate
\begin{equation}\label{op ou-u eps 23}
\begin{split}
\intnu\int_{n\cdot N_p\geq \sin(\theta_2)}dn\;g_\nu(n)e^{-\frac{\left|x-x_{\Omega}(x,n)\right|}{\eps}}&\leq \Arrowvert g\Arrowvert_{\infty} \int_{n\cdot N_p\geq \sin(\theta_2)}dn\; \frac{2\eps}{\left|x-x_{\Omega}(x,n)\right|}e^{-\frac{\left|x-x_{\Omega}(x,n)\right|}{2\eps}}\\\leq&2\pi C(\Omega)^{-1} \Arrowvert g\Arrowvert_{\infty} \eps^{\delta}e^{-\frac{d(x)}{2\eps}},
\end{split}
\end{equation}
since $ \eps^{\frac{1}{2}}\leq \eps^{\delta} $ if $ \delta\leq \frac{1}{2} $.\\
\begin{figure}[H]
	\centering
	\begin{minipage}{.5\textwidth}
		\includegraphics[width=7 cm]{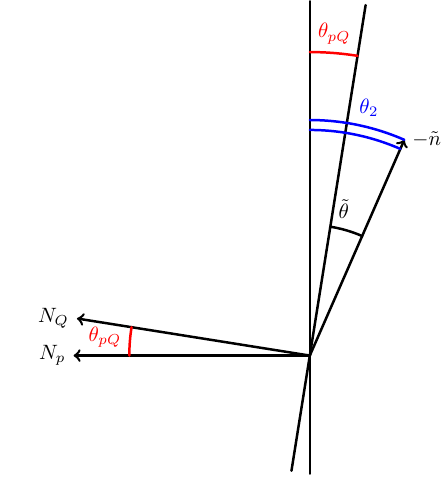}
		\captionof{figure}{Representation of the angles.}
	\end{minipage}%
	\begin{minipage}{.5\textwidth}
	
		\includegraphics[width= 9cm]{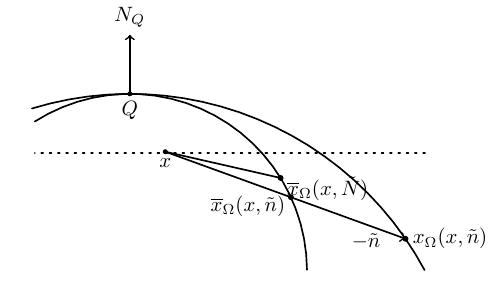}
		\captionof{figure}{Representation of the intersection points.}
	\end{minipage}%
\end{figure}
\noindent\textbf{Region $ \mathcal{U}_3 $}

Now, for the last estimate we notice that $ \left|\mathcal{U}_3\right|\leq 2\pi (\theta_1+\theta_2)\leq 4\pi \eps^\delta $ for $ \delta<\frac{1}{2}
$. Since it is always true that $ \left|e^{-\frac{\left|x-x_{\Pi_p}(x,n)\right|}{\eps}}-e^{-\frac{\left|x-x_{\Omega}(x,n)\right|}{\eps}}\right|\leq 2 e^{-\frac{d(x)}{\eps}} $, we estimate
\begin{equation}\label{op ou-u eps 24}
\intnu\int_{\mathcal{U}_3}dn\;g_\nu(n)\left|e^{-\frac{\left|x-x_{\Pi_p}(x,n)\right|}{\eps}}-e^{-\frac{\left|x-x_{\Omega}(x,n)\right|}{\eps}}\right|\leq C \Arrowvert g\Arrowvert_{\infty} \eps^{\delta}e^{-\frac{d(x)}{\eps}}.
\end{equation}
\noindent\textbf{Summarizing: estimate $ S_2 $}

We put together equations \eqref{op ou-u eps 19},\eqref{op ou-u eps 23}, \eqref{op ou-u eps 24} and we conclude
\begin{equation}\label{op ou-u eps 25}
S_2\leq C\begin{cases}
\eps^{\delta}e^{-\frac{d(x)}{2\eps}} &|x-p|<\eptw,\\e^{-\frac{d(x)}{2\eps}}&|x-p|\geq\eptw,
\end{cases}
\end{equation}
for a constant $ C>0 $ independent of $ x,p,\eps,\delta $.\\

Equations \eqref{op ou-u eps 25} and \eqref{Pip-omega} imply the lemma.
\end{proof}
\end{lemma}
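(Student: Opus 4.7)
The strategy is to compare $\oU_\eps(x,p):=\ou\!\left(\tfrac{\Rot_p(x)\cdot e_1}{\eps},p\right)$, which by construction exactly solves the planar half-space version of \eqref{bvpgreyboundary1}, with the full-domain solution $u^\eps$ of \eqref{u eps}. Setting $\Pi_p:=\Rot_p^{-1}(\RR_+\times\RR^2)$, a direct change of variables shows that $\LL_{\Pi_p}^\eps(\oU_\eps(\cdot,p))(x)=\intnu\int_{n\cdot N_p<0}dn\,g_\nu(n)e^{-|x-x_{\Pi_p}(x,n)|/\eps}$, whereas $u^\eps$ satisfies $\LL_\Omega^\eps(u^\eps)(x)=\intnu\int_{\Ss^2}dn\,g_\nu(n)e^{-|x-x_\Omega(x,n)|/\eps}$. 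The triangle inequality splits the quantity to be estimated into a volume-mismatch term
\begin{equation*}
S_1=\int_{\Pi_p\setminus\Omega}K_\eps(\eta-x)\,\oU_\eps(\eta,p)\,d\eta
\end{equation*}
and a directional-exit-point mismatch term
\begin{equation*}
S_2=\intnu\int_{\Ss^2}dn\,g_\nu(n)\left|e^{-|x-x_{\Pi_p}(x,n)|/\eps}-e^{-|x-x_\Omega(x,n)|/\eps}\right|.
\end{equation*}
In the coarse regime $|x-p|\geq\eptw$, the inclusion $B_{d(x)}(x)\subset\Omega$ together with the uniform bound on $\oU_\eps$ from Lemma \ref{unifbounded} gives $S_1\leq Ce^{-d(x)/\eps}$, while $|x-x_{\Pi_p}|,|x-x_\Omega|\geq d(x)$ forces $S_2\leq 8\pi\Arrowvert g\Arrowvert_\infty e^{-d(x)/\eps}$; this already yields the claim in this regime.

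In the refined regime $|x-p|<\eptw$ (which implies $d(x)<\eptw$), a careful geometric decomposition is required. For $S_1$, introducing the osculating paraboloid of $\bnd$ at the projection $\pi_{\bnd}(x)$, controlled by the minimal radius of curvature $R>0$, one writes $\Pi_p\setminus\Omega\subset A_1\cup A_2\cup A_3$, where $A_1$ accounts for the piece of $\Omega^c$ lying below the tangent plane at $\pi_{\bnd}(x)$, $A_2$ is a thin slab of thickness $\ell(x)\leq C(\Omega)|x-p|^2\leq C\eps^{1+4\delta}$ lying between the tangent planes at $p$ and $\pi_{\bnd}(x)$, and $A_3$ is a conical sector of opening angle $\theta(x)\leq C(\Omega)\eptw$. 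Explicit integration of $K_\eps(\eta-x)=\tfrac{e^{-|\eta-x|/\eps}}{4\pi\eps|\eta-x|^2}$ on each piece, combined with the asymptotics of the normalized exponential integral in Proposition \ref{propK} (separating the cases $d(x)\geq\eps$ and $d(x)<\eps$ in the $A_2$ integral in order to handle the logarithmic singularity of $K$ near zero), and elementary bounds of the form $xe^{-x/2}\leq C$, produces the desired factor $\eps^\delta e^{-d(x)/(4\eps)}$.

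For $S_2$ the sphere is partitioned as $\Ss^2=\mathcal{U}_1\cup\mathcal{U}_2\cup\mathcal{U}_3$ by means of the thresholds $\tan\theta_1=\eps^\delta$ and $\tan\theta_2=2\eps^{1/2}$. On $\mathcal{U}_1$ (transversely incoming rays), a paraboloid computation at $\pi_{\bnd}(x)$ gives $|x_{\Pi_p}(x,n)-x_\Omega(x,n)|\leq C(\Omega)\eps^{1+\delta}$, and the Lipschitz bound $|e^{-a}-e^{-b}|\leq e^{-\min(a,b)}|a-b|$ yields a contribution of order $\eps^\delta e^{-d(x)/\eps}$. On $\mathcal{U}_2$ the planar exponential vanishes since $n\cdot N_p\geq 0$, so only $e^{-|x-x_\Omega(x,n)|/\eps}$ remains; here, comparing $x_\Omega$ with the osculating paraboloid at an auxiliary base point $Q$ on the inner normal through $x$ and solving an explicit quadratic produces $|x-x_\Omega(x,n)|\geq C(\Omega)\eps^{1/2}$, so that $xe^{-x/2}\leq 1$ extracts a factor $\eps^{1/2}\leq\eps^\delta$ from the exponential itself. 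The thin transition region $\mathcal{U}_3$ has spherical measure $\leq 4\pi\eps^\delta$ and contributes via the trivial bound $2e^{-d(x)/\eps}$. The main obstacle is the $\mathcal{U}_2$ estimate: although the threshold angle is only $O(\eps^{1/2})$, one must rigorously verify, through the osculating-paraboloid comparison and a quadratic discriminant analysis, that the ray $x-tn$ stays inside $\Omega$ for a travel distance of order at least $\eps^{1/2}$, and this is precisely the step where the strictly positive curvature hypothesis on $\bnd$ is essential.
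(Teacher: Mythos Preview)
Your proposal is correct and follows essentially the same route as the paper's own proof: the same $S_1$/$S_2$ split, the same $A_1\cup A_2\cup A_3$ decomposition of $\Pi_p\setminus\Omega$ (with the slab thickness $\ell(x)\leq C\eps^{1+4\delta}$ and conical opening $\theta(x)\leq C\eptw$), and the same spherical decomposition $\mathcal{U}_1\cup\mathcal{U}_2\cup\mathcal{U}_3$ with the thresholds $\tan\theta_1=\eps^\delta$, $\tan\theta_2=2\eps^{1/2}$, including the osculating-paraboloid/quadratic argument on $\mathcal{U}_2$. The only cosmetic difference is that in the $\mathcal{U}_1$ step the paper centers the approximating paraboloid at $p$ rather than at $\pi_{\bnd}(x)$, which is immaterial since $|x-p|<\eptw$.
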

We are ready to construct a super-solution that will allow us to estimate $ |u^\eps-\ou| $ near the boundary at a distance smaller than $ \eps^{\frac{1}{2}} $. We recall the rigid motion $ \Rot_p $ defined in \eqref{rotp}.
\begin{prop}\label{supsol ou-u eps}
	Let $ p\in\bnd $, $ 0<A<1 $ the constant of Lemma \ref{estimate op ou-u eps}. Let $ L>0 $ large enough and $ 0<\eps<1 $ sufficiently small. Let $ 0<\delta<\frac{1}{8} $. Then there exists a non negative continuous function $ W_{\eps, L}:\Omega\to \RR_+ $ such that 
	\begin{equation}\label{supsol case}
	\begin{cases}
	W_{\eps,L}\geq C>0 &\text{ for } \left|\Rot_p(x)\cdot e_i\right|\geq \epth ;\\\LL_\Omega^\eps\left(W_{\eps,L}\right)(x)\geq C \eps^\delta e^{-\frac{Ad(x)}{\eps}}&\text{ for }\left|\Rot_p(x)\cdot e_i\right|< \epth; \\0\leq W_{\eps,L}\leq C\left(\eps^\alpha+\frac{1}{\sqrt{L}}\right) &\text{ for } \left|\Rot_p(x)\cdot e_i\right|< \epfo, 
	\end{cases}
	\end{equation}
	for some constant $ C>0 $ and $ \alpha>0 $.
	\end{prop}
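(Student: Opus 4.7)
The plan is to construct $W_{\eps,L}$ as a linear combination
\begin{equation*}
W_{\eps,L}(x)=\lambda_1\,\Phi^\eps(x)+\lambda_2\,V_{\eps,L}(x),
\end{equation*}
where $\Phi^\eps$ is the global interior supersolution built in Theorem~\ref{interior supsol} and $V_{\eps,L}$ is a tangentially-localized correction near $p$. The role of $\Phi^\eps$ is to secure property~$(1)$ through its uniform lower bound $\Phi^\eps\geq C>0$ on $\overline\Omega$ and to furnish a source of size $\|g\|_1\,e^{-d(x)/\eps}$ via $\LL_\Omega^\eps\Phi^\eps$. The role of $V_{\eps,L}$ is to supply the additional $\eps^\delta$-boost demanded on the $\epth$-cube by property~$(2)$, while staying of size $O(\eps^\alpha+1/\sqrt L)$ on the inner $\epfo$-cube of property~$(3)$.

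After rotating into the coordinates $\zeta=\Rot_p(x)$ with $\zeta_\perp=(\zeta_2,\zeta_3)$, I would take $V_{\eps,L}$ as a combination of a tangential cutoff and the quadratic template of Lemma~\ref{first supsol}, in the form
\begin{equation*}
V_{\eps,L}(x)=\frac{1}{\sqrt L}\,\Psi\!\left(\frac{|\zeta_\perp|}{\epth}\right)+\eps^\alpha(C_0-|x|^2),
\end{equation*}
where $\Psi:[0,\infty)\to[0,1]$ is a smooth non-decreasing radial profile vanishing on $[0,1/2]$ and equal to $1$ on $[1,\infty)$, and $C_0$ is chosen so that $C_0-|x|^2\geq 0$ on $\Omega$. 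The second piece is a strict supersolution by Lemma~\ref{first supsol}, yielding $\LL_\Omega^\eps(C_0-|x|^2)\geq 2\eps^2$. The first piece is designed so that, for $x$ close to $\bnd$, the mass of $K_\eps$ falling outside $\Omega$ produces an effective source
\begin{equation*}
\frac{1}{\sqrt L}\,\Psi\!\left(\tfrac{|\zeta_\perp|}{\epth}\right)\int_{\Omega^c}K_\eps(y-x)\,dy\;\gtrsim\;\frac{1}{\sqrt L}\,e^{-d(x)/\eps},
\end{equation*}
which matches the required $\eps^\delta e^{-Ad(x)/\eps}$ provided $L\leq c\,\eps^{-2\delta}$.

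The verification then follows the three-zone Taylor analysis of Lemma~\ref{second supsol}: decompose $\Omega$ into the shells $\{d(x)<M\eps\}$, $\{M\eps\leq d(x)\leq R\mu/2\}$ and $\{d(x)\geq R\mu/2\}$, expand $V_{\eps,L}$ to second order inside $\int_\Omega K_\eps(y-x)V_{\eps,L}(y)\,dy$, and compare against the required lower bound. Near $\bnd$ the dominant contribution comes from the mass-loss term just described and from the quadratic piece (cf.\ Step~2 of Lemma~\ref{second supsol}); in the intermediate and outer shells the quadratic template together with $\LL_\Omega^\eps\Phi^\eps\gtrsim e^{-d(x)/\eps}$ absorb the required source (whose exponential factor makes the bound essentially vacuous for $d(x)\gg\eps$). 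Property~$(1)$ follows from $\Psi\equiv 1$ outside the $\epth$-cube combined with $\Phi^\eps\geq C_\Omega>0$, and property~$(3)$ from $\Psi\equiv 0$ in the inner cube plus the $\eps^\alpha$-smallness of the quadratic piece.

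The main obstacle is the simultaneous accommodation of the three properties under the tight scaling $\delta<1/8$. A purely radial bounded profile cannot be strictly superharmonic in the effective tangential dimension $\RR^2$, so the positive contribution to $\LL_\Omega^\eps V_{\eps,L}$ in the intermediate annulus $\epfo\leq|\zeta_\perp|\leq\epth$ cannot be obtained from a tangential Laplacian gain alone; it must come either from the quadratic template (which exploits the 3D structure as in Lemma~\ref{first supsol}) or from the boundary mass loss into $\Omega^c$. The exponents $\epfo,\epth,\eptw$ have been chosen precisely so that the cubic Taylor remainders, estimated as in \eqref{error 3}, remain negligible compared to the leading $O(\eps^2/\epth^2)=O(\eps^{1-6\delta})$ gain. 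The constants $\lambda_1,\lambda_2$ and $L$ are then fixed in this order: $\lambda_1$ large enough to produce the $C>0$ bound off the $\epth$-cube, $\lambda_2$ large enough to absorb the required $C\eps^\delta e^{-Ad/\eps}$ on the $\epth$-cube, and $L$ chosen in the window determined by $1\ll L\lesssim\eps^{-2\delta}$.
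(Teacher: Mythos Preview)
Your construction has an incompatibility between properties (1) and (3). You plan to secure the lower bound $W_{\eps,L}\geq C>0$ off the $\epth$-cube via $\lambda_1\Phi^\eps$ with $\lambda_1$ a fixed constant. But $\Phi^\eps$ is bounded below by a positive constant on \emph{all} of $\overline\Omega$ (indeed $\Phi^\eps\geq C_3\|g\|_1(C_1-\max_{\overline\Omega}|x|^2)+C_2\|g\|_1(1-\gamma)>0$), so $\lambda_1\Phi^\eps$ alone already violates (3) on the inner $\epfo$-cube. If instead you take $\lambda_1$ small, property (1) must come from the $\Psi/\sqrt L$ term, which gives only $1/\sqrt L$ outside the cube; since the constant in (1) has to be independent of $L$ (it is used in Corollary~\ref{good estimate} before sending $L\to\infty$), this fails too.

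There is a deeper obstruction to the cutoff ansatz. Any bounded radial profile on $\RR^2$ transitioning from $0$ to $1$ must be subharmonic somewhere in the transition annulus, so $\LL_\Omega^\eps\Psi\approx-\tfrac{\eps^2}{3}\Delta\Psi$ is negative of order $\eps^2/\epth^{\,2}=\eps^{1-6\delta}$ there. For $d(x)\sim\epth$ the positive sources you have at hand (from $\Phi^\eps$ or the quadratic template) are only $O(\eps^2)\ll\eps^{1-6\delta}$, and your boundary mass-loss argument contributes only when $d(x)\lesssim\eps$; enlarging coefficients to compensate again destroys (3). The paper circumvents this by using genuine two-dimensional harmonic barriers: the angle functions $F_i^\pm(x)=\tfrac{\pi}{2}\mp\arctan\bigl((x_i\pm\epth)/(x_1+\tfrac{L}{2}\eps)\bigr)$ are harmonic, hence produce no leading Laplacian term, and they naturally interpolate between $\geq\tfrac{\pi}{2}-\arctan 2$ off the $\epth$-cube and $\leq C\eps^\delta$ on the $\epfo$-cube. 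The superharmonic corrections $G_i^\pm,H_i^\pm$ then supply just enough positive source so that $\LL_\Omega^\eps W_i^\pm\geq -\tfrac{C}{\sqrt L}e^{-d/(8\eps)}$; this small deficit is absorbed by $\tfrac{\tilde C}{\sqrt L}\phi_{1/8,\eps}$, and the actual $C\eps^\delta e^{-Ad/\eps}$ source in (2) comes from $C\eps^\delta\phi_{A,\eps}$. Both $\phi$-terms carry small prefactors, so (3) survives.
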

In order to construct this supersolution $ W_{\eps,L} $ we first need some definition for the geometrical setting. First of all we denote for simplicity $ x_i=\Rot_p(x)\cdot e_i $. Let us define for $ i=2,3 $ the radii $ \rho_i^\pm(x)=\sqrt{\left(x_1+\frac{L}{2}\eps\right)^2+\left(x_i\pm \epth\right)^2} $ and the angles $ \theta_i^\pm(x) $ given by $ \cos\left(\theta_i^\pm \right)=\frac{1}{\rho_i^\pm(x)}\left(x_1+\frac{L}{2}\eps\right) $. The function $ W_{\eps,L} $ is constructed with the following auxiliary functions 
\begin{equation}\label{Fpm}
F^{\pm}_i(x)=\frac{\pi}{2}\mp \arctan\left(\frac{x_i\pm\epth}{x_1+\frac{L}{2}\eps}\right),
\end{equation}
\begin{equation}\label{Gpm}
G^{\pm}_i(x)=a\left(\frac{\cos\left(\theta_i^\pm(x)\right)}{\rho_i^\pm(x)/\eps}\right)^{\frac{1}{2}},
\end{equation}
\begin{equation}\label{Hpm}
H^{\pm}_i(x)=-b\left(\frac{\cos\left(\theta_i^\pm(x)\right)}{\rho_i^\pm(x)/\eps}\right)^2
\end{equation}
for $ i=2,3 $ and $ a>b>0 $. Moreover, we define for $ i=2,3 $
\begin{equation}\label{Wpm}
W^{\pm}_i(x)=F^\pm_i(x)+G^\pm_i(x)+H^\pm_i(x).
\end{equation} We will prove that the desired supersolution of Proposition \ref{supsol ou-u eps} is given by
\begin{equation}\label{Wel}
W_{\eps,L}(x)=\sum_{i=2}^3 \left(W^+_i(x)+W^-_i(x)\right)+\frac{\tilde{C}}{\sqrt{L}}\phi_{\frac{1}{8},\eps}+
C\eps^{\delta}\phi_{A,\eps},
\end{equation} where $ \phi_{A,\eps}=\Phi^{\frac{\eps}{A}} $ the supersolution defined in Theorem \ref{interior supsol} and $ C,\tilde{C}>0 $ some suitable constants. We also define the following subsets of $ \Omega $ for $ i=2,3 $. 
\begin{equation}\label{C+2}
\mathcal{C}^+_{i,2\delta}:=\left\{x\in\Omega :\; x_i\leq -\epth \text{ or } |x_i|<\epth, x_1\geq \epth\right\};
\end{equation}
\begin{equation}\label{C-2}
\mathcal{C}^-_{i,2\delta}:=\left\{x\in\Omega :\; x_i\geq \epth \text{ or } |x_i|<\epth, x_1\geq \epth\right\};
\end{equation}
\begin{equation}\label{C3}
\mathcal{C}_{3\delta}:=\left\{x\in\Omega :\;x_1< \epth \text{ and } |x_i|<\epth \text{ for } i=2,3 \right\};
\end{equation}
\begin{equation}\label{C4}
\mathcal{C}_{i,4\delta}:=\left\{x\in\Omega :\;|x_i|<\epfo \text{ and } x_1< \epfo\right\}.
\end{equation}
In order to prove Proposition \ref{supsol ou-u eps} we need the following computational lemma.
\begin{lemma}\label{lemma supsol}
Assume $ p\in\Omega $, $ 0<\eps<1 $, $ L,\delta $ as indicated in Proposition \ref{supsol ou-u eps}. Let $ x_i=\Rot_p(x)\cdot e_i $ for $ i=1,2,3 $. Let $ W^\pm_i $ as in \eqref{Wpm}. Then there exist a constant $ \alpha>0 $ depending only on $ \delta $ and a constant $ C>0 $ depending on $ \Omega $ and $ g_\nu $ but independent of $ \eps $ and $ p $ and suitable $ b>0 $ and $ L>0 $ such that for $ i=2,3 $
\begin{numcases}{}
W^\pm_i(x)\geq 0 & in $ \Omega $\label{Wpmlemma1}\\
W^\pm_i(x)\geq \frac{\pi}{2}-\arctan(2) & in $\mathcal{C}^\pm_{i,2\delta}$\label{Wpmlemma2}\\
W^\pm_i(x)\leq C\eps^{\alpha} & in $\mathcal{C}_{i,4\delta}$\label{Wpmlemma4}\\
\LL_{\Omega}^\eps\left(W^{\pm}_i\right)(x)\geq-\frac{C}{\sqrt{L}}e^{-\frac{d(x)}{8\eps}} & in $\mathcal{C}_{3\delta}$\label{Wpmlemma5},
\end{numcases}
where the sets $ \mathcal{C}^\pm_{i,2\delta} $, $ \mathcal{C}_{i,4\delta} $ and $ \mathcal{C}_{3\delta} $ are defined in \eqref{C+2}, \eqref{C-2}, \eqref{C4} and \eqref{C3}.
\end{lemma}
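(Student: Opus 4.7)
\textbf{Pointwise claims (\ref{Wpmlemma1})-(\ref{Wpmlemma4}).} Convexity of $\Omega$ together with the choice of the isometry $\Rot_p$ sending $N_p$ to $-e_1$ force $x_1 := \Rot_p(x)\cdot e_1 \geq 0$ for every $x \in \Omega$, hence $x_1 + L\eps/2 \geq L\eps/2 > 0$ and $\cos\theta^{\pm}_i > 0$ throughout $\Omega$. Set $w := \eps\cos\theta^{\pm}_i/\rho^{\pm}_i \in [0, 2/L]$. For (\ref{Wpmlemma1}): $F^{\pm}_i \in (0, \pi)$ and $G^{\pm}_i + H^{\pm}_i = \sqrt{w}(a - b\,w^{3/2}) \geq 0$ provided $L$ is so large that $b(2/L)^{3/2} \leq a$. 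For (\ref{Wpmlemma2}): in $\mathcal{C}^{\pm}_{i,2\delta}$ the arctan-argument of $F^{\pm}_i$ is either non-positive or of modulus $\leq 2$, giving $F^{\pm}_i \geq \pi/2 - \arctan(2)$; combined with $G^{\pm}_i + H^{\pm}_i \geq 0$ this transfers to $W^{\pm}_i$. For (\ref{Wpmlemma4}): in $\mathcal{C}_{i,4\delta}$ and for $\delta < 1/8$ one has $\epth \gg \epfo \gg \eps$, so the arctan-argument has modulus $\gtrsim \eps^{-\delta}$ and $F^{\pm}_i \leq C\eps^{\delta}$; moreover $w \leq C\eps^{1/2 - 2\delta}$, so $|G^{\pm}_i| + |H^{\pm}_i| \leq C\eps^{1/4 - \delta}$. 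Taking $\alpha = \delta$ (which satisfies $\alpha \leq 1/4 - \delta$ under $\delta < 1/8$) yields $W^{\pm}_i \leq C\eps^{\alpha}$.

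\textbf{Operator claim (\ref{Wpmlemma5}).} Let $\widetilde{W}^{\pm}_i$ be the natural extension of $W^{\pm}_i$ to the half-space $\{x_1 + L\eps/2 > 0\}$, continued by zero outside, so $\widetilde{W}^{\pm}_i \geq 0$ on $\RR^3$. Then
\[
\LL_{\Omega}^{\eps}(W^{\pm}_i)(x) \;=\; \LL_{\RR^3}^{\eps}(\widetilde{W}^{\pm}_i)(x) \;+\; \int_{\Omega^c} K_{\eps}(\eta - x)\,\widetilde{W}^{\pm}_i(\eta)\,d\eta,
\]
and the boundary integral is non-negative. Three ingredients control the $\RR^3$-operator term. (i) $F^{\pm}_i$ coincides, up to constants, with the argument of the shifted complex variable $z = (x_1 + L\eps/2) + i(x_i \pm \epth)$ and is therefore harmonic in $(x_1, x_i)$ and in $\RR^3$; radial symmetry of $K_{\eps}$ and the mean-value property give $\LL_{\RR^3}^{\eps}(F^{\pm}_i) \equiv 0$. (ii) The function $v := \cos\theta^{\pm}_i/\rho^{\pm}_i = \text{Re}(1/z)$ is harmonic and strictly positive on $\{x_1 + L\eps/2 > 0\}$; the identities $\Delta(v^{1/2}) = -\tfrac{1}{4} v^{-3/2}|\nabla v|^2 \leq 0$ and $\Delta(v^2) = 2|\nabla v|^2 \geq 0$ show that both $G^{\pm}_i$ and $H^{\pm}_i$ are superharmonic on that half-space. (iii) For $x \in \mathcal{C}_{3\delta}$, set $R := x_1 + L\eps/2$; the ball $B_R(x)$ lies in the superharmonicity region, so the sub-mean-value inequality on $B_R(x)$ together with the tail identity $\int_{|\eta - x| > R} K_{\eps}(\eta - x)\,d\eta = e^{-R/\eps}$ yields
\[
\LL_{\RR^3}^{\eps}(G^{\pm}_i + H^{\pm}_i)(x) \;\geq\; -\,\|G^{\pm}_i + H^{\pm}_i\|_{L^{\infty}(\RR^3)}\,e^{-R/\eps} \;\geq\; -\frac{C}{\sqrt{L}}\,e^{-R/\eps},
\]
with the $L^{\infty}$-estimate coming from (\ref{Wpmlemma1}). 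Finally, a second-order Taylor expansion of the distance function based at $p\in\bnd$ gives $d(x) \leq x_1 + O(|\tilde{x}|^2/R_{\min})$ on $\mathcal{C}_{3\delta}$ (with $R_{\min}$ the minimal radius of curvature of $\bnd$), so $R \geq d(x)/8$ for $\eps$ small enough; this upgrades the previous bound to $-\tfrac{C}{\sqrt L}\,e^{-d(x)/(8\eps)}$, which is (\ref{Wpmlemma5}).

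\textbf{Main obstacle.} The delicate point is the behaviour of $G^{\pm}_i$ across the singular plane $\{x_1 + L\eps/2 = 0\}$: the extension by zero is continuous (since $G^{\pm}_i \to 0$ as $\cos\theta^{\pm}_i \to 0^+$) but the normal derivative blows up, so $\widetilde{G}^{\pm}_i$ is not globally superharmonic in the distributional sense on $\RR^3$. The argument above circumvents this precisely by restricting the sub-mean-value inequality to the ball $B_R(x)$ that stays inside the smooth half-space and absorbing the $L^{\infty}$-error on the complementary annulus into the decay factor $e^{-R/\eps}$. This balance also explains the particular choice of exponents in $G^{\pm}_i, H^{\pm}_i$: the half-power keeps $\|G^{\pm}_i\|_{L^{\infty}} \lesssim 1/\sqrt{L}$ (giving the $1/\sqrt{L}$ smallness on the right-hand side of (\ref{Wpmlemma5})), while the square term is the natural sub-leading superharmonic correction coming from applying $\Delta$ to the harmonic function $v$, and the requirement $a > b$ is needed both for the superharmonicity of $G + H$ and for the non-negativity (\ref{Wpmlemma1}).
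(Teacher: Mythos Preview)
Your treatment of the pointwise claims (\ref{Wpmlemma1})--(\ref{Wpmlemma4}) is correct and parallels the paper.

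For (\ref{Wpmlemma5}) there is a genuine gap in step (i). The function $F^{\pm}_i$ is harmonic only on the open half-space $\{x_1+L\eps/2>0\}$; any global extension (by zero, or by the arctan formula itself) has a jump of size $\pi$ across the hyperplane $\{x_1+L\eps/2=0\}$, so $\widetilde F^{\pm}_i$ is \emph{not} harmonic on $\RR^3$ and the identity $\LL^{\eps}_{\RR^3}(F^{\pm}_i)\equiv 0$ is false. Your final inequality therefore relies on an incorrect premise. The repair is to treat $F$ exactly as you treat $G+H$: harmonicity on $B_R(x)$ gives $F(x)-\int_{B_R(x)}K_\eps F\,d\eta=F(x)\,e^{-R/\eps}\geq 0$, while the tail contributes at most $\|\widetilde F\|_\infty\,e^{-R/\eps}\leq \pi\,e^{-R/\eps}$. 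The resulting constant $\pi$ is not $O(1/\sqrt L)$, so your step ``$R\geq d(x)/8$'' is not enough; you must instead use the sharper relation $R=x_1+\tfrac{L}{2}\eps\geq d(x)+\tfrac{L}{2}\eps$ (since $d(x)\leq x_1$ by convexity of $\Omega$), giving $e^{-R/\eps}\leq e^{-L/2}e^{-d(x)/\eps}$ and hence $\pi\,e^{-R/\eps}\leq \tfrac{C}{\sqrt L}\,e^{-d(x)/(8\eps)}$ for $L$ large. With this correction your argument closes. (Two small side remarks: the ``upgrade'' to $e^{-d(x)/(8\eps)}$ is really a downgrade, and $a>b$ is needed only for non-negativity, not for superharmonicity of $G+H$, since $G$ and $H$ are superharmonic separately.)

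Once repaired, your route is quite different from the paper's and noticeably shorter. The paper never invokes the sub-mean-value property against the radial kernel $K_\eps$; instead it Taylor-expands $W$ on balls of radius $\rho(x)/4$ or $d(x)/4$, computes $\Delta F,\Delta G,\Delta H$ explicitly, bounds all derivatives of $F,G,H$ by hand, and works through three separate regimes ($\rho<L\eps$; $\rho\geq L\eps$ with $d<\eps$; $\rho\geq L\eps$ with $d\geq\eps$), balancing the positive Laplacian contributions of $G$ and $H$ against the remainders. Your single-ball sub-mean-value argument replaces this entire case analysis. The trade-off is that the paper obtains a slightly sharper intermediate bound (in the regime $\rho<L\eps$ it actually gets $\LL^\eps_\Omega(W)\geq 0$), and its quantitative derivative estimates are reused later in Section~5 for the non-constant coefficient case; your softer argument would need a separate check that it survives the perturbation $K_\eps\to K_\eps^{\alpha(x)}+R^x_\eps$.
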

\begin{proof}
Due to symmetry consideration it is enough to prove the lemma for $ W=W^-_2 $. For the sake of simplicity we write $ \rho(x)=\rho_2^-(x) $ and $ \theta(x)=\theta_2^-(x) $. Similarly we consider $ F=F^-_2 $, $ G=G^-_2 $ and $ H=H^-_2 $. We also denote by $ \mathcal{C}_{j\delta} $ the sets $ \mathcal{C}^-_{2,j\delta} $, $ \mathcal{C}_{2,j\delta} $ and $ \mathcal{C}_{j\delta} $ for $ j=2,3,4 $.

First of all we notice that $ W $ is smooth on $ x_1>-\frac{L}{2}\eps $. Moreover, since the arctangent is bounded from below by $ \frac{\pi}{2} $ we have that $ F\geq 0 $. Since $ x_1\geq0 $ for $ x\in\Omega $ we see that $ \rho\geq \frac{L}{2}\eps $ and hence for $ L $ big enough $ \frac{\rho}{\eps}>\frac{L}{2}>1 $. On the other hand $ 0\leq \cos(\theta)\leq 1 $ and hence for $ a>b $ we have that $$ G+H=a\left(\frac{\cos\left(\theta_2^-(x)\right)}{\rho_2^-(x)/\eps}\right)^{\frac{1}{2}}-b\left(\frac{\cos\left(\theta_2^-(x)\right)}{\rho_2^-(x)/\eps}\right)^2\geq (a-b)\left(\frac{\cos\left(\theta_2^-(x)\right)}{\rho_2^-(x)/\eps}\right)^{\frac{1}{2}}\geq 0 ,$$
which yields \eqref{Wpmlemma1}.

Assume now $ x_2\geq \epth $. This implies $ \frac{x_2-\epth}{x_1+\frac{L}{2}\eps}\geq0 $ and thus $ F(x)\geq\frac{\pi}{2} $. The non-negativity of $ G+H$ yields $ W(x)\geq \frac{\pi}{2} $.

Let us assume $ x_1\geq \epth $ and $ |x_2|<\epth $. A similar computation as above shows $ \frac{x_2-\epth}{x_1+\frac{L}{2}\eps}\geq -2\frac{\epth}{\epth\left(1+\frac{L}{2}\eps^{\frac{1}{2}-3\delta}\right)}\geq -2 $. Hence, $ W(x)\geq F(x)\geq \frac{\pi}{2}-\arctan(2)>0 $ for $ x\in\mathcal{C}_{2\delta} $ as in \eqref{Wpmlemma2}.

We move now to the proof of \eqref{Wpmlemma4}. Let therefore $ x\in\mathcal{C}_{4\delta} $. First of all $ W(x)\leq F+G $. Moreover, $ x_2-\epth<\epth(\eps^\delta-1)<0 $ and $ x_1+\frac{L}{2}\eps<\epfo \left(1+\frac{L}{2}\eps^{\frac{1}{2}-4\delta}\right)<\frac{3}{2}\epfo $ if $ \delta<\frac{1}{8} $, $ L>0 $ large enough and $ 0<\eps<1 $ sufficiently small such that $ L<\eps^{-\beta} $ for $ \beta=\frac{1-8\delta}{2} $. This computation implies
\begin{equation*}\label{supsol smallness1}
\frac{x_2-\epth}{x_1+\frac{L}{2}\eps}<-\frac{2}{3}\frac{1}{\eps^\delta}(1-\eps^\delta)<-\frac{1}{3}\frac{1}{\eps^\delta}
\end{equation*}
for $ \eps>0 $ small enough. With an application of Taylor expansion for $ y\to-\infty $ we conclude $ F(x)\leq 3\eps^\delta $, since $ \frac{\pi}{2}+\arctan(y)\approx |y|^{-1}-\frac{1}{3|y|^3} $. Moreover, since $ \frac{\rho}{\eps}>1 $ and also $ \rho\geq|x_2-\epth|\geq \frac{\epth}{2} $ if $ \eps $ small enough, we have that $ \cos(\theta)=\frac{x_1+\frac{L}{2}\eps}{\rho}<\frac 3\eps^\delta $. Hence, $ 0\leq\frac{\cos(\theta)}{\rho}<3\eps^\delta $ implies $ G(x)<\sqrt{3}\eps^{\delta/2} $. Taking then $ \alpha=\frac{\delta}{2} $ we conclude $ W\leq C\eps^\alpha $.

It remains now to show, that $ W $ satisfies the estimate \eqref{Wpmlemma5}. The main idea for this proof is to approximate the operator $ \LL_{\Omega}^\eps $ by a Laplacian expanding the function $ W $ by Taylor. Let us assume from now on that $ x\in\mathcal{C}_{3\delta} $, i.e. $ |x_3|,|x_2|,x_1<\epth $. We first notice that for these $ x $ the function $ F $ is harmonic and the functions $ G $ and $ H $ are super-harmonic. In order to prove this we change the coordinates in cylinder coordinates: $ (x_1,x_2,x_3)\mapsto(\rho(x_1,x_2),\theta(x_1,x_2),x_3) $. With this notation, since $ F,G,H $ are actually functions only of $ x_1 $ and $ x_2 $, we can write $ F $ as $ F(\rho,\theta)=\frac{\pi}{2}-\theta $. Thus, since in cylinder coordinates the Laplacian can be written as $ -\Delta= -\frac{1}{\rho}\partial_{\rho}\left(\rho\partial_\rho\right)-\frac{1}{\rho^2}\partial_\theta^2-\partial_{x_3}^2 $, we compute $ -\Delta F=0 $. On the other hand we can also compute for $ H $
\begin{equation}\label{Laplacian H}
 \frac{\eps^2}{\rho}\partial_{\rho}\left(\rho\partial_\rho\frac{\cos^2(\theta)}{\rho^2}\right)+\frac{\eps^2}{\rho^2}\partial_\theta^2\frac{\cos^2(\theta)}{\rho^2}=\frac{4\eps^2\cos^2(\theta)}{\rho^4}+\frac{2\eps^2\sin^2(\theta)}{\rho^4}-\frac{2\eps^2\cos^2(\theta)}{\rho^4}=\frac{2\eps^2}{\rho^4}.
\end{equation}
And similarly we have also for $ G $
\begin{equation}\label{Laplacian G}
 -\frac{\eps^{\frac{1}{2}}}{\rho}\partial_{\rho}\left(\rho\partial_\rho\frac{\cos^{\frac{1}{2}}(\theta)}{\rho^{\frac{1}{2}}}\right)-\frac{\eps^{\frac{1}{2}}}{\rho^2}\partial_\theta^2\frac{\cos^{\frac{1}{2}}(\theta)}{\rho^{\frac{1}{2}}}=-\frac{\eps^{\frac{1}{2}}\cos^{\frac{1}{2}}(\theta)}{4\rho^{\frac{5}{2}}}-\frac{\eps^{\frac{1}{2}}}{2\rho^{\frac{5}{2}}}\left(-\cos^{\frac{1}{2}}(\theta)-\frac{\sin^{2}(\theta)}{2\cos^{\frac{3}{2}}(\theta)}\right)=\frac{\eps^{\frac{1}{2}}}{4\cos^{\frac{3}{2}(\theta)}\rho^{\frac{5}{2}}}.
\end{equation}
We will use the (super-)harmonicity of these functions while applying the Taylor expansion on suitable domains.\\

Before moving to the exact estimate of the operator acting on $ W $ we estimate the derivatives of these functions. We start with analyzing $ F $. As we have seen before $ F=\frac{\pi}{2}-\theta\left(x_1,x_2\right) $ and hence we have $ \partial_1 F(x_1,x_2)=\frac{\sin(\theta)}{\rho} $ and $ \partial_2 F(x_1,x_2)=-\frac{\cos(\theta)}{\rho} $. Since the numerator contains only power laws of cosine and sine with exponent greater or equal 1 and the denominator also only power laws of $ \rho $ with exponent greater or equal 1, using the definition of derivatives in polar coordinates we see that there exists a constant $ C_{F,n}>0 $ for $ n\geq 1 $ such that 
\begin{equation*}\label{derivative F}
 \left|\nabla^n_x F(x)\right|\leq \frac{C_{F,n}}{\rho^n},
\end{equation*} where we also estimated the cosine and the sine by $ 1 $.

Let us move to the function $ H=-b\eps^2\frac{\cos^2(\theta)}{\rho^2} $. We use a similar argument. We compute using polar coordinates $ \partial_1H(x)=\left(\cos(\theta)\partial_\rho-\frac{\sin(\theta)}{\rho}\partial_\theta)\right)H(x)=-2b\eps^2\frac{\cos^3(\theta)}{\rho^3}+2b\eps^2\frac{\sin^2(\theta)\cos(\theta)}{\rho^3} $ and similarly $ \partial_2H(x)=\left(\sin(\theta)\partial_\rho+\frac{\cos(\theta)}{\rho}\partial_\theta)\right)H(x)=-2b\eps^2\frac{\sin(\theta)\cos^2(\theta)}{\rho^3}-2b\eps^2\frac{\cos^2(\theta)\sin(\theta)}{\rho^3} $. Again, the numerator only contains power of cosine and sin of degrees greater or equal $ 1 $, while the denominator only power of $ \rho $ of degree $ 3 $. Hence, applying again the definition of derivative in polar coordinates and estimating cosine and sine by $ 1 $ we conclude again the existence of a constant $ C_{H,n}>0 $ such that 
\begin{equation*}\label{derivative H}
\left|\nabla^n_x H(x)\right|\leq b\frac{C_{H,n}\eps^2}{\rho^{n+2}}.
\end{equation*} 
While the function $ F $ and $ H $ produce non singular derivatives for $ x\in\Omega $, $ G $ produces sigular terms in the derivatives. This is because the denominator of this function contains a square root of the cosine, hence when differentiating by $ \theta $, it appears in the denominator, indeed
\begin{equation*}\label{xderivative G}
\partial_1G(x)=\left(\cos(\theta)\partial_\rho-\frac{\sin(\theta)}{\rho}\partial_\theta)\right)G(x)=-a\eps^{\frac{1}{2}}\frac{\cos^{\frac{3}{2}}(\theta)}{2\rho^{\frac{3}{2}}}+a\eps^{\frac{1}{2}}\frac{\sin^2(\theta)}{2\rho^{\frac{3}{2}}\cos^{\frac{1}{2}}(\theta)};
\end{equation*}
\begin{equation*}\label{yderivative G}
\partial_2G(x)=\left(\sin(\theta)\partial_\rho+\frac{\cos(\theta)}{\rho}\partial_\theta)\right)G(x)=-a\eps^{\frac{1}{2}}\frac{\cos^{\frac{1}{2}}(\theta)\sin(\theta)}{\rho^{\frac{3}{2}}}.
\end{equation*}
Hence, the singular terms appear when differentiating with respect to $ x_1 $. Using that $ \cos(\theta),\sin(\theta)<1 $ and that $ \cos^{-\alpha}(\theta)>\cos^{-\beta}(\theta) $ for $ \alpha>\beta\geq 0 $ we conclude the existence of a constant $ C_{G,n}>0 $ such that
\begin{equation*}\label{derivative G}
\left|\nabla^n_x G(x)\right|\leq a \frac{C_{G,n}\eps^{\frac{1}{2}}}{\cos^{n-{\frac{1}{2}}}(\theta)\rho^{{\frac{1}{2}}+n}}.
\end{equation*} 
We remark that we used always that by construction $ \cos(\theta)\geq0 $.

As we anticipated we will estimate $ \LL_{\Omega}^\eps(W)(x) $ applying the Taylor expansion on $ F$, $ G $ and $ H $ on suitable subsets of $ \RR^3 $ where these functions are smooth. The functions $ F $ and $ H $ will be expanded until the third derivative and we will write
\begin{equation}\label{taylor+error F}
F(\eta)=F(x)+\nabla_xF(x)\cdot(\eta-x)+\frac{1}{2}(\eta-x)^\top\nabla_x^2F(x)(\eta-x)+\sum_{|\alpha|=3} \frac{D^\alpha F(x)}{\alpha!}(\eta-x)^\alpha+E^4_F(\eta,x)
\end{equation}
\begin{equation}\label{taylor+error H}
H(\eta)=H(x)+\nabla_xH(x)\cdot(\eta-x)+\frac{1}{2}(\eta-x)^\top\nabla_x^2H(x)(\eta-x)+\sum_{|\alpha|=3} \frac{D^\alpha H(x)}{\alpha!}(\eta-x)^\alpha+E^4_H(\eta,x).
\end{equation}
The function $ G $ will be expanded only until the second derivative, hence
\begin{equation}\label{taylor+error G}
G(\eta)=G(x)+\nabla_xG(x)\cdot(\eta-x)+\frac{1}{2}(\eta-x)^\top\nabla_x^2G(x)(\eta-x)+E^3_G(\eta,x).
\end{equation}
We recall also that for any smooth function $ \varphi(x_1,x_2) $ the following is true 
\begin{equation}\label{laplacian taylor}
\begin{split}
\int_{B^3_r(x)}d\eta\;\Kern{\eta-x}\frac{1}{2}(\eta-x)^\top& \nabla_x^2\varphi(x)(\eta-x)=\frac{1}{6}\Delta \varphi(x)\int_{B^3_r(x)}d\eta\;\Kern{\eta-x}|\eta-x|^2\\= \frac{1}{6}\Delta \varphi(x)\eps^2\int_0^{\frac{r}{\eps}} t^2e^{-t}dt&= \frac{\eps^2}{3}\Delta \varphi(x)-(r^2+2\eps r+2\eps^2)e^{-\frac{r}{\eps}}\frac{1}{6}\Delta \varphi(x).
\end{split}
\end{equation}
We can now move to the estimate of $ \LL_\Omega^\eps\left(W\right)(x) $ for $ |x_2|,|x_3|<\epth $ and $ 0<x_1<\epth $. We will consider three different cases: $ \rho(x)<L\eps $, $ \rho(x)>L\eps $ with $ d(x)<\eps $ and finally $ \rho(x)>L\eps $ with $ d(x)>\eps $.\\

\noindent\textbf{Case 1: $ \rho(x)<L\eps $}

Let us assume $ \rho(x)<L\eps $. Then, we remark first of all that if $ \eta\in B_{\frac{\rho(x)}{4}}(x) $ then $ \eta_1\geq x_1-\frac{\rho(x)}{4}>- \frac{L}{4}\eps>-\frac{L}{2}\eps$, which implies the smoothness of $ W $ on the whole ball $ B_{\frac{\rho(x)}{4}}(x) $. Moreover, it is also true that $ \cos(\theta)=\frac{x_1+\frac{L}{2}\eps}{\rho}\geq \frac{L\eps}{2 \rho}>\frac{1}{2} $. Hence, in this case the derivative of $ G $ is not singular and we can estimate $ \left|\nabla_x^n G(x)\right|\leq \frac{C_{G,n}2^{n-{\frac{1}{2}}}}{\rho^{{\frac{1}{2}}+n}} $. Moreover, if $ \eta\in B_{\frac{\rho(x)}{4}}(x) $, then from one hand we have $ \frac{3}{4}\rho(x)<\rho(\eta)<\frac{5}{4}\rho(x) $ and on the other hand $ \cos(\theta(\eta))\rho(\eta)=\eta_1+\frac{L}{2}\eps> \frac{L}{4}\eps $, thus $ \cos\left(\theta(\eta)\right)>\frac{1}{5} $
\begin{equation*}\label{taylor G rho small}
\sup_{\eta\in B_{\frac{\rho(x)}{4}}(x)}\left[\frac{1}{\cos^{\frac{5}{2}}\left(\theta(\eta)\right)\rho^{\frac{7}{2}}(x)}\right]\leq 5^{\frac{5}{2}}\left(\frac{4}{3}\right)^{\frac{7}{2}}\left(\frac{1}{\rho(x)}\right)^{\frac{7}{2}}\leq \frac{2C}{\eps L}\left(\frac{1}{\rho(x)}\right)^{\frac{5}{2}},
\end{equation*}
where at the end we used that $ \rho(x)>\frac{L\eps}{2} $ for all $ x\in\Omega $. For the computation of the operator $ \LL_\Omega^\eps $ for the function $ W $ we will use the Taylor expansion of this function on the ball $ B_{\frac{\rho(x)}{4}}(x) $. The error terms as defined in \eqref{taylor+error F}, \eqref{taylor+error H} and \eqref{taylor+error G} satisfy then
\begin{equation}\label{error F rho small}
\begin{split}
\left|E_F^4(\eta,x)\right|\leq \left(\frac{4}{3}\right)^4&\frac{C_{F,4}}{\rho(x)^4}|x-\eta|^4,\;\;\;\left|E_H^4(\eta,x)\right|\leq b \left(\frac{4}{3}\right)^6\frac{C_{H,4}}{\rho(x)^6}|x-\eta|^4\eps^2\;\text{ and }\\&\left|E_G^3(\eta,x)\right|\leq a \frac{C_{G,3}}{L\rho(x)^{\frac{5}{2}}}\frac{|x-\eta|^3}{\eps}\eps^{\frac{1}{2}}.
\end{split}
\end{equation}

We can now proceed with the estimate for the operator. Applying the Taylor expansion we obtain
\begin{equation}\label{estimate rho small}
\begin{split}
&\LL_\Omega^\eps(W)(x)=W(x)-\int_{B_{\frac{\rho(x)}{4}}(x)\cap\Omega} \Epskern{\eta-x}W
(\eta)\;d\eta-\int_{B^c_{\frac{\rho(x)}{4}}(x)\cap\Omega} \Epskern{\eta-x}W
(\eta)\;d\eta\\\geq & W(x)-\int_{B_{\frac{\rho(x)}{4}}(x)}\Epskern{\eta-x}W
(\eta)\;d\eta-\int_{B^c_{\frac{\rho(x)}{4}}(x)\cap\Omega} \Epskern{\eta-x}W
(\eta)\;d\eta\\
=&W(x)\left(1-\int_{B_{\frac{\rho(x)}{4}}(x)}\Epskern{\eta-x}\;d\eta\right)- \left(\nabla_xW(x)\right)\cdot\int_{B_{\frac{\rho(x)}{4}}(x)}\Epskern{\eta-x}(\eta-x)\;d\eta\\&-\frac{1}{2}\int_{B_{\frac{\rho(x)}{4}}(x)}\Epskern{\eta-x}(\eta-x)^\top\nabla^2_xW(x)(\eta-x)\;d\eta\\&-\sum_{|\alpha|=3}\frac{D^\alpha (F+H)(x)}{\alpha!}\int_{B_{\frac{\rho(x)}{4}}(x)}\Epskern{\eta-x}(\eta-x)^\alpha\;d\eta\\&-\int_{B_{\frac{\rho(x)}{4}}(x)}\Epskern{\eta-x}(E_F^4+E_H^4+E_G^3)(\eta,x)\;d\eta-\int_{B^c_{\frac{\rho(x)}{4}}(x)\cap\Omega} \Epskern{\eta-x}W
(\eta)\;d\eta
\end{split}\end{equation}
For the terms of the Taylor expansion containing the first and third derivatives we use now the symmetry of the integral in $ \RR^3 $, while for the second degree derivative terms we use the Laplacian identity as in equation \eqref{laplacian taylor}, together with the fact that $ F $ is harmonic while $ H $ and $ G $ are super-harmonic as in equations \eqref{Laplacian H} and \eqref{Laplacian G}. Denoting by $ C_F,C_G,C_H $ constants depending on $ F,G $ resp. $ H $ only and changing the coordinates $ y\mapsto (\eta-x) $ we estimate using the estimates for the error terms as in \eqref{error F rho small}
\begin{equation}\begin{split}\label{estimate rho small1}
&\LL_\Omega^\eps(W)(x)\geq - C_F\left(\frac{\eps}{\rho}\int_{B^c_{\frac{\rho(x)}{4}}(0)} \frac{e^{-\frac{|y|}{\eps}}}{4\pi \eps^2|y|}\;dy+\frac{\eps^3}{\rho^3}\int_{B^c_{\frac{\rho(x)}{4}}(0)} \frac{e^{-\frac{|y|}{\eps}}|y|}{4\pi \eps^4}\;dy\right) \\&
 - C_Hb\left(\frac{\eps^3}{\rho^3}\int_{B^c_{\frac{\rho(x)}{4}}(0)} \frac{e^{-\frac{|y|}{\eps}}}{4\pi \eps^2|y|}\;dy+\frac{\eps^4}{\rho^4}\int_{B^c_{\frac{\rho(x)}{4}}(0)} \frac{e^{-\frac{|y|}{\eps}}}{4\pi \eps^3}\;dy+\frac{\eps^5}{\rho^5}\int_{B^c_{\frac{\rho(x)}{4}}(0)} \frac{e^{-\frac{|y|}{\eps}}|y|}{4\pi \eps^4}\;dy\right)\\&
 - C_Ga\left(\frac{\eps}{L^{\frac{1}{2}}\rho}\int_{B^c_{\frac{\rho(x)}{4}}(0)} \frac{e^{-\frac{|y|}{\eps}}}{4\pi \eps^2|y|}\;dy+\frac{\eps^2}{L^\frac{1}{2}\rho^2}\int_{B^c_{\frac{\rho(x)}{4}}(0)} \frac{e^{-\frac{|y|}{\eps}}}{4\pi \eps^3}\;dy\right)\\&
 +\frac{2b}{3}\frac{\eps^4}{\rho^4}-C_F \frac{\eps^4}{\rho^4}-C_Hb \frac{\eps^6}{\rho^6}+ \frac{1}{12}\frac{a\eps^\frac{5}{2}}{\cos^{\frac{3}{2}}(\theta)\rho^{\frac{5}{2}}}-a \frac{C_G\eps^{\frac{5}{2}}}{L\rho(x)^{\frac{5}{2}}}-(\pi+a)e^{-\frac{\rho}{4\eps}}\\\geq&\frac{2}{3}\frac{\eps^4}{\rho^4}\left(b-\frac{3}{2}(C_F+\pi\tilde{C})-\frac{3C_Hb}{2L^2}-\frac{3C_F\tilde{C}}{2L}-\frac{3bC_H\tilde{C}}{2L^3}\right)+\frac{a\eps^\frac{5}{2}}{\rho^{\frac{5}{2}}}\left(\frac{1}{12}-\frac{C_G}{L}-\frac{\tilde{C}}{L^{\frac{3}{2}}}-\frac{C_G\tilde{C}}{L^3}\right).\end{split}
\end{equation}
Moreover, we used that $ \frac{\rho}{\eps}>\frac{L}{2}>1 $ and the well-known estimates $ \int_\frac{r}{\eps}^\infty e^{-x}x^n\leq C_ne^{-\frac{r}{2\eps}}\leq \tilde{C}\frac{\eps^4}{r^4} $ for $ n=0,1,2,3 $, as well as $ \frac{\eps^n}{\rho^n}<\frac{\eps}{\rho}<\frac{2}{L} $ and the fact that $ 0<\cos(\theta)<1 $.  Choosing 
\begin{equation}\label{estimate rho small 2}
b> 3 C_F(1+2\tilde{C}\pi) \;\;\;\;\;\text{ and }\;\;\;\;\;L>\max\{\tilde{C},\;\sqrt{6C_H},\;(24C_G+12)^2\}.
\end{equation}
We can conclude \begin{equation}\label{estimate rho small 3}
 \LL_{\Omega}^\eps(W)(x)\geq 0 .
\end{equation} 
\noindent\textbf{Case 2: $  \rho(x)>L\eps $ and $ d(x)<\eps $}

We consider now the case when $ \rho(x)>L\eps $ with $ d(x)<\eps $ for $ |x_2|,|x_3|<\epth $ and $ 0<x_1<\epth $. First of all, we see that if $ d(x)<\eps $ then also $ x_1<2\eps $. This is true since for all these $ x $ the distance can be estimated $\eps> d(x) = |x-z| $ for a unique $ \pi_{\bnd}(x)=z\in \bnd $. Hence, $ x_1<\eps+z_1 $ since also by the convexity and for $ \eps $ sufficiently small we have $ z_1\leq x_1 $. Thus, if $ z_1\geq\eps $ approximating the curvature by a sphere of radius $ R $ from the interior tangent to $ \{0\}\times \RR^2 $ we see that $ z_i\geq C(R)\eps^{\frac{1}{2}} $ for an $ i\in\{2,3\}$ and hence $ d(x)=|x-z|\geq \eps^{\frac{1}{2}}(C(R)-\eps^{3\delta})>\eps $ for $ \eps>0 $ sufficiently small. This implies a contradiction, and thus $ x_1<2\eps $. Let us consider now $ \eta\in B_{\frac{\rho(x)}{4}}(x)\cap\Omega $, then $ \frac{3}{4}\rho(x)<\rho(\eta)<\frac{5}{4}\rho(x) $. Hence, using the definition of cosine
\begin{equation*}\label{d small}
\begin{split}
\cos\left(\theta(x)\right)-\cos&\left(\theta(\eta)\right)=\frac{x_1+\frac{L}{2}\eps}{\rho(x)}-\frac{\eta_1+\frac{L}{2}\eps}{\rho(\eta)}\leq\frac{x_1+\frac{L}{2}\eps}{\rho(x)}-\frac{\frac{2L}{5}\eps}{\rho(x)}\\=&\frac{1}{\rho(x)}\left(x_1+\frac{1}{10}L\eps\right)=\frac{1}{\rho(x)}\left(\frac{x_1}{2}+\frac{L}{4}\eps\right)+\frac{1}{\rho(x)}\left(\frac{x_1}{2}-\frac{3L}{20}\eps\right)\leq  \frac{\cos\left(\theta(x)\right)}{2}
\end{split}
\end{equation*}
if $ L>\frac{20}{3} $. This implies that $ \cos\left(\theta(\eta)\right)\geq\frac{1}{2}\cos\left(\theta(x)\right) $ and therefore using $ \cos(\theta)\rho=x_1+\frac{L}{2}\eps\geq \frac{L}{2}\eps $ we obtain
\begin{equation}\label{taylor d small}
\sup_{\eta\in B_{\frac{\rho(x)}{4}}(x)\cap\Omega}\left[\cos^{-\frac{5}{2}}\left(\theta(\eta)\right)\rho^{-\frac{7}{2}}(x)\right]\leq 5^{\frac{5}{2}}\left(\frac{4}{3}\right)^{\frac{7}{2}}\cos\left(\theta(x)\right)^{-\frac{5}{2}}\rho(x)^{-\frac{7}{2}}\leq  \frac{2C}{\eps L}\cos\left(\theta(x)\right)^{-\frac{3}{2}}\rho(x)^{-\frac{5}{2}}.
\end{equation}

We can now proceed similarly as we did in equations \eqref{estimate rho small} and \eqref{estimate rho small1}. We apply the Taylor expansion on the set $ B_{\frac{\rho(x)}{4}}(x)\cap\Omega $ where $ W $ is smooth. 
	\begin{equation}\label{estimate d small}\begin{split}
	&\LL_\Omega^\eps(W)(x)=W(x)-\int_{B_{\frac{\rho(x)}{4}}(x)\cap\Omega} \Epskern{\eta-x}W
	(\eta)\;d\eta-\int_{B^c_{\frac{\rho(x)}{4}}(x)\cap\Omega} \Epskern{\eta-x}W
	(\eta)\;d\eta\\\geq &W(x)-\int_{B_{\frac{\rho(x)}{4}}(x)\cap\Omega} \Epskern{\eta-x}W
	(\eta)\;d\eta-\int_{B^c_{\frac{\rho(x)}{4}}(x)} \Epskern{\eta-x}(F+G)
	(\eta)\;d\eta\\
	=&W(x)\left(1-\int_{B_{\frac{\rho(x)}{4}}(x)\cap\Omega}\Epskern{\eta-x}\;d\eta\right)- \left(\nabla_xW(x)\right)\cdot\int_{B_{\frac{\rho(x)}{4}}(x)\cap\Omega}\Epskern{\eta-x}(\eta-x)\;d\eta\\&-\frac{1}{2}\int_{B_{\frac{\rho(x)}{4}}(x)\cap\Omega}\Epskern{\eta-x}(\eta-x)^\top\nabla^2_xW(x)(\eta-x)\;d\eta\\&-\sum_{|\alpha|=3}\frac{D^\alpha (F+H)(x)}{\alpha!}\int_{B_{\frac{\rho(x)}{4}}(x)\cap\Omega}\Epskern{\eta-x}(\eta-x)^\alpha\;d\eta\\&-\int_{B_{\frac{\rho(x)}{4}}(x)\cap\Omega}\Kern{\eta-x}(E_F^4+E_H^4+E_G^3)(\eta,x)\;d\eta-\int_{B^c_{\frac{\rho(x)}{4}}(x)} \Epskern{\eta-x}(F+G)
	(\eta)\;d\eta\\\end{split}
	\end{equation}Once again we use the symmetry for the first and third order term on the set $ B_{\frac{d(x)}{4}}(x)\subset B_{\frac{\rho(x)}{4}}(x)\cap\Omega $ estimating the integral on $ B_{\frac{\rho(x)}{4}}(x)\cap\left(\Omega\setminus B_{\frac{d(x)}{4}}(x)\right) $ by the integral on the larger set $ B^c_{\frac{d(x)}{4}}(x)$. Once more we need the identity for the Laplacian on the set $ B_{\frac{d(x)}{4}}(x)$ too. We estimate the error terms of $ F $ and $ H $ by \eqref{error F rho small} and the error term of $ G $ by the last equation \eqref{taylor d small}. Hence, we estimate\begin{equation}\label{estimate d small1}\begin{split}
	\geq& - C_F\left(\frac{\eps}{\rho}\int_{B^c_{\frac{d(x)}{4}}(0)} \frac{e^{-\frac{|y|}{\eps}}}{4\pi \eps^2|y|}\;dy+\frac{\eps^3}{\rho^3}\int_{B^c_{\frac{d(x)}{4}}(0)} \frac{e^{-\frac{|y|}{\eps}}|y|}{4\pi \eps^4}\;dy\right) \\&
- C_Hb\left(\frac{\eps^3}{\rho^3}\int_{B^c_{\frac{d(x)}{4}}(0)} \frac{e^{-\frac{|y|}{\eps}}}{4\pi \eps^2|y|}\;dy+\frac{\eps^4}{\rho^4}\int_{B^c_{\frac{d(x)}{4}}(0)} \frac{e^{-\frac{|y|}{\eps}}}{4\pi \eps^3}\;dy+\frac{\eps^5}{\rho^5}\int_{B^c_{\frac{d(x)}{4}}(0)} \frac{e^{-\frac{|y|}{\eps}}|y|}{4\pi \eps^4}\;dy\right)\\&
- C_Ga\left(\frac{\eps}{L^{\frac{1}{2}}\rho}\int_{B^c_{\frac{d(x)}{4}}(0)} \frac{e^{-\frac{|y|}{\eps}}}{4\pi \eps^2|y|}\;dy+\frac{\eps}{L^\frac{3}{2}\rho}\int_{B^c_{\frac{d(x)}{4}}(0)} \frac{e^{-\frac{|y|}{\eps}}}{4\pi \eps^3}\;dy\right)\\&
+\frac{2b}{3}\frac{\eps^4}{\rho^4}-C_F \frac{\eps^4}{\rho^4}-C_Hb \frac{\eps^6}{\rho^6}+ \frac{1}{12}\frac{a\eps^\frac{5}{2}}{\cos^{\frac{3}{2}}(\theta)\rho^{\frac{5}{2}}}-a \frac{C_G\eps^{\frac{5}{2}}}{L\cos^{\frac{3}{2}}(\theta)\rho(x)^{\frac{5}{2}}}-(\pi+a)e^{-\frac{\rho}{4\eps}}\\
\geq&-Ce^{-\frac{d}{8\eps}}\frac{\eps}{\rho}\left(a+C_F+\frac{aC_G}{L^\frac{1}{2}}+\frac{bC_H}{L^2}\right)+\frac{2}{3}\frac{\eps^4}{\rho^4}\left(b-\frac{3}{2}C_F-\frac{3C_Hb}{2L^2}\right)
+\frac{1}{12}\frac{a\eps^\frac{5}{2}}{\cos^{\frac{3}{2}}(\theta)\rho^{\frac{5}{2}}}\left(1-12\frac{C_G}{L}\right).
\end{split}
\end{equation}
In all the estimate above we used that  $ e^{-\frac{\rho}{4\eps}}\leq \frac{C\eps}{\rho}e^{-\frac{d}{8\eps}} $, since $ \rho>L\eps>d(x) $. Hence, choosing
\begin{equation}\label{estimate d small 2}
 b>3C_F\;\;\;\;\;\text{ and }\;\;\;\;\;L>\max\left\{\frac{20}{3},\;12 C_G,\;\sqrt{3C_H}\right\}
\end{equation}
we conclude using $ L\geq \sqrt{L}$
\begin{equation}\label{estimate d small 3}
\LL_\Omega^\eps(W)(x)\geq -\frac{C(a,\pi,b,C_F)}{\sqrt{L}}e^{-\frac{d(x)}{8\eps}}.
\end{equation}
\noindent\textbf{Case 3: $ \rho(x)>L\eps $ and $ d(x)>\eps$}

We can finish the proof of this Lemma by estimating the operator acting on $ W$ when $ \rho(x)>L\eps $ and $ d(x)>\eps$ for $ x\in\mathcal{C}_{3\delta} $. We notice that by definition $ d(x)\leq \rho(x) $. In this case, we estimate first the operator acting only on $ F+H $ proceeding as for the derivation of \eqref{estimate d small} and \eqref{estimate d small1} with the only difference that this time
\begin{equation*}\label{F+H d big 1}
\begin{split}
(F+H)(x)\left(1-\int_{B_{\frac{\rho(x)}{4}}(x)\cap\Omega}\Epskern{\eta-x}\;d\eta\right)\geq&-\frac{b\eps^2\cos(\theta)^2}{\rho^2}\int_{B^c_{\frac{d}{4}}(x)}\Epskern{\eta-x}\;d\eta\\\geq &-\frac{b\eps^2}{\rho^2}e^{-\frac{d(x)}{4\eps}}\geq-\frac{b}{L^2}e^{-\frac{d(x)}{4\eps}}.
\end{split}
\end{equation*}
Hence we get for $ F+H $
\begin{equation*}\label{F+H big 2}
\begin{split}
\LL_\Omega^\eps(F+H)(x)\geq&-Ce^{-\frac{d(x)}{8\eps}}\frac{1}{L}\left(C_F+\pi+\frac{bC_H}{L^2}+\frac{b}{L}\right)+\frac{2}{3}\frac{\eps^4}{\rho^4}\left(b-\frac{3}{2}C_F-\frac{3C_Hb}{2L^2}\right)\\\geq& -Ce^{-\frac{d(x)}{8\eps}}\frac{1}{L}(\pi+2b),
\end{split}
\end{equation*}
for $ b $ and $ L $ as in \eqref{estimate d small 2}. We consider now the operator acting only on $ G $ and we compute
\begin{equation*}\label{G big 1}
\begin{split}
\LL_\Omega^\eps(G)(x)&=G(x)-\int_{B_{\frac{d(x)}{4}}(x)}\Epskern{\eta-x}G(\eta)\;d\eta-\int_{B_{\frac{\rho(x)}{4}}(x)\cap\Omega\setminus B_{\frac{d(x)}{4}}(x)}\Epskern{\eta-x}G(\eta)\;d\eta\\&-\int_{B^c_{\frac{\rho(x)}{4}}(x)\cap\Omega}\Epskern{\eta-x}G(\eta)\;d\eta\\\geq& G(x)-\int_{B_{\frac{d(x)}{4}}(x)}\Epskern{\eta-x}G(\eta)\;d\eta-\frac{2a}{\sqrt{3L}}e^{-\frac{d(x)}{4\eps}}-\frac{Ca}{L}e^{-\frac{d(x)}{8\eps}}.
\end{split}
\end{equation*}
We used $ \rho(\eta)\geq \frac{3}{4}\rho(x)\geq\frac{3}{4}L\eps $ for $ \eta\in B_{\frac{\rho(x)}{4}}(x)\cap\Omega\setminus B_{\frac{d(x)}{4}}(x) $, the integral on $ B_{\frac{\rho(x)}{4}}(x)\cap\Omega\setminus B_{\frac{d(x)}{4}}(x) $ can be estimated from above by the one on $ B^c_{\frac{d(x)}{4}}(x) $ and the last integral was estimated as in \eqref{estimate d small} and \eqref{estimate d small1} using $ e^{-\frac{\rho(x)}{4\eps}}\leq \frac{C}{L}e^{-\frac{d(x)}{8\eps}} $. In order to estimate the integral on $ B_{\frac{d(x)}{4}}(x) $ we will expand $ G $ by Taylor and therefore we have to control the singularity $ \left(\cos(\theta)\right)^{-\frac{5}{2}} $ of the error term $ E^3_G $ defined in \eqref{taylor+error G}. Let hence $ \eta\in B_{\frac{d(x)}{4}}(x) $ for $ d(x)>\eps $. Since $ d(x)\leq x_1 $, we know that $ x_1>\eps $ and also that $ \eta_1>x_1-\frac{d(x)}{4}>\frac{3}{4}x_1 $. That $ d(x)\leq x_1 $ can be proved in the following way. Let $ z=\left\{x-te_1:t\geq 0\right\}\cap \bnd $, hence $ z_1\geq 0 $. Then $ d(x)\leq |x-z|=x_1-z_1\leq x_1 $. We can thus estimate
\begin{equation*}\label{G big 2}
\begin{split}
\cos\left(\theta(\eta)\right)=&\frac{\eta_1+\frac{L}{2}\eps}{\rho(\eta)}>\frac{x_1-\frac{d(x)}{4}+\frac{L}{2}\eps}{\rho(\eta)}>\frac{\frac{3}{4}x_1+\frac{L}{2}\eps}{\rho(\eta)}\\=&\frac{3}{4}\frac{x_1+\frac{L}{2}\eps}{\rho(\eta)}+\frac{1}{8}\frac{L\eps}{\rho(\eta)}>\frac{3}{4}\frac{x_1+\frac{L}{2}\eps}{\rho(\eta)}>\frac{3}{5}\frac{x_1+\frac{L}{2}\eps}{\rho(x)}=\frac{3}{5}\cos\left(\theta(x)\right),
\end{split}
\end{equation*}
where we used that $ \rho(\eta)<\rho(x)+\frac{d(x)}{4}<\frac{5}{4}\rho(x) $ since $ \rho(x)\geq x_1+\frac{L}{2\eps}\geq d(x)+\frac{L}{2\eps}> d(x) $. Similarly $ \rho(\eta)> \frac{3}{4}\rho(x) $. Hence,
\begin{equation*}\label{taylor d big}
\sup_{\eta\in B_{\frac{d(x)}{4}}(x)}\left[\cos^{-\frac{5}{2}}\left(\theta(\eta)\right)\rho^{-\frac{7}{2}}(x)\right]\leq \left(\frac{5}{3}\right)^{\frac{5}{2}}\left(\frac{4}{3}\right)^{\frac{7}{2}}\cos\left(\theta(x)\right)^{-\frac{5}{2}}\rho(x)^{-\frac{7}{2}}\leq  \frac{2C}{\eps L}\cos\left(\theta(x)\right)^{-\frac{3}{2}}\rho(x)^{-\frac{5}{2}},
\end{equation*}
where we used in addition $ \cos(\theta)\rho=x_1+\frac{L}{2}\eps\geq \frac{L}{2}\eps $.
Now we are ready to conclude the estimate for $ G $. We proceed as we did in \eqref{estimate rho small} and in \eqref{estimate d small} using the Taylor expansion. We have then for $ L $ as in \eqref{estimate d small 2}
\begin{equation*}\label{G big 3}
\begin{split}
\LL_\Omega^\eps(G)(x)&\geq -\frac{2a}{\sqrt{3L}}e^{-\frac{d(x)}{4\eps}}-\frac{Ca}{L}e^{-\frac{d(x)}{8\eps}}-\frac{aC_G}{L^{\frac{3}{2}}}e^{-\frac{d(x)}{8\eps}} +\frac{1}{12}\frac{a\eps^\frac{5}{2}}{\cos^{\frac{3}{2}}(\theta)\rho^{\frac{5}{2}}}\left(1-12\frac{C_G}{L}\right)\\\geq&-\frac{C(a)}{\sqrt{L}}e^{-\frac{d(x)}{8\eps}},
\end{split}
\end{equation*}
Thus, we obtain once more
\begin{equation}\label{estimate d big}
\LL_\Omega^\eps(W)(x)\geq -\frac{C(a,\pi,C_F,b)}{\sqrt{L}}e^{-\frac{d(x)}{8\eps}}.
\end{equation}
Equations \eqref{estimate rho small 3}, \eqref{estimate d small 3} and \eqref{estimate d big} imply the last claim \eqref{Wpmlemma5}. Indeed, there exists a constant $ C>0 $ independent of $ \eps,L,\delta $ such that for $ L>L_0 $ as in \eqref{estimate rho small 2} and \eqref{estimate d small 2} and for $ 0<\eps<1 $ sufficiently small such that $ L<\eps^{-\frac{1-8\delta}{2}} $, $ x\in\mathcal{C}_{3\delta} $ it holds 
\begin{equation*}\label{estimate d big 2}
\LL_\Omega^\eps(W)(x)\geq -\frac{C}{\sqrt{L}}e^{-\frac{d(x)}{8\eps}}.
\end{equation*}
This conclude the proof of the Lemma.
\end{proof}
We can now prove Proposition \ref{supsol ou-u eps}.
\begin{proof}[Proof of Proposition \ref{supsol ou-u eps}]
Let $ W_{\eps,L} $ be as in \eqref{Wel}. Lemma \ref{lemma supsol} implies for $ \overline{W}:=\sum_{i=2}^3\left(W^+_i(x)+W^-_i(x)\right) $ that for any $ i=1,2,3 $
\begin{equation}\label{proofprop1}
	\begin{cases}
	\overline{W}(x)\geq 0 & \text{ if }x\in\Omega;\\\overline{W}(x)\geq\frac{\pi}{2}-\arctan(2)&\text{ if }|x_i|\geq \epth;\\
	\overline{W}(x)\leq C\eps^\alpha&\text{ if }|x_i|< \epfo;\\\LL_{\Omega}^\eps(\overline{W})(x)\geq -\frac{C}{\sqrt{L}}e^{-\frac{d(x)}{8\eps}}&\text{ if }|x_i|< \epth.
	\end{cases}
\end{equation}
Moreover, Theorem \ref{interior supsol} imply that there exists a constant $ \tilde{C}(\Omega,g_\nu) $ such that $ \frac{\tilde{C}}{\sqrt{L}}\phi_{\frac{1}{8},\eps} $ satisfies
\begin{equation*}
\LL_{\Omega}^\eps\left(\frac{\tilde{C}}{\sqrt{L}}\phi_{\frac{1}{8},\eps}\right)(x)\geq \frac{C}{\sqrt{L}}e^{-{\frac{d(x)}{8\eps}}}\;\;\;\;\;\forall x\in\Omega,
\end{equation*}
where $ C $ is the constant in \eqref{proofprop1}. Theorem \ref{interior supsol} implies also $ 0\leq \phi_{A,\eps}(x)\leq C(\Omega)<\infty $ for any $ 0<A<1 $ and any $ 0<A\eps\leq \eps_0 $. Hence, $ W_{\eps,L} $ is non negative and we can conclude equation \eqref{supsol case} for $ \alpha=\frac{\delta}{2} $, for some constant $ C(\Omega, g_\nu)>0 $ independent of $ \delta,p\in\bnd,\eps,L $ and for $ L>0 $ and $ 0<\eps<1 $ sufficiently small such that $ L>\eps^{-\frac{2}{1-8\delta}} $.
\end{proof}
The properties of $ W_{\eps,L} $ imply now the boundedness of $ |u^\eps-\ou| $ near the boundary $ \bnd $.
\begin{corollary}\label{good estimate}
Let $ 0<\delta<\frac{1}{16} $. There exists a constant $ C>0 $, a large $ L>0 $ and an $\alpha>0 $ independent of $ x,p,\eps $ such that 
\begin{equation*}\label{good estimate 1}
\left|\ou\left(\frac{R_p(\cdot)\cdot e_1}{\eps},p\right)-u^\eps\right|(x)\leq C \left(\eps^\alpha+\frac{1}{\sqrt{L}}\right)
\end{equation*}
for all $ |x-p|<\epfo $.
\begin{proof}
Let $ L>0 $ large as in Proposition \ref{supsol ou-u eps} and let $ 0<\eps<1 $ sufficiently small such that $ L<\eps^{-\frac{1}{\beta}} $ for $ \beta=\frac{1-8\delta}{2} $. Let $ \delta<\frac{1}{16} $ and $ \alpha=\frac{\delta}{2} $. If $ |x-p|<\epfo $ with $ x\in\Omega $ then $ 0<x\cdot (-N_p)<\epfo $ and $ |\Rot_p(x)\cdot e_i|<\epfo $ for $ i=2,3 $. Let us consider $ W_{\eps,L} $ as defined in \eqref{Wel}. As we know, $ \ou\left(\frac{R_p(\cdot)\cdot e_1}{\eps},p\right) $ and $ u^\eps $ are both uniformly bounded, independently of $ p $. Let us call $ K>0 $ this bound. Moreover, $ W_{\eps,L}\geq \frac{\pi}{2}-\arctan(2):= \tilde{C}>0 $. Hence, for all $ |\Rot_p(x)\cdot e_i|\geq \epth $ we have that $ \frac{K}{\tilde{C}}W_{\eps,L}(x)-\left|\ou\left(\frac{R_p(\cdot)\cdot e_1}{\eps},p\right)-u^\eps\right|(x)\geq 0 $. The function $ \frac{K}{\tilde{C}}W_{\eps,L} $ is also continuous and satisfies $ \frac{K}{\tilde{C}} \LL_\Omega^\eps\left(W_{\eps,L}\right)\geq C\eps^\delta \frac{K}{\tilde{C}}e^{-\frac{Ad(x)}{\eps}} $ for all $ |\Rot_p(x)\cdot e_i|< \epth $. Using the maximum principle of Theorem \ref{max principle omega} and the estimate for the operator acting on $ \ou\left(\frac{R_p(\cdot)\cdot e_1}{\eps},p\right)-u^\eps $ in Lemma $ \ref{estimate op ou-u eps} $ we obtain 
\begin{equation*}\label{good estimate 2}
\left|\ou\left(\frac{R_p(\cdot)\cdot e_1}{\eps},p\right)-u^\eps\right|(x)\leq C \left(\eps^\alpha+\frac{1}{\sqrt{L}}\right)
\end{equation*}
for all $ |x-p|<\epfo $. 
\end{proof} 
\end{corollary}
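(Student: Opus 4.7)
The plan is a direct application of the comparison version of the maximum principle (Theorem \ref{max principle omega}, part (ii)) to the pair $\frac{K}{\tilde{C}}W_{\eps,L} \pm \bigl(\ou(\Rot_p(\cdot)\cdot e_1/\eps, p) - u^\eps\bigr)$, where $W_{\eps,L}$ is the supersolution constructed in Proposition \ref{supsol ou-u eps}, $\tilde{C} = \frac{\pi}{2} - \arctan(2) > 0$ is the lower bound for $W_{\eps,L}$ in the exterior region, and $K > 0$ is a uniform bound on $|\ou| + |u^\eps|$ (available from Lemma \ref{unifbounded} for $\ou$ and from Theorem \ref{interior supsol} for $u^\eps$, independently of $p$ and $\eps$).

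First I would fix $L$ large and $\eps$ small as in Proposition \ref{supsol ou-u eps} (so $L > \eps^{-2/(1-8\delta)}$), and take the open set $O \subset \Omega$ to be $\{x \in \Omega : |\Rot_p(x)\cdot e_i| < \epth \text{ for } i=1,2,3\}$. Outside $O$, the first bullet of \eqref{supsol case} gives $\frac{K}{\tilde{C}}W_{\eps,L} \geq K \geq |\ou - u^\eps|$, so the comparison function is non-negative there. Inside $O$, the second bullet gives $\LL_\Omega^\eps\bigl(\tfrac{K}{\tilde{C}}W_{\eps,L}\bigr)(x) \geq \tfrac{KC}{\tilde{C}} \eps^\delta e^{-Ad(x)/\eps}$, and Lemma \ref{estimate op ou-u eps} (applicable because $O \subset \{|x-p| < \eptw\}$ when $\delta < \frac{1}{16}$, since $\epth$ in each coordinate forces $|x-p|\lesssim \eptw$) yields $|\LL_\Omega^\eps(\ou - u^\eps)(x)| \leq C\eps^\delta e^{-Ad(x)/\eps}$. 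Enlarging $K$ if necessary so that $\tfrac{KC}{\tilde{C}}$ dominates the constant from Lemma \ref{estimate op ou-u eps}, the operator applied to both $\frac{K}{\tilde{C}}W_{\eps,L} \pm (\ou - u^\eps)$ is non-negative on $O$. The continuity and boundedness hypotheses of Theorem \ref{max principle omega} hold since $W_{\eps,L}$, $\ou(\Rot_p(\cdot)\cdot e_1/\eps, p)$ and $u^\eps$ are all continuous and bounded on $\overline{\Omega}$.

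Applying Theorem \ref{max principle omega}(ii) to both comparison functions then gives $|\ou(\Rot_p(x)\cdot e_1/\eps, p) - u^\eps(x)| \leq \frac{K}{\tilde{C}}W_{\eps,L}(x)$ everywhere in $\Omega$. Finally, for $|x-p| < \epfo$ one has $|\Rot_p(x)\cdot e_i| < \epfo$ for each $i=1,2,3$, so the third bullet of \eqref{supsol case} yields $W_{\eps,L}(x) \leq C(\eps^\alpha + 1/\sqrt{L})$, which is the desired estimate.

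The only non-routine point in this argument is verifying the sign of the operator on $O$, and all the delicate work has already been done: the matching between the decay rate $e^{-Ad(x)/\eps}$ in Lemma \ref{estimate op ou-u eps} and the lower bound $\eps^\delta e^{-Ad(x)/\eps}$ in the second bullet of \eqref{supsol case} is precisely what was engineered in Proposition \ref{supsol ou-u eps}. Thus once the comparison is set up correctly, the corollary reduces to bookkeeping of constants and checking that $\delta < \frac{1}{16}$ guarantees the inclusion $\{|\Rot_p(x)\cdot e_i| < \epth\} \subset \{|x-p| < \eptw\}$ needed to invoke Lemma \ref{estimate op ou-u eps} in the favorable regime.
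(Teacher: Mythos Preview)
Your proposal is correct and follows essentially the same approach as the paper's proof: apply part (ii) of the maximum principle (Theorem \ref{max principle omega}) with the open set $O=\{|\Rot_p(x)\cdot e_i|<\epth\text{ for all }i\}$, using the barrier $\frac{K}{\tilde{C}}W_{\eps,L}$ from Proposition \ref{supsol ou-u eps}, and then read off the smallness of $W_{\eps,L}$ on the smaller cube $\{|\Rot_p(x)\cdot e_i|<\epfo\}$. One minor slip: the relation between $L$ and $\eps$ should be $L<\eps^{-2/(1-8\delta)}$ (i.e., $\eps$ small enough given $L$), not the reverse, but this does not affect the argument.
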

This is a key estimate for the proof of the convergence of the exact solutions $ u^\eps $ up to the boundary $ \bnd $.
\subsection{Convergence of $ u^\eps $ to the solution of the new boundary value problem}
This last section is devoted to the proof of the pointwise convergence of $ u^\eps $ to the solution of the Laplace equation in $ \Omega $ with boundary value $ \ou_\infty $ (cf. Proposition \ref{prop u}).
We proceed defining new regions. We define $ \hat{\Omega}_\eps:=\left\{x\in\Omega: d(x)>\epfo\right\} $, $ \Sigma_\eps:=\left\{x\in\Omega: \epsi<d(x)\leq\epfo\right\} $ and their union $ \Omega_\eps=\hat{\Omega}_\eps\cup\Sigma_\eps $. We also define for $ 0<\sigma\ll 1 $ independent of $ \eps $ the set $ \Omega^\sigma:=\Omega\cup\left\{x\in\Omega^c: d(x)<\sigma\right\} $. We recall the continuous projection $ \pi_{\bnd} $ as given in \eqref{proj}.
\begin{figure}[H]\centering
\includegraphics[height=6cm]{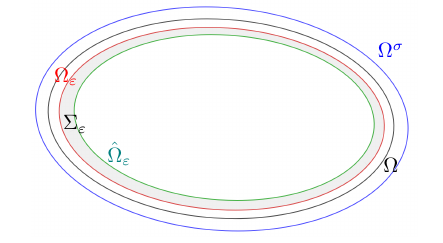}\caption{Decomposition of $ \Omega $}
\end{figure}
\begin{lemma}\label{estimate in sigma  eps}
Let $ 0<\eps<1 $ and $ 0<\delta<1\frac{1}{16} $. Let $ C,L,\alpha $ be as in the Corollary \ref{good estimate}. Then it holds
\begin{equation}\label{estimate sigma eps}
\sup_{x\in\Sigma_\eps}\left|\ou_\infty\left(\pi_{\bnd}(x)\right)-u^\eps(x)\right|\leq C\left(\eps^\alpha+\frac{1}{\sqrt{L}}\right)+\tilde{\omega}_1\left(\eps^{\frac{1}{2}-6\delta}\right),
\end{equation}
where $ \tilde{\omega}_1(r)=C e^{-\frac{r}{2}} $ for a suitable constant $ C>0 $.
\begin{proof}

By Corollary \ref{rate} there exists a constant $ C>0 $ independent of $ p\in\bnd $ such that\linebreak $ \left|\ou(y,p)-\ou_\infty(p)\right|\leq Ce^{-\frac{y}{2}}=\tilde{\omega}_1(y) $. Hence, let $ x\in\Sigma_\eps $. Then
\begin{equation*}\label{estimate sigma eps 1}
\begin{split}
&\left|\ou_\infty\left(\pi_{\bnd}(x)\right)-u^\eps(x)\right|\\\leq& \left|u^\eps(x)-\ou\left(\frac{R_{\pi_{\bnd}(x)}(x)\cdot e_1}{\eps},\pi_{\bnd}(x)\right)\right|-\left|\ou_\infty\left(\pi_{\bnd}(x)\right)-\ou\left(\frac{R_{\pi_{\bnd}(x)}(x)\cdot e_1}{\eps},\pi_{\bnd}(x)\right)\right|\\<& C \left(\eps^\alpha+\frac{1}{\sqrt{L}}\right)+\tilde{\omega}_1\left(\eps^{\frac{1}{2}-6\delta}\right),
\end{split}
\end{equation*}
where we used the result of Corollary \ref{good estimate} and the fact that since $ \epsi<d(x)<\epfo $ then we have that $ \eps^{-\frac{1}{2}+6\delta} \leq \left|\frac{R_{\pi_{\bnd}(x)}(x)\cdot e_1}{\eps}\right|\leq \eps^{-\frac{1}{2}+4\delta } $. Moreover, since $ \delta<\frac{1}{16} $, we have that $ \frac{1}{2}-6\delta>\frac{1}{8}>¨0 $.
\end{proof}
\end{lemma}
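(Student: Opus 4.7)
The estimate will follow from a single triangle inequality together with the two ``hard'' results already proved, namely Corollary \ref{good estimate} (comparison of $u^\eps$ with the boundary layer profile on a neighborhood of thickness $\epfo$) and Corollary \ref{rate} (uniform exponential convergence of $\ou(y,p)$ to $\ou_\infty(p)$ as $y\to\infty$).

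Fix $x\in\Sigma_\eps$ and set $p:=\pi_{\bnd}(x)$ and $y:=\frac{\Rot_p(x)\cdot e_1}{\eps}$. Observe first that $\Rot_p$ is an isometry with $\Rot_p(p)=0$ and $\Rot_p(N_p)=-e_1$, and that by definition of the projection we have $x=p-d(x)N_p$; hence $\Rot_p(x)\cdot e_1=d(x)$ and therefore $y=d(x)/\eps$. Since $x\in\Sigma_\eps$ we have $\epsi<d(x)\leq\epfo$, which gives the two-sided estimate
\begin{equation*}
\eps^{-\frac{1}{2}+6\delta}\leq y\leq \eps^{-\frac{1}{2}+4\delta}.
\end{equation*}
Now split
\begin{equation*}
\left|\ou_\infty(p)-u^\eps(x)\right|\leq\left|u^\eps(x)-\ou(y,p)\right|+\left|\ou(y,p)-\ou_\infty(p)\right|.
\end{equation*}

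For the first term, note that $|x-p|=d(x)\leq\epfo$, so we are in the regime covered by Corollary \ref{good estimate}; this yields $|u^\eps(x)-\ou(y,p)|\leq C(\eps^\alpha+1/\sqrt{L})$ with constants independent of $p$ and $x$. For the second term I apply Corollary \ref{rate}, which provides a constant $C>0$ independent of $p\in\bnd$ such that $|\ou(y,p)-\ou_\infty(p)|\leq C e^{-y/2}$; evaluating at the lower bound $y\geq \eps^{-\frac{1}{2}+6\delta}$ and recalling that $\delta<\frac{1}{16}<\frac{1}{12}$ guarantees $\frac{1}{2}-6\delta>0$, this contribution is dominated by the quantity denoted $\tilde{\omega}_1(\eps^{\frac{1}{2}-6\delta})$ in the statement. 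Taking the supremum over $x\in\Sigma_\eps$ gives the claimed bound. There is no real obstacle here: the only point that requires care is identifying the scaled normal coordinate with $d(x)/\eps$ via the definition of $\Rot_p$ and $\pi_{\bnd}$, which is an immediate consequence of convexity and the $C^3$ regularity of $\bnd$ assumed throughout the paper.
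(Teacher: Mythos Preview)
Your proof is correct and follows essentially the same route as the paper's: a triangle inequality splitting $|\ou_\infty(p)-u^\eps(x)|$ into the piece controlled by Corollary~\ref{good estimate} (since $|x-p|=d(x)\leq\epfo$) and the piece controlled by Corollary~\ref{rate} (since $y=d(x)/\eps\geq\eps^{-\frac{1}{2}+6\delta}$). Your explicit identification $\Rot_p(x)\cdot e_1=d(x)$ via $x=p-d(x)N_p$ is in fact a bit cleaner than the paper's phrasing of the same step.
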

We recall first the definition of $ v $ as solution to the Laplacian (cf. \eqref{defv})
\begin{equation*}\label{definition v}
\begin{cases}
-\Delta v(x)=0 & x\in\Omega,\\v(p)=\ou_\infty(p) & p\in\bnd.
\end{cases}
\end{equation*}
Clearly by the uniformly continuity of $ \ou_\infty $ we have that $ v\in C^\infty\left(\Omega\right)\cap C\left(\overline{\Omega}\right) $. We call $ \omega_2 $ its modulus of continuity. Further, we need to consider another function, harmonic on the larger domain $ \Omega^\sigma $
\begin{equation*}\label{definition vsigma}
\begin{cases}
-\Delta v_\sigma(x)=0 & x\in\Omega^\sigma,\\v(x)=\ou_\infty\left(\pi_{\bnd}(x)\right) & x\in\bnd^\sigma.
\end{cases}
\end{equation*}
We recall that $ \pi_{\bnd}:\bnd^\sigma\to\bnd $ is a continuous bijection if $ \sigma>0 $ small enough and therefore $ v_\sigma\in C^\infty\left(\Omega^\sigma\right)\cap C\left(\overline{\Omega^\sigma}\right) $. Denoting $ \omega $ its modulus of continuity a simple application of the maximum principle for harmonic functions implies $ \sup_{x\in\Omega} \left|v_\sigma(x)-v(x)\right|\leq \omega(\sigma)$. Indeed, $ v_\sigma-v $ is harmonic on $ \Omega $ and thus the maximum must be attained on $ \bnd $. Hence, using that $ \pi_{\bnd} $ is a bijection we obtain
\begin{equation}\label{v-vsigma}
v_\sigma(x)-v(x)\leq \max\limits_{x\in\bnd}\left(v_\sigma(x)-v(x)\right)=\max\limits_{x\in\bnd^\sigma}\left(v_\sigma(\pi_{\bnd}(x))-v_\sigma(x)\right)\leq \omega(\sigma),
\end{equation}
since $ |x-\pi_{\bnd}(x)|=\sigma $ for $ x\in\bnd^\sigma $. The same can be estimated for $ v-v_\sigma $.
\begin{lemma}\label{estimate op vsigma-ueps}
Let $ x\in\hat{\Omega}_\eps $. Then
\begin{equation}\label{estimate op vsigma-ueps 1}
\left|\LL_\Omega^\eps\left(v_\sigma-u^\eps\right)(x)\right|\leq C(\Omega,g_\nu) e^{-\frac{Ad(x)}{\eps}}\left(\eps^\beta+\frac{\eps}{\sigma}\right)+\frac{C}{\sigma^3}\eps^3,
\end{equation}
for some constant $ C(\Omega,g_\nu)>0 $ and $ \eps>0 $ sufficiently small.
\begin{proof}
We already know that in general we always have the estimate
 $ |\LL_\Omega^\eps\left(u^\eps\right)(x)|\leq Ce^{-\frac{d}{\eps}} $ (cf. Theorem \ref{interior supsol}). Since for $ x\in\hat{\Omega}_\eps $ the distance to the boundary satisfies $ d(x)>\epfo $, we estimate for these points
 \begin{equation}\label{LL ueps}
 |\LL_\Omega^\eps\left(u^\eps\right)(x)|\leq Ce^{-\frac{d}{\eps}} \leq Ce^{-\frac{d}{2\eps}} 2\frac{\eps}{\epfo}\leq  Ce^{-\frac{d}{2\eps}}\eps^{\frac{1}{2}-4\delta}=Ce^{-\frac{d}{2\eps}}\eps^\beta,
 \end{equation}
 where $ \beta =\frac{1}{2}-4\delta>0$ for $ \delta<\frac{1}{16} $. \\
 
 Let us consider now the operator $ \LL_{\Omega}^\eps $ acting on $ v_\sigma $. We apply as usual the Taylor expansion to $ v_\sigma(\eta)=v_\sigma(x)+\nabla_x v_\sigma(x)\cdot (\eta-x)+\frac{1}{2}(\eta-x)^\top\nabla_x^2 v_\sigma(x)(\eta-x)+E^3(\eta,x) $. Since $ x\in B_\sigma(x)\subset\Omega^\sigma $ for all $ x\in\Omega $ by the harmonicity of  $ v_\sigma $ we obtain
 \begin{equation*}\label{derivative v sigma}
 \left|\partial^\alpha v_\sigma(x)\right|\leq C(|\alpha|)\frac{\Arrowvert v_\sigma\Arrowvert_{\infty} \sigma^3 }{\sigma^{3+|\alpha|}}=C(|\alpha|)\frac{\Arrowvert \ou_\infty \Arrowvert_{\infty} }{\sigma^{|\alpha|}}\leq \frac{C }{\sigma^{|\alpha|}},
 \end{equation*}
 where we used that $ v_\sigma $ attains its maximum on the boundary and that $ \ou_\infty $ is uniformly bounded. Hence, we calculate for $x\in\hat{\Omega}_\eps $ and $ \eps>0 $ sufficiently small
 \begin{equation}\label{LL vsigma}
 \begin{split}
 \left|\LL_\Omega^\eps\left(v_\sigma\right)(x)\right|=& \left|v_\sigma(x)-\int_\Omega \Epskern{\eta-x}\left[v_\sigma(x)+\nabla_x v_\sigma(x)\cdot (\eta-x)+\frac{1}{2}(\eta-x)^\top\nabla_x^2 v_\sigma(x)(\eta-x)\right]\;d\eta\right|\\
 &+\left|\int_\Omega \Epskern{\eta-x} E^3(\eta,x)\;d\eta\right|\\\leq & v_\sigma(x)\int_{B^c_{d(x)(x)}}\Epskern{\eta-x}\;d\eta+ \frac{C\eps}{\sigma}\int_{B^c_{d(x)(0)}}\frac{e^{-\frac{|y|}{\eps}}}{4\pi \eps^2|y|}\;dy+\frac{C\eps^2}{\sigma^2}\int_{B^c_{d(x)(0)}}\frac{e^{-\frac{|y|}{\eps}}}{4\pi \eps^3}\;dy\\&+ \frac{C\eps^3}{\sigma^3}\int_{\RR^3}\frac{e^{-\frac{|y|}{\eps}}}{4\pi \eps^3}\frac{|y|}{\eps}\;dy\\\leq& Ce^{-\frac{d(x)}{\eps}}\left(1+\frac{\eps}{\sigma}\right)+ \frac{C\eps^3}{\sigma^3},
 \end{split}
 \end{equation}
 where we used the symmetry of the kernel for the first order term, the fact that $ v_\sigma $ is harmonic for the second order term, the boundedness of $ \int_0^\infty e^{-r}r^3\;dr $ and the fact that $ \Omega^c\subset B^c_{d(x)}(x) $. Equations \eqref{LL ueps} and \eqref{LL vsigma} yield \eqref{estimate op vsigma-ueps 1}.
\end{proof}
\end{lemma}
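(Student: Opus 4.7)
The plan is to split via the triangle inequality $|\LL_\Omega^\eps(v_\sigma - u^\eps)(x)| \leq |\LL_\Omega^\eps(v_\sigma)(x)| + |\LL_\Omega^\eps(u^\eps)(x)|$ and estimate each piece independently. For the $u^\eps$ contribution, the integral equation \eqref{u eps} gives directly
\begin{equation*}
\LL_\Omega^\eps(u^\eps)(x) = \int_0^\infty d\nu \int_{\Ss^2} dn\, g_\nu(n)\, e^{-|x - x_\Omega(x,n)|/\eps},
\end{equation*}
and since $|x - x_\Omega(x,n)| \geq d(x)$, this is bounded by $C e^{-d(x)/\eps}$. The hypothesis $d(x) > \epfo$ is used to trade part of the exponential for a power of $\eps$: splitting $e^{-d/\eps} \leq (C\eps/d(x))\, e^{-d(x)/(2\eps)} \leq C\eps^\beta e^{-d(x)/(2\eps)}$ via the elementary bound $te^{-t/2} \leq C$, with $\beta = \tfrac{1}{2}-4\delta > 0$.

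For the $v_\sigma$ contribution the strategy is a Taylor expansion combined with harmonicity. Since $B_\sigma(x) \subset \Omega^\sigma$ for every $x\in\Omega$ and $v_\sigma$ is harmonic on $\Omega^\sigma$, the standard interior estimates for harmonic functions yield $|\partial^\alpha v_\sigma(x)| \leq C\|\ou_\infty\|_\infty/\sigma^{|\alpha|}$. I would expand $v_\sigma(\eta) = v_\sigma(x) + \nabla v_\sigma(x)\cdot (\eta-x) + \tfrac{1}{2}(\eta-x)^\top \nabla^2 v_\sigma(x)(\eta-x) + E^3(\eta,x)$ and substitute into $\LL_\Omega^\eps(v_\sigma)(x)$. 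On $\RR^3$ the first moment of $K_\eps$ vanishes by radial symmetry; for the second moment, $\int_{\RR^3} y_i y_j K_\eps(y)\,dy = \tfrac{2\eps^2}{3}\delta_{ij}$, so the trace contracts against $\Delta v_\sigma(x) = 0$ and this term also vanishes. Writing $\int_\Omega = \int_{\RR^3} - \int_{\Omega^c}$ leaves only integrals on $\Omega^c \subset B^c_{d(x)}(x)$ in the zeroth, first and second order terms, bounded respectively by $e^{-d(x)/\eps}$, $C(\eps/\sigma) e^{-d(x)/(2\eps)}$ and $C(\eps/\sigma)^2 e^{-d(x)/(2\eps)}$ via direct estimates on $\int_{B^c_d} K_\eps(y) |y|^k dy$. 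The third-order remainder is bounded by $(C/\sigma^3) \int_{\RR^3} K_\eps(y)|y|^3 dy = O(\eps^3/\sigma^3)$.

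Combining and absorbing the subdominant $(\eps/\sigma)^2$ piece into $(\eps/\sigma)$ for $\eps$ small yields $|\LL_\Omega^\eps(v_\sigma)(x)| \leq C e^{-d(x)/(2\eps)}(1 + \eps/\sigma) + C\eps^3/\sigma^3$, and adding the $u^\eps$ estimate proves the claim with $A = \tfrac{1}{2}$. The main obstacle is really bookkeeping: verifying the two cancellations from symmetry (first moment) and harmonicity (second moment) correctly, and tracking how each Taylor order contributes an $(\eps/\sigma)^k$ factor while preserving the exponential tail $e^{-d(x)/(2\eps)}$. The role of the $\sigma$-enlargement $\Omega^\sigma$ is essential here: without it the pointwise bounds $|\partial^\alpha v| \leq C/\sigma^{|\alpha|}$ would blow up near $\bnd$ and the Taylor expansion would be useless precisely in the region where $\LL_\Omega^\eps$ has nontrivial boundary contributions.
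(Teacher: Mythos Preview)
Your proposal is correct and follows essentially the same approach as the paper: split via the triangle inequality, bound $|\LL_\Omega^\eps(u^\eps)(x)|$ by the source term $\leq Ce^{-d(x)/\eps}$ and trade half the exponent for $\eps^\beta$ using $d(x)>\epfo$, then handle $\LL_\Omega^\eps(v_\sigma)$ by Taylor expansion with the interior harmonic estimates $|\partial^\alpha v_\sigma|\leq C/\sigma^{|\alpha|}$, using radial symmetry to kill the first-order term and $\Delta v_\sigma=0$ to kill the second-order term on $\RR^3$, leaving only tail integrals over $\Omega^c\subset B^c_{d(x)}(x)$ and the $O(\eps^3/\sigma^3)$ cubic remainder. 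Your identification of $A=\tfrac12$ and your remark on why the $\sigma$-enlargement is needed match the paper's reasoning exactly.
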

\begin{lemma}\label{estimate sigmaeps}
Let $ x\in\Sigma_\eps $ and $ \eps>0 $ small enough. Then the following uniform bound holds
\begin{equation*}\label{estimate sigmaeps 1}
\left|v_\sigma(x)-u^\eps(x)\right|\leq \omega(\sigma)+\omega_2\left(\epfo\right)+ C\left(\eps^\alpha+\frac{1}{\sqrt{L}}\right)+\tilde{\omega}_1\left(\eps^{\frac{1}{2}-6\delta}\right).
\end{equation*}
\begin{proof}
Let $ x\in\Sigma_\eps $. Then, $ d(x)=\left|x-\pi_{\bnd}(x)\right|<\epfo $. Hence, $ \left|v(x)-\ou_\infty\left(\pi_{\bnd}(x)\right)\right|<\omega_2(\epfo) $. Thus using equation \eqref{v-vsigma} and Lemma \ref{estimate in sigma  eps} we conclude
\begin{equation}\label{estimate sigmaeps1}
\begin{split}
\left|v_\sigma(x)-u^\eps(x)\right|\leq& \left|v_\sigma(x)-v(x)\right|+\left|v(x)-\ou_\infty\left(\pi_{\bnd}(x)\right)\right|+\left|\ou_\infty\left(\pi_{\bnd}(x)\right)-u^\eps(x)\right|\\
\leq&  \omega(\sigma)+\omega_2\left(\epfo\right)+ C\left(\eps^\alpha+\frac{1}{\sqrt{L}}\right)+\tilde{\omega}_1\left(\eps^{\frac{1}{2}-6\delta}\right).
\end{split}
\end{equation}
\end{proof}	
\end{lemma}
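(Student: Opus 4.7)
The plan is to use the triangle inequality to compare $v_\sigma(x)$ to $u^\eps(x)$ by passing through two intermediate quantities, namely $v(x)$ and $\ou_\infty(\pi_{\bnd}(x))$. Specifically, I would write
\begin{equation*}
\left|v_\sigma(x)-u^\eps(x)\right|\leq \left|v_\sigma(x)-v(x)\right|+\left|v(x)-\ou_\infty(\pi_{\bnd}(x))\right|+\left|\ou_\infty(\pi_{\bnd}(x))-u^\eps(x)\right|
\end{equation*}
and estimate each of the three terms separately, using only tools already established in the excerpt.

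For the first term, I would directly invoke the bound \eqref{v-vsigma}, which was obtained by applying the maximum principle for harmonic functions to $v_\sigma - v$: since $v_\sigma-v$ is harmonic in $\Omega$ and on $\bnd$ the boundary value is $v_\sigma(p)-\ou_\infty(p)= v_\sigma(p)-v_\sigma(\pi_{\bnd}(p))$ with $|p-\pi_{\bnd}(p)|\leq\sigma$, the modulus of continuity $\omega$ of $v_\sigma$ yields $|v_\sigma(x)-v(x)|\leq\omega(\sigma)$ uniformly in $x\in\Omega$. For the second term, I would use that $v\in C(\overline\Omega)$ with modulus of continuity $\omega_2$ and that $v(\pi_{\bnd}(x))=\ou_\infty(\pi_{\bnd}(x))$ by the boundary condition in \eqref{defv}. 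Since $x\in\Sigma_\eps$ we have $|x-\pi_{\bnd}(x)|=d(x)\leq\epfo$, and hence the second term is bounded by $\omega_2(\epfo)$.

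The third term is precisely the quantity controlled by Lemma \ref{estimate in sigma  eps}, which gives
\begin{equation*}
\left|\ou_\infty(\pi_{\bnd}(x))-u^\eps(x)\right|\leq C\left(\eps^\alpha+\frac{1}{\sqrt{L}}\right)+\tilde\omega_1\bigl(\eps^{\frac{1}{2}-6\delta}\bigr)
\end{equation*}
uniformly on $\Sigma_\eps$. Summing the three bounds yields the claim.

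The only step requiring any real care is the second one, and the only point to verify is that $\pi_{\bnd}(x)$ is well-defined for $x\in\Sigma_\eps$, which holds for $\eps>0$ small enough since $\Sigma_\eps$ lies in an $\epfo$-neighborhood of $\bnd$ and \eqref{proj} guarantees that the projection is well-defined on a fixed neighborhood of $\bnd$. Given all previous results in the excerpt, the proof is then essentially a one-line triangle inequality argument; I do not anticipate any genuine obstacle.
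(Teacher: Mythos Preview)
Your proposal is correct and follows essentially the same approach as the paper: the paper's proof is precisely the three-term triangle inequality through $v(x)$ and $\ou_\infty(\pi_{\bnd}(x))$, bounding the pieces by \eqref{v-vsigma}, the modulus of continuity $\omega_2$ of $v$, and Lemma~\ref{estimate in sigma  eps}, respectively.
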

With an application of the maximum principle on $\Omega_\eps:= \hat{\Omega}_\eps\cup\Sigma_\eps $ we can prove the convergence of $ u^\eps $ to the harmonic function $ v $ $ \eps\to 0 $.
\begin{theorem}\label{convergence up to the boundary}
$ u^\eps $ converges to $ v $ uniformly in every compact set.

\begin{proof}
As we mentioned above we will use the maximum principle for the operator $ \LL_{\Omega_\eps}^\eps $. We start estimating for $ x\in\hat{\Omega}_\eps $ this operator acting on $ v_\sigma-u^\eps $. Thus, for $ \eps>0 $ sufficiently small
\begin{equation}\label{convergence 1}
\begin{split}
\left|\LL_{\Omega_\eps}^\eps\left(v_\sigma-u^\eps\right)(x)\right|\leq&\left|\LL_\Omega^\eps\left(v_\sigma-u^\eps\right)(x)\right|+\int_{\Omega\setminus\Omega_\eps} \Epskern{\eta-x}\left(v_\sigma(\eta)-u^\eps(\eta)\right)\;d\eta\\ 
\leq Ce^{-\frac{d(x)}{\eps}}&\left(1+\frac{\eps}{\sigma}\right)+ \frac{C\eps^3}{\sigma^3}+ \frac{C}{\eps}\exp\left(-\frac{\eps^{\frac{8\delta-1}{2}}}{2}\right)\int_{\Omega\setminus\Omega_\eps} \frac{1}{|\eta-x|^2}\;d\eta\\
\leq Ce^{-\frac{d(x)}{2\eps}}&\left(\eps^{\frac{1-8\delta}{2}}+\frac{\eps}{\sigma}\right)+ \frac{C\eps^3}{\sigma^3}+C\left(\Omega\right)\eps^3,
\end{split}
\end{equation}
where we used Lemma \ref{estimate op vsigma-ueps}, the fact that if $\eta\in \Omega\setminus\Omega_\eps $ and $ 0<\eps<2^{-\frac{1}{2\delta}} $ we have $ |x-\eta|>\epfo(1-\eps^{2\delta})>\frac{\epfo}{2} $ and that $ d(x)\geq \epfo $. Moreover, we used that $ \Omega\setminus\Omega_\eps\subset \Omega $ and that for any $ n\in\mathbb{N} $ there exists a constant $ C_n $ such that $ |x|^ne^{-|x|}\leq C_n $. We chose here $ \delta<\frac{1}{72} $ and $ n=9 $.\\

We now set for $ C>0 $ the maximum between the constants appearing in estimates \eqref{estimate sigmaeps1} and \eqref{convergence 1} 
\begin{equation*}\label{Keps}
K_\eps=C\left(\eps^{\frac{1-8\delta}{2}}+\frac{\eps}{\sigma}+\frac{\eps}{\sigma^3}+\omega(\sigma)+\omega_2\left(\epfo\right)+ C\left(\eps^\alpha+\frac{1}{\sqrt{L}}\right)+\tilde{\omega}_1\left(\eps^{\frac{1}{2}-6\delta}\right)\right).
\end{equation*} Analogously to Theorem \ref{interior supsol} we define $$ \psi(x)= K_\eps2C_3 \left(\left(C_1-\left|x\right|^2\right)+C_2\left[\left(1-\frac{\gamma}{1+\left(\frac{d_\eps(x)}{2\eps}\right)^2}\right)\wedge\left(1-\frac{\gamma}{1+\left(\frac{\mu R}{4\eps}\right)^2}\right)\right]\right), $$ where $ C_1(\Omega):= 2\max_{x\in\overline\Omega}\left|x\right|^2+2\;\diam\left(\Omega\right)^2+4\;\diam\left(\Omega\right)+4 $ and $ C_2(\Omega),\:C_3(\Omega) $ are independent of $ \eps $. We denote $ d_\eps:=\dist(x,\bnd_\eps) $. We claim that $ \psi $ is a supersolution for $ \LL_{\Omega_\eps}^\eps $. This is true because the geometrical properties of $ \Omega_\eps$, in particular its regularity and the estimate for the radii of curvature, are identical to those for $ \Omega $. More precisely, if $ \eps<\eps_0 $ sufficiently small the minimal radius of curvature $ R_\eps $ for $ \bnd_\eps $ satisfies $ \frac{R}{2}<R_\eps<R $. Moreover, $ d_\eps(x)\leq d(x) $ and therefore  $ \Arrowvert \nabla_x^2 d_\eps\Arrowvert_{op}\leq \frac{1}{(1-\mu)\frac{R}{2}} $ for $ \mu\in(0,1) $ and $ d_\eps(x)<\frac{R}{2}\mu $. Hence, $ C_2,C_3,\gamma,\mu $ can be chosen as in Theorem \ref{interior supsol}. Thus, $ \psi $ has uniform upper and lower bound independently on $ \eps $ and all calculations we performed in Lemma \ref{first supsol} and Lemma \ref{second supsol} apply also for $ \LL_{\Omega_\eps}^\eps(\psi)(x) $. 
Hence, since $ d_\eps(x)\leq d(x) $ we estimate using equation \eqref{interior supsol 1}
\begin{equation}\label{convergence 2}
\LL_{\Omega_\eps}^\eps\left(\psi\right)(x)\geq K_\eps \left(e^{-\frac{d_\eps(x)}{2\eps}}+\eps^2\right)\geq K_\eps \left(e^{-\frac{d(x)}{2\eps}}+\eps^2\right).
\end{equation}
 
We remark that $ \psi\geq 8 K_\eps C_3 $ by the definition of $ C_1 $. Hence, multiplying $ \psi $ by $ K=\max\{1,\frac{1}{8C_3}\} $ we have $ K\psi\geq K_\eps $. We apply now use the maximum principle for $ \LL_{\Omega_\eps}^\eps $ acting on $ K\psi-(v_\sigma-u^\eps) $. Indeed, $ K\psi(x)\geq |v_\sigma(x)-u^\eps(x)| $ if $ x\in\Sigma_\eps $ by Lemma \ref{estimate sigmaeps} and the definition of $ K_\eps $. We notice also that estimates \eqref{convergence 1} and \eqref{convergence 2} imply $ \LL_{\Omega_\eps}^\eps\left(K\psi-\left(v_\sigma(x)-u^\eps(x)\right)\right)(x)\geq 0 $ as well as $ \LL_{\Omega_\eps}^\eps\left(K\psi-\left(u^\eps(x)-v_\sigma(x)\right)\right)(x)\geq 0 $ if $ x\in\hat{\Omega}_\eps $. Hence, using the maximum principle for the operator $ \LL_{\Omega_\eps}^\eps $ we conclude $ |u^\eps-v_\sigma|\leq K\psi\leq CK_\eps $ for $ x\in\Omega_\eps $ and for some constant $ C(\Omega, K, g_\nu) $ since $ \psi $ is bounded (cf. Theorem \ref{interior supsol}). Thus,
\begin{equation}\label{convergence3}
CK_\eps-v_\sigma(x)\leq u^\eps(x)\leq CK_\eps+v_\sigma(x)
\end{equation}
for all $ x\in\Omega_\eps $. Since $ \lim\limits_{\eps\to 0} K_\eps= C\left(\omega(\sigma)+\frac{1}{\sqrt{L}}\right) $, we obtain
\begin{equation*}\label{convergence4}
C\left(\omega(\sigma)+\frac{1}{\sqrt{L}}\right) -v_\sigma(x)=\liminf\limits_{\eps\to 0} u^\eps(x)\leq \limsup\limits_{\eps\to 0} u^\eps(x)=C\left(\omega(\sigma)+\frac{1}{\sqrt{L}}\right) +v_\sigma(x).
\end{equation*}
Letting first $ L\to\infty $ and then $ \sigma\to 0 $ we conclude
\begin{equation*}\label{convergence5}
v(x)=\liminf\limits_{\eps\to 0} u^\eps(x)\leq \limsup\limits_{\eps\to 0} u^\eps(x)=v(x)
\end{equation*}
and hence
\begin{equation*}\label{convergence6}
\lim\limits_{\eps\to 0}u^\eps(x)=v(x)
\end{equation*}
for all $ x\in\Omega $.
The convergence is not only pointwise but also uniform in every compact set. Let indeed $ A\subset \Omega $ be compact. Then there exists an $ \eps_0 $ such that $ A\subset \Omega_\eps $ for all $ \eps<\eps_0 $. Since $ CK_\eps $ is independent of $ x\in\Omega $ and $ |v-v_\sigma|\leq \omega(\sigma) $ uniformly in $ x\in\Omega $ equation \eqref{convergence3} implies
\begin{equation*}
\sup\limits_{x\in A}|u^\eps(x)-v(x)|\leq CK_\eps+\omega(\sigma)\overset{\eps\to 0,L\to\infty,\sigma\to 0}{\longrightarrow}0
\end{equation*}
 Hence, Theorem \ref{convergence up to the boundary} follows.
\end{proof}
\end{theorem}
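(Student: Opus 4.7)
The plan is to combine the boundary layer estimates developed in the previous subsections with a non-local maximum principle on a shrunken domain, using an intermediate harmonic function which is smooth up to $\bnd$. Since $v$ itself is only continuous at $\bnd$, a direct Taylor expansion of $v$ inside $\LL_\Omega^\eps$ is not available near the boundary. To circumvent this, I would first work with $v_\sigma$, the harmonic extension on the enlarged domain $\Omega^\sigma$ with boundary values $\ou_\infty \circ \pi_{\bnd}$. Standard interior estimates give $|\partial^\alpha v_\sigma(x)| \leq C\sigma^{-|\alpha|}$ on $\Omega$, and a routine comparison on $\Omega$ yields $\sup_\Omega|v - v_\sigma| \leq \omega(\sigma)$, with $\omega$ the modulus of continuity of $\ou_\infty \circ \pi_{\bnd}$ on $\overline{\Omega^\sigma}$.

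Next I would split $\Omega$ as $\hat{\Omega}_\eps \cup \Sigma_\eps$ with $\hat{\Omega}_\eps = \{d(x) > \epfo\}$ and $\Sigma_\eps = \{\epsi < d(x) \leq \epfo\}$, so that $\Sigma_\eps$ serves as the boundary layer where the planar approximation $\ou$ is relevant and $\hat{\Omega}_\eps$ is the interior region. On $\Sigma_\eps$ I would chain three small errors: $|v_\sigma - v| \leq \omega(\sigma)$, then $|v(x) - \ou_\infty(\pi_{\bnd}(x))| \leq \omega_2(\epfo)$ by continuity of $v$, and finally $|\ou_\infty(\pi_{\bnd}(x)) - u^\eps(x)|$ is controlled by Corollary \ref{good estimate} applied at $p = \pi_{\bnd}(x)$ plus the exponential decay $|\ou(y,p) - \ou_\infty(p)| \leq Ce^{-y/2}$ from Corollary \ref{rate} (noting $\epsi < d(x)$ forces the rescaled argument to be large). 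This produces a uniform estimate $|v_\sigma - u^\eps| \leq K_\eps$ on $\Sigma_\eps$ with $K_\eps \to C(\omega(\sigma) + 1/\sqrt{L})$ as $\eps \to 0$.

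On $\hat{\Omega}_\eps$ I would estimate $\LL_\Omega^\eps(v_\sigma - u^\eps)$. For $u^\eps$ the source in \eqref{u eps} is bounded by $C\|g\|_1 e^{-d(x)/\eps}$, which by $d(x) > \epfo$ gains an extra $\eps^{1/2 - 4\delta}$. For $v_\sigma$ I would expand by Taylor to third order on $B_{d(x)}(x) \subset \Omega$: the first-order term vanishes by symmetry of $K_\eps$, the second-order term gives $\frac{\eps^2}{3}\Delta v_\sigma = 0$ since $v_\sigma$ is harmonic, and the tail and remainder contribute $Ce^{-d(x)/\eps}(1 + \eps/\sigma) + C\eps^3/\sigma^3$. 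With the extra contribution from integration against $\Omega \setminus \Omega_\eps$ (which is easy to bound via $|\eta - x| > \epfo/2$), this yields $|\LL_{\Omega_\eps}^\eps(v_\sigma - u^\eps)(x)| \leq C(e^{-d(x)/2\eps}(\eps^{1/2-4\delta} + \eps/\sigma) + \eps^3/\sigma^3 + \eps^3)$ on $\hat{\Omega}_\eps$, bounded by the same $K_\eps$ times a factor of order $e^{-d_\eps(x)/2\eps} + \eps^2$.

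Finally I would build a supersolution $\psi$ for $\LL_{\Omega_\eps}^\eps$ by copying the construction of Theorem \ref{interior supsol} verbatim with $\Omega$ replaced by $\Omega_\eps$; since $\Omega_\eps$ is convex with $C^3$ boundary and radii of curvature bounded below by $R_\eps \geq R/2$ for small $\eps$, Lemmas \ref{first supsol} and \ref{second supsol} apply and give $\LL_{\Omega_\eps}^\eps(\psi) \geq e^{-d_\eps(x)/2\eps} + \eps^2$ with $\psi$ uniformly bounded. The non-local maximum principle of Theorem \ref{max principle omega} (part (ii)) applied to $KK_\eps\psi \pm (v_\sigma - u^\eps)$, with the boundary control on $\Sigma_\eps$ from step two playing the role of sign condition outside $\hat{\Omega}_\eps$, then delivers $|u^\eps - v_\sigma| \leq CK_\eps$ throughout $\Omega_\eps$. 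Sending $\eps \to 0$, then $L \to \infty$, then $\sigma \to 0$ gives pointwise convergence $u^\eps \to v$; uniformity on compact sets follows because any compact $A \subset \Omega$ lies in $\Omega_\eps$ for $\eps$ small and the bound $CK_\eps + \omega(\sigma)$ is independent of $x$. The main technical obstacle is verifying that the delicate supersolution construction of Theorem \ref{interior supsol} indeed transfers to $\Omega_\eps$ with uniform constants in $\eps$; the rest is bookkeeping of three competing small parameters.
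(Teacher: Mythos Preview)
Your proposal is correct and follows essentially the same approach as the paper: introduce the auxiliary harmonic $v_\sigma$ on the enlarged domain, control $|v_\sigma-u^\eps|$ on the strip $\Sigma_\eps$ via the boundary-layer estimates (Lemma~\ref{estimate in sigma  eps} and Lemma~\ref{estimate sigmaeps}), control $\LL_{\Omega_\eps}^\eps(v_\sigma-u^\eps)$ on $\hat\Omega_\eps$ via Taylor expansion and harmonicity (Lemma~\ref{estimate op vsigma-ueps}), transplant the supersolution of Theorem~\ref{interior supsol} to $\Omega_\eps$, and close with the maximum principle of Theorem~\ref{max principle omega}(ii) followed by the limits $\eps\to0$, $L\to\infty$, $\sigma\to0$. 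The only point you flag as a potential obstacle---that the supersolution construction carries over to $\Omega_\eps$ with uniform constants---is exactly what the paper handles by observing that $\partial\Omega_\eps$ inherits $C^3$ regularity and curvature bounds $R_\eps\in(R/2,R)$ from $\bnd$.
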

\section{Diffusion approximation for space dependent absorption coefficient}
We could prove in the previous sections the convergence of the initial boundary value problem \eqref{bvpnoscattering} to the solution of the Laplacian for constant absorption coefficient. In this section we will show an analogous result for the case, when the absorption coefficient depends on $ x\in\Omega $, but it does not depend on the frequency.\\
\subsection{The limit problem and the boundary layer equation}
We assume, as we did throughout the paper, $ \Omega\subset\RR^3 $ to be a convex bounded domain with $ C^3 $-boundary and strictly positive curvature. From now on we also assume $ \alpha\in C^3(\overline{\Omega}) $ with $ 0<c_0\leq \alpha(x)\leq \Arrowvert \alpha\Arrowvert_{C^3}:=c_1<\infty $. We define $ \overline{\alpha}\in C_b^3(\RR) $ to be the extension of $ \alpha $ in the whole space with $  0<c_0\leq \overline{\alpha}(x)\leq c_1 $ and $ \overline{\alpha}\left.\right|_\Omega=\alpha $. For conivience we will denote $ \overline{\alpha} $ by $ \alpha $. We assume $ g_\nu $ to satisfy the same assumption as in the rest of the paper, namely $ g_\nu(n)\geq 0 $ with $ \int_0^\infty g_\nu(n)\;d\nu\in L^\infty\left(\Ss^2\right) $. We study the limit as $ \eps\to 0 $ of the following boundary value problem
	\begin{equation}\label{bvpalphax}
\begin{cases}
n\cdot \nabla_x I_\nu \left(x,n\right)= \frac{\alpha(x)}{\eps}\left(B_\nu\left(T\left(x\right)\right)-I_\nu \left(x,n\right)\right) & x\in\Omega,\\
\nabla_x\cdot \F=0 & x\in\Omega,\\
I_\nu \left(x,n\right)=g_\nu\left(n\right)& x\in\partial\Omega \text{ and }n\cdot N_x<0.
\end{cases}
\end{equation}
We proceed in the same way as in the case of constant absorption coefficient. Following the computation in Section 2.1 we obtain the limit problem in the interior $ \Omega $ for $ u(x)=4\pi\sigma T^4(x) $ as the elliptic equation
\begin{equation}\label{divproblem}
-\Div\left(\frac{1}{\alpha(x)}\nabla_xu(x)\right)=0. 
\end{equation}
For the boundary layer equation we argue similarly as in Section 2.2. Let $ x_0\in\bnd $ and let $ \Rot_{x_0} $ be the rigid motion in \eqref{rotp}. We rescale $ x=\frac{\eps}{\alpha(x_0)}\Rot_{x_0}^{-1}(y)+x_0 $ for $ y\in\frac{\alpha(x_0)}{\eps}\Rot_{x_0}\left(\Omega\right) $ and we define $ \overline{g}_\nu(n)=g_\nu\left(\Rot_{x_0}^{-1}(n)\right) $. Moreover, since $ \alpha $ is a $ C^3 $-function we also have for $ \eps $ sufficiently small that $ \alpha(x)=\alpha(x_0)+\frac{\eps}{\alpha(x_0)} \mathcal{O}(|y|)$, and hence taking $ \eps\to 0 $ we obtain once again for $ N=\Rot_{x_0}(N_{x_0}) $
\begin{equation*}\label{boundarylayerx}
\begin{cases}
n\cdot \grady\Inuy= \left(\Bnuy-\Inuy\right) & y\in\RR_+\times\RR^2,\\
\grady\cdot \F=0 & y\in\RR_+\times\RR^2,\\
\Inuy=\og_\nu\left(n\right)& y\in \{0\}\times\RR^2 \text{ and }n\cdot N<0.
\end{cases}
\end{equation*}
This implies, that the boundary layer equation for $ \ou(y_1,p)=4\pi\sigma T^4(y) $ is also in this case given by the integral equation \eqref{bvpgreyboundary1}
\begin{equation*}
\ou(y_1,p)-\int_0^\infty d\eta\;K(y_1-\eta)u(\eta,p)= \intnu\int_{n\cdot N_p<0}dn \;g_\nu(n)e^{-\frac{y_1}{\left|n\cdot N_p\right|}},
\end{equation*}
where $ K $ is the normalized exponential integral.
Hence, the whole theory developed in Section 3 is still valid and can be summarized by the Proposition \eqref{prop u}. 

Before moving to the rigorous proof of the convergence to the solution of the elliptic equation given in \eqref{divproblem} we remark that the function $ \oU_\eps(x,p):=\ou\left(\frac{\alpha(p)}{\eps}\Rot_p(x)\cdot e_1,p\right) $ solves the integral equation
\begin{equation}\label{ou x}
\oU_\eps(x,p)-\int_{\Pi_p}d\eta\; \frac{\alpha(p)e^{-\frac{\alpha(p)|x-\eta|}{\eps}}}{4\pi\eps|x-\eta|^2} \oU_\eps(\eta,p)=\intnu\int_{n\cdot N_{p}<0} dn\; g_\nu(n)e^{-\frac{\alpha(p)\left|x-x_{\Pi_p}(x,n)\right|}{\eps}}.
\end{equation}
\subsection{Rigorous proof of the convergence: equation for $ u^\eps $ and properties of the kernel}
We can now move to the proof of the convergence of the solution to the problem \eqref{bvpalphax} to the elliptic equation \eqref{divproblem}. We will follow all arguments given in Section 4 and change them were needed. Hence, first of all we find the integral equation that the sequence $ u^\eps(x)=4\pi\sigma T_\eps^4(x) $ associated to the solution $ I^\eps_\nu(x,n) $ of \eqref{bvpalphax} satisfies. We follow the computations of Section 4.1. Let $ x\in \Omega $, $ n\in\Ss^2 $, $ x_\Omega(x,n)\in\bnd $ the unique intersection point of the line $ \{x-tn : t>0\} $ with the boundary. Let $ s(x,n)=|x-x_\Omega(x,n)| $ and let us denote for $ x,z\in\overline{\Omega} $ by $ \int_{[x,z]}\alpha(\xi)\;ds(\xi) $ the integral along the line connecting $ x $ with $ z $, i.e. $ \int_{\left[x,x_\Omega(x,n)\right]}\alpha(\xi)\;ds(\xi)=\int_0^{s(x,n)}\alpha(x-tn)\;dt $. Solving the first equation in \eqref{bvpalphax} by characteristics we obtain
\begin{equation*}
I^\eps_\nu(x,n)=g_\nu(n)e^{-\int_{\left[x,x_\Omega(x,n)\right]}\frac{\alpha(\xi)}{\eps}\;ds(\xi)}+\int_0^{s(x,n)}e^{-\int_0^t\frac{\alpha(x-\tau n)}{\eps}d\tau}\frac{\alpha(x-tn)}{\eps}\Bnu{x-tn}\;dt.
\end{equation*}
Therefore, analogously as in Section 4.1 using that the flux is divergence free together with the first equation in \eqref{bvpalphax}, equation \eqref{sigma}, the characteristic solution of $ I^\eps $ and changing from spherical coordinates to space coordinates we obtain the following integral equation for $ u^\eps(x)=4\pi\sigma T_\eps^4(x)$
\begin{equation}\label{u x}
u^\eps(x)-\int_\Omega \frac{\alpha(\eta)e^{-\int_{[x,\eta]}\frac{\alpha(\xi)}{\eps}ds(\xi)}}{4\pi\eps|x-\eta|^2}u^\eps(\eta)\;d\eta=\int_0^\infty d\nu\int_{\Ss^2}dn\;g_\nu(n)e^{-\int_{\left[x,x_\Omega(x,n)\right]}\frac{\alpha(\xi)}{\eps}\;ds(\xi)}.
\end{equation}
For $ x,\eta\in\RR^3 $ we define the kernel $ K_\eps $ by\begin{equation}\label{Kepsx}
 K_\eps(x;\eta)=\frac{\alpha(\eta)e^{-\int_{[x,\eta]}\frac{\alpha(\xi)}{\eps}ds(\xi)}}{4\pi\eps|x-\eta|^2}. 
\end{equation} Notice that $ K_\eps $ has been defined in the whole $ \RR^3 $ extending $ \alpha $ by $ \overline{\alpha} $. We remark that $  K_\eps(x;\eta) $ is not symmetric, i.e. $ K_\eps(x;\eta)\ne K_\eps(\eta;x) $. 
In the following we summarize some properties of the kernel $ K_\eps $. 
\begin{prop}\label{prop K eps}
The kernel $ K_\eps $ defined in \eqref{Kepsx} has integral equal $ 1 $. Moreover, it can be decomposed in the following three different ways:
\begin{enumerate}
\item[(i)] Let $ x,\eta\in\RR^3 $, then $ K_\eps(x;\eta)=K^{\alpha(x)}_\eps(x-\eta)+R^x_\eps(x;\eta) $, where $ K^{\alpha(x)}_\eps(x-\eta)=\frac{\alpha(x)e^{-\frac{\alpha(x)|x-\eta|}{\eps}}}{4\pi\eps|x-\eta|^2} $. Moreover, the remainder satisfies
\begin{equation*}\label{remainder1}
\left|R^x_\eps(x;\eta)\right|\leq C(c_0,c_1)\frac{c_0e^{-\frac{c_0|x-\eta|}{\eps}}}{4\pi\eps|x-\eta|^2}
\begin{cases}
\left(|x-\eta|+\frac{|x-\eta|^2}{\eps}\right) & |x-\eta|<\sqrt{\eps}\\1&|x-\eta|\geq \sqrt{\eps}.
\end{cases}
\end{equation*}
\item[(ii)] Let $ x,\eta\in\RR^3 $ and $ p\in\bnd $ with $ |x-p|<\eps^{\frac{1}{2}+2\delta} $ for $ \delta>0 $ very small. Then $ K_\eps(x;\eta)=K^{\alpha(p)}_\eps(x-\eta)+R^p_\eps(x;\eta) $, where the remainder satisfies
\begin{equation*}\label{remainderp}\hspace{-.7cm}
\left|R^p_\eps(x;\eta)\right|\leq C(c_0,c_1)\frac{c_0e^{-\frac{c_0|x-\eta|}{\eps}}}{4\pi\eps|x-\eta|^2}
\begin{cases}
\left(|x-p|+|x-\eta|+\frac{|x-\eta|^2}{\eps}+\frac{|x-\eta|}{\eps}|x-p|\right) & |x-\eta|<\eps^{\frac{1}{2}+2\delta}\\1&|x-\eta|\geq \eps^{\frac{1}{2}+2\delta}.
\end{cases}
\end{equation*}
\item[(iii)] Let $ x,\eta\in\RR^3 $, then $ K_\eps(x;\eta)=K^{\alpha(x)}_\eps(x-\eta)+\mathcal{K}_\eps^1(x-\eta)+\mathcal{K}^2_\eps(x-\eta)+\tilde{R}^x_\eps(x;\eta) $, where 
\begin{equation*}\label{K1}
 \mathcal{K}^1_\eps(x-\eta)=-\frac{1}{2}\frac{\alpha(x)e^{-\frac{\alpha(x)|x-\eta|}{\eps}}}{4\pi\eps|x-\eta|^2}\nabla_x\alpha(x)\cdot(\eta-x)\frac{|x-\eta|}{\eps}
\end{equation*}and
\begin{equation*}\label{K2}
\mathcal{K}^2_\eps(x-\eta)=\frac{e^{-\frac{\alpha(x)|x-\eta|}{\eps}}}{4\pi\eps|x-\eta|^2}\nabla_x\alpha(x)\cdot(\eta-x).
\end{equation*}Moreover, the remainder satisfies
\begin{equation}\label{remainder2}
\left|\tilde{R}^x_\eps(x;\eta)\right|\leq C(c_0,c_1)
\begin{cases}
\frac{c_0e^{-\frac{c_0|x-\eta|}{\eps}}}{4\pi\eps|x-\eta|^2}\left(\frac{|x-\eta|^3}{\eps}+\frac{|x-\eta|^4}{\eps^2}\right) & |x-\eta|<\sqrt{\eps}\\\frac{c_0e^{-\frac{c_0|x-\eta|}{2\eps}}}{4\pi\eps|x-\eta|^2}(1+\eps+\eps^2)&|x-\eta|\geq \sqrt{\eps}.
\end{cases}
\end{equation}
Notice that (iii) is a refinement of (i).
\end{enumerate}
\begin{proof}
We start proving that the integral of $ K_\eps $ is $ 1 $. We compute changing to spherical coordinates as $ \eta=x-rn $
\begin{equation*}
\begin{split}
\int_{\RR^3}& \frac{ \alpha(\eta)e^{-\int_{[x,\eta]}\frac{ \alpha(\xi)}{\eps}ds(\xi)}}{4\pi\eps|x-\eta|^2} \;d\eta=\int_{\Ss^2}dn\int_0^\infty dr\; \frac{ \alpha(x-rn)}{4\pi\eps}e^{-\int_0^r\frac{ \alpha(x-tn)}{\eps}dt}\\
&=-\int_{\Ss^2}dn\int_0^\infty dr\;\frac{1}{4\pi} \frac{d}{dr}e^{-\int_0^r\frac{ \alpha(x-tn)}{\eps}dt}=\int_{\Ss^2}dn\;\frac{1}{4\pi}\left(1-e^{-\int_0^\infty\frac{ \alpha(x-tn)}{\eps}dt}\right)=1,
\end{split}
\end{equation*}
since $ \alpha\geq c_0>0 $.\\

We now proceed with the decompositions of the kernel. We start with claim (i). To this end we consider first of all $ |x-\eta|<\sqrt{\eps} $. We can expand by Taylor and get
\begin{equation}\label{taylor1}
\alpha(\eta)=\alpha(x)+\mathcal{O}\left(|x-\eta|\right)\;\;\;\;\;\text{ and }\;\;\;\;\;\int_0^1\frac{\alpha(x-\tau(x-\eta))d\tau|x-\eta|}{\eps}=\frac{\alpha(x)|x-\eta|}{\eps}+\mathcal{O}\left(\frac{|x-\eta|^2}{\eps}\right).
\end{equation}
Since $ |x-\eta|<\sqrt{\eps} $ we can conclude \begin{equation}\label{taylor2}
 e^{-\int_{[x,\eta]}\frac{\alpha(\xi)}{\eps}ds(\xi)}=e^{-\frac{\alpha(x)|x-\eta|}{\eps}}\left(1+\mathcal{O}\left(\frac{|x-\eta|^2}{\eps}\right)\right).
\end{equation} By assumptions $ c_0\leq \alpha(x)\leq c_1 $ and hence \eqref{taylor1} and \eqref{taylor2} imply claim (i) in the case $ |x-\eta|<\sqrt{\eps} $. In the case $ |x-\eta|\geq\sqrt{\eps} $ we use the rough estimate \begin{equation}\label{taylor4}
 \left|\alpha(\eta)e^{-\int_{[x,\eta]}\frac{\alpha(\xi)}{\eps}ds(\xi)}-\alpha(x)e^{-\frac{\alpha(x)|x-\eta|}{\eps}}\right|\leq 2c_1e^{-\frac{c_0|x-\eta|}{\eps}} .
\end{equation}

Concerning claim (ii) we argue similarly. Let $ p\in\bnd $ and $ |x-p|<\eptw $. Let us first of all consider $ |x-\eta|<\eptw $, then $ |\eta-p|<\eps^{\frac{1}{2}+\delta} $ for $ \eps>0 $ sufficiently  small. We expand $ \alpha $ again by Taylor around $ p $. Hence, similarly as before
\begin{equation}\label{taylor3}
\alpha(\eta)=\alpha(p)+\mathcal{O}\left(|\eta-p|\right)\;\text{ and }\; e^{-\int_{[x,\eta]}\frac{\alpha(\xi)}{\eps}ds(\xi)}=e^{-\frac{\alpha(p)|x-\eta|}{\eps}}\left(1+\mathcal{O}\left(\frac{|x-\eta|^2+|x-p||x-\eta|}{\eps}\right)\right).
\end{equation} 
Using on one hand $ |\eta-p|\leq|\eta-x|+|x-p| $ and on the other hand a rough estimate as in \eqref{taylor4} we conclude also the proof of claim (ii).\\

It remains to show claim (iii). We assume again first of all $ |x-\eta|<\sqrt{\eps} $. We expand $ \alpha $ using Taylor. Since $ \alpha\in C_b^3\left(\RR^3\right) $ all terms in the computation below are well-defined. Hence, we see
\begin{equation}\label{taylor5}
\alpha(\eta)=\alpha(x)+\nabla_x\alpha(x)\cdot (\eta-x)+\mathcal{O}\left(|x-\eta|^2\right)
\end{equation}
and
\begin{equation*}
\begin{split}
\frac{1}{\eps}\int_0^1\alpha(x-\tau(x-\eta))d\tau|x-\eta|=&\frac{1}{\eps}\int_0^1\alpha(x)+\tau \nabla_x\alpha(x)\cdot (\eta-x)+\mathcal{O}\left(|x-\eta|^2\right)d\tau|x-\eta|\\=&\frac{\alpha(x)|x-\eta|}{\eps}+\frac{1}{2}\nabla_x\alpha(x)\cdot(\eta-x)\frac{|x-\eta|}{\eps}+\mathcal{O}\left(\frac{|x-\eta|^3}{\eps}\right).
\end{split}
\end{equation*}
Thus, this implies for $ |x-\eta|<\sqrt{\eps} $ the estimate
\begin{equation}\label{taylor6}
e^{-\int_{[x,\eta]}\frac{\alpha(\xi)}{\eps}ds(\xi)}=e^{-\frac{\alpha(x)|x-\eta|}{\eps}}\left(1-\frac{1}{2}\nabla_x\alpha(x)\cdot(\eta-x)\frac{|x-\eta|}{\eps}+\mathcal{O}\left(\frac{|x-\eta|^3}{\eps}+\frac{|x-\eta|^4}{\eps^2}\right)\right).
\end{equation}
Equations \eqref{taylor5} and \eqref{taylor6} imply the decomposition $ K_\eps(x;\eta)=K^{\alpha(x)}_\eps(x-\eta)+\mathcal{K}_\eps^1(x-\eta)+\mathcal{K}^2_\eps(x-\eta)+\tilde{R}^x_\eps(x;\eta) $ and the estimate on $ \tilde{R}^x_\eps $ when $ |x-\eta|<\sqrt{\eps} $, while a rough estimate similar to \eqref{taylor4} and the well-known inequality $ e^{-|x|}|x|^n\leq C_ne^{-\frac{|x|}{2}} $ imply the claim (iii) for $ |x-\eta|\geq \sqrt{\eps} $.
\end{proof}
\end{prop}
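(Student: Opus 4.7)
The plan has two distinct parts: first the normalization $\int_{\RR^3} K_\eps(x;\eta)\,d\eta = 1$, then the three Taylor-style decompositions.

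For the normalization I would switch to spherical coordinates centered at $x$, writing $\eta = x - rn$ with $r > 0$ and $n \in \Ss^2$. The Jacobian $r^2$ cancels the $|x-\eta|^{-2}$ factor, and the integrand reduces to $\frac{1}{4\pi\eps}\alpha(x - rn)\exp(-\int_0^r \alpha(x - tn)/\eps\,dt)$, which is exactly $-\frac{1}{4\pi}\frac{d}{dr}\exp(-\int_0^r \alpha(x-tn)/\eps\,dt)$. The radial integral telescopes, and since $\alpha \geq c_0 > 0$ globally the boundary term at $r = \infty$ vanishes, leaving $1$ after integrating $\frac{1}{4\pi}$ over $\Ss^2$.

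For the three decompositions, the common strategy is a two-regime split at the scale $\sqrt{\eps}$ for (i) and (iii), and at $\eps^{1/2+2\delta}$ for (ii), together with Taylor expansion of $\alpha$. In the \emph{short-range} regime I would expand $\alpha(\eta)$ and the line integral $\int_{[x,\eta]}\alpha/\eps\,ds$ around $x$ (or around $p$ for part (ii), since the relevant scale is $|x-p|$ there), substitute into the definition of $K_\eps$, factor out the main exponential $\exp(-\alpha(x)|x-\eta|/\eps)$ (resp.\ $\exp(-\alpha(p)|x-\eta|/\eps)$), and collect the leading term plus a remainder via $e^{a+b} = e^a(1 + O(b))$ when $b$ is small. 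For (iii) I would carry the expansion one further order: the first-order correction to $\alpha(\eta)$ produces $\mathcal{K}^2_\eps$, while the first-order correction inside the exponent, after linearizing $e^{-s}$, produces $\mathcal{K}^1_\eps$; the leftover cubic and quartic Taylor contributions give the $\tilde{R}_\eps^x$ remainder in \eqref{remainder2}.

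In the \emph{long-range} regime (above the cutoff scale) I would avoid Taylor altogether and use crude pointwise bounds: both $K_\eps(x;\eta)$ and the reference kernel are separately bounded by $c_1\exp(-c_0|x-\eta|/\eps)/(4\pi\eps|x-\eta|^2)$, and the difference is controlled by their sum. The characteristic loss from $c_0|x-\eta|/\eps$ to $c_0|x-\eta|/(2\eps)$ in \eqref{remainder2} is the familiar trick $e^{-|z|}|z|^k \leq C_k e^{-|z|/2}$, absorbing polynomial prefactors of the form $|x-\eta|^k/\eps^j$ produced by $\mathcal{K}^1_\eps$ and $\mathcal{K}^2_\eps$. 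The main obstacle, if any, is pure book-keeping in the short-range regime: verifying that the product of the expansions of $\alpha(\eta)$ and $\exp(-\int_{[x,\eta]}\alpha/\eps\,ds)$ collapses exactly to the stated remainder bounds. Part (ii) is slightly subtler because the center of expansion is $p$ rather than $x$, so the line from $x$ to $\eta$ is parameterized by points at distance at most $|x-p| + |x-\eta|$ from $p$, which is the origin of the additional $|x-p|$ contribution; but the analysis remains elementary.
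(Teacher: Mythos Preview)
Your proposal is correct and follows essentially the same approach as the paper: the normalization via spherical coordinates and the total-derivative trick, the two-regime split (at $\sqrt{\eps}$ for (i) and (iii), at $\eps^{1/2+2\delta}$ for (ii)) with Taylor expansion of $\alpha$ around $x$ or $p$ in the short range and crude pointwise bounds in the long range, and the absorption of polynomial factors via $e^{-|z|}|z|^k\leq C_k e^{-|z|/2}$ in the long-range part of (iii). Your identification of the origin of $\mathcal{K}^1_\eps$ and $\mathcal{K}^2_\eps$ and of the extra $|x-p|$ term in (ii) also matches the paper's reasoning.
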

This proposition is one of the key results which will allow us to generalize the results obtained in the previous section when the absorption coefficient depends smoothly enough on the space variable. There are two very important consequences. First of all, the Banach fixed point theorem guarantees a unique continuous bounded solution $ u^\eps $. Indeed, the continuity of the function $ \frac{1}{|x|^2} $ and its integrability in $ \Omega $ together with the uniform continuity in $ x $ and uniform boundedness of $ e^{-\int_{[x,\eta]}\frac{\alpha(\xi)}{\eps}ds(\xi)} $ imply that the operator $ A_g^\eps:C(\Omega)\to C(\Omega) $ is a selfmap, where we consider $$ A_g^\eps(u)(x)=\int_\Omega d\eta K_\eps(x;\eta)u(\eta)+\int_0^\infty d\nu\int_{\Ss^2}dn\; g_\nu(n)e^{-\int_{[x,x_\Omega(x,n)]}\frac{\alpha(\xi)}{\eps}ds(\xi)}.$$ Moreover, since the integral on $ \RR^3 $ of the kernel $ K_\eps $ is $ 1 $, arguing as in Section 4.1 we conclude that $ A_g^\eps $ is a contraction.\\

\noindent The second consequence is that we can prove for the integral operator $ \LL^\eps_\Omega(u)(x):=u(x)-\int_\Omega d\eta\;K_\eps(x;n)u(\eta) $ comparison properties identical to the one in Theorem \ref{max principle omega} (Maximum principle). Hence, $ u^\eps $ in \eqref{u x} is non-negative.  
\subsection{Rigorous proof of the convergence: uniform boundedness of $ u^\eps $}
Next we generalize Section 4.2 for the case $ \alpha(x)\in C^3(\overline{\Omega}) $. We want to show that the sequence $ u^\eps $ is uniform bounded in $ \eps $. As we have seen in Section 5.2 the maximum principle of Theorem \ref{max principle omega} holds. We want to apply it with a suitable supersolution. Indeed, using the notation $ \eps d_\eps~=~d(x) $ as in Lemma \ref{second supsol} we see $ \int_{[x,x_\Omega(x,n)]}\frac{\alpha(\xi)}{\eps}ds(\xi)\geq \frac{c_0|x-x_\Omega|}{\eps}\geq c_0d_\eps(x) $ and thus $ \left|\LL^\eps_\Omega\left(u^\eps\right)(x)\right|\leq \Arrowvert g\Arrowvert_1e^{-c_0d_\eps(x)}$.

The supersolution we will consider is similar to the one constructed in Theorem \ref{interior supsol}. However, it is now not true anymore that there exists a constant $ C>0 $ such that $ \LL^\eps_\Omega(C-|x|^2)\geq 2\eps^2 $ for every $ x\in\Omega $. This can be already been looking at the expected limit elliptic operator $ L=-\Div\left(\frac{1}{\alpha(x)}\nabla_x \right) $, for which the inequality $ L\left(C-|x|^2\right)\geq 0 $ holds for arbitrary functions $ \alpha $ depending on $ x $ only for small values of $ |x| $. We remark that the supersolution constructed in Theorem \ref{interior supsol} contains two parts, namely a multiple of $ C-|x|^2 $ and a function proportional to the function $ \psi $ constructed in Lemma \ref{second supsol}. The role of the function $ \psi $ is to control the value of the operator $ \LL^\eps_\Omega $ near the boundary $ \bnd $. The contribution of $ \psi $ is relevant only in the region where $ e^{-d_\eps(x)}\geq \eps^2 $. We will see that the function $ \psi $ still gives a supersolution for the new operator $ \LL^\eps_\Omega $ (up to an error of order $ \eps^2 $). Therefore, in order to prove the analogous of Theorem \ref{interior supsol} we need to replace the function $ C-|x|^2 $ by exponential functions.
\begin{theorem}\label{interior supsol x}
	There exist suitable constants $ 0<\mu<1 $, $ 0<\gamma(\mu)<\frac{1}{3} $, $ C_1,\;C_2,\;C_3>0 $, $ \lambda>0 $ large enough and there exists some $ \eps_0>0 $ such that the function 
\begin{equation*}\label{supsol1 x}
\Phi^\eps(x)=C_3 \Arrowvert g\Arrowvert_1\left[ \left(e^{\lambda D}+C_1-e^{\lambda x_1}\right)+C_2\left(\left(1-\frac{\gamma}{1+\left(\frac{c_0d(x)}{\eps}\right)^2}\right)\wedge\left(1-\frac{\gamma}{1+\left(\frac{c_0\mu R}{\eps}\right)^2}\right)\right)\right],
\end{equation*}
for  $ a\wedge b\:=\min\left(a,b\right) $, $ R>0 $ the minimal radius of curvature, $ D=\diam(\Omega) $ and $ d(x):=\dist\left(x,\partial\Omega\right) $, 
satisfies $ \LL_\Omega^\eps\left(\Phi^\eps\right)(x)\geq \Arrowvert g\Arrowvert_1 e^{-\frac{c_0d(x)}{\eps}} $ in $ \Omega $ uniformly for all $ \eps<\eps_0 $. Moreover, the solutions $ u^\eps $ of \eqref{u x} are uniformly bounded in $ \eps $.
\begin{proof}
We will use the notation of Theorem \ref{interior supsol} and Lemma \ref{first supsol} and \ref{second supsol}. Notice first of all that \begin{equation}\label{rough estimate K}
 \frac{c_0}{c_1}K^{c_1}_\eps(x-\eta)\leq K_\eps(x;\eta)\leq \frac{c_1}{c_0}K^{c_0}_\eps(x-\eta) ,
\end{equation} where we used the notation of Proposition \ref{prop K eps}. Moreover, using the decomposition of claim (i) in Proposition \ref{prop K eps} we can always estimate for any $ n\in\mathbb{N} $ the integral $ \int_{\RR^3}|R^x_\eps(x;\eta)||(\eta-x)|^n\;d\eta $ by
\begin{equation}\label{R x eps}
\begin{split}
 \int_{\RR^3}|R^x_\eps(x;\eta)||(\eta-x)|^n\;d\eta\leq& C(c_0,c_1)\int_{|x-\eta|<\sqrt{\eps}}K_\eps^{c_0}(x-\eta)\left(\frac{|\eta-x|^{2+n}}{\eps}+|x-\eta|^{1+n}\right)\;|d\eta\\&+C(c_0,c_1)\int_{|x-\eta|\geq\sqrt{\eps}}K_\eps^{c_0}(x-\eta)|(\eta-x)|^n\;d\eta\\
 \leq& C(c_0,c_1)\eps^{n+1}\int_0^\infty e^{-r}r^{n+1}(1+r)\;dr+C(c_0,c_1)\eps^n\int_{\frac{c_0}{\sqrt{\eps}}}^\infty e^{-r}r^n\;dr\\
 \leq&C(c_0,c_1,n)\eps^{n+1}+C(c_0,c_1,n)\eps^n e^{-\frac{c_0}{2\sqrt{\eps}}}\leq C(c_0,c_1,n)\eps^{n+1},
\end{split}
\end{equation}
where we used $ e^{-|x|}|x|^{m}\leq C_m $.
Let now $ x\in\Omega $. Let $ x_0\in\bnd $ be a point such that $ |x-x_0|=d(x) $. Then we estimate with the help of Proposition \ref{prop K eps} 
\begin{equation*}\label{L 1}
\begin{split}
\LL^\eps_\Omega(1)=& \int_{\Omega^c}d\eta\;K_\eps(x;\eta)\geq \int_{\Pi_{x_0}}d\eta\;K_\eps(x;\eta)\geq \frac{c_0}{c_1}\int_{\RR_-\times\RR^2}K^{c_1}_\eps(d(x)e_1-\eta)\\=&\frac{c_0}{c_1}\int_{-\infty}^{-c_1\deps x}K(y)\;dy\geq\begin{cases}
\frac{c_0}{c_1}\nu_{c_1M_0}& \deps{x}\leq M_0,\\0 & \forall x\in\Omega.
\end{cases}
\end{split}
\end{equation*}
This estimate will play a crucial role the proof. We proceed considering the function $ \varphi(x)=e^{\lambda D}-e^{\lambda x_1} $. It is not difficult to see that for $ x\in\Omega $ we get $ \varphi(x)\geq 0 $. Moreover, expanding by Taylor $ e^{\lambda \eta_1}=e^{\lambda x_1}+\lambda e^{\lambda x_1}(\eta_1-x_1)+\frac{\lambda^2}{2}e^{\lambda x_1}(\eta_1-x_1)^2+E_3(x;\eta) $ with $ |E_3(x;\eta)|\leq \frac{\lambda^3}{6}e^{\lambda D}|x_1-\eta_1|^3 $ we compute
\begin{equation}\label{L exp}
\begin{split}
\LL^\eps_\Omega&\left(e^{\lambda x_1}\right)=e^{\lambda x_1}-\int_\Omega K_\eps(x;\eta)e^{\lambda \eta_1}d\eta\\
=&e^{\lambda x_1}\int_{\Omega^c}K_\eps(x;\eta)\;d\eta-\lambda e^{\lambda x_1}\int_\Omega K_\eps(x;\eta)(\eta_1-x_1)\;d\eta\\&-\frac{\lambda^2}{2}e^{\lambda x_1}\int_\Omega K_\eps(x;\eta)(\eta_1-x_1)^2\;d\eta-\int_\Omega K_\eps(x;\eta)E_3(x;\eta)\;d\eta.
\end{split}
\end{equation}
The first term on the right hand side can be controlled by $ e^{\lambda D} $. For the second term we use the decomposition of claim (i) in Proposition \ref{prop K eps}, the symmetry of the operator $ K^{\alpha(x)}_\eps $ and the estimate \eqref{R x eps}. Moreover, we remind that $ \Omega^c\subset B^c_{d(x)}(x) $. Hence,
\begin{equation}\label{L exp 1}
\begin{split}
-\lambda e^{\lambda x_1}\int_\Omega K_\eps(x;\eta)(\eta_1-x_1)\;d\eta=&\lambda e^{\lambda x_1}\int_{\Omega^c} K^{\alpha(x)}_\eps(x-\eta)(\eta_1-x_1)\;d\eta-\lambda e^{\lambda x_1}\int_\Omega R_\eps(x;\eta)(\eta_1-x_1)\;d\eta\\
\leq&\lambda \frac{c_1}{c_0^2}\eps e^{-\frac{c_0\deps{x}}{2}}e^{\lambda x_1}+C(c_0,c_1)\lambda e^{\lambda x_1}\eps^2.
\end{split}
\end{equation}
For the third term in equation \eqref{L exp} we can proceed as follows using the decomposition of claim (i) of Proposition \ref{prop K eps} and \eqref{R x eps}
\begin{equation}\label{L exp 2}
\begin{split}
\frac{\lambda^2}{2}e^{\lambda x_1}\int_\Omega &K_\eps(x;\eta)(\eta_1-x_1)^2\;d\eta=\frac{\lambda^2}{2}e^{\lambda x_1}\int_{\RR^3}  K^{\alpha(x)}_\eps(x-\eta)(\eta_1-x_1)^2\;d\eta\\&-\frac{\lambda^2}{2}e^{\lambda x_1}\int_{\Omega^c} K^{\alpha(x)}_\eps(x-\eta)(\eta_1-x_1)^2\;d\eta+\frac{\lambda^2}{2}e^{\lambda x_1}\int_\Omega R_\eps(x;\eta)(\eta_1-x_1)^2\;d\eta\\
\geq& \frac{2\eps^2}{3c_1^2}\lambda^2e^{\lambda x_1}-\frac{\eps^2}{c_0^2}\lambda^2e^{\lambda x_1}e^{-\frac{c_0\deps{x}}{2}}-c(c_0,c_1)\lambda^2 e^{\lambda x_1}\eps^3.
\end{split}
\end{equation}
Finally, using the estimate \eqref{rough estimate K} for $ K_\eps $ we compute for the term containing the error $ E_3(x;\eta) $
\begin{equation}\label{L exp 3}
\int_\Omega K_\eps(x;\eta)E_3(x;\eta)\;d\eta\leq \lambda^3e^{\lambda D}C(c_0,c_1)\eps^3\int_{\RR^3} e^{-r}r^3\leq C(c_0,c_1)\lambda^3e^{\lambda D}C(c_0,c_1)\eps^3.
\end{equation}
Hence, \eqref{L exp 1}, \eqref{L exp 2} and \eqref{L exp 3} imply for $ \lambda>0 $ large enough and $ 0<\eps<1 $ sufficiently small
\begin{equation*}\label{estimate exp}
\begin{split}
\LL^\eps_\Omega\left(e^{\lambda D}-e^{\lambda x_1}\right)\geq& \frac{2\eps^2}{3c_1^2}\lambda^2e^{\lambda x_1}-C(c_0,c_1)\lambda e^{\lambda x_1}\left(\eps e^{-\frac{c_0\deps{x}}{2}}+\eps^2+\eps^2\lambda e^{-\frac{c_0\deps{x}}{2}}\right)-c(c_0,c_1,\lambda,D)\eps^3\\
\geq& \begin{cases}
-A_1\eps e^{-\frac{c_0\deps{x}}{2}}& \text{ if }d(x)<\frac{2\eps}{c_0}\ln\left(\frac{1}{\eps}\right),\\A_2 \eps^2&\text{ if } d(x)\geq\frac{2\eps}{c_0}\ln\left(\frac{1}{\eps}\right),
\end{cases}
\end{split}
\end{equation*}
where $ A_1(c_0,c_1,\lambda,D)>0 $ and $ A_2(c_0,c_1,\lambda,D)>0 $ are constants independent of $ \eps $.\\

\noindent We proceed with the estimate for the operator acting on $ \psi(x)=\left(1-\frac{\gamma}{1+\left(\frac{c_0d(x)}{\eps}\right)^2}\right)\wedge\left(1-\frac{\gamma}{1+\left(\frac{c_0\mu R}{\eps}\right)^2}\right) $. Arguing similarly as in Lemma \ref{second supsol} we can show that for $ \mu>0 $  and  $ 0<\gamma<\frac{1}{3} $ small enough and $ \eps<\eps_1<\frac{R\mu^3}{2} $ sufficiently small there exists constants $ A_3(R,\Omega, \mu,\gamma)>0 $ and $ A_4(R,\mu,\gamma)>0 $ such that
\begin{equation*}\label{estimate psi}
\LL^\eps_\Omega(\psi)(x)\geq \begin{cases}
A_3\frac{1}{\left(1+(c_0\deps{x})^2\right)^2}& \text{ if }0<d(x)\leq \frac{R\mu}{2},\\-A_4\eps^2 & \text{ if }\frac{R\mu}{2}<d(x)<R\mu, \\0 & \text{ if } d(x)\geq R\mu .
\end{cases}
\end{equation*}
The proof works following the same steps of Lemma \ref{second supsol}. Indeed, for the regions $ \{d(x)\geq R\mu\} $ and $ \{0<d(x)<M\eps\} $, for $ M=\frac{1}{\mu^2} $, the proof does not change. We use the estimate \eqref{rough estimate K} for $ K_\eps $ and the fact that its integral in $ \RR^3 $ is $ 1 $. The constant $ \gamma $ must be chosen sufficiently small so that $ \gamma<\frac{\nu_{c_1M}c_0}{2c_1} $. For the regions $ \{M\eps\leq d(x)\leq \frac{R\mu}{2}\} $ and $ \{\frac{R\mu}{2}<d(x)<R\mu\} $ we use again the Taylor expansion, the estimates on $ K_\eps $, the fact that $ \eps<\frac{R\mu^2}{2} $ and $ M=\frac{1}{\mu^2} $. Beside the fact that $ K_\eps $ has integral $ 1 $ in $ \RR^3 $ we use its decomposition $ K_\eps=K_\eps^{\alpha(x)}+R^x_\eps $ according to claim (i) in Proposition \ref{prop K eps} and the estimate \eqref{R x eps} for the remainder. In every computation where we used the symmetry of the kernel, e.g. for the first term of the Taylor expansion, we decompose $ K_\eps $ and apply the symmetry argument for $ K_\eps^{\alpha(x)} $ and estimate the remainder. We omit the details of the proof, since the computations are similar to those in Lemma \ref{second supsol}.\\

We now finish the proof of Theorem \ref{interior supsol x}. Let $ \eps_0\leq\eps_1 $ such that $ \frac{R\mu}{2}>\frac{2\eps}{c_0}\ln\left(\frac{1}{\eps}\right) $ for all $ 0<\eps<\eps_0 $. Let $ C_2=\frac{A_2}{2A_4} $. Moreover, since $ (1+x^2)^2e^{-\frac{x}{2}}\to 0 $ as $ x\to \infty $ there exists some $ M_0>0 $ such that if $ \deps{x}\geq M_0 $ then
$$ -A_1\eps e^{-\frac{c_0\deps{x}}{2}}+\frac{C_2A_3}{\left(1+(c_0\deps{x})^2\right)^2}\geq \frac{C_2A_3}{2\left(1+(c_0\deps{x})^2\right)^2}\geq \frac{C_2A_3}{12}e^{-c_0\deps{x}}. $$ Let $ C_1>0 $ satisfy $ C_1>\frac{A_1c_1}{c_0\nu_{c_1M_0}} $. Then for all $ \deps{x}<M_0 $ we obtain $$ \LL^\eps_\Omega(C_1)\geq C_1\frac{c_0}{c_1}\nu_{c_1M_0}> A_1 \eps e^{-\frac{c_0\deps{x}}{2}} .$$ Finally, taking $ C_3^{-1}=\min\{\frac{A_2}{2}, \frac{C_2A_3}{12}\} $ we get the desired lower estimate $ \LL^\eps_\Omega\left(\Phi^\eps\right)(x)\geq \Arrowvert g\Arrowvert_1 e^{-\frac{c_0d(x)}{\eps}}  $.\\
We conclude the proof of this Theorem applying the maximum principle of Theorem \ref{max principle omega} to the continuous function $ \Phi^\eps-u^\eps $.

\end{proof}
\end{theorem}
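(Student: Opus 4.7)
The strategy mirrors Theorem~\ref{interior supsol}: I would build $\Phi^\eps$ as a sum of three pieces, each handling a different range of $d(x)$, and then invoke the maximum principle for $\LL_\Omega^\eps$ on $\Phi^\eps-u^\eps$. Since the kernel $K_\eps(x;\eta)$ is no longer symmetric and the ``polynomial'' piece $C_1-|x|^2$ from Lemma~\ref{first supsol} would only give a good lower bound for small $|x|$ (because the limiting operator is now $-\Div(\alpha^{-1}\nabla\cdot)$, not the Laplacian), I would replace that piece by $e^{\lambda D}-e^{\lambda x_1}$, which is nonnegative on $\Omega$ and whose second-order $\LL_\Omega^\eps$-behavior can be made positive of order $\eps^2$ by taking $\lambda$ large. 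The other two pieces are an additive constant $C_1$ (used to win against exponentially small boundary errors via the fact that $\LL_\Omega^\eps(1)\geq \frac{c_0}{c_1}\nu_{c_1M_0}>0$ whenever $d_\eps(x)\leq M_0$, which holds by Proposition~\ref{prop K eps} and a half-space comparison) and the function $\psi$ of Lemma~\ref{second supsol} with $d(x)/\eps$ replaced by $c_0 d(x)/\eps$.

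Concretely, I would first compute $\LL_\Omega^\eps(e^{\lambda D}-e^{\lambda x_1})$ via Taylor expansion of $e^{\lambda \eta_1}$ around $x$. Using the decomposition $K_\eps(x;\eta)=K_\eps^{\alpha(x)}(x-\eta)+R^x_\eps(x;\eta)$ of Proposition~\ref{prop K eps}(i), the symmetry of $K_\eps^{\alpha(x)}$ kills the first-order term on $\RR^3$, the second-order term contributes a positive $\tfrac{2\eps^2}{3c_1^2}\lambda^2 e^{\lambda x_1}$, and the tail/boundary corrections plus the remainder integrals $\int|R_\eps^x||\eta-x|^n\,d\eta\lesssim\eps^{n+1}$ produce either $O(\eps^2)$ or an $O(\eps e^{-c_0 d_\eps(x)/2})$ error near the boundary. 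Choosing $\lambda$ large and $\eps$ small, this yields the dichotomy used in the excerpt: a clean $A_2\eps^2$ away from the boundary and at worst $-A_1\eps e^{-c_0 d_\eps(x)/2}$ within the layer $d(x)\lesssim \eps\ln(1/\eps)$.

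Next I would redo Lemma~\ref{second supsol} for the new operator with $\psi$ adapted to scale $\eps/c_0$. The proof is essentially identical: the region $\{d(x)\geq R\mu\}$ is handled because minima of supersolutions are supersolutions; the region $\{d(x)<M\eps\}$ uses the lower bound on $\int_{\Omega^c}K_\eps(x;\eta)\,d\eta$ obtained from the half-space comparison above; and the middle region is treated by a Taylor expansion of $\psi$ combined with the decomposition $K_\eps=K_\eps^{\alpha(x)}+R^x_\eps$, where the symmetric part gives the leading $-\eps^2\Delta\psi$ behavior (already estimated in Lemma~\ref{second supsol}) and the remainder contributes at most $O(\eps^3)$ by the estimate displayed in \eqref{R x eps}. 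Picking $\gamma<\nu_{c_1 M_0}c_0/(2c_1)$ and $\mu$ small enough preserves all sign choices.

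Finally I would combine the three pieces: in the zone $d_\eps(x)\leq M_0$ the constant $C_1$ (chosen $>A_1 c_1/(c_0\nu_{c_1M_0})$) absorbs the negative $A_1\eps e^{-c_0 d_\eps/2}$; in the zone $M_0< d_\eps(x)\lesssim R\mu/(2\eps)$ the term $C_2\psi$ provides $A_3/(1+(c_0 d_\eps)^2)^2\gtrsim e^{-c_0 d_\eps}$, dominating both the bad exponential and the $-A_4\eps^2$ from the transition sub-layer; and for $d_\eps(x)$ large the positive $A_2\eps^2$ from the exponential piece dominates the right-hand side $e^{-c_0 d(x)/\eps}$. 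Together these give $\LL^\eps_\Omega(\Phi^\eps)\geq\Arrowvert g\Arrowvert_1 e^{-c_0 d(x)/\eps}\geq |\LL^\eps_\Omega(u^\eps)|$, and the maximum principle (valid because $K_\eps$ integrates to $1$ on $\RR^3$) yields $0\leq u^\eps\leq \Phi^\eps$ uniformly in $\eps$. The main technical obstacle is the asymmetry of $K_\eps(x;\eta)$: every step that used symmetry in Section~4.2 must be re-executed through the splitting $K_\eps=K_\eps^{\alpha(x)}+R^x_\eps$ and careful bookkeeping of the resulting remainder integrals to verify that they remain of smaller order than the positive leading contributions that are kept.
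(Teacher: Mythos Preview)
Your proposal is correct and follows essentially the same approach as the paper: replacing the polynomial piece $C_1-|x|^2$ by $e^{\lambda D}-e^{\lambda x_1}$, handling the asymmetric kernel through the decomposition $K_\eps=K_\eps^{\alpha(x)}+R^x_\eps$ of Proposition~\ref{prop K eps}(i) with the remainder bound~\eqref{R x eps}, re-running the $\psi$ analysis of Lemma~\ref{second supsol} on the scale $c_0 d(x)/\eps$, and then combining the three pieces via $\LL^\eps_\Omega(1)\geq \tfrac{c_0}{c_1}\nu_{c_1 M_0}$ and the maximum principle. The only minor bookkeeping difference is that the paper's choice $\gamma<\tfrac{c_0}{2c_1}\nu_{c_1 M}$ uses $M=\mu^{-2}$ rather than $M_0$, but this is immaterial.
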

\begin{remark}
We notice for further reference that we have obtained a stronger estimate than $ \LL^\eps_\Omega\left(\Phi^\eps\right)(x)\geq \Arrowvert g\Arrowvert_1 e^{-\frac{c_0d(x)}{\eps}}  $, namely
\begin{equation}\label{Phi}
\LL^\eps_\Omega\left(\Phi^\eps\right)(x)\geq\Arrowvert g\Arrowvert_1\begin{cases}
\frac{6}{\left(1+(c_0\deps{\eps})^2\right)^2} & \text{ if }d(x)<\frac{2\eps}{c_0}\ln\left(\frac{1}{\eps}\right)\\ \eps^2+\frac{6}{\left(1+(c_0\deps{\eps})^2\right)^2} & \text{ if }\frac{2\eps}{c_0}\ln\left(\frac{1}{\eps}\right)\leq d(x)<\frac{R\mu}{2}\\\eps^2 & \text{ if } d(x)\geq\frac{R\mu}{2}
\end{cases}\geq \Arrowvert g\Arrowvert_1 \eps^2,
\end{equation}
since $ \frac{6}{\left(1+(c_0\deps{\eps})^2\right)^2}\geq e^{-\frac{c_0d(x)}{\eps}} \geq \eps^2 $ if $ d(x)<\frac{2\eps}{c_0}\ln\left(\frac{1}{\eps}\right) $. This estimate will be important at the end of the paper.
\end{remark}
\subsection{Rigorous proof of the convergence: estimates of $ u^\eps-\ou$ near the boundary $ \bnd $}
In this section we will extend the results of Section 4.3 also for the case of space dependent absorption coefficient. We will show a slightly different result than the one in Section 4.3. We will prove that $ \oU_\eps(x,p) $ as defined in \eqref{ou x} is a good approximation of $ u^\eps(x) $ in a neighborhood of $ p\in\bnd $ of size close to $ \eps^{\frac{2}{3}} $ istead of $ \eps^{\frac{1}{2}} $. We will use once more the maximum principle of Theorem \ref{max principle omega}. We start with the estimate of $ \LL^\eps_\Omega\left(\oU_\eps(\cdot,p)-u^\eps\right) $.
\begin{lemma}\label{L ou-u x}
Let $ p\in\bnd $ and let $ \Rot_p $ be the rigid motion defined in \eqref{rotp}. Then the following holds for $ x\in\Omega $, $ \delta>0 $ sufficiently small and independent of $ \eps $ and a suitable $ 0<A<\frac{1}{4} $ and constant $ C>0 $
\begin{equation}\label{L ou-u x 1}
\left|\LL_\Omega^\eps\left(\ou\left(\alpha(p)\frac{\Rot_p(\cdot)\cdot e_1}{\eps},p\right)-u^\eps\right)(x)\right|\leq C e^{-Ac_0\deps{x}}\begin{cases}1,& \forall x\in\Omega\\\eps^\delta & \text{ if }|x-p|<\eptw.
\end{cases}
\end{equation}
\begin{proof}
We start with the rough estimate for $ x\in\Omega $. The operator can be estimated by $$ \left|\LL_\Omega^\eps\left(\oU_\eps(x,p)-u^\eps(x)\right)\right|\leq \left|\LL_\Omega^\eps\left(\oU_\eps(x,p)\right)\right|+\left|\LL_\Omega^\eps\left(u^\eps(x)\right)\right| .$$ As we have seen in Section 5.3 it is always true that $ \left|\LL_\Omega^\eps\left(u^\eps(x)\right)\right| \leq \Arrowvert g\Arrowvert_1e^{-c_0\deps{x}} $. Hence, we have to consider $ \oU_\eps(x,p)=\ou_\infty(p)+\oV_\eps(x,p) $ with $ \oV_\eps(x,p)=\oU_\eps(x,p)-\ou_\infty(p) $ and thus by Lemma \ref{exp rate} $ \left|\oV_\eps(x,p)\right|\leq Ce^{-\frac{c_0|\Rot_p(x)\cdot e_1|}{2\eps}} $.

By the geometry of the problem we can estimate $ |\Rot_p(x)\cdot e_1|\geq d(x) $. Indeed, let $ x_p\in\bnd $ be the unique intersection point of the line $ \{x+tN_p:t>0\} $ with the boundary $ \bnd $, i.e. $ x_p=x+t_pN_p $. Then, since $ \Omega $ is convex, \begin{equation}\label{Rotpx distance}
 |\Rot_p(x)\cdot e_1|=(x-p)\cdot(-N_p)\geq (x-x_p)\cdot(-N_p) =t_p=|x-x_p|\geq d(x).
\end{equation}Hence, using also that $ |x-\eta|\geq |d(\eta)-d(x)| $ we compute 
\begin{equation*}\label{L ou-u x 2}
\begin{split}
\left|\LL^\eps_\Omega\right.&\left.\left(\oU_\eps(x,p)\right)\right|\leq \ou_\infty(p)\int_{\Omega^c}K_\eps(x;\eta)d\eta+Ce^{-\frac{c_0}{2}\deps{x}}+C\int_\Omega K^\eps(x;\eta)e^{-\frac{c_0}{2}\deps{\eta}}\;d\eta\\
\leq& C(c_0,c_1)\Arrowvert\ou_\infty\Arrowvert_{\infty}e^{-\frac{c_0}{2}\deps{x}}\int_0^\infty e^{-\frac{r}{2}}\;dr+ Ce^{-\frac{c_0}{2}\deps{x}}\\
&+C(c_0,c_1)\int_{d(\eta)<d(x)}\frac{e^{-\frac{c_0|x-\eta|}{2\eps}}}{4\pi\eps|x-\eta|^2}e^{-\frac{c_0}{2}\deps{x}}\;d\eta+Ce^{-\frac{c_0}{2}\deps{x}}\int_{d(\eta)\geq d(x)}K_\eps(x;\eta)\;d\eta\\
\leq& C(c_0,c_1,\Arrowvert \ou_\infty\Arrowvert_{\infty})e^{-\frac{c_0}{2}\deps{x}}.
\end{split}
\end{equation*}
We now prove the estimate when $ |x-p|<\eptw $. In this case we use the decomposition $ K_\eps=K^{\alpha(p)}_\eps+R_\eps^{p} $ in claim (ii) of Proposition \ref{prop K eps}. Similarly as in \eqref{R x eps} we can estimate
\begin{equation}\label{R p eps}
\int_{\RR^3}|R^p_\eps(x;\eta)||x-\eta|^n\;d\eta\leq C(c_0,c_1)\eps^n\eptw.
\end{equation}
Hence, using the decomposition above we compute for $ |x-p|<\eptw $
\begin{equation*}
\begin{split}
\left|\LL_\Omega^\eps\right.&\left.\left(\oU_\eps(x,p)-u^\eps(x)\right)\right|\\\leq&\left|\int_0^\infty d\nu\int_{\Ss^2}dn\;g_\nu(n)e^{-\int_{\left[x,x_\Omega(x,n)\right]}\frac{\alpha(\xi)}{\eps}\;ds(\xi)}-\intnu\int_{n\cdot N_{p}<0} dn\; g_\nu(n)e^{-\frac{\alpha(p)\left|x-x_{\Pi_p}(x,n)\right|}{\eps}}\right|\\
&+\int_{\Pi_p\setminus \Omega}d\eta\; K_\eps^{\alpha(p)}(x-p)\oU_\eps(\eta,p)+\int_\Omega d\eta\; |R^p_\eps(x;\eta)|\oU_\eps(\eta)\\
=&I_1+I_2+I_3.
\end{split}
\end{equation*}
We estimate now these three terms. Analogous to Lemma \ref{estimate op ou-u eps} we decompose $ \Ss^2=\mathcal{U}_1\cup\mathcal{U}_2\cup\mathcal{U}_3 $. For the term $ I_1 $ we only have to notice that if $ n\in\mathcal{U}_1 $ then $ |x_\Omega-x|\leq \eps^{\frac{1}{2}+\delta}(C(\Omega)\eps^{\frac{1}{2}}+1+\eps^\delta)\leq 2 \eps^{\frac{1}{2}+\delta}$ for $ \eps>0 $ sufficiently small. Hence, $ \frac{|\xi-p||x-x_\Omega|}{\eps}<4\eps^{2\delta}<1 $ for any $ \xi=x-t(x-x_\Omega) $, $ 0\leq t\leq1 $ and $ \eps>0 $ sufficiently small. Thus, we obtain using the Taylor expansion on $ \alpha(\xi) $ as we did in \eqref{taylor3}
\begin{equation*}
\begin{split}
I_1\left.\right|_{\mathcal{U}_1}\leq&\int_0^\infty d\nu\int_{\mathcal{U}_1}dn\;g_\nu(n)\left|e^{-\frac{\alpha(p)|x-x_\Omega(x,n)|}{\eps}}-e^{-\frac{\alpha(p)\left|x-x_{\Pi_p}(x,n)\right|}{\eps}}\right|\\
&+\int_0^\infty d\nu\int_{\mathcal{U}_1}dn\;g_\nu(n)\left|e^{-\int_{\left[x,x_\Omega(x,n)\right]}\frac{\alpha(\xi)}{\eps}\;ds(\xi)}-e^{-\frac{\alpha(p)|x-x_\Omega(x,n)|}{\eps}}\right|\\\leq & C(\Omega)\Arrowvert g\Arrowvert_{\infty}\eps^{\delta}e^{-c_0\deps{x}}+C \Arrowvert g\Arrowvert_{\infty}\int_{\mathcal{U}_1}dn\;e^{-\frac{\alpha(p)|x-x_\Omega(x,n)|}{\eps}}\eps^{2\delta}\leq C(\Omega)\Arrowvert g\Arrowvert_{\infty}\eps^{\delta}e^{-c_0\deps{x}},
\end{split}
\end{equation*}
where to estimate the first term in the first inequality we used Lemma \ref{estimate op ou-u eps}, specifically equation \eqref{op ou-u eps 19}, and to estimate the second term we expanded $ \alpha(\xi) $ at $ \xi=p $. To estimate the contributions in the regions $ \mathcal{U}_2 $ and $ \mathcal{U}_3 $, the estimate $ \int_{\left[x,x_\Omega(x,n)\right]}\frac{\alpha(\xi)}{\eps}\;ds(\xi)\geq -c_0\frac{|x-x_\Omega(x,n)|}{\eps} $ together with the result of Lemma \ref{estimate op ou-u eps} implies the bound on $ I_1 $. The term $ I_2 $ can be handled exactly as in Lemma \ref{estimate op ou-u eps}. Finally, for $ I_3 $ we use the uniform boundedness of $ \ou(y,p) $ and equation \eqref{R p eps}. This implies
\begin{equation*}
\left|\LL_\Omega^\eps\left(\oU_\eps(x,p)-u^\eps(x)\right)\right|\leq C(\Omega,c_0,c_1,g) \eps^\delta e^{-c_0\deps{x}}+C\eptw.
\end{equation*}  
We conclude by interpolation. Indeed, if $ d(x)\leq \frac{4\eps}{c_0}\ln\left(\eps^{-\frac{1}{2}-\delta}\right) $, then $ \eptw\leq \eps^\delta e^{-\frac{c_0\deps{x}}{4}} $, while if $ d(x)>\frac{4\eps}{c_0}\ln\left(\eps^{-\frac{1}{2}-\delta}\right)  $ we use the global estimate to get $ e^{-\frac{c_0\deps{x}}{2}}\leq \eps^{\frac{1}{2}+\delta}e^{-\frac{c_0\deps{x}}{4}} $.
\end{proof}
\end{lemma}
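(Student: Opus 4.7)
\medskip
\noindent\textbf{Proof proposal.} The plan is to mirror the structure of Lemma \ref{estimate op ou-u eps}, replacing the constant $\alpha \equiv 1$ by a Taylor approximation $\alpha(\xi) \approx \alpha(p)$ whenever $\xi$ stays close to $p$, and using the kernel decomposition $K_\eps(x;\eta) = K^{\alpha(p)}_\eps(x-\eta) + R^p_\eps(x;\eta)$ from Proposition \ref{prop K eps}\,(ii). I would prove the two cases in \eqref{L ou-u x 1} separately: a crude bound valid on all of $\Omega$ for the first case, and a refined local bound when $|x - p| < \eptw$ for the $\eps^\delta$ case.

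For the crude global estimate I would decompose $\oU_\eps(x,p) = \ou_\infty(p) + \oV_\eps(x,p)$. By Corollary \ref{rate} one has $|\oV_\eps(x,p)| \leq C e^{-c_0 |\Rot_p(x)\cdot e_1|/(2\eps)}$, and the convexity of $\Omega$ gives $|\Rot_p(x)\cdot e_1| \geq d(x)$ (as was already used in \eqref{Rotpx distance}), so $|\oV_\eps(x,p)| \leq C e^{-c_0 d_\eps(x)/2}$. Then $\LL^\eps_\Omega(\ou_\infty(p))(x) = \ou_\infty(p) \int_{\Omega^c} K_\eps(x;\eta)\,d\eta$, which the pointwise bound on $K_\eps$ from Proposition \ref{prop K eps} controls by $C e^{-c_0 d_\eps(x)/2}$; $\LL^\eps_\Omega(\oV_\eps)$ is estimated similarly by writing out the defining integral and splitting $\Omega$ into $\{d(\eta)<d(x)\}$ and its complement. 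Together with the generic bound $|\LL^\eps_\Omega(u^\eps)(x)| \leq \|g\|_1 e^{-c_0 d_\eps(x)}$ coming from the source in \eqref{u x}, this yields the first case with any $A \leq 1/2$, hence in particular $A < 1/4$.

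For the refined estimate in $|x-p| < \eptw$ I would use Proposition \ref{prop K eps}\,(ii) to write $K_\eps(x;\eta) = K^{\alpha(p)}_\eps(x-\eta) + R^p_\eps(x;\eta)$. After rescaling, the planar equation \eqref{ou x} for $\oU_\eps$ has kernel exactly $K^{\alpha(p)}_\eps$; hence $\LL^\eps_\Omega(\oU_\eps(\cdot,p))(x)$ differs from the planar source by (a) the integral of $K^{\alpha(p)}_\eps(x-\eta)\oU_\eps(\eta,p)$ over the symmetric difference $\Pi_p \triangle \Omega$ (which is handled exactly as in the three-region decomposition $A_1 \cup A_2 \cup A_3$ of Lemma \ref{estimate op ou-u eps}, since that argument is purely geometric and involves only the curvature of $\bnd$), and (b) the remainder $\int_\Omega R^p_\eps(x;\eta)\oU_\eps(\eta,p)\,d\eta$, which is absorbed by the bound $\int |R^p_\eps(x;\eta)| |\eta-x|^n \,d\eta \lesssim \eps^{n}\,\eptw$ obtained by a direct integration of the estimate in Proposition \ref{prop K eps}\,(ii). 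For the boundary source of $u^\eps$ I would decompose $\Ss^2 = \mathcal{U}_1 \cup \mathcal{U}_2 \cup \mathcal{U}_3$ as in Lemma \ref{estimate op ou-u eps}, observing that on $\mathcal{U}_1$ the characteristic $[x, x_\Omega(x,n)]$ stays within distance $\lesssim \eps^{\frac{1}{2}+\delta}$ of $p$; Taylor-expanding $\alpha(\xi) = \alpha(p) + O(|\xi - p|)$ along this segment produces a multiplicative error $1 + O(\eps^{2\delta})$ in the exponential, which when compared with the half-space source $e^{-\alpha(p)|x-x_{\Pi_p}|/\eps}$ combines with the constant-coefficient geometric estimate to give the desired $\eps^\delta e^{-A c_0 d_\eps(x)}$ bound. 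The contributions of $\mathcal{U}_2$ and $\mathcal{U}_3$ already decay fast enough to be absorbed.

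The main obstacle I anticipate is the simultaneous bookkeeping in the source-term analysis: we must control three distinct small parameters — the path-integral discrepancy $|\int_{[x,x_\Omega]}\alpha/\eps\,ds - \alpha(p)|x-x_{\Pi_p}|/\eps|$, the geometric discrepancy $|x_\Omega - x_{\Pi_p}|$, and the hemisphere mismatch on $\Ss^2$ — each of which enters multiplicatively \emph{inside} the exponential. The final interpolation between the rough global estimate and the refined near-$p$ estimate (splitting on whether $d(x)$ is smaller or larger than a multiple of $\eps \log(\eps^{-1/2-\delta})$) is what forces the decay rate to drop from the natural $c_0$ down to $A c_0$ with $A < 1/4$; arranging this interpolation so that the prefactor stays at $\eps^\delta$ in the second case is the only delicate piece of algebra.
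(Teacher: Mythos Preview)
Your proposal is correct and follows essentially the same route as the paper's proof: the same splitting $\oU_\eps=\ou_\infty(p)+\oV_\eps$ for the global bound, the same use of Proposition~\ref{prop K eps}\,(ii) to separate out $K^{\alpha(p)}_\eps$, the same three-term decomposition $I_1+I_2+I_3$ with the $\mathcal{U}_1\cup\mathcal{U}_2\cup\mathcal{U}_3$ analysis of the source, and the same final interpolation on $d(x)$ versus $\eps\log(\eps^{-1/2-\delta})$ that forces $A<1/4$. The only cosmetic difference is that you write $\Pi_p\triangle\Omega$ where the paper writes $\Pi_p\setminus\Omega$, which coincide since $\Omega\subset\Pi_p$ by convexity.
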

It only remains to adapt the supersolution $ W_{\eps,L} $ of Proposition \ref{supsol ou-u eps}. As we anticipated at the beginning of Section 5.4 we are going to prove that $ \ou $ is a good approximation of $ u^\eps $ in a neighborhood of $ p\in\bnd $ of size close to $ \eps^{\frac{2}{3}} $. First of all we notice that if $ \delta<\frac{1}{12} $ then $ \frac{2}{3}>\frac{1}{2}+2\delta $ and hence we have also that
\begin{equation*}\label{2/3}
\left|\LL_\Omega^\eps\left(\ou\left(\frac{\Rot_p(\cdot)\cdot e_1}{\eps},p\right)-u^\eps\right)(x)\right|\leq C e^{-Ac_0\deps{x}}\begin{cases}1& \text{ if }|x-p|\geq \eps^{\frac{2}{3}}\\\eps^\delta & \text{ if }|x-p|<\eps^{\frac{2}{3}}.\end{cases}
\end{equation*}
The result we prove is the following
\begin{prop}\label{supsol ou-u eps x}
	Let $ p\in\bnd $, $ 0<A<\frac{1}{4} $ the constant of Lemma \ref{L ou-u x}. Let $ L>0 $ large enough and $ 0<\eps<1 $ sufficiently small. Let $ 0<\delta<\frac{1}{12} $. Then there exists a non negative continuous function $ W_{\eps, L}:\Omega\to \RR_+ $ such that 
	\begin{equation*}\label{supsol case x}
	\begin{cases}
	W_{\eps,L}\geq C>0 &\text{ for } \left|\Rot_p(x)\cdot e_i\right|\geq \eps^{\frac{2}{3}+\delta} ;\\\LL_\Omega^\eps\left(W_{\eps,L}\right)(x)\geq C \eps^\delta e^{-\frac{Ad(x)}{\eps}}&\text{ for }\left|\Rot_p(x)\cdot e_i\right|< \eps^{\frac{2}{3}+\delta}; \\0\leq W_{\eps,L}\leq C\left(\eps^\alpha+\frac{1}{\sqrt{L}}\right) &\text{ for } \left|\Rot_p(x)\cdot e_i\right|< \eps^{\frac{2}{3}+2\delta}, 
	\end{cases}
	\end{equation*}
	for some constant $ C>0 $ and $ \alpha>0 $.
\end{prop}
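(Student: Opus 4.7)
The plan is to construct $W_{\eps,L}$ in direct analogy with \eqref{Wel}, assembling functions $F^\pm_i, G^\pm_i, H^\pm_i$ of the form \eqref{Fpm}--\eqref{Hpm} but adapted to the space-dependent coefficient. Specifically, in the definitions of $\rho_i^\pm, \theta_i^\pm, F^\pm_i, G^\pm_i, H^\pm_i$ I would replace $\eps$ by $\eps/\alpha(p)$ (so that near the reference point $p$ the building blocks mimic the constant-coefficient supersolution with absorption $\alpha(p)$) and replace the layer width $\epth$ by the thicker $\eps^{\frac{2}{3}+\delta}$. The candidate is then taken as in \eqref{Wel}, with $\phi_{A,\eps}$ and $\phi_{\frac{1}{8},\eps}$ now denoting the functions $\Phi^\eps$ from Theorem \ref{interior supsol x} (scaled so as to give the exponential decay at rates $A$ and $\frac{1}{8}$ respectively).

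The three elementary properties---non-negativity on $\Omega$, the lower bound of order $1$ outside the intermediate region, and the upper bound $C\eps^\alpha$ on the innermost region---depend only on the explicit form of $F^\pm_i, G^\pm_i, H^\pm_i$ and not on the integral operator. They follow line by line from the corresponding computations in Lemma \ref{lemma supsol}, with $\epth$ replaced by $\eps^{\frac{2}{3}+\delta}$ and $\alpha=\delta/2$ as before.

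The main obstacle is establishing the operator inequality $\LL^\eps_\Omega(W^\pm_i)(x) \geq -\frac{C}{\sqrt{L}} e^{-c_0 d(x)/(8\eps)}$ on the intermediate region (the analogue of $\mathcal{C}_{3\delta}$) with the new, non-symmetric kernel $K_\eps(x;\eta)$. The key tool is the refined decomposition of Proposition \ref{prop K eps}(iii),
\begin{equation*}
K_\eps(x;\eta) = K_\eps^{\alpha(x)}(x-\eta) + \mathcal{K}^1_\eps(x-\eta) + \mathcal{K}^2_\eps(x-\eta) + \tilde{R}^x_\eps(x;\eta).
\end{equation*}
Against the symmetric leading term $K_\eps^{\alpha(x)}$, the same three-case analysis as in the proof of Lemma \ref{lemma supsol} ($\rho < L\eps/\alpha(x)$; $\rho \geq L\eps/\alpha(x)$ with $d<\eps$; $\rho \geq L\eps/\alpha(x)$ with $d\geq \eps$) reproduces, after the substitution $\eps \mapsto \eps/\alpha(x)$, the positive lower bounds of order $\eps^4/\rho^4$ from $H$ and $\eps^{5/2}/(\cos^{3/2}(\theta)\rho^{5/2})$ from $G$, together with the exponential control of the tail integrals outside $B_{\rho(x)/4}(x)$. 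The genuinely new contributions come from $\mathcal{K}^1_\eps$ and $\mathcal{K}^2_\eps$. Both kernels are odd in the direction $\nabla\alpha(x)\cdot(\eta-x)$, so the zeroth-order Taylor term of $W^\pm_i$ drops out under integration and one is left with integrals involving $\nabla W^\pm_i$ multiplied by $\nabla \alpha(x)$. These are controlled using the gradient bounds for $F^\pm_i, G^\pm_i, H^\pm_i$ already established in Lemma \ref{lemma supsol} together with the moment estimates $\int |\mathcal{K}^j_\eps||\eta-x|^k\,d\eta = O(\eps^k)$. The hardest term is the one against $\nabla G$, whose singular factor $\cos^{-1/2}(\theta)\rho^{-3/2}$ would otherwise dominate the leading lower bound; a careful bookkeeping in the three sub-cases shows that the new layer width $\eps^{\frac{2}{3}+\delta}$ is precisely what is needed to absorb these corrections into $\frac{C}{\sqrt{L}} e^{-c_0 d(x)/(8\eps)}$. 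The remainder $\tilde{R}^x_\eps$, via the estimate \eqref{remainder2}, contributes terms of order higher than $\eps^2/\rho^2$ and is absorbed exactly as the Taylor error in Lemma \ref{lemma supsol}.

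The proof concludes by mirroring the assembly of Proposition \ref{supsol ou-u eps}: adding the terms $\frac{\tilde{C}}{\sqrt{L}}\phi_{\frac{1}{8},\eps}$ and $C\eps^\delta \phi_{A,\eps}$, whose operator bounds are now provided by Theorem \ref{interior supsol x} (in particular the sharper estimate \eqref{Phi}), produces the required pointwise lower bound $C\eps^\delta e^{-Ac_0 d(x)/\eps}$ on $\LL^\eps_\Omega(W_{\eps,L})(x)$. The admissible range for $L$ and $\eps$ is the same as in Proposition \ref{supsol ou-u eps}, adjusted to the new layer width, and one obtains the three conclusions of the proposition with $\alpha=\delta/2$.
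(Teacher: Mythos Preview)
Your overall strategy is correct and parallels the paper's, but the paper proceeds more economically in two respects.

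First, the paper does \emph{not} rescale $\eps\mapsto\eps/\alpha(p)$ in the building blocks: the radii $\rho_i^\pm$ and the auxiliary functions $F_i^\pm,G_i^\pm,H_i^\pm$ are defined exactly as in \eqref{Fpm}--\eqref{Hpm} with plain $\eps$, only with the layer width $\epth$ replaced by $\eps^{2/3+\delta}$. The variable coefficient enters solely through the operator $\LL_\Omega^\eps$.

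Second, and more substantially, the paper uses the \emph{coarser} decomposition (i) of Proposition~\ref{prop K eps}, namely $K_\eps=K_\eps^{\alpha(x)}+R_\eps^x$, rather than (iii). For the supersolution estimate one does not need to recover an elliptic operator (that was the role of (iii) in Lemma~\ref{estimate op vsigma-ueps x}); here one only needs the first, second and third-order Taylor terms of $W$ to be handled by the symmetry of $K_\eps^{\alpha(x)}$, with the remainder $R_\eps^x$ controlled by \eqref{R x eps}, $\int|R_\eps^x|\,|x-\eta|^n\,d\eta\le C\eps^{n+1}$. Your route through $\mathcal{K}^1,\mathcal{K}^2$ works as well---the odd kernels against $\nabla W\cdot(\eta-x)$ produce terms of the same order $\eps^2|\nabla W|$---but introduces unnecessary bookkeeping.

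The mechanism you leave implicit (``the new layer width is precisely what is needed'') is made explicit in the paper as \eqref{2over3}: on $\mathcal{C}_\delta$ one has $\rho(x)<\eps^{2/3}$, hence $\eps(\rho/\eps)^3=\rho^3/\eps^2<1$, and therefore every remainder contribution of size $\eps^{n+1}|\nabla^nW|$ (which, after using $\cos\theta\cdot\rho\ge L\eps/2$ on the $G$ piece, reduces to a multiple of $\eps^2/\rho$) is dominated by the leading positive term $\eps^4/\rho^4$ coming from $\Delta H$. This is precisely why the exponent $2/3$ replaces $1/2$. The paper also notes that one must now couple the constants by taking $a=2b$, so that the $G$-remainder of order $bC_G\eps^4/(\sqrt{L}\,\rho^4)$ is absorbed by the $H$-Laplacian term $\tfrac{2b}{3c_1^2}\eps^4/\rho^4$ for $L$ large---a point your sketch does not mention. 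The final assembly with $\phi_{A,\eps}$ and $\phi_{1/8,\eps}$ from Theorem~\ref{interior supsol x} is as you describe.
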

This proposition implies arguing as in Section 4.3, the following corollary (see Corollary \ref{good estimate}).
\begin{corollary}\label{good estimatex}
	There exists a constant $ C>0 $, a large $ L>0 $ and an $\alpha>0 $ independent of $ x,p,\eps $ such that 
	\begin{equation*}\label{good estimate 1x}
	\left|\ou\left(\frac{R_p(\cdot)\cdot e_1}{\eps},p\right)-u^\eps\right|(x)\leq C \left(\eps^\alpha+\frac{1}{\sqrt{L}}\right)
	\end{equation*}
	for all $ |x-p|<\eps^{\frac{2}{3}+2\delta}$.
	\end{corollary}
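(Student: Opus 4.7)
The plan is to mimic exactly the argument of Corollary \ref{good estimate} from the constant-coefficient case, now using the ingredients assembled for the space-dependent absorption: the maximum principle for the new operator $\LL^\eps_\Omega$ (established in Section~5.2 as a consequence of Proposition \ref{prop K eps} and the fact that $\int_{\RR^3}K_\eps(x;\eta)\,d\eta=1$), the operator bound of Lemma~\ref{L ou-u x}, and the supersolution $W_{\eps,L}$ of Proposition \ref{supsol ou-u eps x}. The shift from a neighborhood of size $\eps^{1/2+k\delta}$ to one of size $\eps^{2/3+k\delta}$ costs nothing at the level of this deduction: the only reason the exponents changed is that Proposition \ref{supsol ou-u eps x} is formulated with those thresholds.

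First I would fix $0<\delta<\frac{1}{12}$ and $L>0$ large enough so that Proposition \ref{supsol ou-u eps x} applies, and choose $0<\eps<1$ small enough that the same hypotheses (including $L<\eps^{-\beta}$ for the appropriate $\beta$) are satisfied. Next I record the two uniform bounds which are the inputs: $\oU_\eps(x,p)=\ou\bigl(\alpha(p)\Rot_p(x)\cdot e_1/\eps,\,p\bigr)$ is bounded by some $K>0$ independently of $p\in\bnd$ by Proposition~\ref{prop u}, and $u^\eps$ is bounded by the same $K$ (enlarging if necessary) by Theorem \ref{interior supsol x}. Proposition \ref{supsol ou-u eps x} gives the lower bound $W_{\eps,L}\geq \tilde{C}>0$ on the complementary region $\{|\Rot_p(x)\cdot e_i|\geq \eps^{2/3+\delta}\}$.

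Then I would form the comparison function $V^{\pm}(x):=\tfrac{K}{\tilde{C}}W_{\eps,L}(x)\mp\bigl(\oU_\eps(x,p)-u^\eps(x)\bigr)$, both of which are continuous on $\overline{\Omega}$. On the region $\{|\Rot_p(x)\cdot e_i|\geq \eps^{2/3+\delta}\}$ one has $V^{\pm}\geq K-K=0$ by the lower bound on $W_{\eps,L}$. On the complementary open set $O:=\{|\Rot_p(x)\cdot e_i|<\eps^{2/3+\delta}\}\cap\Omega$ the supersolution estimate $\LL^\eps_\Omega(W_{\eps,L})\geq C\eps^\delta e^{-Ad(x)/\eps}$ combined with the bound $|\LL^\eps_\Omega(\oU_\eps-u^\eps)|\leq C\eps^\delta e^{-Ac_0 d(x)/\eps}$ from Lemma~\ref{L ou-u x} implies $\LL^\eps_\Omega(V^{\pm})\geq 0$ after absorbing constants by increasing the prefactor $K/\tilde{C}$ if necessary (recall $A<\tfrac{1}{4}$ and $c_0\leq 1$ may need to be incorporated into the exponent comparison). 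I would apply the maximum principle of Theorem \ref{max principle omega} in its form (ii) to each of $V^{+}$ and $V^{-}$ to conclude $|\oU_\eps-u^\eps|\leq \tfrac{K}{\tilde{C}}W_{\eps,L}$ throughout $\Omega$.

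Finally, in the smaller region $\{|x-p|<\eps^{2/3+2\delta}\}$ one has $|\Rot_p(x)\cdot e_i|<\eps^{2/3+2\delta}$ for every $i=1,2,3$, so the third inequality of Proposition \ref{supsol ou-u eps x} yields $W_{\eps,L}(x)\leq C(\eps^\alpha+L^{-1/2})$ and the estimate of the corollary follows with $\alpha=\delta/2$. The only subtle point I foresee is checking that the exponent $A$ supplied by Lemma~\ref{L ou-u x} is at least as large as the decay exponent appearing in the supersolution estimate of Proposition \ref{supsol ou-u eps x}; since both were deliberately chosen of the form $e^{-Ac_0 d(x)/\eps}$ with $A<\frac{1}{4}$, this is just a matter of choosing the two $A$'s consistently (the smaller of the two dictates the whole chain). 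No further ingredients are needed.
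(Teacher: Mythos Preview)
Your proposal is correct and follows essentially the same approach as the paper, which simply says the corollary follows from Proposition~\ref{supsol ou-u eps x} ``arguing as in Section~4.3'' (i.e., exactly as in Corollary~\ref{good estimate}). Your expansion of that argument---uniform bounds from Proposition~\ref{prop u} and Theorem~\ref{interior supsol x}, the comparison function $\tfrac{K}{\tilde C}W_{\eps,L}\mp(\oU_\eps-u^\eps)$, and the maximum principle in form~(ii)---matches the paper's intended proof; the only implicit check you should make explicit is that $\{|\Rot_p(x)\cdot e_i|<\eps^{2/3+\delta}\}$ forces $|x-p|<\eps^{2/3}<\eps^{1/2+2\delta}$ when $\delta<\tfrac{1}{12}$, so the $\eps^\delta$ branch of Lemma~\ref{L ou-u x} indeed applies on~$O$.
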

In order to adapt the supersolution $ W_{\eps,L} $ of Proposition \ref{supsol ou-u eps} in this case we start considering a new slightly different geometrical setting. Once more we denote for simplicity $ x_i=\Rot_p(x)\cdot e_i $. We define now for $ i=2,3 $ the radii $ \rho_i^\pm(x)=\sqrt{\left(x_1+\frac{L}{2}\eps\right)^2+\left(x_i\pm \eps^{\frac{2}{3}+\delta}\right)^2} $ and the angles $ \theta_i^\pm(x) $ given by $ \cos\left(\theta_i^\pm \right)=\frac{1}{\rho_i^\pm(x)}\left(x_1+\frac{L}{2}\eps\right) $. We construct then the function $ W_{\eps,L} $ using now these definitions for $ \rho_i^\pm(x) $ and $ \theta_i^\pm(x) $  analogously to Section 4.3
\begin{equation}\label{Welx}
W_{\eps,L}(x)=\sum_{i=2}^3 \left(W^+_i(x)+W^-_i(x)\right)+\frac{\tilde{C}}{\sqrt{L}}\phi_{\frac{1}{8},\eps}+
C\eps^{\delta}\phi_{A,\eps},
\end{equation} where $ \phi_{A,\eps}=\Phi^{\frac{\eps}{A}} $ the supersolution defined in Theorem \ref{interior supsol x}, $ C,\tilde{C}>0 $ some suitable constants and $ W^\pm_i=F^\pm_i(x)+G^\pm_i(x)+H^\pm_i(x) $  given by the auxiliary functions defined in \eqref{Fpm},\eqref{Gpm} and \eqref{Hpm} adapted to the new geometrical setting. Analogously to Section 4.3 we define the following subsets of $ \Omega $ for $ i=2,3 $. 
\begin{equation*}
\mathcal{C}^+_{i}:=\left\{x\in\Omega :\; x_i\leq -\eps^{\frac{2}{3}+\delta} \text{ or } |x_i|<\eps^{\frac{2}{3}+\delta}, x_1\geq \eps^{\frac{2}{3}+\delta}\right\};
\end{equation*}
\begin{equation*}
\mathcal{C}^-_{i}:=\left\{x\in\Omega :\; x_i\geq \eps^{\frac{2}{3}+\delta}\text{ or } |x_i|<\eps^{\frac{2}{3}+\delta}, x_1\geq \eps^{\frac{2}{3}+\delta}\right\};
\end{equation*}
\begin{equation*}
\mathcal{C}_{\delta}:=\left\{x\in\Omega :\; x_1< \eps^{\frac{2}{3}+\delta} \text{ and } |x_i|<\eps^{\frac{2}{3}+\delta} \text{ for } i=2,3\right\};
\end{equation*}
\begin{equation*}
\mathcal{C}_{i,2\delta}:=\left\{x\in\Omega :\;|x_i|<\eps^{\frac{2}{3}+2\delta} \text{ and } x_1< \eps^{\frac{2}{3}+2\delta}\right\}.
\end{equation*}
Also Lemma \ref{lemma supsol} can be extended. We can prove using the geometrical setting above and the defined functions and sets the following Lemma.
\begin{lemma}\label{lemma supsol x}
	Assume $ p\in\Omega $, $ 0<\eps<1 $, $ L,\delta $ as indicated in Proposition \ref{supsol ou-u eps x}. Let $ x_i=\Rot_p(x)\cdot e_i $ for $ i=1,2,3 $. Let $ W^\pm_i $ as above. Then there exist a constant $ \alpha>0 $ depending only on $ \delta $ and a constant $ C>0 $ depending on $ \Omega $, $ g_\nu $, $ c_0 $ and $ c_1 $ but independent of $ \eps $ and $ p $ and suitable $ b>0 $ and $ L>0 $ such that for $ i=2,3 $
	\begin{numcases}{}
	W^\pm_i(x)\geq 0 & in $ \Omega $\label{Wpmlemma1x},\\
	W^\pm_i(x)\geq \frac{\pi}{2}-\arctan(2) & in $\mathcal{C}^\pm_{i}$\label{Wpmlemma2x},\\
	W^\pm_i(x)\leq C\eps^{\alpha} & in $\mathcal{C}_{i,2\delta}$\label{Wpmlemma4x},\\
	\LL_{\Omega}^\eps\left(W^{\pm}_i\right)(x)\geq-\frac{C}{\sqrt{L}}e^{-\frac{c_0\deps{x}}{8}} & in $\mathcal{C}_{\delta}$\label{Wpmlemma5x}.
	\end{numcases}
	\begin{proof}
	As in the proof of Lemma \ref{lemma supsol} it is enough to prove it for $ W=W^-_2 $. Again we consider $ \rho=\rho_2^- $, $ \theta=\theta_2^- $, $ F=F^-_2 $, $ G=G^-_2 $, $ H=H^-_2 $, $ \mathcal{C}_{2\delta}=\mathcal{C}_{2,2\delta} $ and $ \mathcal{C}=\mathcal{C}_{2} $. We only have to show claim \eqref{Wpmlemma5x}, since all other claims work exactly in the same way. The only thing that is needed is that $ \eps^{\frac{2}{3}+2\delta}=\eps^\delta \eps^{\frac{2}{3}+\delta}=\eps^{2\delta}\eps^{\frac{2}{3}} $. In this case we have $ \beta= \frac{1-12\delta}{3}>0 $ and $ \eps<\left(\frac{1}{L}\right)^{\frac{1}{\beta}} $.\\
	
	In order to prove \eqref{Wpmlemma5x} we follow the strategy of Lemma \ref{lemma supsol}. We expand hence by Taylor the function $ W $ putting together equations \eqref{taylor+error F}, \eqref{taylor+error H} and \eqref{taylor+error G}. Exactly as in Section 4.3 we consider for $ x\in\mathcal{C}_\delta $ the three cases $ \rho(x)<L\eps $, $ \rho(x)\geq L\eps $ with $ d(x)<\eps $ and $ \rho(x)\geq L\eps $ with $ d(x)\geq \eps $. For each of these situations the same estimates of the error term $ E^3(\eta,x) $ as in \eqref{error F rho small} holds. We substitute for $ W(\eta) $ in the formulation of $ \LL^\eps_\Omega(W)(x) $ the Taylor expansion. For all terms containing the first, second and third derivatives of $ W $ we argued in Lemma \ref{lemma supsol} by the symmetry of the kernel. In this case is not possible anymore. Hence, for that terms, we decompose the kernel $ K_\eps(x;\eta)=K^{\alpha(x)}_\eps(x-\eta)+R_\eps^x(x;\eta) $ according to claim (i) of Proposition \ref{prop K eps}. For $ K_\eps^{\alpha(x)} $ we use the same arguments as in Section 4. For the terms with the remainder $ R_\eps^x $ we estimate in a different way. We notice first of all that if $ x\in\mathcal{C}_\delta $, then \begin{equation}\label{2over3}
	\rho(x)\leq 2\eps^{\frac{2}{3}+\delta}<\eps^{\frac{2}{3}}
	\end{equation} taking $ \eps>0 $ sufficiently small. Since $ \rho>\frac{L}{2}\eps $ and $ \cos(\theta(x))\rho(x)\geq \frac{L}{2}\eps  $ we recall also that for $ n\geq 1 $ \begin{equation*}\begin{split}	
\eps^n\left|\nabla_x^nW(x)\right|\leq& C_F\frac{\eps^n}{\rho^n(x)}+ bC_H\frac{\eps^{n+2}}{\rho^{n+2}(x)}+ aC_G\frac{\eps}{L^{n-\frac{1}{2}}\eps^n\rho(x)} .
	\end{split}
	\end{equation*} This implies, using the estimates \eqref{R x eps} and \eqref{2over3} and $ \rho>\frac{L}{2}\eps $ that
	\begin{equation*}
	\begin{split}
	\left|\nabla_x^nW(x)\right|\int_{\RR^3} d\eta\; |R^x_\eps(x;\eta)||x-\eta|^n\leq& \frac{C(c_0,c_1)\eps^2}{\rho(x)}\left(\frac{C_F\eps^{n-1}}{\rho^{n-1}}+bC_H\frac{\eps^{n+1}}{\rho^{n+1}(x)}+\frac{aC_G}{L^{n-\frac{1}{2}}}\right)\\=& \frac{C(c_0,c_1)\eps^4}{\rho^4(x)}\left(\frac{C_F\eps^{n-1}}{\rho^{n-1}}+bC_H\frac{\eps^{n+1}}{\rho^{n+1}(x)}+\frac{aC_G}{L^{n-\frac{1}{2}}}\right)\eps \left(\frac{\rho(x)}{\eps}\right)^3\\\leq& \frac{C(c_0,c_1)\eps^4}{\rho^4(x)}\left(\frac{C_F\eps^{n-1}}{\rho^{n-1}}+bC_H\frac{\eps^{n+1}}{\rho^{n+1}(x)}+\frac{aC_G}{L^{n-\frac{1}{2}}}\right)\\\leq& \frac{C(c_0,c_1)\eps^4}{\rho^4(x)}\left(C_F+\frac{bC_H}{L^2}+\frac{aC_G}{\sqrt{L}}\right).
	\end{split}
	\end{equation*}
	Hence, since $ \frac{\eps^4}{\rho^4(x)}= \left(\frac{\eps}{\rho(x)}\right)^{\frac{5}{2}} \left(\frac{\eps}{\rho(x)}\right)^{\frac{3}{2}}$ all arguments and estimates we had in the proof of Lemma \ref{lemma supsol} for the first case when $ \rho(x)<L\eps $ can be obtained also for this function $ W $ and for $ \alpha\in C^3\left(\Omega\right) $. For the other two cases, when $ \rho(x)\geq L\eps $ with $ d(x)<\eps $ or $ d(x)\geq \eps $ we need also to add to the assumption $ a>b $ the assumption $ a=2b $ (or even $ a=b+1 $). Then the term coming from the integral
	\begin{equation*}
	-\left|\nabla_x^nG(x)\right|\int_{\RR^3} d\eta \;|R^x_\eps(x;\eta)||x-\eta|^n\geq - \frac{C(c_0,c_1)2b C_G\eps^4}{\sqrt{L}\rho^4(x)}
	\end{equation*}
	can be always absorbed taking $ L $ large enough by the term coming from the Laplacian of $ H $ when integrating the second derivative term of the Taylor expansion with the kernel $ K^{\alpha(x)}_\eps $, i.e. it is absorbed by $ \frac{2b}{3\alpha^2(x)}\frac{\eps^4}{\rho^4(x)}\geq  \frac{2b}{3c_1^2}\frac{\eps^4}{\rho^4(x)} $. The remaining arguments and computation are similar to the one in the proof of Lemma \ref{lemma supsol}.
	 We thus refer to the that proof which implies Lemma \ref{lemma supsol x}.
	\end{proof}
\end{lemma}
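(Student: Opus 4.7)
The plan is to mirror the structure of Lemma \ref{lemma supsol} in the constant-coefficient setting, taking advantage of symmetry to reduce to $W = W^-_2$. Claims \eqref{Wpmlemma1x}, \eqref{Wpmlemma2x}, and \eqref{Wpmlemma4x} are purely algebraic/geometric inequalities on the components $F = F^-_2$, $G = G^-_2$, $H = H^-_2$ and the functional form of $W$: they follow verbatim from the arguments of Lemma \ref{lemma supsol}, once one systematically replaces the scale $\varepsilon^{1/2+2\delta}$ by $\varepsilon^{2/3+\delta}$ (and $\varepsilon^{1/2+4\delta}$ by $\varepsilon^{2/3+2\delta}$) in the definitions of $\rho$, $\theta$, and the regions $\mathcal{C}^\pm_i$, $\mathcal{C}_{i,2\delta}$. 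In particular, the key bound $\rho \geq L\varepsilon/2$, the identity $\cos(\theta)\rho = x_1 + L\varepsilon/2$, the positivity of $F + G + H$ for $a > b$, and the upper bound on $F + G$ via the Taylor tail $\pi/2 + \arctan(y) \sim |y|^{-1}$ all carry over unchanged.

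The core of the work is claim \eqref{Wpmlemma5x}. My approach is to decompose $\mathcal{C}_\delta$ into three regions as in Lemma \ref{lemma supsol}, namely $\{\rho(x) < L\varepsilon\}$, $\{\rho(x) \geq L\varepsilon,\; d(x) < \varepsilon\}$, $\{\rho(x) \geq L\varepsilon,\; d(x) \geq \varepsilon\}$, and in each region apply Taylor expansion of $W$ up to third order for $F$ and $H$ and second order for $G$ (the latter having a singular third derivative governed by $\cos^{-5/2}(\theta)\rho^{-7/2}$). The new ingredient, relative to Lemma \ref{lemma supsol}, is that we cannot use pure symmetry of the kernel. Instead I will use the decomposition $K_\varepsilon(x;\eta) = K^{\alpha(x)}_\varepsilon(x - \eta) + R^x_\varepsilon(x;\eta)$ from Proposition \ref{prop K eps}(i): the symmetric part $K^{\alpha(x)}_\varepsilon$ produces the same positive contributions as in the constant-coefficient case (harmonicity of $F$, super-harmonicity of $H$ and $G$ via \eqref{Laplacian H}, \eqref{Laplacian G}, with $\alpha(x)$ appearing as a bounded prefactor between $c_0$ and $c_1$), while the remainder $R^x_\varepsilon$ is handled by the integral bound $\int |R^x_\varepsilon(x;\eta)| |x-\eta|^n \, d\eta \leq C(c_0, c_1, n)\varepsilon^{n+1}$, which I will establish by splitting at $|x - \eta| = \sqrt{\varepsilon}$ and using Proposition \ref{prop K eps}(i) together with $|y|^m e^{-|y|} \leq C_m$.

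The main obstacle, and the reason the enlarged scale $\varepsilon^{2/3+\delta}$ (rather than $\varepsilon^{1/2+2\delta}$) is used, lies in matching the remainder contributions to the positive Laplacian term. Combining the derivative bounds on $W$ with the remainder estimate above yields
\[
\varepsilon^n |\nabla^n_x W(x)| \int_{\RR^3} |R^x_\varepsilon(x;\eta)||x-\eta|^n \, d\eta \;\leq\; \frac{C\varepsilon^2}{\rho(x)}\Bigl(C_F + \frac{bC_H}{L^2} + \frac{aC_G}{\sqrt{L}}\Bigr),
\]
and the crucial point is that in $\mathcal{C}_\delta$ one has $\rho(x) \leq 2\varepsilon^{2/3+\delta} < \varepsilon^{2/3}$, so $\varepsilon^2/\rho(x) = (\varepsilon^4/\rho^4(x))\cdot(\rho(x)/\varepsilon)^3 \leq \varepsilon^4/\rho^4(x)$. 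This is exactly the scale of the positive contribution coming from the Laplacian of $H$, which equals $\frac{2b}{3\alpha^2(x)}\frac{\varepsilon^4}{\rho^4(x)} \geq \frac{2b}{3c_1^2}\frac{\varepsilon^4}{\rho^4(x)}$. Hence for $L$ large enough and $b$ chosen as in Lemma \ref{lemma supsol}, the remainder can be absorbed, recovering the bound $\mathcal{L}^\varepsilon_\Omega(W) \geq -\frac{C}{\sqrt L}e^{-c_0 d_\varepsilon(x)/8}$.

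The hardest sub-case is $\rho(x) \geq L\varepsilon$ (both with $d < \varepsilon$ and $d \geq \varepsilon$), because here the natural margin in the $G$-contribution from Lemma \ref{lemma supsol} is already delicate, and the new $R^x_\varepsilon$-term produces an additional $\frac{aC_G\varepsilon^4}{\sqrt{L}\rho^4}$ from the $\nabla G$ integral, which is of the same order as, not smaller than, the positive Laplacian contribution of $G$. I expect this will force strengthening the relation $a > b$ to $a = 2b$ (equivalently $a = b + 1$), so that the $H$-Laplacian term $\frac{2b}{3c_1^2}\frac{\varepsilon^4}{\rho^4}$ has strictly enough mass to absorb \emph{both} the classical $C_F$ term and this new $\frac{aC_G}{\sqrt L}\frac{\varepsilon^4}{\rho^4}$ remainder. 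With this adjustment and $L$ chosen sufficiently large (depending on $c_0$, $c_1$, $C_F$, $C_G$, $C_H$), the rest of the estimates of Lemma \ref{lemma supsol} go through unchanged, concluding the proof.
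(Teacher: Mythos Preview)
Your proposal is correct and follows essentially the same approach as the paper: reduction to $W=W^-_2$, the observation that \eqref{Wpmlemma1x}--\eqref{Wpmlemma4x} carry over verbatim under the scale replacement, the three-region decomposition of $\mathcal{C}_\delta$, the kernel splitting $K_\eps=K^{\alpha(x)}_\eps+R^x_\eps$ from Proposition~\ref{prop K eps}(i), the key bound $\rho(x)<\eps^{2/3}$ yielding $\eps^2/\rho\le\eps^4/\rho^4$, and the strengthening $a=2b$ so that the $H$-Laplacian term absorbs the extra $\frac{aC_G}{\sqrt L}\frac{\eps^4}{\rho^4}$ remainder in the cases $\rho\ge L\eps$. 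The paper's proof contains exactly these ingredients and no additional ideas.
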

Arguing as in Section 4.3, Lemma \ref{lemma supsol x} implies now Proposition \ref{L ou-u x} for the supersolution $ W_{\eps,L} $ as given in equation \eqref{Welx}. In this case we have to use Theorem \ref{interior supsol x} instead of Theorem \ref{interior supsol}.
\subsection{Rigorous proof of the convergence of $ u^\eps $ to the solution of the new boundary value problem}
We are now ready conclude the proof of the convergence of $ u^\eps $ to the function $ v $, solution of the boundary value problem
\begin{equation}\label{final bvp x}
\begin{cases}
-\Div\left(\frac{1}{\alpha(x)}\nabla_x v(x)\right) =0 & x\in\Omega,\\v(p)=\ou_\infty(p) & p\in\bnd.
\end{cases}
\end{equation}
To this end, we generalize Section 4.4 for the case $ \alpha\in C^3\left(\overline{\Omega}\right) $. Again, we decompose $ \Omega $ in new regions. In this case though, their distance from the boundary will be of the order $ \eps^{\frac{2}{3}} $. Since the results in this last section are analogous to those we obtained in Section 4.4 we use the same notation. Hence, we define in this case $ \hat{\Omega}_\eps:=\left\{x\in\Omega: d(x)>\eps^{\frac{2}{3}+2\delta}\right\} $, $ \Sigma_\eps:=\left\{x\in\Omega: \eps^{\frac{2}{3}+4\delta}<d(x)\leq\eps^{\frac{2}{3}+2\delta}\right\} $ and their union $ \Omega_\eps=\hat{\Omega}_\eps\cup\Sigma_\eps $. We also define for $ 0<\sigma<1 $ sufficiently small independent of $ \eps $ the set $ \Omega^\sigma:=\Omega\cup\left\{x\in\Omega^c: d(x)<\sigma\right\} $. Recall the continuous projection $ \pi_{\bnd} $ as given in \eqref{proj} and the estimate $ \left|\Rot_{\pi_{\bnd}(x)}(x)\cdot e_1\right|\geq d(x) $ as we have seen in \eqref{Rotpx distance}. Then as in Lemma \ref{estimate in sigma  eps} we can prove the following result.
\begin{lemma}\label{estimate in sigma eps x}
Let $ 0<\eps<1 $ sufficiently small, $ C,\; \alpha,\; L,\; 0<\delta<\frac{1}{12} $ according to Corollary \ref{good estimatex}. Then 
\begin{equation*}\label{estimate sigma eps x}
\sup_{x\in\Sigma_\eps}\left|\ou_\infty\left(\pi_{\bnd}(x)\right)-u^\eps(x)\right|\leq C\left(\eps^\alpha+\frac{1}{\sqrt{L}}\right)+C(c_0,c_1) \eps^{\frac{1}{3}-4\delta}.
\end{equation*}
\begin{proof}
We combine the estimate in \eqref{Rotpx distance} with Lemma \ref{exp rate}, we use the Corollary \ref{good estimatex}, the fact that $ 0<\delta<\frac{1}{12} $ and the following estimate
\begin{equation*}
\sup_{x\in\Sigma_\eps}\left|\ou_\infty\left(\pi_{\bnd}(x)\right)-u^\eps(x)\right|\leq \sup_{x\in\Sigma_\eps}\left|\ou_\infty\left(\pi_{\bnd}(x)\right)-\oU_\eps\left(x,\pi_{\bnd}(x)\right)\right|+\sup_{x\in\Sigma_\eps}\left|\oU_\eps\left(x,\pi_{\bnd}(x)\right)-u^\eps(x)\right|.
\end{equation*}
\end{proof}
\end{lemma}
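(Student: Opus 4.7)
The plan is to insert the intermediate quantity $\oU_\eps(x,\pi_{\bnd}(x))$ via the triangle inequality, so that
\[
\left|\ou_\infty(\pi_{\bnd}(x))-u^\eps(x)\right|\;\leq\;\left|\oU_\eps(x,\pi_{\bnd}(x))-u^\eps(x)\right|\;+\;\left|\ou_\infty(\pi_{\bnd}(x))-\oU_\eps(x,\pi_{\bnd}(x))\right|,
\]
and then estimate the two pieces by appealing to Corollary \ref{good estimatex} and to the exponential decay rate of $\ou(y,p)$ established in Corollary \ref{rate} (and Lemma \ref{exp rate}). No new analytic machinery should be needed; the point is simply to verify that the geometric conditions of the earlier results are met on the strip $\Sigma_\eps$.

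For the first piece, recall that if $x\in\Sigma_\eps$ then $|x-\pi_{\bnd}(x)|=d(x)\leq \eps^{2/3+2\delta}$, so Corollary \ref{good estimatex} applies with the base point $p=\pi_{\bnd}(x)\in\bnd$ and yields
\[
\left|\oU_\eps(x,\pi_{\bnd}(x))-u^\eps(x)\right|\;\leq\;C\left(\eps^\alpha+\frac{1}{\sqrt{L}}\right).
\]
For the second piece, use the definition $\oU_\eps(x,p)=\ou(\alpha(p)\Rot_p(x)\cdot e_1/\eps,p)$ together with the geometric lower bound $|\Rot_{\pi_{\bnd}(x)}(x)\cdot e_1|\geq d(x)$ from \eqref{Rotpx distance} and the uniform lower bound $\alpha\geq c_0$. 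Corollary \ref{rate} (whose constant is independent of the boundary point) gives
\[
\left|\ou_\infty(\pi_{\bnd}(x))-\oU_\eps(x,\pi_{\bnd}(x))\right|\;\leq\;C\exp\!\left(-\tfrac{1}{2}\,\tfrac{\alpha(\pi_{\bnd}(x))\,|\Rot_{\pi_{\bnd}(x)}(x)\cdot e_1|}{\eps}\right)\;\leq\;C\exp\!\left(-\tfrac{c_0\, d(x)}{2\eps}\right).
\]

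Finally, since $x\in\Sigma_\eps$ implies $d(x)>\eps^{2/3+4\delta}$, the argument of the exponential is bounded below by $\tfrac{c_0}{2}\,\eps^{-(1/3-4\delta)}$, and here $1/3-4\delta>0$ thanks to the assumption $\delta<\tfrac{1}{12}$. Using the trivial inequality $e^{-t}\leq 1/t$ for $t>0$ we obtain, for all sufficiently small $\eps$,
\[
C\exp\!\left(-\tfrac{c_0}{2}\eps^{-(1/3-4\delta)}\right)\;\leq\;\tfrac{2C}{c_0}\,\eps^{\,1/3-4\delta}\;=\;C(c_0,c_1)\,\eps^{\,1/3-4\delta},
\]
which combined with the estimate of the first piece yields the claim. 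The proof is essentially a bookkeeping exercise; the only (mild) subtlety is checking that the exponent $1/3-4\delta$ is positive, which is exactly the role of the hypothesis $\delta<1/12$.
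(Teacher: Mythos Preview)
Your proof is correct and follows exactly the approach of the paper: triangle inequality via the intermediate term $\oU_\eps(x,\pi_{\bnd}(x))$, then Corollary~\ref{good estimatex} for one piece and the uniform exponential rate (Corollary~\ref{rate}/Lemma~\ref{exp rate}) together with the geometric bound \eqref{Rotpx distance} for the other. Your write-up simply spells out the details that the paper leaves implicit, including the use of $e^{-t}\le 1/t$ to convert the exponential tail into the stated power $\eps^{1/3-4\delta}$.
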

Similarly to Section 4.4 we consider the function $ v_\sigma $, solution to the boundary value problem
\begin{equation}\label{bvp sigma x}
\begin{cases}
-\Div\left(\frac{1}{\overline{\alpha}(x)}\nabla_x v_\sigma(x)\right) =0 & x\in\Omega^\sigma,\\v(x)=\ou_\infty(\pi_{\bnd}(x)) & x\in\bnd^\sigma,
\end{cases}
\end{equation}
where we consider the smooth extension of $ \alpha $ as defined at the beginning of Section 5.1. Since $ \overline{\alpha}\in C^3(\RR^3) $, $ \pi_{\bnd} $ is a continuous bijection and $ \ou_\infty $ is Lipschitz, the theory on elliptic regularity assures that $ v_\sigma $ uniquely exists and it is also three times continuously differentiable, i.e. $ v_\sigma\in C^3\left(\Omega^\sigma\right)\cap C\left(\overline{\Omega^\sigma}\right) $. For the same reason also the function $ v $ defined in \eqref{final bvp x} belongs to $ C^3(\Omega)\cap C(\overline{\Omega}) $. We denote again by $ \omega $ the modulus of continuity of $ v_\sigma $ and by $ \omega_2 $ the one of $ v $. Moreover, the elliptic equation satisfies the maximum principle, hence for all $ x\in\Omega $ we can estimate $$ \left|v(x)-v_\sigma(x)\right|\leq \max\limits_{x\in\bnd^\sigma}\left|v_\sigma\left(\pi_{\bnd}(x)\right)-v_\sigma(x)\right|\leq \omega(\sigma) .$$
We can now prove a suitable new version of Lemma \ref{estimate op vsigma-ueps}. 
\begin{lemma}\label{estimate op vsigma-ueps x}
Let $ x\in\hat{\Omega}_\eps $, $ 0<\delta<\frac{1}{12} $ and $ \beta=\frac{1-6\delta}{3}>0 $. Then
\begin{equation*}\label{op vsigma-ueps x}
\left|\LL_{\Omega}^\eps\left(v_\sigma-u^\eps\right)(x)\right|\leq C(\Omega, g_\nu, c_0,c_1) e^{-\frac{c_0d(x)}{2\eps}}\left(\eps^\beta+C_\sigma\eps\right)+ C(\Omega, \sigma, c_0,c_1)\eps^3,
\end{equation*}
for some constants $  C(\Omega, g_\nu, c_0,c_1)>0 $ and $ C(\Omega, \sigma, c_0,c_1) $ and $ 0<\eps<1 $ sufficiently small.
\begin{proof}
Since $ x\in\hat{\Omega}_\eps $ then $ d(x)>\eps^{\frac{2}{3}+2\delta} $. Hence, as we have seen at the beginning of Section 5.3 we obtain for $ \beta $ as in the Lemma
\begin{equation*}
\left|\LL_{\Omega}^\eps\left(u^\eps\right)(x)\right|\leq \Arrowvert g\Arrowvert_1 e^{-\frac{c_0d}{\eps}}\leq C(g_\nu, c_0) e^{-\frac{c_0d}{2\eps}} \eps^\beta.
\end{equation*}
We consider now the operator acting on $ v_\sigma $. Since $\Omega \subsetneq \Omega^\sigma $ there exists a constant $ c_\sigma>0 $ depending on $ v_\sigma $ such that $ \sup\limits_{0\leq n\leq3}\sup\limits_{x\in\Omega} \Arrowvert \nabla_x^n v_\sigma \Arrowvert_{\infty}\leq c_\sigma $. We expand $ v_\sigma $ in $ \Omega $ with Taylor as $ v_\sigma(\eta)=v_\sigma(x)+\nabla_x v_\sigma(x)\cdot (\eta-x)+\frac{1}{2}(\eta-x)^\top\nabla_x^2 v_\sigma(x)(\eta-x)+E^3(\eta,x)$, where $ \left|E^3(\eta,x)\right|\leq c_\sigma |x-\eta|^3 $. We want to use the expansion of $ v_\sigma $ together with the fact that this function solves the elliptic equation as given in \eqref{bvp sigma x}. In the case of constant coefficient, the estimate on the operator was the result of the symmetry of the kernel. Indeed, the integral in $ \RR^3 $ of the term with the first derivative in the Taylor expansion was zero and for the term with the second derivative we obtained the Laplacian, which was in that case also zero. As we have already noticed, when the absorption coefficient $ \alpha $ is space dependent the kernel $ K_\eps $ is no longer symmetric and moreover $ v_\sigma $ solves a more general elliptic equation. 

Our strategy now is to find a decomposition of the kernel $ K_\eps $ in such a way that we can recover the elliptic equation \eqref{bvp sigma x} and with a remainder which gives errors of the order $ \eps^3 $. This decomposition is given by claim (iii) in Proposition \ref{prop K eps}. We recall $ K_\eps(x;\eta)=  K^{\alpha(x)}_\eps(x-\eta)+\mathcal{K}_\eps^1(x-\eta)+\mathcal{K}^2_\eps(x-\eta)+\tilde{R}^x_\eps(x;\eta)$. Analogously as we have computed in \eqref{R x eps} we see in this case
\begin{equation}\label{R3 x eps}
\int_{\RR^3} \tilde{R}^x_\eps(x;\eta) |x-\eta|^n \;d\eta\leq C(c_0,c_1)\eps^{n+2}.
\end{equation}
With this decomposition we recover the elliptic equation. Indeed, using equation \eqref{laplacian taylor} we compute
\begin{equation}\label{2term}
\int_{\RR^3}d\eta \left(K^{\alpha(x)}_\eps(x-\eta)+\mathcal{K}_\eps^1(x-\eta)+\mathcal{K}^2_\eps(x-\eta)\right)\frac{1}{2}(\eta-x)^\top\nabla_x^2 v_\sigma(x)(\eta-x)=\frac{\eps^2}{3\alpha^2(x)}\Delta v_\sigma(x),
\end{equation}
where we used that $ \mathcal{K}_\eps^1 $ and $ \mathcal{K}_\eps^2 $ are antisymmetric while $ (\eta-x)^\top\nabla_x^2 v_\sigma(x)(\eta-x) $ is symmetric.

Before moving to the term containing $ \nabla_x v_\sigma(x)\cdot (\eta-x) $ we notice that for any symmetric function $ F(x-\eta) $ we can compute
\begin{equation}\label{1term1}
\begin{split}
\int_{\RR^3}d\eta\;& F(x-\eta) \left(\nabla_x \alpha(x)\cdot (\eta-x)\right)\left(\nabla_x v_\sigma(x)\cdot (\eta-x)\right)\\=&
\sum_{i\ne j}\partial_i\alpha(x)\partial_jv_\sigma(x)\int_{\RR^3}d\eta\; F(x-\eta) (\eta-x)_i(\eta-x)_j\\&+\sum_{i=1}^3 \partial_i\alpha(x)\partial_iv_\sigma(x)\int_{\RR^3}d\eta\; F(x-\eta)(\eta-x)_i^2\\=&\frac{1}{3}\nabla_x\alpha(x)\cdot \nabla_xv_\sigma(x)\int_{\RR^3}d\eta\; F(x-\eta)|\eta-x|^2,
\end{split}
\end{equation}
where we used the symmetry of $ F $. Hence, since $ K^{\alpha(x)}_\eps $ is symmetric using the definition of $ \mathcal{K}_\eps^1 $ and $ \mathcal{K}_\eps^2 $ and equation \eqref{1term1} we conclude

\begin{equation}\label{1term2}
\begin{split}
\int_{\RR^3}d\eta\;& \left(K^{\alpha(x)}_\eps(x-\eta)+\mathcal{K}_\eps^1(x-\eta)+\mathcal{K}^2_\eps(x-\eta)\right)\left(\nabla_x v_\sigma(x)\cdot (\eta-x)\right)\\=&
\int_{\RR^3}d\eta\; \frac{\alpha(x)e^{-\frac{\alpha(x)|x-\eta|}{\eps}}}{4\pi\eps|x-\eta|^2}\left(\frac{1}{\alpha(x)}-\frac{|x-\eta|}{2\eps}\right)\left(\nabla_x \alpha(x)\cdot (\eta-x)\right)\left(\nabla_x v_\sigma(x)\cdot (\eta-x)\right)\\=&\frac{1}{3}\nabla_x\alpha(x)\cdot \nabla_xv_\sigma(x)\int_{\RR^3}d\eta\; \frac{\alpha(x)e^{-\frac{\alpha(x)|x-\eta|}{\eps}}}{4\pi\eps|x-\eta|^2}\left(\frac{1}{\alpha(x)}-\frac{|x-\eta|}{2\eps}\right)|\eta-x|^2\\=&\frac{\eps^2}{3\alpha^3(x)}\nabla_x\alpha(x)\cdot \nabla_xv_\sigma(x)\int_{\RR^3}d\eta\; \frac{e^{-|\eta|}}{4\pi|\eta|^2}\left(1-\frac{|\eta|}{2}\right)|\eta|^2=-\frac{\eps^2}{3\alpha^3(x)}\nabla_x\alpha(x)\cdot \nabla_xv_\sigma(x).
\end{split}
\end{equation}
Hence, equations \eqref{2term} and \eqref{1term2} imply 
\begin{equation}\label{1+2term}
\begin{split}
\int_{\RR^3}d\eta\;& \left(K^{\alpha(x)}_\eps(x-\eta)+\mathcal{K}_\eps^1(x-\eta)+\mathcal{K}^2_\eps(x-\eta)\right)\left(\nabla_x v_\sigma(x)\cdot (\eta-x)+\frac{1}{2}(\eta-x)^\top\nabla_x^2 v_\sigma(x)(\eta-x)\right)\\=&\frac{\eps^2}{3\alpha(x)}\left(\frac{\Delta v_\sigma(x)}{\alpha(x)}-\frac{\nabla_x\alpha(x)}{\alpha(^2x)}\cdot \nabla_xv_\sigma(x)\right)=\frac{\eps^2}{3\alpha(x)}\Div\left(\frac{1}{\alpha(x)}\nabla_xv_\sigma(x)\right)=0.
\end{split}
\end{equation}
We are ready now to conclude the estimate of the operator acting on $ v_\sigma $. Using indeed that $ |x-\eta|>d(x) $ for $\eta\in \Omega^c $ we compute 
\begin{equation*}
\begin{split}
\left|\LL_\Omega^\eps\right.&\left.\left(v_\sigma\right)(x)\right|\leq \left|v_\sigma(x)-\int_\Omega \Epskern{\eta-x}\left[v_\sigma(x)+\nabla_x v_\sigma(x)\cdot (\eta-x)+\frac{1}{2}(\eta-x)^\top\nabla_x^2 v_\sigma(x)(\eta-x)\right]\;d\eta\right|\\
&+\left|\int_\Omega \Epskern{\eta-x} E^3(\eta,x)\;d\eta\right|\\\leq & v_\sigma(x)\int_{B^c_{d(x)}(x)}\Epskern{\eta-x}\;d\eta+C(c_\sigma)\int_{\RR^3}d\eta\;\left|\tilde{R}^x_\eps(x;\eta)\right|\left(|x-\eta|+|x-\eta|^2\right)\\&+ C(c_0,c_1,\sigma)(\eps+\eps^2+\eps^3)\int_{B^c_{d_\eps(x)}(0)}\frac{e^{-|y|}}{4\pi |y|^2}\left(|y|+|y|^2+|y|^3+|y|^4\right)\;dy\\&+ C(c_0,c_1,\sigma)\eps^3\int_{\RR^3}\frac{e^{-\frac{|y|}{\eps}}}{4\pi \eps^3}\frac{|y|}{\eps}\;dy\leq C(c_0,c_1,\sigma,\Omega)e^{-\frac{c_0d(x)}{2\eps}}\left(\eps^\beta+\eps\right)+ C(c_0,c_1,\sigma)\eps^3,
\end{split}
\end{equation*}
where we first used $ d(x)>\eps^{\frac{2}{3}+2\delta} $, then we decomposed the kernel according to claim (iii) in Proposition \ref{prop K eps}, we applied the result in \eqref{1+2term}, the estimate for the remainder $ \tilde{R}^x_\eps $ as given in \eqref{R3 x eps} and finally the estimate $ K_\eps\leq C(c_0,c_1)K^{c_0}_\eps $. This ended the proof of Lemma \ref{estimate op vsigma-ueps x}.
\end{proof}
\end{lemma}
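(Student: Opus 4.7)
The plan is to write $\LL^\eps_\Omega(v_\sigma-u^\eps)(x)=\LL^\eps_\Omega(v_\sigma)(x)-\LL^\eps_\Omega(u^\eps)(x)$ and treat the two pieces separately. The second piece is easy: from the characteristic formula \eqref{u x} one has $|\LL^\eps_\Omega(u^\eps)(x)|\le\|g\|_1 e^{-c_0 d(x)/\eps}$, and since $x\in\hat\Omega_\eps$ means $d(x)>\eps^{2/3+2\delta}$, one factor $e^{-c_0 d(x)/(2\eps)}$ can be traded for $\eps^\beta$ with $\beta=(1-6\delta)/3>0$.

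The real work is $\LL^\eps_\Omega(v_\sigma)(x)$, and the whole idea is to use the ellipticity \eqref{bvp sigma x} of $v_\sigma$. I would Taylor expand $v_\sigma(\eta)=v_\sigma(x)+\nabla v_\sigma(x)\cdot(\eta-x)+\tfrac12(\eta-x)^\top\nabla^2 v_\sigma(x)(\eta-x)+E^3(\eta,x)$ with $|E^3(\eta,x)|\le c_\sigma|\eta-x|^3$ (here $c_\sigma$ controls $\|v_\sigma\|_{C^3(\overline\Omega)}$, which is finite because $\overline\Omega\Subset\Omega^\sigma$ and $\alpha\in C^3$ is elliptic). Then I would insert into the integral against $K_\eps(x;\eta)$ the refined decomposition from claim (iii) of Proposition \ref{prop K eps},
\begin{equation*}
K_\eps(x;\eta)=K^{\alpha(x)}_\eps(x-\eta)+\mathcal{K}^1_\eps(x-\eta)+\mathcal{K}^2_\eps(x-\eta)+\tilde R^x_\eps(x;\eta).
\end{equation*}

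The crucial calculation, to be carried out by direct integration using spherical symmetry of $K^{\alpha(x)}_\eps$ and antisymmetry of $\mathcal{K}^{1,2}_\eps$ in $\eta-x$, is the identity
\begin{equation*}
\int_{\RR^3}\bigl(K^{\alpha(x)}_\eps+\mathcal{K}^1_\eps+\mathcal{K}^2_\eps\bigr)(x-\eta)\Bigl[\nabla v_\sigma(x)\!\cdot\!(\eta-x)+\tfrac12(\eta-x)^\top\nabla^2 v_\sigma(x)(\eta-x)\Bigr]d\eta=\frac{\eps^2}{3\alpha(x)}\Div\!\Bigl(\tfrac{1}{\alpha}\nabla v_\sigma\Bigr)(x),
\end{equation*}
which vanishes by \eqref{bvp sigma x}. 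The symmetric kernel $K^{\alpha(x)}_\eps$ kills the first-order term and produces $\frac{\eps^2}{3\alpha(x)^2}\Delta v_\sigma(x)$ from the quadratic term, while the odd corrections $\mathcal{K}^1_\eps,\mathcal{K}^2_\eps$ integrated against the quadratic form yield exactly the missing $-\frac{\eps^2}{3\alpha(x)^3}\nabla\alpha(x)\!\cdot\!\nabla v_\sigma(x)$ needed to complete the divergence. What remains to estimate are: the $0$-th order contribution $v_\sigma(x)\int_{\Omega^c}K_\eps(x;\eta)d\eta$, bounded by $C e^{-c_0 d(x)/\eps}$; the truncation of the above key identity to $\Omega$, which by the same exponential decay is also $\le C e^{-c_0 d(x)/\eps}$ (with a constant depending on $c_\sigma$, hence on $\sigma$); the integrals of the remainder $\tilde R^x_\eps$ against the first- and second-order Taylor terms, controlled by $C(c_0,c_1)(\eps^3+\eps^4)$ thanks to the bounds in \eqref{remainder2}; and finally the error $E^3(\eta,x)$ integrated against $K_\eps$, bounded by $c_\sigma\eps^3\int e^{-c_0 r}r^3 dr\le C(\sigma,c_0,c_1)\eps^3$.

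The main obstacle I expect is verifying the divergence-form cancellation cleanly, since one has to combine an odd-kernel$\times$quadratic-form integral with an even-kernel$\times$quadratic-form integral, making a dimensional/angular computation unavoidable; however, the functions $\mathcal{K}^1_\eps,\mathcal{K}^2_\eps$ in Proposition \ref{prop K eps} were constructed precisely so that their moments of order two reproduce the $\alpha^{-1}\nabla\alpha$ coefficient of the formal diffusion limit. Once this cancellation is in hand, packaging the exponential decay (using $d(x)>\eps^{2/3+2\delta}$ to convert $e^{-c_0 d(x)/\eps}$ into $\eps^\beta e^{-c_0 d(x)/(2\eps)}$) and collecting the $\eps^3$-type errors immediately gives the stated estimate with a leading $C(\Omega,g_\nu,c_0,c_1)e^{-c_0 d(x)/(2\eps)}(\eps^\beta+C_\sigma\eps)$ plus a $C(\Omega,\sigma,c_0,c_1)\eps^3$ remainder.
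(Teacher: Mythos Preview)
Your proposal follows essentially the same route as the paper: split into $\LL^\eps_\Omega(v_\sigma)$ and $\LL^\eps_\Omega(u^\eps)$, Taylor-expand $v_\sigma$ to third order, insert the refined decomposition (iii) of Proposition~\ref{prop K eps}, and verify that the $(K^{\alpha(x)}_\eps+\mathcal{K}^1_\eps+\mathcal{K}^2_\eps)$-integral against the first two Taylor terms reproduces $\tfrac{\eps^2}{3\alpha}\Div(\alpha^{-1}\nabla v_\sigma)=0$, leaving only exponentially small truncation errors, the $\tilde R^x_\eps$-moments of order $\eps^{n+2}$, and the cubic Taylor remainder of order $\eps^3$.

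One slip in your explanation: the cross term $-\tfrac{\eps^2}{3\alpha^3}\nabla\alpha\cdot\nabla v_\sigma$ does \emph{not} come from the odd kernels $\mathcal{K}^1_\eps,\mathcal{K}^2_\eps$ integrated against the quadratic form---an odd kernel against an even integrand vanishes on $\RR^3$. It comes from $\mathcal{K}^1_\eps,\mathcal{K}^2_\eps$ integrated against the \emph{linear} term $\nabla v_\sigma(x)\cdot(\eta-x)$, which is odd~$\times$~odd~$=$~even; the paper carries this out explicitly in \eqref{1term1}--\eqref{1term2}. With that correction your outline matches the paper's proof.
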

Similarly as in Lemma \ref{estimate sigma eps}, Lemma \ref{estimate in sigma eps x}, the maximum principle for elliptic operators and the uniform continuity of $ v $ imply the following result
\begin{lemma}\label{sigma eps x}
Let $ x\in\Sigma_\eps $ and $ \eps>0 $ small enough. Then the following uniform bound holds
\begin{equation*}
\left|v_\sigma(x)-u^\eps(x)\right|\leq \omega(\sigma)+\omega_2\left(\eps^{\frac{2}{3}+2\delta}\right)+ C\left(\eps^\alpha+\frac{1}{\sqrt{L}}\right)+C(c_0,c_1) \eps^{\frac{1}{3}-4\delta}.
\end{equation*}
\end{lemma}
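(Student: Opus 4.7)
The plan is to prove this via a direct triangle-inequality decomposition, paralleling exactly the argument used in Lemma \ref{estimate sigmaeps} of Section 4.4. Specifically, for $x \in \Sigma_\eps$ I would write
\begin{equation*}
\left|v_\sigma(x)-u^\eps(x)\right| \leq \left|v_\sigma(x)-v(x)\right| + \left|v(x)-\ou_\infty(\pi_{\bnd}(x))\right| + \left|\ou_\infty(\pi_{\bnd}(x))-u^\eps(x)\right|,
\end{equation*}
and control each of the three pieces separately.

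For the first term, I would invoke the maximum principle for the elliptic operator $-\Div(\overline{\alpha}^{-1}\nabla_x \cdot)$ applied to the difference $v_\sigma - v$ on $\Omega$. Since $v$ and $v_\sigma$ both solve the same elliptic equation on $\Omega$ (because $\overline{\alpha}|_\Omega = \alpha$), their difference is a solution of the homogeneous equation and therefore attains its extrema on $\bnd$. For any $x \in \bnd$ the bijection $\pi_{\bnd}: \bnd^\sigma \to \bnd$ applied as in \eqref{v-vsigma} gives $|v_\sigma(x) - v(x)| = |v_\sigma(\pi_{\bnd}(y)) - v_\sigma(y)| \leq \omega(\sigma)$ for the appropriate $y \in \bnd^\sigma$ with $|y - \pi_{\bnd}(y)| = \sigma$, yielding the bound $\omega(\sigma)$ uniformly in $\Omega$.

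For the second term, I would use that $v \in C(\overline{\Omega})$ with modulus of continuity $\omega_2$ and boundary value $v(p) = \ou_\infty(p)$ for $p \in \bnd$. Since $x \in \Sigma_\eps$ we have $d(x) \leq \eps^{\frac{2}{3}+2\delta}$, hence $|x - \pi_{\bnd}(x)| = d(x) \leq \eps^{\frac{2}{3}+2\delta}$, so
\begin{equation*}
\left|v(x) - \ou_\infty(\pi_{\bnd}(x))\right| = \left|v(x) - v(\pi_{\bnd}(x))\right| \leq \omega_2\!\left(\eps^{\frac{2}{3}+2\delta}\right).
\end{equation*}

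For the third term I would directly quote Lemma \ref{estimate in sigma eps x}, which gives exactly the bound $C(\eps^\alpha + 1/\sqrt{L}) + C(c_0,c_1)\eps^{\frac{1}{3}-4\delta}$ uniformly over $x \in \Sigma_\eps$. Summing the three contributions yields the stated inequality. There is no substantial obstacle here: the hard work was already done in constructing $v_\sigma$ with elliptic regularity, in proving Corollary \ref{good estimatex} via the supersolution $W_{\eps,L}$ of Proposition \ref{supsol ou-u eps x}, and in deriving the exponential convergence rate of $\ou$ in Proposition \ref{prop u}. The present lemma is the routine assembly step combining these ingredients through the triangle inequality.
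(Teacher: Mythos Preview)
Your proposal is correct and follows exactly the same approach as the paper: the paper simply states that the result follows ``similarly as in Lemma \ref{estimate sigmaeps}'' from Lemma \ref{estimate in sigma eps x}, the maximum principle for the elliptic operator, and the uniform continuity of $v$, which is precisely the three-term triangle-inequality decomposition you wrote out.
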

We have now all elements for completing the proof of the convergence of $ u^\eps $ to $ v $. 
\begin{theorem}\label{convergence up to the boundary x}
	$ u^\eps $ converges to $ v $ uniformly in every compact set.
\begin{proof}
We argue exactly as in Theorem \ref{convergence up to the boundary} applying the maximum principle to the operator $ \LL_{\Omega_\eps}^\eps $. To this end we see first of all that 
$$ \int_{\Omega\setminus\Omega_\eps}d\eta\;K_\eps(x\;\eta)\left|v_\sigma(\eta)-u^\eps(\eta)\right|\leq \frac{C(c_0,c_1,g)}{\eps}\exp\left(-\frac{\eps^{\frac{6\delta-1}{3}}}{2}\right)\int_{\Omega\setminus\Omega_\eps}d\eta\;\frac{1}{|x-\eta|^2}\leq C(c_0,c_1,\Omega,g)\eps^3,  $$
where we used $ |x-\eta|>\frac{\eps^{\frac{2}{3}+2\delta}}{2} $ for $ \eta\in\Omega\setminus\Omega_\eps $ and $ \eps $ sufficiently small and the well-known estimate $ |x|^ne^{-|x|}\leq C_n $ with $ n=13 $ and $ \delta<\frac{1}{78} $. Hence, Lemma \ref{estimate op vsigma-ueps x} implies for $ x\in\hat{\Omega}_\eps $ and $ \beta=\frac{1-6\delta}{3} $
$$ \left|\LL_{\Omega_\eps}^\eps\left(v_\sigma-u^\eps\right)(x)\right|\leq C(\Omega, g_\nu, c_0,c_1) e^{-\frac{c_0d(x)}{2\eps}}\left(\eps^\beta+C_\sigma\eps\right)+ C(\Omega, \sigma, c_0,c_1)\eps^3. $$
Moreover, Lemma \ref{sigma eps x} assures that $$ \left|v_\sigma(x)-u^\eps(x)\right|\leq \omega(\sigma)+\omega_2\left(\eps^{\frac{2}{3}+2\delta}\right)+ C\left(\eps^\alpha+\frac{1}{\sqrt{L}}\right)+C(c_0,c_1) \eps^{\frac{1}{3}-4\delta} $$ for $ x\in\Sigma_\eps $.

As we have seen in \eqref{Phi} the supersolution $ \Phi^\eps(x) $ satisfies also $ \LL_{\Omega}^\eps\left(\Phi^\eps\right)(x)\geq C \eps^2 $. Hence, we refer now to the proof Theorem \ref{convergence up to the boundary}, which works here in the same way just replacing the supersolution with the suitable $ \Phi^\eps $ defined in Theorem \ref{interior supsol x}. Since the arguments are the same we omit the details.
\end{proof}
\end{theorem}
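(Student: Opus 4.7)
My plan is to follow exactly the scheme of Theorem \ref{convergence up to the boundary}, applying the maximum principle for the non-local operator $\LL_{\Omega_\eps}^\eps$ on the set $\Omega_\eps = \hat{\Omega}_\eps \cup \Sigma_\eps$, where $\hat{\Omega}_\eps = \{d(x) > \eps^{2/3+2\delta}\}$ and $\Sigma_\eps = \{\eps^{2/3+4\delta} < d(x) \leq \eps^{2/3+2\delta}\}$. The function to control is $v_\sigma - u^\eps$, where $v_\sigma$ solves the slightly enlarged boundary value problem \eqref{bvp sigma x} in $\Omega^\sigma \supsetneq \Omega$. The reason for working with $v_\sigma$ rather than $v$ itself is that $v_\sigma$ is smooth (in fact $C^3$) up to $\overline{\Omega}$, which is exactly what the Taylor expansion in Lemma \ref{estimate op vsigma-ueps x} requires, and the elliptic maximum principle gives $\sup_\Omega |v - v_\sigma| \leq \omega(\sigma)$.

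First I would convert the operator estimate on $\Omega$ into one on $\Omega_\eps$: using the decay $K_\eps(x;\eta) \leq C(c_0,c_1) K_\eps^{c_0}(x-\eta)$ from Proposition \ref{prop K eps} and the fact that $|x-\eta| > \tfrac{1}{2}\eps^{2/3+2\delta}$ for $x \in \hat{\Omega}_\eps$ and $\eta \in \Omega \setminus \Omega_\eps$, the integral $\int_{\Omega\setminus \Omega_\eps} K_\eps(x;\eta)|v_\sigma(\eta) - u^\eps(\eta)|\,d\eta$ is bounded by $C(\Omega,g,c_0,c_1)\eps^3$, using $|x|^n e^{-|x|} \leq C_n$ with $n$ large enough (which requires taking $\delta < \tfrac{1}{78}$). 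Combining this with Lemma \ref{estimate op vsigma-ueps x} gives, for $x \in \hat{\Omega}_\eps$,
\[
\left|\LL_{\Omega_\eps}^\eps(v_\sigma - u^\eps)(x)\right| \leq C e^{-\frac{c_0 d(x)}{2\eps}}\bigl(\eps^\beta + C_\sigma \eps\bigr) + C_\sigma \eps^3,
\]
with $\beta = \tfrac{1-6\delta}{3} > 0$. Simultaneously, Lemma \ref{sigma eps x} controls $|v_\sigma - u^\eps|$ on $\Sigma_\eps$ by $K_\eps' := \omega(\sigma) + \omega_2(\eps^{2/3+2\delta}) + C(\eps^\alpha + L^{-1/2}) + C\eps^{1/3-4\delta}$.

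Next I would use the supersolution $\Phi^\eps$ constructed in Theorem \ref{interior supsol x}. The key fact I will invoke is the remark after that theorem, where it is observed that $\LL_\Omega^\eps(\Phi^\eps)(x) \geq \Arrowvert g\Arrowvert_1 \eps^2$ \emph{uniformly} in $x \in \Omega$, not merely the weaker exponential decay bound. Since the geometric estimates for $\Omega_\eps$ (minimum radius of curvature, regularity) differ from those for $\Omega$ only by constants once $\eps < \eps_0$, $\Phi^\eps$ transplanted to $\Omega_\eps$ still satisfies $\LL_{\Omega_\eps}^\eps(\Phi^\eps) \geq C\eps^2 + Ce^{-c_0 d(x)/(2\eps)}$ up to constants independent of $\eps$. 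Setting $K_\eps := $ the maximum of the constants appearing in the two estimates above divided by the lower bound of $\LL_{\Omega_\eps}^\eps(\Phi^\eps)$, the function $K_\eps \Phi^\eps \pm (v_\sigma - u^\eps)$ is continuous, dominates $|v_\sigma - u^\eps|$ on $\Sigma_\eps$ (boundary of $\Omega_\eps$), and is a supersolution of $\LL_{\Omega_\eps}^\eps$ on $\hat{\Omega}_\eps$. The maximum principle of Theorem \ref{max principle omega}, whose validity in this setting was noted in Section 5.2, then yields $|v_\sigma - u^\eps|(x) \leq C K_\eps$ on all of $\Omega_\eps$, since $\Phi^\eps$ is uniformly bounded independent of $\eps$.

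Finally I would pass to the limit in three stages. For any fixed $x \in \Omega$, let $\eps \to 0$: all the $\eps$-dependent terms in $K_\eps$ vanish and one is left with $|v(x) - u^\eps(x)| \leq C(\omega(\sigma) + \omega_2(0) + L^{-1/2})$ plus $|v - v_\sigma| \leq \omega(\sigma)$. Then let $L \to \infty$ and $\sigma \to 0$. This gives pointwise convergence $u^\eps(x) \to v(x)$, and since the bound $CK_\eps$ is uniform in $x$, convergence is in fact uniform on any compact $A \Subset \Omega$ (for $\eps$ small enough $A \subset \Omega_\eps$). The main obstacle, and the reason the neighborhood scales as $\eps^{2/3}$ rather than $\eps^{1/2}$, is controlling $\LL_\Omega^\eps(v_\sigma)$: because $\alpha$ depends on $x$, the kernel $K_\eps(x;\eta)$ is no longer symmetric and the standard cancellation of the first-order Taylor term fails. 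This obstruction is overcome by the finer three-term decomposition $K_\eps = K_\eps^{\alpha(x)} + \mathcal{K}_\eps^1 + \mathcal{K}_\eps^2 + \tilde R_\eps^x$ of Proposition \ref{prop K eps}(iii), which is designed so that the combination of the first-derivative and second-derivative terms reproduces exactly $\tfrac{\eps^2}{3\alpha(x)}\Div(\alpha^{-1}\nabla v_\sigma) = 0$ in \eqref{1+2term}, while the remainder $\tilde R_\eps^x$ produces only $O(\eps^3)$ errors. With this identity, the proof closes identically to the constant-coefficient case.
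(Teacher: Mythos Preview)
Your proposal is correct and follows essentially the same approach as the paper: you convert the $\LL_\Omega^\eps$ estimate of Lemma~\ref{estimate op vsigma-ueps x} into an $\LL_{\Omega_\eps}^\eps$ estimate via the $O(\eps^3)$ bound on the strip integral (with the same choice $\delta<\tfrac{1}{78}$), combine it with Lemma~\ref{sigma eps x} on $\Sigma_\eps$, and then invoke the supersolution $\Phi^\eps$ of Theorem~\ref{interior supsol x} together with the uniform lower bound $\LL_\Omega^\eps(\Phi^\eps)\ge C\eps^2$ from~\eqref{Phi} to run the maximum-principle argument exactly as in Theorem~\ref{convergence up to the boundary}. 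Your additional remark explaining why the $\eps^{2/3}$ scale and the decomposition of Proposition~\ref{prop K eps}(iii) are needed is accurate supplementary commentary not present in the paper's terse proof.
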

\bibliographystyle{siam}
\bibliography{bibliography}
\end{document}